\documentclass[reqno,12pt,letterpaper]{amsart}
\usepackage{amsmath,amssymb,amsthm,graphicx,mathrsfs,url}
\usepackage[usenames,dvipsnames]{color}
\usepackage[colorlinks=true,linkcolor=Red,citecolor=Green]{hyperref}

\setlength{\textheight}{8.50in} \setlength{\oddsidemargin}{0.00in}
\setlength{\evensidemargin}{0.00in} \setlength{\textwidth}{6.08in}
\setlength{\topmargin}{0.00in} \setlength{\headheight}{0.18in}
\setlength{\marginparwidth}{1.0in}
\setlength{\abovedisplayskip}{0.2in}
\setlength{\belowdisplayskip}{0.2in}
\setlength{\parskip}{0.05in}

\DeclareGraphicsRule{*}{mps}{*}{}

\def\smallsection#1{\smallskip\noindent\textbf{#1}.}

\newtheorem{theo}{Theorem}
\newtheorem{prop}{Proposition}[section]
\newtheorem{defi}[prop]{Definition}

\newtheorem{lemm}[prop]{Lemma}

\numberwithin{equation}{section}

\newcommand{\mc}{\mathcal}

\newcommand{\cc}{\mathbb{C}}

\newcommand{\la}{\lambda}

\newcommand{\pl}{\partial}
\newcommand{\x}{\times}

\newcommand{\bbar}{\overline}

\newcommand{\cjd}{\rangle}
\newcommand{\cjg}{\langle}

\DeclareMathOperator{\Con}{Con}
\DeclareMathOperator{\Res}{Res}
\DeclareMathOperator{\comp}{comp}
\DeclareMathOperator{\Ell}{ell}
\DeclareMathOperator{\End}{End}
\DeclareMathOperator{\Hol}{Hol}
\DeclareMathOperator{\Hom}{Hom}

\let\Im=\Imag

\DeclareMathOperator{\Ran}{Ran}
\DeclareMathOperator{\rank}{rank}

\let\Re=\Real
\DeclareMathOperator{\sgn}{sgn}

\DeclareMathOperator{\supp}{supp}

\DeclareMathOperator{\WF}{WF}
\DeclareMathOperator{\tr}{tr}
\def\WFh{\WF_h}
\def\indic{\operatorname{1\hskip-2.75pt\relax l}}

\title{Pollicott--Ruelle resonances for open systems}
\author{Semyon Dyatlov}
\email{dyatlov@math.mit.edu}
\address{Department of Mathematics, Massachusetts Institute of Technology,
Cambridge, MA 02139, USA}
\author{Colin Guillarmou}
\email{cguillar@dma.ens.fr}
\address{DMA, U.M.R. 8553 CNRS, \'Ecole Normale Superieure, 45 rue d'Ulm,
75230 Paris cedex 05, France}

\begin{document}

\begin{abstract}
We define Pollicott--Ruelle resonances for geodesic flows on noncompact asymptotically
hyperbolic negatively curved manifolds, as well as for more general
open hyperbolic systems related to Axiom A flows. These resonances are the poles
of the meromorphic continuation of the resolvent of the generator of the flow
and they describe decay of classical correlations. As an application, we show
that the Ruelle zeta function extends meromorphically to the entire complex plane.
\end{abstract}

\maketitle

\addtocounter{section}{1}
\addcontentsline{toc}{section}{1. Introduction}

For an Anosov flow on a compact manifold, \emph{Pollicott--Ruelle resonances}
are complex numbers which describe fine features of decay of correlations~\cite{Po,Ru}.
They also are the singularities of the meromorphic extension of
the Ruelle zeta function, whose existence (conjectured
by Smale~\cite{Sm}) has recently been proved on compact manifolds
by Giulietti--Liverani--Pollicott~\cite{glp}, see also
Dyatlov--Zworski~\cite{DyZw}.

The purpose of this paper is to define Pollicott--Ruelle resonances
for \emph{open hyperbolic systems}. An example is the geodesic flow $\varphi^t=e^{tX}:SM\to SM$
on an asymptotically hyperbolic negatively curved noncompact Riemannian manifold $M$
(see~\S\ref{s:examples-geodesic}).
Building on the microlocal approach of Faure--Sj\"ostrand~\cite{FaSj} and~\cite{DyZw}, we show that:
\begin{itemize}
\item the resolvent $(X+\lambda)^{-1}:L^2(SM)\to L^2(SM)$, $\Re\lambda>0$,
continues meromorphically to $\mathbf R(\lambda):C_0^\infty(SM)\to \mathcal D'(SM)$, $\lambda\in\mathbb C$ (Theorem~\ref{t:mer});
\item the singular part of $\mathbf R(\lambda)$ at its poles (called Pollicott--Ruelle resonances)
is described in terms of support and wavefront set
(Theorem~\ref{t:res-states});
\item the Ruelle zeta function $\zeta(\lambda)=\prod_{\gamma^\sharp} (1-e^{-\lambda T_{\gamma^\sharp}})$, where
$T_{\gamma^\sharp}>0$ are the lengths of primitive closed geodesics,
extends meromorphically to $\lambda\in\mathbb C$ (Theorem~\ref{t:zeta}).
\end{itemize}
These results are motivated by decay of correlations, counting closed trajectories,
linear response, and boundary rigidity in geometric inverse problems~-- see the discussion below.

Rather than consider the flow on the entire $SM$, it suffices to work with its restriction
to $\mathcal U=S U$, where $U\subset M$ is a large convex compact set containing all trapped trajectories.
Our results hold under the following general assumptions
(see~\S\ref{s:examples-basic} for a basic example
and~\S\ref{s:boundary} for applications to boundary problems):
\begin{enumerate}
\item[\textbf{\hypertarget{AA1}{(A1)}}]
$\overline{\mathcal U}$ is an $n$-dimensional compact manifold with interior $\mathcal U$ and boundary $\partial\mathcal U$, $X$
is a smooth ($C^\infty$) nonvanishing vector field on $\overline{\mathcal U}$, and
$\varphi^t=e^{tX}$ is the corresponding flow;
\item[\textbf{(A2)}] $\rho\in C^\infty(\overline{\mathcal U})$ is a boundary defining function,
that is $\rho>0$ on $\mathcal U$, $\rho=0$ on $\partial\mathcal U$,
and $d\rho\neq 0$ on $\partial\mathcal U$;
\item[\textbf{\hypertarget{AA3}{(A3)}}] the boundary $\partial\mathcal U$ is \emph{strictly convex} in the sense that
\begin{equation}
  \label{e:convex}
x\in\partial\mathcal U,\
X\rho(x)=0\quad \Longrightarrow\quad X^2\rho(x)<0.
\end{equation}
\end{enumerate}
The condition~\eqref{e:convex} does not depend on the choice of $\rho$.
We embed $\overline{\mathcal U}$ into some {\em com\-pact} manifold $\mathcal M$ without boundary
(which is unrelated to the noncompact manifold $SM$ used in the example above)
and extend the vector field $X$ there, so that the flow $\varphi^t$ is defined for
all times, see~\S\ref{s:dyn1}. We choose the extension of $X$ to $\mathcal M$ (also denoted $X$)
so that $\mathcal U$ is convex in the sense that (see Lemma~\ref{l:extended})
\begin{equation}
  \label{e:convex2}
x,\varphi^T(x)\in\mathcal U,\ T\geq 0\quad\Longrightarrow\quad
\varphi^t(x)\in\mathcal U\quad\text{for all }t\in [0,T].
\end{equation}
Define the \emph{incoming/outgoing tails} $\Gamma_\pm\subset\overline{\mathcal U}$ and the \emph{trapped set} $K$
by
\begin{equation}
  \label{e:gpm}
\Gamma_\pm:=\bigcap_{\pm t\geq 0} \varphi^t(\overline{\mathcal U}),\quad
K:=\Gamma_+\cap\Gamma_-.
\end{equation}
We have $K\subset\mathcal U$, see~\S\ref{s:dyn1}.
We make the assumption that the flow is \emph{hyperbolic} on $K$:
\begin{enumerate}
\item[\textbf{\hypertarget{AA4}{(A4)}}] for each $x\in K$, there is a splitting
\begin{equation}
  \label{e:hypersplit}
T_x \mathcal M=E_0(x)\oplus E_s(x)\oplus E_u(x),\quad
E_0(x):=\mathbb R X(x)
\end{equation}
continuous in $x$, invariant under $\varphi^t$, and such that for some constants $C,\gamma>0$,
\begin{equation}
  \label{e:hyper}
|d\varphi^t(x)\cdot v|\leq Ce^{-\gamma |t|}|v|,\quad
\begin{cases}
t\geq 0,\ v\in E_s(x);\\
t\leq 0,\ v\in E_u(x).
\end{cases}
\end{equation}
\end{enumerate}
In the terminology of~\cite[Definitions~6.4.18 and~17.4.1]{KaHa}, $K$ is a \emph{locally maximal hyperbolic set}
for the flow $\varphi^t$. In fact, each locally maximal set has a strictly
convex neighborhood (see~\cite[Theorem~1.5]{conley-easton}
and the discussion following~\cite[Corollary~B]{robinson}),
thus our results hold for any basic set of an Axiom A flow~\cite[\S7]{Po0}.
(We keep the strict convexity assumption because it simplifies the proofs and is satisfied in many cases.)
We remark that our proofs never use that the periodic orbits are dense in $K$ (which is part of Axiom A).

We finally assume that
\begin{enumerate}
  \item[\textbf{\hypertarget{AA5}{(A5)}}] we fix a smooth
complex vector bundle $\mathcal E$ over $\overline{\mathcal U}$ and a first order
differential operator $\mathbf X:C^\infty(\overline{\mathcal U};\mathcal E)\to C^\infty(\overline{\mathcal U};\mathcal E)$ such that
\begin{equation}
  \label{e:X-bundles}
\mathbf X(f\mathbf u)=(Xf)\mathbf u+f(\mathbf X\mathbf u),\quad
f\in C^\infty(\overline{\mathcal U}),\
\mathbf u\in C^\infty(\overline{\mathcal U};\mathcal E).
\end{equation}
\end{enumerate}
In the scalar case, where $\mathcal E=\mathbb R$ is the trivial bundle, \eqref{e:X-bundles} means
that $\mathbf X=X-V$, where $V\in C^\infty(\overline{\mathcal U};\mathbb C)$ is a potential.
We extend $\mathbf X$ arbitrarily to $\mathcal M$ so that~\eqref{e:X-bundles} holds.

One important special case is that of \emph{Anosov flows}, when $\mathcal U=\mathcal M$ is a compact
manifold without boundary, and thus $K=\mathcal U$. In this situation, Pollicott--Ruelle resonances have been
studied extensively, see the overview of previous work below.

\smallsection{Meromorphic continuation of the resolvent}
Fix a smooth measure $\mu$ on $\mathcal M$
and a smooth metric on the fibers of $\mathcal E$ (neither needs to be invariant under the flow);
this fixes a norm on $L^2(\mathcal M;\mathcal E)$.
We consider the \emph{transfer operator} $e^{-t\mathbf X}:L^2(\mathcal M;\mathcal E)\to L^2(\mathcal M;\mathcal E)$. For the scalar
case $\mathcal E=\mathbb R$, $\mathbf X=X-V$, it has the form
\begin{equation}
  \label{e:Xpot}
e^{-t\mathbf X}f(x)=\exp\Big(\int_0^t V(\varphi^{-s}(x))\,ds\Big) f(\varphi^{-t}(x)).
\end{equation}
Note that~\eqref{e:X-bundles} implies the following property characterizing the support of $e^{-t\mathbf X}$:
\begin{equation}
  \label{e:X-support}
e^{-t\mathbf X}(f\mathbf u)=(f\circ\varphi^{-t})e^{-t\mathbf X}\mathbf u,\quad
f\in C^\infty(\mathcal M),\
\mathbf u\in C^\infty(\mathcal M;\mathcal E).
\end{equation}
For a large constant $C_0$, we have
\begin{equation}
  \label{e:C0-def}
\|e^{-t\mathbf X}\|_{L^2(\mathcal M;\mathcal E)\to L^2(\mathcal M;\mathcal E)}\leq e^{C_0t},\quad
t\geq 0.
\end{equation}
For $\Re\lambda > C_0$, the resolvent
$(\mathbf X+\lambda)^{-1}$ on $L^2(\mathcal M;\mathcal E)$
is given by the formula
\begin{equation}
  \label{e:res-upper}
(\mathbf X+\lambda)^{-1}\mathbf f=\int_0^\infty e^{-t(\mathbf X+\lambda)}\mathbf f \,dt.
\end{equation}
For the purposes of meromorphic continuation, we consider the \emph{restricted resolvent}
\begin{equation}
  \label{e:res}
\mathbf R(\lambda)=\indic_{\mathcal U}(\mathbf X+\lambda)^{-1}\indic_{\mathcal U}:C_0^\infty(\mathcal U;\mathcal E)\to \mathcal D'(\mathcal U;\mathcal E),\quad
\Re\lambda>C_0.
\end{equation}
Here $\mathcal D'(\mathcal U;\mathcal E)$ is the space of distributions on $\mathcal U$ with values in $\mathcal E$.
By~\eqref{e:convex2}, \eqref{e:X-support}, and~\eqref{e:res-upper},
$\mathbf R(\lambda)$ does not depend on the values of $\mathbf X$ outside of $\mathcal U$.
Our main result is
\begin{theo}
  \label{t:mer}
Under the assumptions~\hyperlink{AA1}{\rm(A1)}--\hyperlink{AA5}{\rm(A5)},
the family $\mathbf R(\lambda)$ defined in~\eqref{e:res} continues meromorphically to $\lambda\in\mathbb C$,
with poles of finite rank. These poles
are called Pollicott--Ruelle resonances of $\mathbf X$. 
\end{theo}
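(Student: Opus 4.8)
The plan is to convert the open system into a standard hyperbolic trapping problem on the closed manifold $\mathcal M$ and then apply the microlocal Fredholm method of Faure--Sj\"ostrand~\cite{FaSj} and Dyatlov--Zworski~\cite{DyZw}. First I would install a complex absorbing potential. Using~\eqref{e:gpm}, the extension of $X$ provided by Lemma~\ref{l:extended}, and compactness of $\mathcal M$, choose $W\in C^\infty(\mathcal M;[0,\infty))$ which vanishes on a small neighborhood $\mathcal V$ of $\overline{\mathcal U}$ and is positive outside it, with $\mathcal V$ taken close enough to $\overline{\mathcal U}$ that the set of points whose full trajectory stays in $\mathcal V$ is contained in a neighborhood of $K$ on which the hyperbolic splitting~\eqref{e:hypersplit} extends; thus the only flow lines never meeting $\{W>0\}$ are essentially those in the hyperbolic set $K$ of~\hyperlink{AA4}{(A4)}. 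Set $\mathbf X_W:=\mathbf X+W$. By~\eqref{e:X-support}, \eqref{e:res-upper} and~\eqref{e:convex2}, any trajectory segment with both endpoints in $\mathcal U$ stays in $\mathcal U$ and hence avoids $\supp W$, so $\indic_{\mathcal U}e^{-t\mathbf X_W}\indic_{\mathcal U}=\indic_{\mathcal U}e^{-t\mathbf X}\indic_{\mathcal U}$ for all $t\geq 0$ and therefore $\indic_{\mathcal U}(\mathbf X_W+\lambda)^{-1}\indic_{\mathcal U}=\mathbf R(\lambda)$ for $\Re\lambda>C_0$. It thus suffices to meromorphically continue $(\mathbf X_W+\lambda)^{-1}$ and cut off afterwards.

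The operator $-i\mathbf X_W$ has scalar real principal symbol $p(x,\xi)=\langle\xi,X(x)\rangle$, whose Hamilton flow is the symplectic lift of $\varphi^t$; the subprincipal part combines the zeroth-order part of $\mathbf X$ coming from~\eqref{e:X-bundles} with the absorption $-iW$ (with $W\geq 0$). On the characteristic set $\{p=0\}$, the invariant sets that matter, over $K$, are the radial source $E_u^*$ and radial sink $E_s^*$ (the annihilators of $E_0\oplus E_u$ and of $E_0\oplus E_s$, hyperbolic by~\hyperlink{AA4}{(A4)}): every other null bicharacteristic, tracked forward or backward, either reaches the elliptic region $\{p\neq 0\}$, or enters $\{W>0\}$, or enters an arbitrarily small conic neighborhood of $E_s^*\cup E_u^*$. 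Following Faure--Sj\"ostrand I would build an escape function $G\in C^\infty(T^*\mathcal M\setminus 0)$, homogeneous of degree $0$, equal to $+1$ near $E_u^*$ and $-1$ near $E_s^*$, with $H_pG\leq 0$ near $\{p=0\}$ and $H_pG<0$ there outside a neighborhood of $E_s^*\cup E_u^*$; for $s>0$ this gives anisotropic Sobolev spaces $\mathcal H_{sG}$ attached to the weight $e^{sG}$, whose elements may be as rough as $H^{-s}$ along the unstable codirection and as smooth as $H^{s}$ along the stable one.

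On $\mathcal H_{sG}$ I would then assemble the usual estimates: ellipticity on $\{p\neq 0\}$; propagation of singularities along $H_p$, where $H_pG<0$ and the sign of $-iW$ absorb the lower-order terms (so there is nowhere for a singularity to propagate from); and the radial-point estimates of~\cite{DyZw} at $E_u^*$ (providing the a priori bound) and at $E_s^*$, valid once $s$ is large relative to the rates in~\eqref{e:hyper} and the zeroth-order part of $\mathbf X$. This yields, for $\Re\lambda>-\sigma(s)$ with $\sigma(s)\to\infty$ as $s\to\infty$, an estimate $\|u\|_{\mathcal H_{sG}}\leq C\|(\mathbf X_W+\lambda)u\|_{\mathcal H_{sG}}+C\|u\|_{H^{-N}}$ on the domain $\{u\in\mathcal H_{sG}:\mathbf X_Wu\in\mathcal H_{sG}\}$, together with the analogous bound for the formal transpose on the dual anisotropic space; since $\mathcal H_{sG}\hookrightarrow H^{-N}$ is compact, $\mathbf X_W+\lambda$ is Fredholm of index $0$. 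As it is invertible for $\Re\lambda>C_0$, analytic Fredholm theory makes $(\mathbf X_W+\lambda)^{-1}$ meromorphic with finite-rank poles on $\Re\lambda>-\sigma(s)$; these data do not depend on $s$ (nor on the cutoffs) on overlaps, so letting $s\to\infty$ and inserting $\indic_{\mathcal U}$ gives the meromorphic continuation of $\mathbf R(\lambda)$ to $\mathbb C$ with poles of finite rank.

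The delicate point is the joint choice in the first two steps: the extension of $X$ to $\mathcal M$, the absorbing function $W$, and the escape function $G$ must be arranged so that the microlocally trapped set of $\mathbf X_W$ reduces to the radial sets over $K$ while every other bicharacteristic is flushed into $\{p\neq 0\}\cup\{W>0\}$ in finite time; strict convexity~\hyperlink{AA3}{(A3)} --- which through~\eqref{e:convex2} forbids trajectories from re-entering $\mathcal U$ and controls the flow near $\partial\mathcal U$ --- together with the local maximality of $K$ are exactly what make this possible. By contrast, the radial estimates are those of~\cite{DyZw} once~\hyperlink{AA4}{(A4)} is in force, and the vector bundle~\hyperlink{AA5}{(A5)} costs nothing because the principal symbol of $\mathbf X$ is scalar; it only shifts the threshold on $s$.
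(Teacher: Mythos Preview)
The outline has the right overall architecture---complex absorption plus anisotropic spaces plus Fredholm theory---but the step where you invoke ``the radial-point estimates of~\cite{DyZw} at $E_u^*$ \dots\ and at $E_s^*$'' is where the argument breaks. In the open setting the sets $\kappa(E_s^*)$ and $\kappa(E_u^*)$ are \emph{not} radial sources or sinks for $e^{tH_p}$ on $\overline T^*\mathcal M$: they are saddles. Concretely, take $(x,\xi)$ with $x\in\Gamma_-\setminus K$ close to $K$ and $\xi$ close to $E_s^*$; backward in time the base point $\varphi^{-t}(x)$ leaves $\overline{\mathcal U}$ in finite time, so $e^{-tH_p}(x,\xi)$ does \emph{not} converge to $\kappa(E_s^*)$. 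Thus no neighborhood of $\kappa(E_s^*)$ has the backward-attracting property required by~\cite[Proposition~2.6]{DyZw}, and the dual problem occurs at $\kappa(E_u^*)$ via $\Gamma_+$. Your absorber $W$ does catch those escaping trajectories eventually, but the standard radial estimate has no ``$B$'' term to feed them into; you would need an estimate that simultaneously allows some backward trajectories to converge to $L$ while others are controlled by an auxiliary operator. That is exactly the content of the paper's Lemma~\ref{l:ultimate}, which is new and whose proof (via the escape-function construction of Lemma~\ref{l:ultimatesc}) is the main analytic input. The paper flags this explicitly: ``the radial sets \dots\ are no longer sources or sinks, but rather saddle sets.'' (Incidentally, your source/sink labels are reversed: $\kappa(E_s^*)$ plays the source role and $\kappa(E_u^*)$ the sink, matching $m=+1$ near $E_-^*$ and $m=-1$ near $E_+^*$ in Lemma~\ref{l:order}.)

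A second, smaller gap concerns the absorber. A smooth potential $W\in C^\infty(\mathcal M)$ is order~$0$ and does not make $\mathbf X_W+\lambda$ elliptic at fiber infinity; you still have to propagate there. With the extension of Lemma~\ref{l:extconv} (not Lemma~\ref{l:extended}, which is about the bundles $E_\pm^*$), the vector field $X$ vanishes on $\{\rho=-\varepsilon\}$, so the entire cosphere fiber over each such point sits in $\{p=0\}$ and the linearized fiber dynamics is highly degenerate (rank-one $\partial_x X$). The paper handles this with a genuinely first-order absorber $Q_\infty\in\Psi^1_h$ elliptic over $\{\rho\le-2\varepsilon\}$ and over the glancing set $\{\rho=-\varepsilon\}\cap\{X_1\rho=0\}$, together with a smooth $q_1$ and another application of Lemma~\ref{l:ultimate} over the hyperbolic pieces $L_\pm$ of $\{\rho=-\varepsilon\}$ (Cases~2 and~9 in the proof of Lemma~\ref{l:keymic}). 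A bounded $W$ alone would not close those cases.
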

In fact, we can define $\mathbf R(\lambda)$ as the restricted resolvent of a Fredholm problem on certain anisotropic Sobolev
spaces~-- see~\S\ref{s:proofs}. For the case of Anosov flows, our definition
of resonances coincides with that of~\cite{FaSj}, the only difference being
the convention for the spectral parameter~-- if $\{\lambda_j\}$ are the resonances
in Theorem~\ref{t:mer}, then the resonances of~\cite{FaSj}
are $\{i\lambda_j\}$.

\smallsection{Characterization of resonant states}
We next study the singular parts of $\mathbf R(\lambda)$.
For each $\lambda\in\mathbb C$, $j\geq 1$ define the space of \emph{generalized resonant states}
\begin{equation}
  \label{e:res-spaces}
\Res_{\mathbf X}^{(j)}(\lambda)=\{\mathbf u\in \mathcal D'(\mathcal U;\mathcal E)\mid
\supp\mathbf u\subset\Gamma_+,\
\WF(\mathbf u)\subset E_+^*,\
(\mathbf X+\lambda)^j\mathbf u=0
\}.
\end{equation}
Here $E_+^*\supset E_u^*$ is the extended unstable bundle
over $\Gamma_+$, constructed in Lemma~\ref{l:extended}.
We will also use the extended stable bundle $E_-^*\supset E_s^*$ over $\Gamma_-$.
The symbol $\WF$ denotes the wavefront set, see~\S\ref{s:notation}.
\begin{theo}
  \label{t:res-states}
For each $\lambda_0\in\mathbb C$, we have the expansion
\begin{equation}
  \label{e:xpansion}
\mathbf R(\lambda)=\mathbf R_H(\lambda)+\sum_{j=1}^{J(\lambda_0)} {(-1)^{j-1}(\mathbf X+\lambda_0)^{j-1}\Pi_{\lambda_0}\over (\lambda-\lambda_0)^j}
\end{equation}
for some family $\mathbf R_H(\lambda):C_0^\infty(\mathcal U;\mathcal E)\to \mathcal D'(\mathcal U;\mathcal E)$ holomorphic near $\lambda_0$
and some finite rank operator $\Pi_{\lambda_0}:C_0^\infty(\mathcal U;\mathcal E)\to\mathcal D'(\mathcal U;\mathcal E)$. Moreover,
if $K_{\Pi_{\lambda_0}}$ is the Schwartz kernel of $\Pi_{\lambda_0}$ and $\WF'(\Pi_{\lambda_0})$ is its wavefront set (see~\eqref{e:wfprime}, \eqref{e:schwartz}),
then
\begin{align}
  \label{e:piprop1}
\supp K_{\Pi_{\lambda_0}}\ \subset\ \Gamma_+\times\Gamma_-,&\quad
\WF'(\Pi_{\lambda_0})\ \subset\ E_+^*\times E_-^*;\\
  \label{e:piprop2}
\Pi_{\lambda_0}^2=\Pi_{\lambda_0},\quad
\mathbf X\Pi_{\lambda_0}=\Pi_{\lambda_0}\mathbf X,&\quad
\Ran(\Pi_{\lambda_0})=\Res_{\mathbf X}^{(J(\lambda_0))}(\lambda_0).
\end{align}
\end{theo}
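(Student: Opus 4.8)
\smallsection{Proof idea}
The strategy is to extract the Laurent expansion \eqref{e:xpansion} and the algebraic relations from the Fredholm construction of \S\ref{s:proofs}, to obtain the support and wavefront bounds \eqref{e:piprop1} from the dynamics, and then to combine the two, the support/wavefront control being exactly what makes the cutoffs $\indic_{\mathcal U}$ (and the boundary modification) transparent. Recall from \S\ref{s:proofs} that $\mathbf R(\lambda)=\indic_{\mathcal U}\,\mathbf P(\lambda)^{-1}\,\indic_{\mathcal U}$, where $\mathbf P(\lambda)$ is a Fredholm family on an anisotropic space $\mc H$, invertible for $\Re\lambda\gg1$, equal to $\mathbf X+\lambda$ modulo a term whose Schwartz kernel is supported away from a neighbourhood of $\Gamma_+\cup\Gamma_-$, and --- by Theorem~\ref{t:mer} --- with $\mathbf P(\lambda)^{-1}$ meromorphic and of finite rank at its poles. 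Near such a pole $\lambda_0$ write $\mathbf P(\lambda)^{-1}=H(\lambda)+\sum_{j=1}^{J(\lambda_0)}A_j(\lambda-\lambda_0)^{-j}$ with $H$ holomorphic, $A_{J(\lambda_0)}\neq0$ of finite rank, and set $\Pi_{\lambda_0}:=\indic_{\mathcal U}A_1\indic_{\mathcal U}$, the residue of $\mathbf R$. Matching Laurent coefficients in $\mathbf P(\lambda)\mathbf P(\lambda)^{-1}=I$ gives $A_{j+1}=-(\mathbf X+\lambda_0)A_j$ up to the modification term, and applying $\indic_{\mathcal U}$ --- which annihilates that term by the support statement below --- yields \eqref{e:xpansion}; matching in $\mathbf P(\lambda)^{-1}\mathbf P(\lambda)=I$ gives $A_{j+1}=-A_j(\mathbf X+\lambda_0)$ up to modification, so comparing the two produces $\mathbf X\Pi_{\lambda_0}=\Pi_{\lambda_0}\mathbf X$, and the coefficient of $(\lambda-\lambda_0)^{-J(\lambda_0)}$ gives $(\mathbf X+\lambda_0)^{J(\lambda_0)}\Pi_{\lambda_0}=0$.

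Next come the support and wavefront bounds \eqref{e:piprop1}. The support bound is soft dynamics: if $\chi\in C_0^\infty(\mathcal U)$ vanishes near $\Gamma_-$, then by \eqref{e:gpm}, \eqref{e:convex2} and compactness every point of $\supp\chi$ leaves $\overline{\mathcal U}$ under the forward flow within a uniform time $T$, so by \eqref{e:res-upper}, \eqref{e:X-support} and the independence of $\mathbf R(\lambda)$ from $\mathbf X$ outside $\mathcal U$ one gets $\mathbf R(\lambda)\chi=\indic_{\mathcal U}\int_0^{T}e^{-t(\mathbf X+\lambda)}\chi\,dt$, which is entire; hence $\Pi_{\lambda_0}\chi=0$, putting the second variable of $K_{\Pi_{\lambda_0}}$ in $\Gamma_-$, and the symmetric argument with $\psi$ vanishing near $\Gamma_+$ (using \eqref{e:convex2} to kill $e^{-t\mathbf X}f(x)$ for $x\in\mathcal U$, $f\in C_0^\infty(\mathcal U)$, $t$ large) puts the first variable in $\Gamma_+$. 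The wavefront bound $\WF'(\Pi_{\lambda_0})\subset E_+^*\times E_-^*$ is the crux, and I expect it to be the main obstacle; it should follow from the microlocal estimates behind \S\ref{s:proofs}. Over $K$ the flow lifted to $T^*\mathcal M$ has $E_u^*$ as a radial source and $E_s^*$ as a radial sink, with invariant extensions $E_+^*,E_-^*$ over $\Gamma_\pm$ (Lemma~\ref{l:extended}); any $u\in\Ran\Pi_{\lambda_0}$ solves $(\mathbf X+\lambda_0)^{J(\lambda_0)}u=0$ with $\supp u\subset\Gamma_+$, so propagation of singularities along the flow together with the radial source estimate at $E_+^*$ forces $\WF(u)\subset E_+^*$, controlling the first variable of $K_{\Pi_{\lambda_0}}$, while the second variable is handled by the same argument applied to the transpose of $\Pi_{\lambda_0}$ --- the residue of the adjoint problem, for which $\Gamma_-,E_-^*$ play the roles of $\Gamma_+,E_+^*$. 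The delicate points are the uniformity of the propagation up to $\partial\mathcal U$, which is exactly where strict convexity (A3) is used, and checking that the diagonal and flow-out part of $\WF'(\mathbf P(\lambda)^{-1})$, being holomorphic in $\lambda$, is absorbed into $\mathbf R_H$ so that only the $E_+^*\times E_-^*$ piece contributes to the residue.

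Finally I would assemble \eqref{e:piprop2}. The commutation $\mathbf X\Pi_{\lambda_0}=\Pi_{\lambda_0}\mathbf X$ was obtained above. Since by \eqref{e:piprop1} the cutoffs $\indic_{\mathcal U}$ and the modification term of $\mathbf P$ are transparent on $\Gamma_\pm$, the operator $\Pi_{\lambda_0}$ agrees, as a map with finite-dimensional range $\Ran\Pi_{\lambda_0}\subset\mc H$, with the Riesz spectral projection of the closed operator $\mathbf X$ on $\mc H$ for the eigenvalue $-\lambda_0$, so $\Pi_{\lambda_0}^2=\Pi_{\lambda_0}$. For the range, $\Ran\Pi_{\lambda_0}\subset\Res_{\mathbf X}^{(J(\lambda_0))}(\lambda_0)$ is immediate from \eqref{e:piprop1} and $(\mathbf X+\lambda_0)^{J(\lambda_0)}\Pi_{\lambda_0}=0$. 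Conversely, given $u\in\Res_{\mathbf X}^{(j)}(\lambda_0)$, the distributions $u_m:=(\mathbf X+\lambda_0)^m u$ lie in $\Res_{\mathbf X}^{(j-m)}(\lambda_0)$ --- hence in $\mc H$, by the construction of the anisotropic space --- and vanish for $m\geq j$, and a one-line computation gives $(\mathbf X+\lambda)\big(\sum_{m\geq0}(-1)^m(\lambda-\lambda_0)^{-m-1}u_m\big)=u$; since the modification term vanishes near $\supp u\subset\Gamma_+$, the bracketed finite sum is $\mathbf P(\lambda)^{-1}u$ for $\lambda$ near $\lambda_0$, $\lambda\neq\lambda_0$, so taking residues gives $\Pi_{\lambda_0}u=u_0=u$ while comparing pole orders forces $u_m=0$ for $m\geq J(\lambda_0)$. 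Hence $\Res_{\mathbf X}^{(j)}(\lambda_0)=\Res_{\mathbf X}^{(J(\lambda_0))}(\lambda_0)=\Ran\Pi_{\lambda_0}$ for all $j$, with $J(\lambda_0)$ equal to the order of the pole, which completes \eqref{e:piprop2}.
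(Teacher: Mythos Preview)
Your overall strategy matches the paper's, and the algebraic parts --- extracting the Laurent expansion, the relations $A_{j+1}=-(\mathbf X+\lambda_0)A_j$, the commutation $\mathbf X\Pi=\Pi\mathbf X$, the inclusion $\Ran\Pi\subset\Res^{(J)}$, and the computation showing $u\in\Res^{(j)}\Rightarrow\Pi u=u$ --- are essentially what the paper does in Lemma~\ref{l:microl} and the proof of Theorem~\ref{t:res-states}. (The paper works directly with~\eqref{e:quation} on $\mathcal U$ rather than with $\mathbf P(\lambda)\mathbf P(\lambda)^{-1}=I$ plus a ``modification term'', which is cleaner but equivalent.) Two points are worth flagging.

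\textbf{Wavefront bound.} Your proposed route to $\WF'(\Pi_{\lambda_0})\subset E_+^*\times E_-^*$ --- take $u\in\Ran\Pi_{\lambda_0}$, use $\supp u\subset\Gamma_+$ and $(\mathbf X+\lambda_0)^Ju=0$, then propagate singularities and apply radial estimates --- differs from the paper's. The paper proves the stronger statement~\eqref{e:wavefront}, bounding $\WF'$ of the \emph{entire} resolvent (holomorphic part included), via the identity~\eqref{e:identi2} expressing $\mathbf R(\lambda)$ through the auxiliary resolvent $\mathbf R_\delta$, using the already-proved semiclassical bound~\eqref{e:WF-absorbed}, and then letting $\delta\to 0$ so that $\bigcap_\delta\Theta_\delta^\pm$ collapses onto $E_\pm^*$. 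Your route can be made to work (it amounts to a nonsemiclassical rereading of Cases~4--7 in the proof of Lemma~\ref{l:keymic}), but it only bounds $\WF'(\Pi_{\lambda_0})$; the bound on $\mathbf R_H(\lambda)$ in~\eqref{e:wavefront} is exactly what makes the flat trace in~\S\ref{s:trace} well-defined and is therefore indispensable for Theorem~\ref{t:zeta}. Also a minor slip: in the paper's conventions $\kappa(E_s^*)$ is the radial source and $\kappa(E_u^*)$ the sink, the opposite of what you wrote.

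\textbf{The step $u=\Pi u$.} You write ``the bracketed finite sum is $\mathbf P(\lambda)^{-1}u$'' and dismiss the modification term because it vanishes near $\Gamma_+$. This is right in spirit but glosses over a real issue: $u$ is not compactly supported in $\mathcal U$ (its support reaches $\partial\mathcal U$ along $\Gamma_+$), so it is not in the domain of $\mathbf R(\lambda):C_0^\infty(\mathcal U)\to\mathcal D'(\mathcal U)$. The paper handles this in Lemma~\ref{l:returns}: multiply by cutoffs $\chi,\chi'\in C_0^\infty(\mathcal U)$ equal to $1$ near $K$ and satisfying~\eqref{e:convfun}, check via the wavefront condition that $\chi'u\in\mathcal H^r_h$ so that $\mathbf R(\lambda)\chi'u$ makes sense, and then show $\chi u=\chi\mathbf R(\lambda)\chi'(\mathbf X+\lambda)u$ by using convexity and Lemma~\ref{l:outgoing2} to kill the commutator term $\chi\mathbf R(\lambda)(X\chi')u$. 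Your remark ``hence in $\mc H$'' is the right seed, but the cutoffs are where the actual work lives.
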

The operator products in~\eqref{e:piprop2} are understood in the sense of distributions.
The operator $\Pi_{\lambda_0}^2:C_0^\infty(\mathcal U;\mathcal E)\to \mathcal D'(\mathcal U;\mathcal E)$ is well-defined due to~\eqref{e:piprop1},
since $\Gamma_+\cap\Gamma_-=K$ is a compact subset of $\mathcal U$
and $E_+^*$, $E_-^*$ only intersect at the zero section~-- see~\cite[Theorem~8.2.14]{ho1}.

Note that Theorem~\ref{t:res-states} implies that $\lambda_0$ is a resonance
if and only if the space $\Res_{\mathbf X}^{(1)}(\lambda_0)$ of resonant states
is nontrivial.

We can apply Theorem~2
to the operator $\mathbf X^*$, which satisfies~\eqref{e:X-bundles} with $X$ replaced by $-X$
and $\mathcal E$ replaced by $\mathcal E^*\otimes |\Omega|^1$, with $|\Omega|^1$ the
bundle of densities on $\mathcal U$.
The direction of the flow is reversed, which means that $\Gamma_+,E_+^*$ switch
places with $\Gamma_-,E_-^*$. Therefore,
$$
\Ran((\Pi_{\lambda_0})^*)=\Res_{\mathbf X^*}^{J(\lambda_0)}(\lambda_0),
$$
where $\Res_{\mathbf X^*}^{(j)}(\lambda)$ is the space of generalized
coresonant states:
$$
\Res_{\mathbf X^*}^{(j)}(\lambda)=\{\mathbf v\in\mathcal D'(\mathcal U;\mathcal E^*\otimes |\Omega|^1)\mid
\supp \mathbf v\subset\Gamma_-,\
\WF(\mathbf v)\subset E_-^*,\
(\mathbf X^*+\bar\lambda)^j\mathbf v=0\}.
$$
Note that for each $\mathbf u\in\Res^{(1)}_{\mathbf X}(\lambda),\mathbf v\in\Res^{(1)}_{\mathbf X^*}(\lambda)$,
the pointwise product $\mathbf u\cdot\bbar{\mathbf v}\in\mathcal D'(\mathcal U;|\Omega|^1)$ is well-defined
and supported on $K$, thanks to~\cite[Theorem~8.2.10]{ho1}. Moreover,
$\mathcal L_X(\mathbf u\cdot \bbar{\mathbf v})=0$.

\smallsection{Ruelle zeta function}
Let $V\in C^\infty(\mathcal U;\mathbb C)$. For a primitive closed trajectory $\gamma^\sharp:[0,T_{\gamma^\sharp}]\to K$
of $\varphi^t$ of period $T_{\gamma^\sharp}$, let
\begin{equation}
  \label{e:V-gamma}
V_{\gamma^\sharp}={1\over T_{\gamma^\sharp}}\int_0^{T_{\gamma^\sharp}} V(\gamma^\sharp(t))\,dt
\end{equation}
be the average of $V$ over $\gamma^\sharp$. Define the \emph{Ruelle zeta function}
as the following product over all primitive closed trajectories of $\varphi^t$ on $K$:
$$
\zeta_V(\lambda):=\prod_{\gamma^\sharp}\big(1-\exp(-T_{\gamma^\sharp}(\lambda+V_{\gamma^\sharp}))\big),\quad
\Re\lambda\gg 1.
$$
The product converges for $\Re\lambda$ large enough since the number of closed trajectories
of period no more than $T$ grows at most exponentially in $T$~-- see Lemma~\ref{l:recur3}.

\begin{theo}
  \label{t:zeta}
Assume that the stable/unstable foliations $E_u,E_s$ are orientable.
Then the function $\zeta_V(\lambda)$ admits a meromorphic continuation to $\lambda\in\mathbb C$.
\end{theo}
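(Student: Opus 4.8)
The plan is to express $\zeta_V(\lambda)$ as an alternating product of regularized determinants (dynamical zeta functions) built from the twisted transfer operators $\mathbf X_k = X - V$ acting on sections of $\mathcal E_k := \Lambda^k E_u^* \otimes (\text{line bundle})$, restricted over the trapped set, and then to invoke Theorem~\ref{t:mer} and Theorem~\ref{t:res-states} to continue each factor meromorphically. Concretely, I would first recall the standard Atiyah--Bott/Guillemin trace formula heuristic: the flat trace of $e^{-t\mathbf X}$ acting on $k$-forms normal to the flow picks up, at each closed orbit $\gamma$ of period $T_\gamma = m T_{\gamma^\sharp}$, a contribution
\begin{equation}
  \label{e:flattrace}
\tr^\flat\big(e^{-t\mathbf X_k}\big) = \sum_{\gamma}\, T_{\gamma^\sharp}\, e^{-t V_\gamma}\,
{\tr(\Lambda^k \mathcal P_\gamma)\over |\det(I-\mathcal P_\gamma)|}\,\delta(t - T_\gamma),
\end{equation}
where $\mathcal P_\gamma$ is the linearized Poincar\'e return map on $E_u\oplus E_s$. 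Summing $(-1)^k$ over $k=0,\dots,n-1$ collapses $\sum_k (-1)^k \tr(\Lambda^k\mathcal P_\gamma) = \det(I-\mathcal P_\gamma)$, and dividing by $|\det(I-\mathcal P_\gamma)|$ leaves only the sign $\sgn\det(I-\mathcal P_\gamma)$, which by orientability of $E_u,E_s$ is constant equal to $(-1)^{q}$ with $q=\dim E_u$ (this is exactly where the orientability hypothesis is used). Thus the alternating sum of flat traces is, up to this global sign, the logarithmic derivative of $\zeta_V$.

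The second step is to make \eqref{e:flattrace} rigorous in the open-system setting. Here I would use the anisotropic-Sobolev Fredholm framework from \S\ref{s:proofs}: the operator $\mathbf R_k(\lambda) = \indic_{\mathcal U}(\mathbf X_k+\lambda)^{-1}\indic_{\mathcal U}$ continues meromorphically by Theorem~\ref{t:mer} (applied with bundle $\mathcal E_k$), and its poles have finite rank with residues described by Theorem~\ref{t:res-states}. The flat trace $\tr^\flat \mathbf R_k(\lambda)$ is well-defined because the wavefront set of the Schwartz kernel, contained in $E_+^*\times E_-^*$ by \eqref{e:piprop1} together with the corresponding statement for the holomorphic part, is disjoint from the conormal to the diagonal (since $E_+^*\cap E_-^* = 0$); one then shows $\tr^\flat\mathbf R_k(\lambda) = \int_0^\infty e^{-t\lambda}\tr^\flat(e^{-t\mathbf X_k})\,dt$ for $\Re\lambda\gg1$ and identifies the right-hand side with the Laplace transform of \eqref{e:flattrace}, giving
\begin{equation}
  \label{e:laplace}
\tr^\flat \mathbf R_k(\lambda) = \sum_\gamma {T_{\gamma^\sharp}\,e^{-T_\gamma(\lambda+V_\gamma)}\,\tr(\Lambda^k\mathcal P_\gamma)\over |\det(I-\mathcal P_\gamma)|},\quad \Re\lambda\gg1.
\end{equation}
The needed dynamical input — that closed orbits are isolated and grow at most exponentially, so the sum converges — is Lemma~\ref{l:recur3}, and that every closed orbit of $\varphi^t$ meeting $\mathcal U$ lies in $K$ follows from convexity \eqref{e:convex2}.

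The third step assembles the pieces. Forming $\sum_{k=0}^{n-1}(-1)^k \tr^\flat\mathbf R_k(\lambda)$ and using the collapse identity above yields $\sum_k (-1)^k \tr^\flat\mathbf R_k(\lambda) = (-1)^{q}\sum_{\gamma^\sharp}\sum_{m\ge1} T_{\gamma^\sharp} e^{-mT_{\gamma^\sharp}(\lambda+V_{\gamma^\sharp})} = (-1)^{q}\,\partial_\lambda \log\zeta_V(\lambda)$ for $\Re\lambda\gg1$; note $\zeta_V$ here really depends on $V$ only through the orbit averages $V_{\gamma^\sharp}$, consistent with \eqref{e:V-gamma}. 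Since each $\tr^\flat\mathbf R_k(\lambda)$ is meromorphic on $\mathbb C$ with poles of finite rank (hence integer residues for the zeta logarithmic derivative, by the projector structure in Theorem~\ref{t:res-states}), the function $\partial_\lambda\log\zeta_V$ extends meromorphically with simple poles and integer residues; integrating and exponentiating produces a meromorphic continuation of $\zeta_V$ itself. The main obstacle I anticipate is the rigorous justification of the flat-trace formula \eqref{e:laplace} in the noncompact/open setting — specifically, controlling the flat trace of the holomorphic part $\mathbf R_{H,k}(\lambda)$ and cutoff errors near $\partial\mathcal U$, and proving that the flat trace commutes with the meromorphic continuation — together with the bookkeeping needed to show the residues are genuine integers (equivalently, that the order of a zero/pole of $\zeta_V$ at $\lambda_0$ equals $\sum_k(-1)^k \dim\Res^{(1)}_{\mathbf X_k}(\lambda_0)$ counted with multiplicity). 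The convexity and the absence of trapped orbits near the boundary should localize everything to a neighborhood of $K$, reducing these points to the now-standard compact Anosov arguments of \cite{glp,DyZw}.
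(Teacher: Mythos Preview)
Your overall strategy---expand $\zeta_V'/\zeta_V$ via the identity $\det(I-\mathcal P_\gamma)=\sum_\ell(-1)^\ell\tr\wedge^\ell\mathcal P_\gamma$, invoke orientability to fix the sign, and continue each piece via a flat-trace/Guillemin formula applied to a bundle-valued resolvent---is exactly the paper's approach in \S\ref{s:zeta}. However, two of your concrete implementations would fail as written.

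First, the bundle $\mathcal E_k=\Lambda^k E_u^*$ is only H\"older continuous in general, so assumption~\hyperlink{AA5}{(A5)} is violated and none of the pseudodifferential machinery (Theorems~\ref{t:mer}, \ref{t:res-states}) applies. The paper avoids this by working on the \emph{smooth} bundle $\wedge^\ell\mathcal E_0$, where $\mathcal E_0(x)=\{\eta\in T_x^*\mathcal M:\langle X(x),\eta\rangle=0\}$ is the annihilator of the flow direction; the Poincar\'e map $\mathcal P_\gamma$ acts naturally there and the same determinant identity goes through. Second, and more seriously, your claim that $\tr^\flat\mathbf R_k(\lambda)$ is well-defined because the wavefront set of the kernel lies in $E_+^*\times E_-^*$ is false for the holomorphic part: by~\eqref{e:wavefront}, $\WF'(\mathbf R_H(\lambda))$ also contains the full diagonal $\Delta(T^*\mathcal U)$ and the flow-out $\Upsilon_+$, so the restriction to the diagonal is \emph{not} defined. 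The remedy (Theorem~\ref{t:trace}) is to take the flat trace of $\chi e^{-t_0(\mathbf X+\lambda)}\mathbf R(\lambda)\chi$ for small $t_0>0$: composing with the short-time propagator pushes the diagonal component onto the graph of $e^{t_0H_p}$, and then~\eqref{e:nondeg} guarantees the transversality condition~\eqref{e:trcond}. This regularization is precisely the ``obstacle'' you anticipated, but the resolution is not a cutoff-error estimate---it is a structural modification of the operator whose trace you compute.
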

Theorem~\ref{t:zeta} was established in~\cite{glp} in the special
case of Anosov flows when additionally $V=0$.
Another argument based on microlocal methods was
presented in~\cite{DyZw} and served as the starting point of our
proof.
The singularities (zeroes and poles) or $\zeta_V$ are Pollicott--Ruelle resonances
for certain operators on the bundle of differential forms, see~\S\ref{s:zeta}.
Our methods actually prove meromorphic continuation of more general dynamical
traces~-- see Theorem~\ref{t:trace} in~\S\ref{s:trace}.
The orientability condition can be relaxed, see~\eqref{e:orientability} and
the remarks following it.

In Theorem~\ref{t:zeta} we assumed that the potential $V$ is smooth.
However it is likely that this statement also holds for certain nonsmooth potentials
arising from (un)stable Jacobians by passing to the Grassmanian bundle
of $M$ as in~\cite[\S 2]{FaTs3}. The framework of the present paper
appears convenient for that goal since the lifted flow on a neighborhood
of the unstable bundle in the Grassmanian bundle of an open hyperbolic
system produces another open hyperbolic system.

\smallsection{Applications to boundary value problems}
A useful corollary of our work is the well-posedness 
(up to a finite dimensional space corresponding to resonant states) 
of the two boundary value problems for the transport equation 
\[
(X-V)u=f \textrm{ in }\mc{U}, \quad  u|_{\pl_\pm \mc{U}}=0
\] 
for $u,f$ in certain anisotropic Sobolev spaces, where $\pl_\pm \mc{U}:=\{x\in \pl \mc{U}\mid \mp X\rho>0\}$ 
and $V$ is a potential; see for instance Proposition \ref{prop:bvp} and particularly~\cite[\S4]{G}.
The microlocal description of solutions is crucial in the proof of lens 
rigidity of surfaces with hyperbolic trapped sets and no conjugate points in \cite{G}. 

\smallsection{Motivation and discussion}
We call a resonance $\lambda_0$ the \emph{first resonance} of $\mathbf X$ if $\lambda_0$
is simple (that is, $\rank\Pi_{\lambda_0}=1$), $\lambda_0\in\mathbb R$,
and there are no other resonances with $\Re\lambda\geq \Re\lambda_0$.
We say that there is a \emph{spectral gap}
of size $\nu>0$ if there are no resonances with $\Re\lambda\geq \Re\lambda_0-\nu$,
and an \emph{essential spectral gap} if the number of resonances with $\Re\lambda\geq \Re\lambda_0-\nu$ is
finite. The size of an essential spectral gap on compact manifolds
is bounded from above, see Jin--Zworski~\cite{long}; a combination of
the techniques of~\cite{long} with those of the present paper could potentially lead
to a similar result in our more general setting.

The first resonances and the corresponding resonant states
capture key dynamical features of the flow. For Anosov flows in the scalar
case $\mathcal E=\mathbb C,\mathbf X=X$, zero is always a resonance since the function $1$ is a resonant state.
Moreover, resonances with $\Re\lambda=0$ have equal algebraic and geometric
multiplicity ($\Res_{\mathbf X}^{(j)}(\lambda)=\Res_{\mathbf X}^{(1)}(\lambda)$)
and the associated projectors $\Pi_\lambda$ are bounded on the space $C^0(\mathcal U)$
of continuous functions; this follows from Theorem~\ref{t:res-states} and the fact
that $\|\mathbf R(\lambda)\|_{C^0\to C^0}\leq (\Re\lambda)^{-1}$ for $\Re\lambda>0$.
The space of coresonant states $\Res_{\mathbf X^*}^{(1)}(0)$ consists
of Sinai--Ruelle--Bowen (SRB) measures. One consequence of this relation is the fact
that SRB measures depend smoothly on a parameter, if the vector field $X$ depends
smoothly on that parameter~-- this is known as \emph{linear response}.
Moreover, ergodicity of the flow with respect to the SRB measure
is equivalent to zero being a simple resonance,
and mixing is equivalent to zero being the first resonance.
See~\cite{but-liv} for details.

For weakly topologically mixing Axiom A flows, the first pole of the Ruelle zeta function $\zeta_V(\lambda)$ for $V\equiv 0$ is the topological
entropy $h_{\mathrm{top}}$ of the flow $\varphi^t$
and for general $V$ it gives the topological
pressure~-- see~\cite[Theorems~9.1, 9.2]{parry-pollicott}. This implies the asymptotic formula
$N^\sharp(T)\sim e^{h_{\mathrm{top}}T}/(h_{\mathrm top}T)$ for the number $N^\sharp(T)$ of primitive closed
trajectories of period less than $T$~-- see~\cite{parry-pollicott}. The associated (co)resonant
states in the Anosov case
are related to Margulis measures on the stable/unstable foliations and their
product is the measure of maximal entropy~-- see~\cite{margulis,bowen-marcus,glp}.

If one has an essential spectral gap of size $\nu$ with a polynomial (in $\lambda$) resolvent bound, then
there is a \emph{resonance expansion} with remainder $\mathcal O(e^{-(\nu-\Re\lambda_0)t})$ for
correlations $\langle e^{-t\mathbf X}\mathbf u,\mathbf v\rangle$, $\mathbf u\in C_0^\infty(\mathcal U;\mathcal E)$,
$\mathbf v\in C_0^\infty(\mathcal U;\mathcal E^*\otimes |\Omega|^1)$~-- see~\cite[Corollary~5]{NoZw2}.
For the Anosov case and $\mathcal E=\mathbb C$, $\mathbf X=X$, existence of a spectral gap was proved by Dolgopyat~\cite{dolgopyat} and Liverani~\cite{liverani}
(with subexponential decay of correlations earlier established by Chernov~\cite{chernov}); the precise size
of the essential gap was given by Tsujii~\cite{tsujii}. This followed earlier work
of Ratner~\cite{ratner} and Moore~\cite{moore} for locally symmetric spaces, including geodesic flows
on compact hyperbolic manifolds. (See~\cite{DFG} for a detailed description of resonances in the latter case.)

Regarding the noncompact case, Naud~\cite{naud}
established a spectral gap for $\mathcal E=\mathbb C$, $\mathbf X=X$ on convex co-compact hyperbolic surfaces. The
first resonance in that case is given by $\delta-1$, where $\delta$ is the exponent of convergence
of the Poincar\'e series of the fundamental group.
Similarly, a spectral gap for the Ruelle zeta function
implies an asymptotic formula for $N^\sharp(T)$ with an exponentially small remainder, see~\cite{glp}.
We also mention the work of Stoyanov~\cite{stoyanov2,stoyanov3}
on the spectral gap for the Ruelle zeta function of Axiom A flows under additional assumptions, as well as decay of correlations
for Gibbs measures, as well as the work~\cite{stoyanov} addressing these questions for contact Anosov flows.
The recent preprint of Petkov--Stoyanov~\cite{petkov-stoyanov} provides a spectral gap for transfer operators
depending on two complex parameters. We note that results mentioned in this paragraph give a holomorphic continuation
of the zeta function to a small strip past the domain of convergence under additional assumptions, such as
contact structure of the flow or the local non-integrability condition; our result gives a meromorphic continuation
to the entire complex plane without such additional assumptions, but at the cost of not establishing
a spectral gap.

\smallsection{Previous work and methods of the proofs}
We finally give a brief overview of the history of the subject and explain
the methods used in the present paper.

Smale~\cite{Sm} defined Axiom~A flows and formulated a conjecture~\cite[pp.~802--803]{Sm} whether a certain
zeta function, related trivially to the Ruelle zeta function, extends meromorphically to $\mathbb C$,
admitting that `a positive answer would be a little shocking'. Ruelle~\cite{Ru0} gave a positive
answer to Smale's question for real analytic Anosov flows with analytic stable/unstable foliations;
the analyticity assumption on the foliations, but not on the flow itself, was removed in the works of
Rugh~\cite{rugh2} in dimension 3 and Fried~\cite{fried} in general dimensions.
Pollicott~\cite{Po0,Po} and Ruelle~\cite{Ru,Ru1} extended the zeta function to a small strip past the first pole
for general Axiom A flows and related its poles to decay of correlations. These papers,
as well as the previously mentioned work~\cite{dolgopyat,naud,stoyanov2,stoyanov} use \emph{Ruelle transfer operators},
which conjugate the flow to a shift on a space constructed by symbolic dynamics.
We refer the reader to the book of Parry--Pollicott~\cite{papo} for a detailed description of this approach.

Later, Pollicott--Ruelle resonances for the special case of Anosov flows
were interpreted as the eigenvalues of the generator of the flow
(or of the transfer operator $e^{-t\mathbf X}$) on suitably designed \emph{anisotropic spaces} which consist of functions
which are regular in the stable directions and irregular in the unstable directions.
These spaces fall into two categories:
\begin{itemize}
\item anisotropic H\"older spaces, studied by Liverani~\cite{liverani}, Butterley--Liverani~\cite{but-liv}, and for the related case of Anosov maps, by Blank--Keller--Liverani~\cite{b-k-l}, Liverani~\cite{liverani2}, and
Gou\"ezel--Liverani~\cite{go-liv}; and
\item anisotropic Sobolev spaces, studied for Anosov maps by Baladi--Tsujii~\cite{bats} and used in the
microlocal works discussed below.
\end{itemize}
Some similar ideas appeared already in the works of Rugh~\cite{rugh} in the analytic category and Kitaev~\cite{kitaev}.
Using anisotropic spaces, Pollicott--Ruelle resonances were defined in the entire complex plane and a meromorphic
continuation of the Ruelle zeta function to $\mathbb C$ for Anosov flows was proved by Giulietti--Liverani--Pollicott~\cite{glp}.

The work of Faure--Sj\"ostrand~\cite{FaSj} for Anosov flows (following the earlier work~\cite{fa-ro-sj} for Anosov maps
and the work~\cite{faure} on the prequantum cat map)
interpreted the equation $(\mathbf X+\lambda)\mathbf u=\mathbf f$ on anisotropic Sobolev spaces as a
\emph{scattering problem}. This used the methods of \emph{microlocal analysis} to consider
the operator $e^{-t\mathbf X}$ as quantizing a Hamiltonian flow $e^{tH_p}$ (see~\eqref{e:hammertime}) 
on the phase space $T^*\mathcal U$. In contrast with standard scattering problems (for the operator $-\Delta-\lambda^2$
on a noncompact Riemannian manifold), waves escape not to the spatial infinity $\{|x|=\infty\}$,
but to the fiber infinity $\{|\xi|=\infty\}$, and the anisotropic spaces provide the correct regularity
at the fiber infinity to make $(\mathbf X+\lambda)\mathbf u=\mathbf f$ into a Fredholm problem.
The microlocal approach made it possible to apply the methods of scattering theory to Anosov flows, resulting
in:
\begin{itemize}
\item sharp upper bound for the number of resonances in strips~\cite{DDZ} (improving the bound of~\cite{FaSj});
\item an essential spectral gap of optimal size~\cite{tsujii,NoZw2};
\item band structure for resonances of contact Anosov flows, including a Weyl law under the pinching condition
and meromorphic continuation of the Gutzwiller--Voros zeta function~\cite{FaTs1,FaTs2,FaTs3};
\item a microlocal proof of meromorphic continuation of Ruelle zeta function~\cite{DyZw}
(recovering the result of~\cite{glp});
\item definition of resonances as limits of the eigenvalues of $\mathbf X-\varepsilon\Delta$
as $\varepsilon\to 0+$, and stochastic stability of resonances~\cite{DyZw2}.
\end{itemize}
Our present work uses the microlocal method to obtain results for general open hyperbolic systems.
In particular, we use anisotropic Sobolev spaces to control the singularities at fiber infinity
and obtain $\mathbf R(\lambda)$ as the restriction of the resolvent of a Fredholm problem
in these spaces.
To show meromorphic continuation
of the zeta function, we use a wavefront set condition on $\mathbf R(\lambda)$ to ensure
that a certain flat trace can be defined; this trace is the continuation of $\zeta_V'/\zeta_V$.

However, compared to the Anosov case studied in~\cite{FaSj,DyZw}, the case of open hyperbolic
systems presents several additional difficulties. First of all,
the radial sets corresponding to the stable/unstable foliations are no longer sources or sinks,
but rather saddle sets (see Figure~\ref{f:funny3d} on page~\pageref{f:funny3d});
to handle them, we prove a propagation of singularities result (Lemma~\ref{l:ultimate})
which applies to a broad class of dynamical situations.

We also need to capture singularities which escape from $\mathcal U$. To do that,
we surround $\mathcal U$ by a slightly larger strictly convex set and multiply $X$ by a boundary
defining function of this set to make it vanish on the boundary. We then use complex absorbing potentials
on the boundary and complex absorbing pseudodifferential operators beyond the boundary (and near the glancing points)
to obtain a global Fredholm problem for the extension of $\mathbf X$ to a compact manifold
without boundary $\mathcal M$. 

We finally remark that the work of Arnoldi--Faure--Weich~\cite{afw} defined resonances
for certain open hyperbolic maps, while~\cite{FaTs3} defined resonances for the Grassmanian bundle
of an Anosov flow, which can be viewed as special cases of the open hyperbolic systems studied in the present paper.

\smallsection{Structure of the paper}
Sections~\ref{s:dyn} and~\ref{s:semi} contain the necessary preliminary constructions;
Section~\ref{s:dyn} concerns hyperbolic dynamical systems and Section~\ref{s:semi},
microlocal and semiclassical analysis. The proofs of Theorems~\ref{t:mer} and~\ref{t:res-states}
are contained in Section~\ref{s:res}. Theorem~\ref{t:zeta} and the closely related Theorem~\ref{t:trace}
are proved in Section~\ref{s:trace-zeta}.
Finally, Section~\ref{s:examples} gives several examples of open hyperbolic systems, including
geodesic flows on certain complete negatively curved Riemannian manifolds.

\section{Dynamical preliminaries}
\label{s:dyn}

In this section, we discuss several dynamical corollaries of assumptions~\hyperlink{AA1}{\rm(A1)}--\hyperlink{AA5}{\rm(A5)}
in the introduction. In particular, in~\S\ref{s:dyn2}, we show how to extend the stable/unstable bundles
to $\Gamma_\pm$ (Lemma~\ref{l:extended}) and construct the components of the weight function
for the anisotropic Sobolev space (Lemma~\ref{l:functions}).

\subsection{Basic properties}
  \label{s:dyn1}
  
We start by showing that the vector field $X$ can be extended from $\overline{\mathcal U}$ to a compact manifold
without boundary so that $\mathcal U$ is convex:
\begin{lemm}
  \label{l:extconv}
Let $\mathcal U,X,\rho$ satisfy the assumptions~\hyperlink{AA1}{\rm(A1)}--\hyperlink{AA3}{\rm(A3)}
in the introduction. Then there exists a compact manifold without boundary
$\mathcal M\supset\overline{\mathcal U}$ and a smooth extension of $X$ to a vector
field on $\mathcal M$ such that~\eqref{e:convex2} holds.
Moreover, $\overline{\mathcal U}$ satisfies the convexity condition~\eqref{e:convex2} as well.
\end{lemm}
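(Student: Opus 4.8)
The plan is to fix an arbitrary smooth extension of $X$ near $\overline{\mc U}$, to upgrade the infinitesimal convexity \eqref{e:convex} to a uniform statement on a full neighborhood of $\pl\mc U$, and then to globalize it by slowing the flow to a stop on the boundary of a thin strictly convex collar around $\overline{\mc U}$, so that orbits which escape $\overline{\mc U}$ can never return. Concretely: embed $\overline{\mc U}$ as a submanifold with boundary of a compact boundaryless manifold $\mc M$ (e.g.\ double a compact collar neighborhood of $\overline{\mc U}$, or double a tubular neighborhood of a Whitney embedding into some $\rr^N$), extend $\rho$ and $X$ smoothly to a neighborhood of $\overline{\mc U}$ and then $\rho$ to all of $\mc M$, so that $\{\rho\ge-\delta_0\}$ is compact, $d\rho\neq 0$ on $\{|\rho|\le\delta_0\}$, and the extension of $X$ is nonvanishing there. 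On $\pl\mc U=\{\rho=0\}$ the compact set $G:=\{X\rho=0\}$ consists of points where $X$ is tangent to $\pl\mc U$, so $X^2\rho|_G$ is independent of the extension and is negative by \eqref{e:convex}, while off $G$ one has $X\rho\neq 0$. A compactness argument (a sequence of counterexamples converging to $\pl\mc U$ would subconverge to a point of $G$) then yields $\delta\in(0,\delta_0)$ such that on $V:=\{|\rho|<\delta\}$,
\[
X\rho(y)=0,\quad \rho(y)\le 0\quad\Longrightarrow\quad X^2\rho(y)<0 .
\]

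\textbf{The halting collar.} Let $\mc U_2:=\{\rho>-\delta\}$, so $\overline{\mc U}\subset\mc U_2$, $\mc U_2\setminus\overline{\mc U}\subset V$, and $\pl\mc U_2=\{\rho=-\delta\}$ is smooth. Put $\rho_2:=\kappa(\rho)$ with $\kappa$ smooth increasing, $\kappa\equiv 1$ on $[0,\infty)$, $\kappa(-\delta)=0$, $\kappa<0$ below $-\delta$, and define the extension $X_2:=\beta(\rho_2)X$, where $\beta$ is smooth with $\beta\equiv 0$ on $(-\infty,0]$, $\beta>0$ on $(0,\infty)$, $\beta(1)=1$. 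Then $X_2$ is smooth (flat at $\pl\mc U_2$), equals $X$ on $\overline{\mc U}$, is a positive multiple of $X$ on $\mc U_2$, and vanishes identically on $\mc M\setminus\mc U_2$; hence $\pl\mc U_2$ consists of fixed points, $\overline{\mc U_2}$ is invariant under $\varphi^t=e^{tX_2}$, inside $\mc U_2$ the $X_2$-orbits are reparametrized $X$-orbits, and the flow is complete since $\mc M$ is compact.

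\textbf{Verification of \eqref{e:convex2}.} Let $x,\varphi^T(x)\in\mc U$, $T\ge 0$. By invariance $A:=\varphi^{[0,T]}(x)\subset\mc U_2$, so $A$ is an arc of an $X$-orbit with endpoints in $\mc U$, which we parametrize by $X$-time. If $A\not\subset\overline{\mc U}$, choose a connected component of $\{\rho<0\}$ along this arc; its endpoints lie on $\pl\mc U$, the sub-arc between them lies in $A\cap\{\rho<0\}\subset\mc U_2\setminus\overline{\mc U}\subset V$, and an interior minimum of $\rho$ there gives $y^\ast\in V$ with $\rho(y^\ast)<0$, $X\rho(y^\ast)=0$, $X^2\rho(y^\ast)\ge 0$, contradicting the first step. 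Hence $A\subset\overline{\mc U}$; if an interior point of $A$ lay on $\pl\mc U$ it would be an interior zero of the nonnegative function $\rho$ along $A$, yielding $X\rho=0$, $X^2\rho\ge 0$ at a point of $\pl\mc U$ and contradicting \eqref{e:convex}. Therefore $A\subset\mc U$, which is \eqref{e:convex2}; running the identical argument with $\overline{\mc U}$ in place of $\mc U$ gives the remaining (``moreover'') assertion.

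\textbf{Main obstacle.} The crux is the first step: \eqref{e:convex} is purely infinitesimal at $\pl\mc U$, and turning it into a uniform implication on an honest neighborhood $V$ — with the glancing set $G$ controlled by compactness, and with care that $X^2\rho|_G$ does not depend on the (not yet chosen) extension — is the real content. Once that is available, the halting collar $\mc U_2$ is precisely the device that confines the exterior portion of any returning orbit to $V$, at the negligible cost of creating a set of fixed points of $X_2$ in $\mc M\setminus\mc U_2$; the remaining points, namely smoothness of $X_2$ across $\pl\mc U_2$ and preservation of $X\neq 0$ on a thin enough collar, are routine.
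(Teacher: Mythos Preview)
Your proof is correct and follows essentially the same strategy as the paper's: extend $X$ arbitrarily past $\partial\mathcal U$, upgrade the strict convexity~\eqref{e:convex} to an open neighborhood of $\partial\mathcal U$ (the paper does this via the equivalent inequality $X_1^2\rho<C(X_1\rho)^2$ on $\partial\mathcal U$, which then extends by continuity), multiply by a cutoff in $\rho$ so that orbits starting in $\mathcal U$ cannot escape a thin collar, and conclude by the interior--minimum argument. The only notable difference is cosmetic for this lemma but relevant downstream: the paper's cutoff $\psi(\rho)$ has a \emph{simple} zero at $\rho=-\varepsilon$ (so the modified field vanishes only on that hypersurface and reverses sign beyond it), a structure exploited in Lemmas~\ref{l:marron0}--\ref{l:marron3}, whereas your $X_2$ vanishes identically on $\{\rho\le-\delta\}$.
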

\begin{proof}
We first embed $\overline{\mathcal U}$ into some compact manifold without boundary $\mathcal M$
(for example, by letting $\mathcal M$ be the doubling of $\overline{\mathcal U}$ across the boundary)
and extend the function $\rho$ to $\mathcal M$ so that
$\rho<0$ on $\mathcal M\setminus\overline{\mathcal U}$.
We next extend $X$ in an abitrary way to $\mathcal M$ and call the resulting vector field
$X_1$. It follows from~\eqref{e:convex} that for some constant $C>0$,
$$
X^2\rho<C(X\rho)^2\quad\text{on }\partial\mathcal U.
$$
By continuity, there exists $\varepsilon>0$ such that
\begin{equation}
  \label{e:extconv0}
X_1^2\rho<C(X_1\rho)^2\quad\text{on }\{|\rho|\leq 2\varepsilon\}.
\end{equation}
We now take $\psi\in C^\infty(\mathbb R)$ such that
$$
\psi(s)=1\quad\text{for }s\geq 0,\quad
\sgn \psi(s)=\sgn(s+\varepsilon),\quad
\psi'(-\varepsilon)>0.
$$
The extension of $X$ to $\mathcal M$ is then defined by
$$
X:=\psi(\rho) X_1;\quad
X\rho=\psi(\rho)X_1\rho,\quad
X^2\rho=\psi(\rho)^2X_1^2\rho+\psi(\rho)\psi'(\rho)(X_1\rho)^2.
$$
It follows from~\eqref{e:extconv0} that
\begin{equation}
  \label{e:extconv1}
|\rho(x)|\leq 2\varepsilon,\
\rho(x)\neq -\varepsilon,\
X\rho(x)=0\quad\Longrightarrow\quad
X^2\rho(x)<0.
\end{equation}
We now show that~\eqref{e:convex2} holds. Assume that
$x,\varphi^T(x)\in \mathcal U$ for some $T\geq 0$,
but $\varphi^t(x)\notin\mathcal U$ for some $t\in [0,T]$.
Denote $f(t)=\rho(\varphi^t(x))$.
Let $t_0$ be the point that
minimizes the value of $f$ on $[0,T]$.
By our assumptions, $f(t_0)\leq 0$ and thus
$t_0\in (0,T)$; it follows that $f'(t_0)=0$ and $f''(t_0)\geq 0$.
On the other hand, since $X$ vanishes
on $\{\rho=-\varepsilon\}$, we have
$f(t_0)>-\varepsilon$. By~\eqref{e:extconv1},
we have $f''(t_0)<0$, giving a contradiction.
The condition~\eqref{e:convex2} is verified for $\overline{\mathcal U}=\{\rho\geq 0\}$
by the same argument.
\end{proof}
We henceforth assume that $X$ is extended to $\mathcal M$ in the manner
described in Lemma~\ref{l:extconv}, and put $\varphi^t:=e^{tX}$.
We next establish the topological properties of $\Gamma_\pm$ and $K$:
\begin{lemm}
  \label{l:gpmclosed}
Let $K$ be defined in~\eqref{e:gpm}. Then $K\subset\mathcal U$.
\end{lemm}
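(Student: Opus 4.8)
The plan is to show that $K=\Gamma_+\cap\Gamma_-$ cannot meet the boundary $\partial\mathcal U=\{\rho=0\}$, and more strongly cannot meet $\overline{\mathcal U}\setminus\mathcal U$, using the strict convexity of $\partial\mathcal U$ together with the convexity property~\eqref{e:convex2} established in Lemma~\ref{l:extconv}. First I would observe that $\Gamma_\pm\subset\overline{\mathcal U}$ by construction, so $K\subset\overline{\mathcal U}$ and it suffices to rule out $x\in K\cap\partial\mathcal U$. Suppose such an $x$ exists. Since $x\in\Gamma_-=\bigcap_{t\le 0}\varphi^t(\overline{\mathcal U})$ and $x\in\Gamma_+=\bigcap_{t\ge 0}\varphi^t(\overline{\mathcal U})$, the whole orbit $\{\varphi^t(x):t\in\mathbb R\}$ stays in $\overline{\mathcal U}$, i.e. $f(t):=\rho(\varphi^t(x))\ge 0$ for all $t\in\mathbb R$, with $f(0)=0$.

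The key step is then a local convexity argument at $t=0$. Since $f\ge 0$ and $f(0)=0$, the point $t=0$ is a global (hence local) minimum of $f$, so $f'(0)=0$, which says $X\rho(x)=0$. Now apply the strict convexity assumption~\hyperlink{AA3}{(A3)}, equation~\eqref{e:convex}: $x\in\partial\mathcal U$ and $X\rho(x)=0$ force $X^2\rho(x)<0$, i.e. $f''(0)<0$. But a $C^2$ function with a local minimum at $0$ must satisfy $f''(0)\ge 0$, a contradiction. Hence $K\cap\partial\mathcal U=\varnothing$, and since $K\subset\overline{\mathcal U}=\mathcal U\cup\partial\mathcal U$, we conclude $K\subset\mathcal U$. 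One should note that $f$ is indeed $C^2$ (in fact $C^\infty$) in $t$ because $X$ is a smooth vector field on $\mathcal M$ by Lemma~\ref{l:extconv} and $\rho$ is smooth, so $\varphi^t(x)$ depends smoothly on $t$ and $f''(t)=X^2\rho(\varphi^t(x))$; in particular the expression $X^2\rho$ makes sense at boundary points since $\rho$ and the extended $X$ are defined on all of $\mathcal M$.

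I do not expect a serious obstacle here; the only point requiring a little care is making sure the orbit through $x\in K$ really stays in $\overline{\mathcal U}$ for all times — this is immediate from the definitions of $\Gamma_\pm$ in~\eqref{e:gpm}, since $y\in\Gamma_+$ means $\varphi^t(\varphi^{-t}y)=y$ with $\varphi^{-t}y\in\overline{\mathcal U}$ is not quite what we want; rather, $y\in\Gamma_+$ means $y\in\varphi^t(\overline{\mathcal U})$ for all $t\ge 0$, equivalently $\varphi^{-t}(y)\in\overline{\mathcal U}$ for all $t\ge 0$, and $y\in\Gamma_-$ gives $\varphi^{-t}(y)\in\overline{\mathcal U}$ for all $t\le 0$; together these say $\varphi^t(y)\in\overline{\mathcal U}$ for all $t\in\mathbb R$. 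Alternatively, and perhaps more cleanly, one can invoke the convexity~\eqref{e:convex2}: if $x,\varphi^T(x)\in\overline{\mathcal U}$ for arbitrarily large $T$ (which holds since $x\in\Gamma_+$) and for arbitrarily negative $T$ (since $x\in\Gamma_-$), then $\varphi^t(x)\in\overline{\mathcal U}$ for all $t$. Either way the argument is short, and the strict convexity is doing all the real work. I would write the proof in the compact form below.

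\begin{proof}
By~\eqref{e:gpm} we have $\Gamma_\pm\subset\overline{\mathcal U}$, hence $K\subset\overline{\mathcal U}$; it thus suffices to show $K\cap\partial\mathcal U=\varnothing$. Let $x\in K$. Since $x\in\Gamma_+$, we have $\varphi^{-t}(x)\in\overline{\mathcal U}$ for all $t\geq 0$; since $x\in\Gamma_-$, we have $\varphi^{-t}(x)\in\overline{\mathcal U}$ for all $t\leq 0$. Therefore $\varphi^t(x)\in\overline{\mathcal U}$ for all $t\in\mathbb R$, i.e. the function
$$
f(t):=\rho(\varphi^t(x))
$$
satisfies $f(t)\geq 0$ for all $t\in\mathbb R$. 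Note $f\in C^\infty(\mathbb R)$ because, by Lemma~\ref{l:extconv}, $X$ is a smooth vector field on the compact manifold $\mathcal M\supset\overline{\mathcal U}$, so $t\mapsto\varphi^t(x)$ is smooth, and $f'(t)=X\rho(\varphi^t(x))$, $f''(t)=X^2\rho(\varphi^t(x))$.

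Suppose now that $x\in\partial\mathcal U$, so that $f(0)=0$. Since $f\geq 0$, the point $t=0$ is a global minimum of $f$, whence $f'(0)=0$ and $f''(0)\geq 0$. The first of these gives $X\rho(x)=0$, so by the strict convexity assumption~\hyperlink{AA3}{\rm(A3)}, namely~\eqref{e:convex}, we obtain $X^2\rho(x)<0$, that is $f''(0)<0$. This contradicts $f''(0)\geq 0$. Hence no point of $K$ lies on $\partial\mathcal U$, and since $K\subset\overline{\mathcal U}=\mathcal U\cup\partial\mathcal U$ we conclude $K\subset\mathcal U$.
\end{proof}
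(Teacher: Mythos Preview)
Your proof is correct and follows essentially the same approach as the paper's own proof: observe $K\subset\overline{\mathcal U}$, argue that for $x\in K$ the full orbit stays in $\overline{\mathcal U}$ so $f(t)=\rho(\varphi^t(x))$ has a minimum at $t=0$ if $x\in\partial\mathcal U$, and then derive a contradiction from the strict convexity~\eqref{e:convex}. The paper's version is simply more terse.
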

\begin{proof}
From~\eqref{e:gpm}, we see that $K\subset\overline{\mathcal U}$;
therefore it suffices to show that $K\cap\partial\mathcal U=\emptyset$.
Assume that $x\in K\cap \partial\mathcal U$. Then $\varphi^t(x)\in \overline{\mathcal U}$
for all $t\in\mathbb R$. Therefore, the function $f(t):=\rho(\varphi^t(x))$
has a local minimum at $t=0$, which contradicts~\eqref{e:convex}.
\end{proof}

\begin{lemm}
  \label{l:convergence}
Assume that $x\in \Gamma_\pm$. Then we have uniformly in $x$,
$$
\varphi^t(x)\to K\quad\text{as }t\to\mp\infty,
$$
where convergence is understood as follows: for each neighborhood of $K$, $\varphi^t(\Gamma_\pm)$ lies inside that neighborhood
for $\mp t$ large enough.
\end{lemm}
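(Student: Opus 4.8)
The plan is to prove the statement for $\Gamma_+$ (so $t\to-\infty$); the case of $\Gamma_-$ follows by reversing the flow. The key observation is that $\Gamma_+$ is a compact, $\varphi^t$-forward-invariant set (for $t\le 0$ it is even invariant, being an intersection $\bigcap_{t\ge 0}\varphi^t(\overline{\mathcal U})$), and the convergence claimed is a standard "attractor" statement for the backward flow restricted to $\Gamma_+$, with $K=\Gamma_+\cap\Gamma_-$ playing the role of the maximal invariant set. I would argue by contradiction using compactness. Suppose the convergence fails: then there is a neighborhood $W$ of $K$, a sequence $t_j\to-\infty$, and points $x_j\in\Gamma_+$ with $\varphi^{t_j}(x_j)\notin W$. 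Set $y_j:=\varphi^{t_j}(x_j)$. Since $\Gamma_+$ is invariant under $\varphi^t$ for $t\le 0$, we have $y_j\in\Gamma_+$, and by compactness of $\overline{\mathcal U}$ (hence of $\Gamma_+$) we may pass to a subsequence so that $y_j\to y\in\Gamma_+\setminus W$; in particular $y\notin K$.

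The core of the argument is then to show that $y\in\Gamma_-$, which contradicts $y\notin K$. Fix any $T\ge 0$. For $j$ large enough that $-t_j\ge T$, the point $x_j\in\Gamma_+$ satisfies $\varphi^{-s}(x_j)\in\overline{\mathcal U}$ for all $s\ge 0$ (again because $\Gamma_+$ is backward-invariant and contained in $\overline{\mathcal U}$); in particular $\varphi^{-s}(y_j)=\varphi^{t_j-s}(x_j)\in\overline{\mathcal U}$ for $0\le s\le -t_j$, so certainly $\varphi^T(y_j)=\varphi^{t_j+T}(x_j)$ lies in $\overline{\mathcal U}$. Wait — I need forward orbit points of $y_j$, i.e.\ $\varphi^s(y_j)$ for $s\ge 0$; these equal $\varphi^{t_j+s}(x_j)$, and since $x_j\in\Gamma_+$ means $\varphi^{t_j+s}(x_j)\in\varphi^{t_j+s}(\overline{\mathcal U})\subset\overline{\mathcal U}$ precisely when $t_j+s\ge 0$, this holds for $0\le s\le -t_j$. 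Thus for each fixed $T$ and all large $j$ we get $\varphi^s(y_j)\in\overline{\mathcal U}$ for $0\le s\le T$; letting $j\to\infty$ and using continuity of the flow gives $\varphi^s(y)\in\overline{\mathcal U}$ for $0\le s\le T$. Since $T$ was arbitrary, $\varphi^s(y)\in\overline{\mathcal U}$ for all $s\ge 0$, i.e.\ $y\in\Gamma_-$. Combined with $y\in\Gamma_+$ this yields $y\in K$, contradicting $y\notin W\supset K$.

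The uniformity in $x$ is built into the contradiction scheme: we allowed the basepoints $x_j$ to vary, so the conclusion is exactly that for every neighborhood $W$ of $K$ there is $t_W<0$ with $\varphi^t(\Gamma_+)\subset W$ for all $t\le t_W$, which is the stated form of uniform convergence. The main obstacle — really the only subtle point — is bookkeeping the direction of the flow and the inclusions defining $\Gamma_\pm$ correctly: one must use that $x\in\Gamma_+$ controls the \emph{forward} orbit of the translated points $y_j=\varphi^{t_j}(x_j)$ up to time $-t_j$, which tends to $+\infty$, so in the limit the full forward orbit of the limit point $y$ stays in $\overline{\mathcal U}$. No hyperbolicity (A4) is needed here; only compactness, continuity of $\varphi^t$, and the definitions~\eqref{e:gpm} enter.
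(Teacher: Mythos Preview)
Your proof is correct and follows essentially the same approach as the paper's: reduce to showing that any subsequential limit of $\varphi^{t_j}(x_j)$ with $x_j\in\Gamma_\pm$ and $t_j\to\mp\infty$ lies in $K$, using only compactness, continuity of the flow, and the definitions~\eqref{e:gpm}. The paper phrases it directly (for $\Gamma_-$) rather than by contradiction, and handles both directions of $t$ at once by noting that for any fixed $t$ one has $t+t_j\ge 0$ eventually; your version instead uses closedness of $\Gamma_+$ for one direction and the orbit argument for the other, which is an equivalent bookkeeping.
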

\begin{proof}
We consider the case of $\Gamma_-$; the case of $\Gamma_+$ is handled similarly.
Since $\mathcal M$ is compact, it suffices to show that for each sequences
$t_j\to +\infty$, $x_j\in \Gamma_-$, if $\varphi^{t_j}(x_j)\to x_\infty\in \mathcal M$, then $x_\infty\in K$;
that is, $\varphi^t(x_\infty)\in \overline{\mathcal U}$ for all $t\in \mathbb R$.
This is true since $\varphi^t(x_\infty)$ is the limit of $\varphi^{t+t_j}(x_j)$; it remains to use that
$\varphi^{t+t_j}(x_j)\in \overline{\mathcal U}$ whenever $t+t_j\geq 0$, which happens for $j$ large enough.
\end{proof}

\begin{lemm}
  \label{l:long}
Let $V\subset \mathcal U$ be a neighborhood of $K$. Then there exists $T>0$
such that for each $x\in \overline{\mathcal U}$ such that $\varphi^{T}(x),\varphi^{-T}(x)\in\overline{\mathcal U}$, we have
$x\in V$.
\end{lemm}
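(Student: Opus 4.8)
The statement is a ``uniform trapping implies closeness to $K$'' result: if a point stays in $\overline{\mathcal U}$ for both times $T$ and $-T$, it must lie in a prescribed neighborhood $V$ of $K$, provided $T$ is large. The natural approach is a compactness/contradiction argument. Suppose the conclusion fails for every $T$: then there are times $T_j\to+\infty$ and points $x_j\in\overline{\mathcal U}$ with $\varphi^{T_j}(x_j)\in\overline{\mathcal U}$, $\varphi^{-T_j}(x_j)\in\overline{\mathcal U}$, but $x_j\notin V$. Since $\mathcal M$ (hence $\overline{\mathcal U}$) is compact, pass to a subsequence so that $x_j\to x_\infty\in\overline{\mathcal U}$. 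The plan is to show $x_\infty\in K$, which contradicts $x_\infty\notin V$ (as $V$ is open and contains $K$, the complement $\overline{\mathcal U}\setminus V$ is closed and contains all $x_j$, hence $x_\infty$).

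To see $x_\infty\in K=\Gamma_+\cap\Gamma_-$, fix any $t\in\mathbb R$; I claim $\varphi^t(x_\infty)\in\overline{\mathcal U}$. For $j$ large enough we have $-T_j\le t\le T_j$, and by the convexity property~\eqref{e:convex2} applied to the orbit segment from $\varphi^{-T_j}(x_j)$ to $\varphi^{T_j}(x_j)$ (both endpoints in $\overline{\mathcal U}$, which satisfies~\eqref{e:convex2} by Lemma~\ref{l:extconv}), the entire segment $\varphi^s(x_j)$ for $s\in[-T_j,T_j]$ lies in $\overline{\mathcal U}$; in particular $\varphi^t(x_j)\in\overline{\mathcal U}$. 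Letting $j\to\infty$ and using continuity of $\varphi^t$ together with closedness of $\overline{\mathcal U}$, we get $\varphi^t(x_\infty)\in\overline{\mathcal U}$. Since $t\in\mathbb R$ was arbitrary, $\varphi^t(x_\infty)\in\overline{\mathcal U}$ for all $t$, i.e. $x_\infty\in\Gamma_+\cap\Gamma_-=K$. This is the desired contradiction, proving the lemma.

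I do not expect any genuine obstacle here: the only points requiring a little care are (i) invoking the convexity~\eqref{e:convex2} for $\overline{\mathcal U}$ rather than for the open set $\mathcal U$ — this is exactly the ``moreover'' clause of Lemma~\ref{l:extconv} — and (ii) making sure the neighborhood $V$ is used correctly, namely that $\overline{\mathcal U}\setminus V$ is closed so that the limit point $x_\infty$ of the $x_j$ still lies outside $V$. Note that $K\subset\mathcal U$ by Lemma~\ref{l:gpmclosed}, so the hypothesis that $V$ is a neighborhood of $K$ contained in $\mathcal U$ is consistent, though the argument really only needs $V$ open with $K\subset V$. An alternative, constructive route would be to derive a quantitative $T$ from Lemma~\ref{l:convergence} (uniform convergence of $\Gamma_\pm$ to $K$) by an open-cover argument, but the contradiction argument above is shorter and self-contained.
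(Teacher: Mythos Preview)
Your proof is correct and follows essentially the same approach as the paper: a compactness/contradiction argument in which a limit point $x_\infty$ of a sequence $x_j\notin V$ is shown to satisfy $\varphi^t(x_\infty)\in\overline{\mathcal U}$ for all $t$ via the convexity of $\overline{\mathcal U}$ (Lemma~\ref{l:extconv}), hence $x_\infty\in K$, contradicting $x_\infty\notin V$. The paper's version is just more terse and phrases the reduction as ``it suffices to show $x_\infty\in K$'' rather than writing out the contradiction explicitly.
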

\begin{proof}
It suffices to show that for each sequences $T_j\to +\infty$, $x_j\in \overline{\mathcal U}$,
if  $x_j\to x_\infty$ and $\varphi^{T_j}(x_j),\varphi^{-T_j}(x_j)\in\overline{\mathcal U}$, then $x_\infty\in K$.
By Lemma~\ref{l:extconv}, we have $\varphi^t(x_j)\in\overline{\mathcal U}$ for $|t|\leq T_j$. Therefore,
$\varphi^t(x_\infty)\in\overline{\mathcal U}$ for all $t$, implying
that $x_\infty\in K$.
\end{proof}

\begin{lemm}
  \label{l:convfun}
Assume that $\chi\in C_0^\infty(\mathcal U)$. Then there exists
$\chi'\in C_0^\infty(\mathcal U)$ such that
\begin{equation}
  \label{e:convfun}
x,\varphi^T(x)\in \supp(\chi),\ T\geq 0\ \Longrightarrow\
\varphi^t(x)\notin\supp (1-\chi')\quad\text{for all }t\in [0,T].
\end{equation}
\end{lemm}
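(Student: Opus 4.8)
The plan is to produce $\chi'$ as a smoothed indicator of the forward-backward saturation of $\supp(\chi)$, then use the compactness results already established (Lemmas~\ref{l:gpmclosed}--\ref{l:long}) to guarantee this saturation stays in a compact subset of $\mathcal U$. Write $S=\supp(\chi)$, a compact subset of $\mathcal U$. Define
\[
A=\{\varphi^t(x)\mid x\in S,\ \varphi^T(x)\in S\text{ for some }T\geq t\geq 0\},
\]
the set of all points lying on a forward orbit segment that starts and ends in $S$. The lemma will follow if I can show $\bar A$ is a compact subset of $\mathcal U$: then I pick $\chi'\in C_0^\infty(\mathcal U)$ with $\chi'=1$ on a neighborhood of $\bar A$, so that $\supp(1-\chi')$ is disjoint from $A$, which is exactly~\eqref{e:convfun}.

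First I would check $A\subset\mathcal U$. If $x\in S$, $\varphi^T(x)\in S$ with $T\geq 0$, then since $S\subset\mathcal U$ and $X$ is extended so that~\eqref{e:convex2} holds, we get $\varphi^t(x)\in\mathcal U$ for all $t\in[0,T]$; hence $A\subset\mathcal U$. Next I would show $\bar A$ is compact and still contained in $\mathcal U$. Since $\mathcal M$ is compact, $\bar A$ is automatically compact, so the real point is $\bar A\cap\partial\mathcal U=\emptyset$. Take a sequence $y_j=\varphi^{t_j}(x_j)\in A$ with $x_j\in S$, $\varphi^{T_j}(x_j)\in S$, $0\leq t_j\leq T_j$, and $y_j\to y_\infty$; I must show $y_\infty\in\mathcal U$. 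There are two regimes. If the quantities $t_j$ and $T_j-t_j$ stay bounded along a subsequence, then passing to further subsequences $x_j\to x_\infty\in S$, $t_j\to t_\infty$, $T_j-t_j\to s_\infty$, so $y_\infty=\varphi^{t_\infty}(x_\infty)$ with $\varphi^{t_\infty+s_\infty}(x_\infty)=\lim\varphi^{T_j}(x_j)\in S$; by~\eqref{e:convex2} again $y_\infty\in\mathcal U$. If instead $t_j\to\infty$ or $T_j-t_j\to\infty$ (along a subsequence), then $\varphi^{t}(y_j)\in\overline{\mathcal U}$ for $-t_j\leq t\leq T_j-t_j$ — this uses that $\overline{\mathcal U}$ also satisfies~\eqref{e:convex2}, from Lemma~\ref{l:extconv} — so in the limit $\varphi^t(y_\infty)\in\overline{\mathcal U}$ for all $t$ in a half-line (or all of $\mathbb R$); when it holds for all $t\in\mathbb R$ we get $y_\infty\in K\subset\mathcal U$ by Lemma~\ref{l:gpmclosed}. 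When only a half-line of orbit is trapped in $\overline{\mathcal U}$, say $\varphi^t(y_\infty)\in\overline{\mathcal U}$ for all $t\geq 0$ (the case $t_j\to\infty$ gives the backward half-line, symmetrically), then $y_\infty\in\Gamma_+$, and I still need $y_\infty\notin\partial\mathcal U$; but if $y_\infty\in\partial\mathcal U\cap\Gamma_+$ then $f(t)=\rho(\varphi^t(y_\infty))$ satisfies $f(0)=0$, $f(t)\geq 0$ for small $t>0$... this last sub-case needs a touch more care, since $\Gamma_+$ can meet $\partial\mathcal U$.

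The main obstacle I anticipate is precisely this boundary sub-case: a forward orbit that spends a long time in $S$ on its way out can exit through $\partial\mathcal U$ in the limit, so $\bar A$ need not a priori avoid the boundary. The clean fix is to cut off from the start: since $S\subset\mathcal U$ is compact, choose a compact $W$ with $S\subset W^\circ\subset W\subset\mathcal U$ and replace $S$ by $W$ throughout, or equivalently enlarge $S$ slightly before saturating. Even better, I would first invoke Lemma~\ref{l:convergence}: points of $\Gamma_\pm$ flow into any fixed neighborhood of $K$, and $K\subset\mathcal U$ is compact (Lemma~\ref{l:gpmclosed}), so orbit segments starting and ending in $S$ that are long must pass near $K$ and, being forced by~\eqref{e:convex2} to stay in $\mathcal U$ between their endpoints, cannot escape to $\partial\mathcal U$ until after leaving $S$ — which is already past the segment we care about. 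Concretely: by Lemma~\ref{l:long} there is $T_0$ so that any $x$ with $\varphi^{\pm T_0}(x)\in\overline{\mathcal U}$ lies in a preassigned neighborhood $V\Subset\mathcal U$ of $K$; one then argues that every point of $A$ lies in $S\cup V$ (splitting the orbit segment according to whether the remaining forward/backward time inside $S$ exceeds $T_0$), and $\overline{S\cup V}$ is a compact subset of $\mathcal U$. Taking $\chi'$ equal to $1$ near $\overline{S\cup V}$ finishes the proof. The only routine checks left are the compactness/continuity bookkeeping in the case split, which I would not spell out in full.
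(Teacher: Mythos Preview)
Your overall plan---show that the flow-saturation $A$ of $S=\supp\chi$ has compact closure in $\mathcal U$, then take $\chi'=1$ near $\bar A$---is sound, but the concrete argument you settle on at the end is wrong. The claim ``every point of $A$ lies in $S\cup V$'' fails: if $y=\varphi^t(x)$ with $x,\varphi^T(x)\in S$ and $0<t<T_0$, there is no reason for $y$ to lie in $S$; it lies only in the time-$T_0$ flowout of $S$, which need not be contained in $S\cup V$. Your earlier sequence argument can actually be completed (in the half-line case you get $y_\infty\in\Gamma_\pm$ together with a nearby point $\varphi^{s_\infty}(y_\infty)\in S\subset\mathcal U$, and then the strict convexity~\eqref{e:convex} rules out $y_\infty\in\partial\mathcal U$ by the same local-minimum contradiction as in Lemma~\ref{l:gpmclosed}), so the strategy is salvageable---just not via the $S\cup V$ shortcut.

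The paper takes a much shorter route that bypasses all of this. Since $S$ is compact in $\mathcal U=\{\rho>0\}$, there is $\delta>0$ with $S\subset\{\rho\geq\delta\}$. For small $\delta$, the inequality~\eqref{e:extconv0} shows that the superlevel set $\{\rho\geq\delta\}$ is itself convex in the sense of~\eqref{e:convex2} (the same argument as in the proof of Lemma~\ref{l:extconv}, with $0$ replaced by $\delta$). Hence any orbit segment with endpoints in $S$ stays in $\{\rho\geq\delta\}$, and one simply takes $\chi'\in C_0^\infty(\mathcal U)$ with $\chi'=1$ near $\{\rho\geq\delta\}$. This avoids dynamics near $K$, Lemma~\ref{l:long}, and the boundary case analysis entirely: the whole point is that the boundary defining function $\rho$ already provides a nested family of convex neighborhoods.
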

\begin{proof}
Take $\delta\in(0,2\varepsilon)$ small enough such that $\supp\chi\subset \{\rho\geq\delta\}$
and choose $\chi'\in C_0^\infty(\mathcal U)$ such that
$\chi'=1$ near $\{\rho\geq\delta\}$. Similarly to the proof of Lemma~\ref{l:extconv},
we derive from \eqref{e:extconv0} that $\{\rho\geq\delta\}$ is convex; therefore, \eqref{e:convfun} holds.
\end{proof}
We next derive several properties of the vector fields $X$ and $X_1$
near $\partial\mathcal U$:
\begin{lemm}
  \label{l:marron0}
Let $\varepsilon,X_1,\rho$ be chosen in the proof of Lemma~\ref{l:extconv} and
take $\alpha,\beta\in [-2\varepsilon,2\varepsilon]$ such that
$\alpha\leq\beta$. Let $x\in\{\alpha\leq \rho\leq\beta\}$.

1. There exists $T\geq 0$ such that $e^{TX_1}(x)\in\{\rho=\alpha\}\cup\{\rho=\beta\}$.

2. If additionally $X_1\rho(x)\leq 0$, then there exists $T\geq 0$
such that $e^{TX_1}(x)\in\{\rho=\alpha\}$ and 
$X_1\rho(e^{tX_1}(x))<0$, $\rho(e^{tX_1}(x))\in [\alpha,\beta)$ for all $t\in (0, T]$.

Same is true when $X_1$ is replaced by $-X_1$.
\end{lemm}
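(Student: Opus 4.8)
The plan is to analyze the behavior of the function $f(t):=\rho(e^{tX_1}(x))$ along the $X_1$-flow inside the shell $\{|\rho|\le 2\varepsilon\}$, using the key differential inequality~\eqref{e:extconv0}, namely $X_1^2\rho<C(X_1\rho)^2$, equivalently $f''<C(f')^2$ wherever $f\in[-2\varepsilon,2\varepsilon]$. The point of this inequality is that $f$ cannot have an interior local minimum while staying in the shell: if $f'(t_0)=0$ then $f''(t_0)<0$, so every critical point is a strict local maximum. Consequently, on any interval during which $f$ stays in $[\alpha,\beta]\subset[-2\varepsilon,2\varepsilon]$, the function $f$ is either monotone, or increases then decreases (at most one critical point). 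I would first record this ``no interior minimum'' principle as the backbone of both parts.

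For part~1, suppose for contradiction that $e^{tX_1}(x)\in\{\alpha<\rho<\beta\}$ for all $t\ge0$; then $f(t)\in(\alpha,\beta)$ for all $t\ge 0$, and by the no-interior-minimum principle $f$ is eventually monotone (after its unique possible maximum), hence $f(t)$ converges to some limit $\ell\in[\alpha,\beta]$ as $t\to+\infty$, and $f'(t)\to 0$ along a sequence $t_j\to\infty$. But $X_1$ is a smooth nonvanishing vector field on the compact manifold $\mathcal M$, so by compactness we may pass to a subsequence with $e^{t_jX_1}(x)\to y$, where $\rho(y)=\ell$ and $X_1\rho(y)=0$; then $\eqref{e:extconv0}$ gives $X_1^2\rho(y)<0$. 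This forces $f$ to be strictly concave near such points, which together with $f'(t_j)\to0$ contradicts $f$ remaining in $(\alpha,\beta)$ for all positive time. Hence $f$ must reach $\alpha$ or $\beta$ at some finite $T\ge0$; taking the first such time gives $e^{TX_1}(x)\in\{\rho=\alpha\}\cup\{\rho=\beta\}$.

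For part~2, we additionally assume $f'(0)=X_1\rho(x)\le 0$. By the no-interior-minimum principle, once $f'$ is $\le 0$ it stays $<0$: indeed if $f'(t_1)=0$ for some first $t_1\ge 0$ we would need $f''(t_1)<0$, impossible since $f'$ is nonpositive before $t_1$ and $0$ at $t_1$ forces $f''(t_1)\ge 0$ unless $f'\equiv 0$ on a left neighborhood, which again contradicts $f''<0$ at interior zeros; a short argument rules out $f'\equiv 0$ on an interval as well. So $f$ is strictly decreasing and stays in $[\alpha,\beta)$ as long as it remains in the shell, and by part~1 (applied with the role of $\beta$ now automatically avoided since $f$ decreases) it must hit $\{\rho=\alpha\}$ at some finite first time $T>0$, with $f'<0$ and $f\in[\alpha,\beta)$ throughout $(0,T]$. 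Translating back, $X_1\rho(e^{tX_1}(x))<0$ and $\rho(e^{tX_1}(x))\in[\alpha,\beta)$ for $t\in(0,T]$. The statement for $-X_1$ follows by applying the above to the reversed flow, noting $\eqref{e:extconv0}$ is symmetric under $X_1\mapsto -X_1$ since it only involves $X_1\rho$ and $X_1^2\rho$, which are both even-degree expressions (the latter being invariant, the former squared).

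The main obstacle I expect is the careful handling of the monotonicity/finite-time claims: the differential inequality is strict but only gives control when $f$ is in the shell, so one must argue that $f$ genuinely exits the region $\{\alpha<\rho<\beta\}$ in finite time rather than asymptotically approaching a boundary value, and that it does so cleanly (first exit time, strict sign of $f'$). The compactness argument extracting a limit point $y$ with $X_1\rho(y)=0$ is the device that converts ``asymptotic'' behavior into a contradiction with the pointwise inequality~\eqref{e:extconv0}; making this rigorous (uniform lower bound on $|f''|$ near the zero set of $f'$ inside the shell, by compactness and continuity) is the only genuinely delicate point, and everything else is bookkeeping.
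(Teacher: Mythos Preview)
Your argument is correct, but it takes a different route from the paper's. The paper makes the single substitution $g(t)=e^{-Cf(t)}$, which converts the Riccati-type inequality $f''+\delta\le C(f')^2$ (obtained from~\eqref{e:extconv0} and compactness of $\{|\rho|\le 2\varepsilon\}$) into uniform strict convexity $g''>\delta_1>0$. Part~1 is then immediate, since a uniformly convex function cannot remain bounded on $[0,\infty)$; and Part~2 follows because $f'(0)\le 0$ gives $g'(0)\ge 0$, so $g$ is strictly increasing and reaches $e^{-C\alpha}$ in finite time with $g'>0$ throughout. Your approach instead analyzes critical points of $f$ directly (the ``no interior minimum'' principle), deduces eventual monotonicity, and then rules out asymptotic convergence via a compactness argument that extracts the same uniform $\delta$. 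Both are valid; the paper's substitution is slicker and avoids the casework, while yours is more hands-on and perhaps more transparent about why the strict inequality matters.

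Two small points to tighten in your version. First, in Part~1 your contradiction ``$f$ strictly concave near such points \dots\ contradicts $f$ remaining in $(\alpha,\beta)$'' is the right idea but should be made quantitative: once you have $f''+\delta\le C(f')^2$, whenever $|f'|<\sqrt{\delta/(2C)}$ you get $f''<-\delta/2$, so $f'$ is driven through zero and then $f'$ stays below $-\sqrt{\delta/(2C)}$ (any return crossing would force $f''\ge 0$ at a point where $f''\le -\delta/2$), forcing $f\to-\infty$ in finite time. Second, in Part~2 your appeal to Part~1 needs a word when $f(0)=\beta$: Part~1 as stated would allow $T=0$, so you should instead invoke the underlying fact that $f$ cannot remain in $[\alpha,\beta]$ for all $t\ge 0$, combined with $f(t)<\beta$ for $t>0$, to conclude the exit is through $\alpha$.
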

\begin{proof}
Denote $f(t):=\rho(e^{tX_1}(x))$. Then by~\eqref{e:extconv0}, there exists $\delta>0$
such that
$$
f''(t)+\delta\leq C(f'(t))^2\quad\text{if }|f(t)|\leq 2\varepsilon.
$$
Then for some $\delta_1>0$,
$$
g''(t)>\delta_1\quad\text{if }|f(t)|\leq 2\varepsilon,\quad
g(t):=e^{-Cf(t)}.
$$
It follows immediately that we cannot have $f(t)\in [\alpha,\beta]$ for all $t\geq 0$;
this implies part~1. To see part~2, we note that $X_1\rho(x)\leq 0$
implies that $g'(0)\geq 0$; then there exists $T\geq 0$ such that
$g(T)=e^{-C\alpha}$ and $g'(t)>0$, $g(t)\in (e^{-C\beta},e^{-C\alpha}]$ for all $t\in (0, T]$.
\end{proof}

\begin{lemm}
  \label{l:marron}
Let $\varepsilon,X_1$ be chosen in the proof of Lemma~\ref{l:extconv} and
consider the sets
\begin{equation}
  \label{e:Sigma-def}
\Sigma_\pm:=\bigcup_{\pm t\geq 0}\varphi^t(\mathcal U),\quad
\Sigma:=\Sigma_+\cup\Sigma_-.
\end{equation}
Then $\overline{\Sigma}\cap \{\rho=-\varepsilon\}\cap \{X_1\rho=0\}=\emptyset$.
\end{lemm}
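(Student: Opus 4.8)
The plan is to argue by contradiction: suppose $x_\infty\in\overline{\Sigma}$ with $\rho(x_\infty)=-\varepsilon$ and $X_1\rho(x_\infty)=0$. Two preliminary observations do most of the bookkeeping. First, $X=\psi(\rho)X_1$ vanishes on $\{\rho=-\varepsilon\}$ (since $\psi(-\varepsilon)=0$), so $\{\rho=-\varepsilon\}$, and hence $\{\rho>-\varepsilon\}$, is invariant under $\varphi^t=e^{tX}$; as $\mathcal U\subset\{\rho>0\}$, this gives $\Sigma\subset\{\rho>-\varepsilon\}$. Second, every point of $\Sigma_-$ is carried into $\mathcal U\subset\{\rho>0\}$ by $\varphi^\tau$ for some $\tau\geq0$ (symmetrically for $\Sigma_+$ with $\tau\leq0$). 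Since $\overline\Sigma=\overline{\Sigma_+}\cup\overline{\Sigma_-}$ and replacing $X_1$ by $-X_1$ preserves both $X_1\rho(x_\infty)=0$ and~\eqref{e:extconv0} (which involves only $(X_1\rho)^2$ and $X_1^2\rho$), I may assume $x_\infty\in\overline{\Sigma_-}$ and choose $y_j\in\Sigma_-$ with $y_j\to x_\infty$ and times $\tau_j\geq0$ with $\varphi^{\tau_j}(y_j)\in\mathcal U$.

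Next I would reduce to an integral curve of the \emph{unmodified} field $X_1$. For $j$ large, $\rho(y_j)<0$, so by continuity of $t\mapsto\rho(\varphi^t(y_j))$ there is a first time $\sigma_j\in(0,\tau_j)$ with $\rho(\varphi^{\sigma_j}(y_j))=0$; on $[0,\sigma_j]$ the orbit stays in $\{-\varepsilon<\rho\leq0\}\subset\{|\rho|\leq2\varepsilon\}$ by the invariance above. On this region $\psi(\rho)>0$, so $X$ is a strictly positive time reparametrization of $X_1$, and the orbit segment coincides, as an unparametrized curve, with an $X_1$-integral curve $\eta_j\colon[0,S_j]\to\{-\varepsilon<\rho\leq0\}$ with $\eta_j(0)=y_j$, $\rho(\eta_j(S_j))=0$ and $\rho(\eta_j(s))<0$ for $s<S_j$; the reparametrization being increasing, $0<S_j<\infty$ and $X_1\rho(\eta_j(0))=X_1\rho(y_j)\to X_1\rho(x_\infty)=0$.

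Finally I would feed this curve into the slab-convexity estimate already used for Lemma~\ref{l:marron0}: \eqref{e:extconv0} gives $C(X_1\rho)^2-X_1^2\rho\geq\delta>0$ on $\{|\rho|\leq2\varepsilon\}$, so $g_j(s):=e^{-C\rho(\eta_j(s))}$ satisfies $g_j''\geq\delta_1:=C\delta e^{-2C\varepsilon}>0$ on $[0,S_j]$, while $1\leq g_j\leq e^{C\varepsilon}$ there. Comparing $g_j$ with the concave parabola $s\mapsto\delta_1 s(S_j-s)/2$ gives $g_j(S_j/2)\leq\frac{g_j(0)+g_j(S_j)}{2}-\delta_1 S_j^2/8$, and since $g_j(S_j/2)\geq1$ and $g_j(0),g_j(S_j)\leq e^{C\varepsilon}$, this forces a uniform bound $S_j\leq S_{\max}:=\sqrt{8(e^{C\varepsilon}-1)/\delta_1}$. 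On the other hand $g_j$ lies above its tangent line at $s=0$, so $g_j'(0)S_j\leq g_j(S_j)-g_j(0)=1-g_j(0)$; as $g_j(0)=e^{-C\rho(y_j)}\to e^{C\varepsilon}>1$, we get $|g_j'(0)|\geq(e^{C\varepsilon}-1-o(1))/S_{\max}$, bounded below by a positive constant. But $g_j'(0)=-C\,(X_1\rho)(y_j)\,g_j(0)\to -C\cdot0\cdot e^{C\varepsilon}=0$, a contradiction.

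The main obstacle is the bookkeeping in the middle step: one must verify that $\{\rho>-\varepsilon\}$ is genuinely $\varphi$-invariant, so the relevant orbit segment stays where $\psi(\rho)>0$ and the passage from an $X$-orbit to an $X_1$-orbit is legitimate, and one must track that this $X_1$-curve starts near $x_\infty$ (hence with $X_1\rho$ near $0$) yet must climb from $\rho\approx-\varepsilon$ up to $\rho=0$. Once that is in place, the contradiction is a soft consequence of the strict convexity of $\{|\rho|\leq2\varepsilon\}$ already established for Lemma~\ref{l:marron0}: a curve confined to the slab along which $e^{-C\rho}$ is uniformly convex cannot simultaneously have vanishing $\rho$-derivative at one endpoint and gain a definite amount of $\rho$.
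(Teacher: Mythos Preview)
Your argument is correct. The paper's proof takes a different, more geometric route: rather than following a sequence $y_j\to x_\infty$ and deriving a quantitative contradiction from the convexity of $e^{-C\rho}$ along $X_1$-orbits, it shows directly that any point $x$ with $\rho(x)=-\varepsilon$ and $X_1\rho(x)=0$ has an open neighborhood disjoint from $\Sigma$. The key observation there is that by~\eqref{e:extconv0} the function $t\mapsto\rho(e^{tX_1}(x))$ has a nondegenerate local maximum at $t=0$, so for small $\delta>0$ one has $e^{\pm\delta X_1}(x)\in\{\rho<-\varepsilon\}$ and $e^{tX_1}(x)\notin\overline{\mathcal U}$ for all $|t|\leq\delta$; this persists for nearby $x'$, and since $X=\psi(\rho)X_1$ vanishes on $\{\rho=-\varepsilon\}$, the entire $\varphi^t$-orbit of such $x'$ is a reparametrization of an $X_1$-arc contained in $\{e^{sX_1}(x'):|s|\leq\delta\}$ and hence never meets $\overline{\mathcal U}$. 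The paper's argument is shorter and avoids your explicit bounds on $S_j$ and $g_j'(0)$; your approach is more computational but has the merit of recycling precisely the inequality $g''\geq\delta_1$ already set up in the proof of Lemma~\ref{l:marron0}, without extracting the finer nondegenerate-maximum picture.
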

\begin{proof}
Take $x\in \{\rho=-\varepsilon\}\cap \{X_1\rho=0\}$. Then by~\eqref{e:extconv0},
the function $t\mapsto\rho(e^{tX_1}(x))$ has a nondegenerate local maximum at $t=0$.
Therefore, there exists $\delta>0$ such that
\begin{equation}
  \label{e:marron1}
e^{\pm\delta X_1}(x)\in \{\rho<-\varepsilon\},\quad
e^{tX_1}(x)\notin\overline{\mathcal U}\quad\text{for all }t\in [-\delta,\delta].
\end{equation}
Fix $\delta$ and take $x'$ in a small neighborhood of $x$. Then~\eqref{e:marron1} holds also for $x'$.
Since $X$ is a multiple of $X_1$ which vanishes on $\{\rho=-\varepsilon\}$,
it follows that the trajectory $\varphi^t(x')$ never passes through $\overline{\mathcal U}$; that is,
$x'\notin\Sigma$. It follows that $x\notin\overline\Sigma$, finishing the proof.
\end{proof}

\begin{lemm}
  \label{l:marron3}
Let $V_\pm\subset \Sigma_\pm\setminus\mathcal U$ be a compact set. Then there exists
a function
$$
\chi_\pm\in C_0^\infty(\{-2\varepsilon<\rho<\varepsilon\}\cap \{\pm X_1\rho<0\};[0,1])
$$
such that
$\pm X\chi_\pm\geq 0$ everywhere and $\pm X\chi_\pm>0$ on $V_\pm$.
(See Figure~\ref{f:marron}.)
\end{lemm}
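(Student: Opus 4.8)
The plan is to construct $\chi_+$ (the case of $\chi_-$ being symmetric under $t\mapsto -t$) by building a function that is monotone nondecreasing along the flow $\varphi^t$ and strictly increasing precisely on a neighborhood of the compact set $V_+$. The natural idea is to start from a suitable "observation window" $W\subset\{-2\varepsilon<\rho<\varepsilon\}\cap\{X_1\rho<0\}$ containing $V_+$, pick a cutoff $\chi_0\in C_0^\infty(W;[0,1])$ with $\chi_0=1$ near $V_+$, and then set
$$
\chi_+(x):=\sup_{t\geq 0}\chi_0(\varphi^{-t}(x)),
$$
so that $\chi_+$ is automatically nondecreasing along the flow; one then checks smoothness and the support condition. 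Equivalently, and more convenient for smoothness, I would average: take a nonnegative $\eta\in C_0^\infty((0,\infty))$ and set $\chi_+(x)=\int_0^\infty \eta(t)\,\psi(\varphi^{-t}(x))\,dt$ where $\psi\ge 0$ is supported in $W$; then $X\chi_+(x)=-\int_0^\infty \eta'(t)\psi(\varphi^{-t}(x))\,dt$, and with $\eta$ chosen so that $-\eta'\ge 0$ near the relevant times one gets $X\chi_+\ge 0$, with strict positivity where the trajectory genuinely enters the window. The key dynamical input making this work is Lemma \ref{l:marron} (so that $\Sigma_\pm$ stays away from the bad glancing set $\{\rho=-\varepsilon\}\cap\{X_1\rho=0\}$, hence $X$ does not degenerate along the relevant trajectories before they leave $\overline{\mathcal U}$) together with Lemma \ref{l:marron0}, which says that from any point with $X_1\rho\le 0$ the trajectory of $X_1$ reaches $\{\rho=\alpha\}$ in finite time while keeping $X_1\rho<0$ — this pins down the geometry of $\Sigma_+\setminus\mathcal U$ as a finite flow-out of a piece of $\partial\mathcal U$ through the collar $\{-2\varepsilon<\rho<\varepsilon\}$.

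The concrete steps I would carry out are: (1) Use that $V_+\subset\Sigma_+\setminus\mathcal U$ is compact to find, via Lemma \ref{l:marron0} part 2 and compactness, a uniform time $T_0>0$ and a compact "seed" set $S\subset\overline{\mathcal U}\cap\{0\le\rho\le\varepsilon\}\cap\{X_1\rho<0\}$ near $\partial\mathcal U$ such that every $x\in V_+$ equals $\varphi^{t(x)}(y)$ for some $y\in S$ and $t(x)\in(0,T_0]$, with $X\rho<0$ (equivalently $X_1\rho<0$, since $X=\psi(\rho)X_1$ with $\psi>0$ on $\{\rho>-\varepsilon\}$) all along that orbit segment, and the segment staying in $\{-2\varepsilon<\rho<\varepsilon\}$. (2) Thicken $S$ slightly to an open set $S'$ with $\overline{S'}$ still inside the good region, and verify using Lemma \ref{l:marron} that the forward $X$-orbit of $\overline{S'}$, before it can return to $\{\rho\ge\varepsilon\}$ or reach $\{\rho=-\varepsilon\}$, spends only a bounded time $T_1>0$ in the collar $\{-2\varepsilon<\rho<\varepsilon\}$; outside that time band we are free to set things up so nothing happens. (3) Choose $\psi\in C_0^\infty(S';[0,1])$ with $\psi=1$ near $S$, choose $\eta\in C_0^\infty((0,T_1);[0,\infty))$ with $\eta$ nondecreasing on $[0,T_0+1]$ and then supported and nonincreasing afterward — more precisely arrange $-\eta'\ge 0$ on the time interval where trajectories of points of $V_+$ sit, and $\eta\equiv$ const $>0$ on a subinterval covering those times — and define $\chi_+(x):=\int_0^\infty\eta(t)\,\psi(\varphi^{-t}(x))\,dt$. (4) Check the four conclusions: $\chi_+\in C_0^\infty$ because $\psi$ is smooth and compactly supported and the integral is over a bounded $t$-range with trajectories staying in a compact set; $\supp\chi_+\subset\{-2\varepsilon<\rho<\varepsilon\}\cap\{X_1\rho<0\}$ because for $t$ in the support of $\eta$ the points $\varphi^{-t}(\,\cdot\,)$ landing in $\supp\psi$ force $x$ into exactly the flow-out region analyzed in step (2), which lies in that good region by construction and Lemmas \ref{l:marron0}–\ref{l:marron}; $X\chi_+=-\int_0^\infty\eta'(t)\psi(\varphi^{-t}(x))\,dt\ge 0$ by the sign of $-\eta'$ and $\psi\ge 0$; and $X\chi_+>0$ on $V_+$ because for $x\in V_+$ there is a whole subinterval of $t$'s for which $\varphi^{-t}(x)$ lies in $\{\psi=1\}$ with $-\eta'(t)>0$ there (this is where one uses that $x\in V_+$ is genuinely a time-$t(x)$, $t(x)\in(0,T_0]$, image of a seed point, so the relevant $t$'s are interior to where $\eta$ is strictly increasing).

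The main obstacle I expect is the simultaneous bookkeeping in step (2)–(3): making the single choice of $\eta$ work uniformly over all of $V_+$ while keeping the support of $\chi_+$ from leaking out of $\{-2\varepsilon<\rho<\varepsilon\}\cap\{X_1\rho<0\}$. The delicate point is that a trajectory starting in the seed region near $\{\rho=\varepsilon\}$ could, after exiting through $\{\rho=-\varepsilon\}$-ward, either hit the degenerate locus $\{\rho=-\varepsilon,\ X_1\rho=0\}$ or curl back; Lemma \ref{l:marron} is exactly what rules out accumulation at the degenerate glancing locus, and the convexity estimate \eqref{e:extconv0} (through the auxiliary function $g=e^{-C\rho}$ with $g''>\delta_1>0$, as used in Lemma \ref{l:marron0}) guarantees the trajectory passes monotonically through the collar in bounded time with $X_1\rho<0$ throughout, so there is no return. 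Once one commits to placing $\supp\psi$ strictly inside the region where $0\le\rho\le\varepsilon$ and $X_1\rho<0$, and chooses $T_1$ to be (slightly more than) the uniform transit time through $\{-2\varepsilon<\rho<\varepsilon\}$ furnished by Lemma \ref{l:marron0}, the support statement becomes automatic and the sign computation for $X\chi_+$ is the routine identity above. A secondary, purely technical nuisance is smoothness of $\chi_+$ at points where the trajectory hits $\partial\mathcal U$ or the zeros of $X$ on $\{\rho=-\varepsilon\}$ — but these are avoided because $\eta$ is supported away from $t=0$ and the flow time is bounded, keeping everything inside an open set on which $\varphi^{-t}$ is smooth jointly in $(t,x)$.
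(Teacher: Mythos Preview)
There is a genuine gap. Your entire construction uses the flow $\varphi^t=e^{tX}$, but you repeatedly invoke finite transit times that only hold for $e^{tX_1}$. Recall that $X=\psi(\rho)X_1$ with $\psi(-\varepsilon)=0$, so $\{\rho=-\varepsilon\}$ consists of fixed points of $\varphi^t$. A point $x_0\in V_+\subset\Sigma_+\setminus\mathcal U$ has forward $X$-orbit that \emph{asymptotically} approaches $\{\rho=-\varepsilon\}$ without ever reaching it; the claim in step~(2) that the forward $X$-orbit of the seed ``spends only a bounded time $T_1$'' in the collar is therefore false. Lemma~\ref{l:marron0} gives finite transit time for $X_1$, not for $X$.

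This breaks the averaging argument irreparably. With $\chi_+(x)=\int_0^\infty\eta(t)\psi(\varphi^{-t}(x))\,dt$ and $\eta\in C_0^\infty((0,\infty))$, $\eta\ge 0$, one has (check the sign: your formula has the wrong one)
\[
X\chi_+(x)=\int_0^\infty\eta'(t)\,\psi(\varphi^{-t}(x))\,dt.
\]
For $y$ in the seed set and $s$ large, put $x=\varphi^s(y)$; then $\varphi^{-t}(x)\in\supp\psi$ only for $t$ near $s$, and since $\eta$ is compactly supported and nonnegative we must have $\eta'\le 0$ (and somewhere $<0$) on any sufficiently large interval. Hence $X\chi_+(x)<0$ for such $x$, contradicting the required sign. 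If instead you let $\eta\to c>0$ at infinity so that $\eta'\ge 0$, then $\chi_+$ has a positive limit on $\{\rho=-\varepsilon\}\cap\{X_1\rho<0\}$ from the side $\{\rho>-\varepsilon\}$ but is identically $0$ on $\{\rho<-\varepsilon\}$ (the backward $X$-orbit there never enters $\{\rho>-\varepsilon\}$), so $\chi_+$ is not smooth across $\{\rho=-\varepsilon\}$, which lies in the prescribed support region.

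The paper's proof avoids this by working in an $X_1$-flow box rather than an $X$-flow box. After reducing to $V_+=\{x_0\}$ by compactness, one follows $e^{tX_1}(x_0)$ forward to $x_1\in\{\rho=-\varepsilon\}$ in finite time $T$ (Lemma~\ref{l:marron0}), introduces coordinates $(x',t)\mapsto e^{tX_1}(x')$ on $U_1\times(-T-\delta,\delta)$ with $U_1$ a small neighborhood of $x_1$ in $\{\rho=-\varepsilon\}$, and observes that in these coordinates $X=\psi(\rho)\partial_t$ with $\sgn\psi(\rho)=-\sgn t$. One then sets $\chi_+(x',t)=\chi_0(x')\chi_1(t)$ with $\chi_1\in C_0^\infty((-T-\delta,\delta);[0,1])$ satisfying $t\chi_1'(t)\le 0$ and $\chi_1'(-T)>0$; the sign change of $\psi(\rho)$ across $t=0$ is exactly what makes $X\chi_+=\chi_0(x')\psi(\rho)\chi_1'(t)\ge 0$ everywhere while keeping $\chi_+$ compactly supported. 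This mechanism is missing from your plan.
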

\begin{figure}
\includegraphics{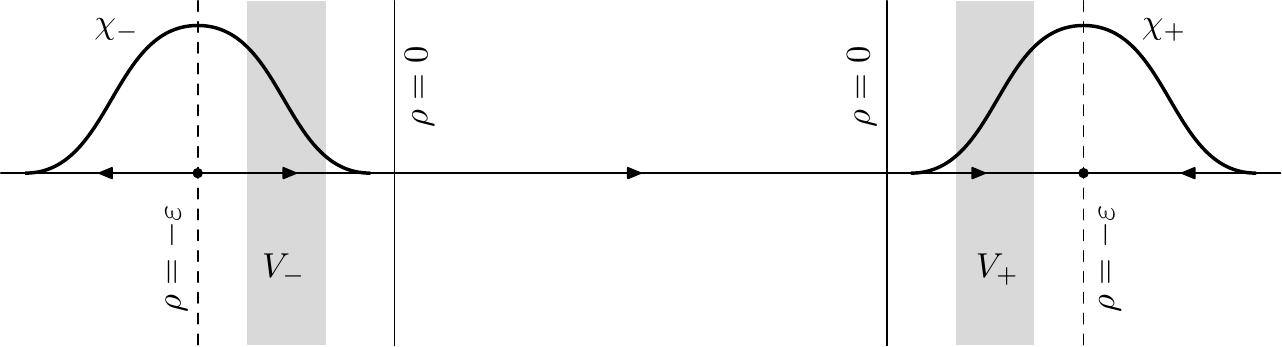}
\caption{A nontrapped trajectory of the vector field $X_1$ with the sets
$V_\pm$ and the functions $\chi_\pm$. The arrows indicate
the direction of the field $X=\psi(\rho)X_1$.}
\label{f:marron}
\end{figure}
\begin{proof}
We construct $\chi_+$; the function $\chi_-$ is constructed similarly, reversing
the direction of the flow.
By compactness of $V_+$, it suffices to prove the lemma for the case
when $V_+=\{x_0\}$, where $x_0\in\Sigma_+\setminus\mathcal U$. Note that
$x_0\in \{\rho>-\varepsilon\}$ since $x_0\in\Sigma_+$ and $X$ vanishes on $\{\rho=-\varepsilon\}$.

We first claim that $X_1 \rho(x_0)<0$. Indeed, assume that $X_1\rho(x_0)\geq 0$.
Then by part~2 of Lemma~\ref{l:marron0} (with $[\alpha,\beta]=[-\varepsilon,0])$, there exists $T\geq 0$ such that
$e^{-TX_1}(x_0)\in\{\rho=-\varepsilon\}$ and $e^{-tX_1}(x_0)\notin\mathcal U$ for all $t\in [0,T]$.
Since $X=\psi(\rho)X_1$, we see that $\varphi^{-t}(x_0)\notin\mathcal U$ for all
$t\geq 0$, contradicting the fact that $x_0\in\Sigma_+$.

By part~2 of Lemma~\ref{l:marron0} (with $[\alpha,\beta]=[-\varepsilon,0]$), there exists
$T\geq 0$ such that $x_1:=e^{TX_1}(x_0)\in \{\rho=-\varepsilon\}$ and
$e^{tX_1}(x_0)\in\{-\varepsilon\leq \rho\leq 0\}$, $X_1\rho(e^{tX_1}(x_0))<0$ for all $t\in [0,T]$.
Let $U_1$ be a small neighborhood of $x_1$ in the surface $\{\rho=-\varepsilon\}$. Then for $\delta>0$ small enough,
the map
\begin{equation}
  \label{e:funkco}
(x',t)\in U_1\times (-T-\delta,\delta)\ \mapsto\ e^{tX_1}(x')\in\mathcal M
\end{equation}
is a diffeomorphism onto some open subset of $\{-2\varepsilon<\rho<\varepsilon\}\cap \{X_1\rho<0\}$.
Note that in the $(x',t)$ coordinates, $X=\psi(\rho)\partial_t$ and
$\sgn\psi(\rho)=-\sgn t$.
It remains to put in the $(x',t)$ coordinates,
$$
\chi_+(x',t)=\chi_0(x')\chi_1(t),
$$
where $\chi_0\in C_0^\infty(U_1;[0,1])$ satisfies $\chi_0(x_1)=1$
and $\chi_1\in C_0^\infty((-T-\delta,\delta);[0,1])$ satisfies
$t\chi'_1(t)\leq 0$ everywhere and $\chi'_1(-T)>0$. We finally extend
$\chi_+$ by zero to the entire $\mathcal M$.
\end{proof}
We finally give the following property of the resolvent
$\mathbf R(\lambda)$ defined in~\eqref{e:res}.
\begin{lemm}
  \label{l:outgoing2}
Assume that $\psi_1,\psi_2\in C_0^\infty(\mathcal U)$ satisfy
$\supp\psi_1\cap\Gamma_-=\supp\psi_2\cap\Gamma_+=\emptyset$.
Then the operators
$$
\mathbf R(\lambda)\psi_1,\ \psi_2\mathbf R(\lambda):C_0^\infty(\mathcal U)\to \mathcal D'(\mathcal U),\quad
\Re\lambda>C_0,
$$
extend holomorphically to $\lambda\in\mathbb C$.
\end{lemm}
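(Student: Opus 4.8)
The plan is to write the resolvent explicitly as a time integral in its region of convergence $\{\Re\lambda>C_0\}$, observe that inserting the cutoff $\psi_1$ (resp. $\psi_2$) makes the integrand vanish identically for all large times, and then conclude that the resulting finite‑time integral is an entire function of $\lambda$ continuing $\mathbf R(\lambda)\psi_1$ (resp. $\psi_2\mathbf R(\lambda)$). Concretely: since $\supp\psi_1$ is a compact subset of the open set $\mathcal U$ we have $\indic_{\mathcal U}\psi_1=\psi_1$, and similarly $\psi_2\indic_{\mathcal U}=\psi_2$, so by~\eqref{e:res},~\eqref{e:res-upper} and $e^{-t(\mathbf X+\lambda)}=e^{-t\lambda}e^{-t\mathbf X}$, for $\Re\lambda>C_0$ we have
\[
\mathbf R(\lambda)\psi_1=\int_0^\infty e^{-t\lambda}\,\indic_{\mathcal U}e^{-t\mathbf X}\psi_1\,dt,\qquad
\psi_2\mathbf R(\lambda)=\int_0^\infty e^{-t\lambda}\,\psi_2\,e^{-t\mathbf X}\indic_{\mathcal U}\,dt.
\]
It therefore suffices to produce $T_1,T_2>0$ with $\indic_{\mathcal U}e^{-t\mathbf X}\psi_1=0$ for $t\ge T_1$ and $\psi_2 e^{-t\mathbf X}\indic_{\mathcal U}=0$ for $t\ge T_2$.

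First I would analyze the supports of these operators. By the support property~\eqref{e:X-support}, for any test section $\mathbf u$ one has $(\indic_{\mathcal U}e^{-t\mathbf X}\psi_1\mathbf u)(x)=\indic_{\mathcal U}(x)(\psi_1\circ\varphi^{-t})(x)(e^{-t\mathbf X}\mathbf u)(x)$, so $\indic_{\mathcal U}e^{-t\mathbf X}\psi_1$ can be nonzero only if there is $y\in\supp\psi_1$ with $\varphi^t(y)\in\mathcal U$. Since also $y\in\mathcal U$, the convexity property~\eqref{e:convex2} furnished by Lemma~\ref{l:extconv} then forces $\varphi^s(y)\in\mathcal U\subset\overline{\mathcal U}$ for every $s\in[0,t]$. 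The same reasoning applied to $\psi_2 e^{-t\mathbf X}\indic_{\mathcal U}$ shows that it can be nonzero only if there is $x\in\supp\psi_2$ with $\varphi^{-t}(x)\in\mathcal U$, and then $\varphi^{-s}(x)\in\overline{\mathcal U}$ for every $s\in[0,t]$.

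Next I would run the compactness argument used in the proof of Lemma~\ref{l:long}. Recall that $\Gamma_-$ is precisely the set of points whose forward orbit stays in $\overline{\mathcal U}$ and $\Gamma_+$ the set of points whose backward orbit stays in $\overline{\mathcal U}$. The sets $W_N:=\{y\in\supp\psi_1\mid \varphi^s(y)\in\overline{\mathcal U}\text{ for all }s\in[0,N]\}$ are closed subsets of the compact set $\supp\psi_1$, decreasing in $N$, with $\bigcap_{N\ge 0}W_N=\supp\psi_1\cap\Gamma_-=\emptyset$ by hypothesis; hence $W_{T_1}=\emptyset$ for some $T_1>0$, and combined with the previous paragraph this gives $\indic_{\mathcal U}e^{-t\mathbf X}\psi_1=0$ for all $t\ge T_1$. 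The symmetric family $W_N':=\{x\in\supp\psi_2\mid \varphi^{-s}(x)\in\overline{\mathcal U}\text{ for all }s\in[0,N]\}$ has $\bigcap_{N\ge 0}W_N'=\supp\psi_2\cap\Gamma_+=\emptyset$, producing $T_2$ with $\psi_2 e^{-t\mathbf X}\indic_{\mathcal U}=0$ for $t\ge T_2$. Then, for $\Re\lambda>C_0$, the two displayed integrals are in fact over $[0,T_1]$ and $[0,T_2]$ respectively; using the bound~\eqref{e:C0-def} and the strong continuity of the transfer group $t\mapsto e^{-t\mathbf X}$ on $L^2(\mathcal M;\mathcal E)$, each of these finite integrals defines a bounded operator $C_0^\infty(\mathcal U;\mathcal E)\to L^2(\mathcal U;\mathcal E)\subset\mathcal D'(\mathcal U;\mathcal E)$ depending holomorphically (indeed entirely) on $\lambda\in\mathbb C$ and agreeing with $\mathbf R(\lambda)\psi_1$, resp. $\psi_2\mathbf R(\lambda)$, on $\{\Re\lambda>C_0\}$. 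These are the desired continuations.

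I do not expect a genuine obstacle here: the content is entirely the convexity~\eqref{e:convex2} together with the elementary compactness argument of~\S\ref{s:dyn1}. The only two points that warrant a little care are the observation that the cutoff operators vanish \emph{identically} for large $t$ (rather than merely becoming small), which is exactly where~\eqref{e:convex2} is essential, and the routine verification that the truncated integral is the holomorphic continuation, i.e. that it is weakly holomorphic in $\lambda$ and coincides with $\mathbf R(\lambda)\psi_1$, $\psi_2\mathbf R(\lambda)$ on the half‑plane of convergence.
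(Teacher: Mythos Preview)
Your proof is correct and follows essentially the same approach as the paper: write the restricted resolvent as a time integral, use the support property~\eqref{e:X-support} together with convexity~\eqref{e:convex2} to see the integrand vanishes for all large $t$, and observe that the resulting finite integral is entire. The paper phrases the compactness step as a neighborhood/open cover argument (for each $x_0\in\supp\psi_1$ find $T(x_0)$ with $\varphi^{T(x_0)}(x_0)\notin\overline{\mathcal U}$, then use continuity and convexity), whereas you use the equivalent nested closed sets formulation, but the content is identical.
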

\begin{proof}
We establish holomorphic extension of $\mathbf R(\lambda)\psi_1$; the extension
of $\psi_2\mathbf R(\lambda)$ is handled similarly.
There exists $T>0$ such that $\varphi^t(\supp\psi_1)\cap\mathcal U=\emptyset$
for all $t\geq T$. Indeed, it is enough to show this for some $T=T(x_0)$ when the compact set $\supp\psi_1$ is replaced
by a small neighborhood $U_0$ of some fixed $x_0\in\supp\psi_1$. Since $x_0\notin\Gamma_-$, there exists
$T>0$ such that $\varphi^T(x_0)\notin\overline{\mathcal U}$. It follows that
$\varphi^T(x)\notin\overline{\mathcal U}$ when $x$ lies in a small neighborhood of $U_0$ of $x_0$.
By convexity of $\mathcal U$, it follows that $\varphi^t(x)\notin\mathcal U$ for $t\geq T$ and $x\in U_0$.

The holomorphic extension of $\mathbf R(\lambda)\psi_1$ is now given by the formula
\begin{equation}
  \label{e:holodeck}
\mathbf R(\lambda)\psi_1\mathbf f=\int_0^T \big(e^{-t(\mathbf X+\lambda)}\psi_1\mathbf f\big)\big|_{\mathcal U}\,dt ,
\end{equation}
where we used the fact that $\big(e^{-t(\mathbf X+\lambda)}\psi_1\mathbf f\big)\big|_{\mathcal U}=0$
for $t\geq T$, following from~\eqref{e:X-support}.
\end{proof}

\subsection{Hyperbolic sets}
  \label{s:dyn2}

We next express the assumption~\hyperlink{AA4}{\rm(A4)}
from the introduction in terms of the action of the differential on the dual space.
Define the function on the cotangent bundle $T^*\mathcal M$
\begin{equation}
  \label{e:symbol}
p(x,\xi)=\langle X(x),\xi\rangle,
\end{equation}
then its Hamiltonian flow is the action of $d\varphi^t$ on covectors:
\begin{equation}
  \label{e:hammertime}
e^{tH_p}(x,\xi)=(\varphi^t(x),(d\varphi^t(x))^{-T}\cdot \xi),\quad
x\in \mathcal M,\
\xi\in T^*_x\mathcal M,
\end{equation}
where $(d\varphi^t(x))^{-T}:T^*_x\mathcal M\to T^*_{\varphi^t(x)}\mathcal M$
is the inverse transpose of $d\varphi^t(x):T_x\mathcal M\to T_{\varphi^t(x)}\mathcal M$.
For each $x\in K$, define the dual stable/unstable decomposition
\begin{equation}
  \label{e:decdual}
T_x^*\mathcal M=E_0^*(x)\oplus E_s^*(x)\oplus E_u^*(x),
\end{equation}
where $E_0^*$ is the annihilator of $E_s\oplus E_u$, $E_s^*$ is the annihilator
of $E_0\oplus E_s$, and $E_u^*$ is the annihilator of $E_0\oplus E_u$. Note the
reversal of roles of $E_s,E_u$. By~\eqref{e:hyper}, we have
\begin{equation}
  \label{e:hyperdual}
|(d\varphi^t(x))^{-T}\cdot \xi|\leq Ce^{-\gamma|t|}|\xi|,\quad
\begin{cases}
t\geq 0,\ \xi\in E_s^*(x);\\
t\leq 0,\ \xi\in E_u^*(x).
\end{cases}
\end{equation}
We now extend the bundles $E_s^*,E_u^*$ to $\Gamma_-,\Gamma_+$ respectively, and
study the global dynamics of the flow $e^{tH_p}$:
\begin{lemm}
  \label{l:extended}
There exist vector subbundles $E^*_\pm\subset T_{\Gamma_\pm}^* \mathcal M$ over $\Gamma_\pm$ such that:

1. $E_+^*|_K=E_u^*$, $E_-^*|_K=E_s^*$, and $E_\pm^*(x)$ depend continuously on $x\in\Gamma_\pm$.

2. $E_\pm^*$ are invariant under the flow $\varphi^t$
and $\langle X,\eta\rangle=0$ for $\eta\in E_\pm^*$.

3. If $x\in\Gamma_\pm$ and $\xi\in E_\pm^*(x)$, then as $t\to \mp\infty$
\begin{equation}
  \label{e:lapi3}
|(d\varphi^t(x))^{-T}\xi|\leq \widetilde Ce^{-\tilde\gamma |t|}|\xi|
\end{equation}
for some constants $\widetilde C,\tilde\gamma>0$ independent of $x,\xi$.

4. If $x\in\Gamma_\pm$ and $\xi\in T^*_x\mathcal M$ satisfies
$p(x,\xi)=0$ and $\xi\notin E_\pm^*(x)$, then
as $t\to \mp\infty$
\begin{equation}
  \label{e:lapi4}
|(d\varphi^t(x))^{-T} \xi|\to \infty,\quad
{(d \varphi^t(x))^{-T} \xi\over |(d\varphi^t(x))^{-T} \xi|}\to E_\mp^*|_K.
\end{equation}
\end{lemm}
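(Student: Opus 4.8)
The plan is to construct $E_\pm^*$ by pushing forward the bundles $E_s^*, E_u^*$ from $K$ along the flow, using Lemma~\ref{l:convergence} to control the limit at $\mp\infty$. I treat $E_+^*$ over $\Gamma_+$; the case of $E_-^*$ is symmetric. For $x\in\Gamma_+$, Lemma~\ref{l:convergence} gives $\varphi^t(x)\to K$ as $t\to-\infty$. The idea is to define $E_+^*(x)$ as the unique subspace such that $(d\varphi^t(x))^{-T}E_+^*(x)$ stays bounded (and in fact decays) as $t\to-\infty$; equivalently, it is the pullback under $d\varphi^{-t}$ of $E_u^*$ at points near $K$. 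Concretely, I would fix a continuous family of complements and work in the Grassmannian of $(\dim E_u)$-planes in $T^*\mathcal M$: near $K$, hyperbolicity~\eqref{e:hyperdual} together with a standard graph-transform/cone-field argument shows that $E_u^*$ is the unique attracting fixed section for the backward dynamics $e^{-tH_p}$ within the null space $\{p=0\}$, and any nearby $(\dim E_u)$-plane in $\{p=0\}$ not containing a vector from $E_s^*\oplus E_0^*$ converges to $E_u^*$ under $d\varphi^{-t}$. Pulling this back along the flow to all of $\Gamma_+$ (which is possible since $\varphi^t(\Gamma_+)\subset\overline{\mathcal U}$ for $t\le 0$ and the flow eventually enters any neighborhood of $K$) yields a well-defined, flow-invariant, continuous subbundle $E_+^*$ over $\Gamma_+$ with $E_+^*|_K=E_u^*$. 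Invariance of $E_+^*$ and the identity $\langle X,\eta\rangle=0$ (i.e.\ $E_+^*\subset\{p=0\}$) are immediate from the construction, since $\{p=0\}$ is $e^{tH_p}$-invariant and $E_u^*\subset\{p=0\}$.

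For part~3, the decay estimate~\eqref{e:lapi3} for $t\to-\infty$ follows by combining the contraction~\eqref{e:hyperdual} on $K$ with an argument spelled out as follows: given $x\in\Gamma_+$, choose $T_0\ge 0$ so that $\varphi^{-t}(x)$ lies in a small neighborhood $W$ of $K$ for all $t\ge T_0$, on which the cone field and hyperbolicity estimates are valid with slightly worse constants. On $[0,T_0]$ the norm changes by at most a fixed factor depending on $T_0$ — but since $T_0$ can be taken uniform over $\Gamma_+$ (by the uniform convergence in Lemma~\ref{l:convergence}), and on $W$ one has uniform exponential decay of covectors in (a cone around) $E_u^*$, one gets~\eqref{e:lapi3} with uniform $\widetilde C,\tilde\gamma$. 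Here it is convenient to replace $|\cdot|$ by an adapted norm near $K$ as in~\cite[\S6.2]{KaHa} so that the decay is monotone, then absorb the transition region into the constant $\widetilde C$.

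Part~4 is the heart of the lemma: if $x\in\Gamma_+$, $p(x,\xi)=0$, $\xi\notin E_+^*(x)$, then $|(d\varphi^t(x))^{-T}\xi|\to\infty$ and the direction converges to $E_-^*|_K=E_s^*$ as $t\to-\infty$. The plan is again to reduce to dynamics near $K$: for $t$ very negative, $\varphi^t(x)$ is close to some point of $K$, and in the splitting $T^*\mathcal M=E_0^*\oplus E_s^*\oplus E_u^*$ near $K$, any covector in $\{p=0\}$ has no $E_0^*$ component, so it is (approximately) $\xi_s + \xi_u$ with $\xi_s\in E_s^*$-cone, $\xi_u\in E_u^*$-cone; the $E_s^*$ component is expanded exponentially under $d\varphi^{-t}$ by~\eqref{e:hyperdual}, the $E_u^*$ component is contracted, so the normalized covector is attracted to the $E_s^*$ cone — and a cone-contraction argument pins the limit to $E_s^*$ exactly. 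The condition $\xi\notin E_+^*(x)$ is used to ensure the $E_s^*$ component does not vanish: by definition of $E_+^*$ via backward boundedness, $\xi\in E_+^*(x)$ iff $(d\varphi^t(x))^{-T}\xi$ stays bounded as $t\to-\infty$, so $\xi\notin E_+^*(x)$ forces the expanding component to be nonzero, hence the blow-up and the convergence of the direction. I expect the main obstacle to be the bookkeeping that makes the constants in parts~3 and~4 \emph{uniform} over $\Gamma_\pm$ despite $\Gamma_\pm$ being only a closed (not necessarily compact-inside-$\mathcal U$) set — this is handled by the uniform convergence statement in Lemma~\ref{l:convergence} together with compactness of $\overline{\mathcal U}$, which lets one choose the entry time $T_0$ into a neighborhood of $K$ uniformly, and by choosing an adapted metric near $K$ so the cone fields and decay rates are genuinely uniform there.
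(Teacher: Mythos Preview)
Your proposal is correct and follows essentially the same approach as the paper: both extend $E_u^*, E_s^*$ from $K$ to $\Gamma_\pm$ via a cone-field argument, using the uniform convergence of Lemma~\ref{l:convergence} to enter a neighborhood of $K$ where hyperbolicity~\eqref{e:hyperdual} provides cone contraction and expansion, and then characterizing $E_\pm^*(x)$ as the unique subspace of $\{p=0\}$ with bounded backward/forward orbit. The paper makes this explicit by defining concrete cones $\mathcal C_s^{(y)}(x),\mathcal C_u^{(y)}(x)$ via parallel transport to nearby points $y\in K$ and the projections $\pi_s,\pi_u$, but this is exactly the adapted-norm/cone-field construction you sketch; your identification of the uniformity issue and its resolution via Lemma~\ref{l:convergence} matches the paper's argument as well.
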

\begin{proof} We construct $E_-^*$; the bundle $E_+^*$ is contructed similarly.
The lemma is a natural consequence of the lamination of $\Gamma_-$ by the weak stable manifolds
$(W_s(x))_{x\in K}$ of the flow, where we put $E_-^*$ to be the annihilator of
the tangent space of $W_s(x)$, see for example~\cite[\S3.3]{NoZw}; the construction
of $W_s(x)$ ultimately relies on the Hadamard--Perron Theorem~\cite[Theorem~6.2.8]{KaHa}.
However, to make the paper more self-contained and since we only need a small portion
of the proof of the Hadamard--Perron theorem, we sketch a direct proof of the lemma below.

We fix some smooth Riemannian metric $\tilde g$ on $\mathcal M$ and measure the norms of
cotangent vectors with respect to this metric. Denote by $d_{\tilde g}(\cdot,\cdot)$ the
distance function induced by $\tilde g$. Take $\varepsilon>0$ small enough
to be fixed later; we in particular let $\varepsilon$ be smaller than the
injectivity radius of $(\mathcal M,\tilde g)$. (This constant is unrelated
to the one in Lemma~\ref{l:extconv}.) For $x,y\in\mathcal M$ such that
$d_{\tilde g}(x,y)<\varepsilon$, let
$$
\tau_{x\to y}:T^*_x\mathcal M\to T^*_y\mathcal M
$$
be the parallel transport along the shortest geodesic from $x$ to $y$.

Using~\eqref{e:hyperdual}, fix $t_0>0$ such that
for each $t\geq t_0$, $y\in K$ and $\eta\in T^*_y\mathcal M$,
$$
\begin{aligned}
|(d\varphi^{t}(y))^{-T} \eta|\leq {1\over 10}|\eta|,&\quad \eta\in E_s^*(y);\\
|(d\varphi^{t}(y))^{-T} \eta|\geq 10|\eta|,&\quad \eta\in E_u^*(y).
\end{aligned}
$$
For each $y\in K$, let
$$
\pi_s(y):T_y^*\mathcal M\to E_s^*(y),\quad
\pi_u(y):T_y^*\mathcal M\to E_u^*(y)
$$
be the projection maps corresponding to the decomposition~\eqref{e:hyperdual}.

For $x\in \mathcal M$, $y\in K$, and $d_{\tilde g}(x,y)<\varepsilon$, define the
\emph{dual stable/unstable cones} inside the annihilator of $X$ in $T_x^*\mathcal M$:
\begin{equation}
  \label{e:dualcones}
\begin{gathered}
\mathcal C^{(y)}_s(x)=\{\xi\in T_x^*\mathcal M\mid p(x,\xi)=0,\
|\pi_s(y)\tau_{x\to y}\xi|\geq |\pi_u(y)\tau_{x\to y}\xi|\},\\
\mathcal C^{(y)}_u(x)=\{\xi\in T_x^*\mathcal M\mid p(x,\xi)=0,\
|\pi_u(y)\tau_{x\to y}\xi|\geq |\pi_s(y)\tau_{x\to y}\xi|\}.
\end{gathered}
\end{equation}
Then for $\varepsilon$ small enough and each $t\in [t_0,2t_0]$, $y,y'\in K$, and $x\in\mathcal M$ such that
$d_{\tilde g}(x,y)<\varepsilon$, $d_{\tilde g}(\varphi^{t}(x),y')<\varepsilon$,
we have similarly to~\cite[Lemma~6.2.10]{KaHa}
\begin{equation}
  \label{e:cones1}
\begin{aligned}
(d\varphi^t(x))^{-T} \mathcal C^{(y)}_u(x)&\ \Subset\ \mathcal C^{(y')}_u(\varphi^t(x)),\\
(d\varphi^t(x))^{-T} \mathcal C^{(s)}_s(x)&\ \Supset\ \mathcal C^{(y')}_s(\varphi^t(x)).
\end{aligned}
\end{equation}
Indeed, \eqref{e:cones1} is verified directly for the case $x=y,\varphi^{t}(x)=y'$, and
it follows for small $\varepsilon$ by continuity.
Moreover, similarly to~\cite[Lemma~6.2.11]{KaHa} we find for $t\in [t_0,2t_0]$,
\begin{equation}
  \label{e:cones2}
\begin{aligned}
|(d\varphi^{t}(x))^{-T}\xi|\geq 4 |\xi|,&\quad \xi\in \mathcal C^{(y)}_u(x);\\
|(d\varphi^{-t}(x))^{-T} \xi|\geq 4 |\xi|,&\quad \xi\in \mathcal C^{(y)}_s(x).
\end{aligned}
\end{equation}
For $x\in \Gamma_-$, we define $E^*_-(x)$ as follows:
$\xi\in T_x^*\mathcal M$ lies in $E^*_-(x)$ if and only if $p(x,\xi)=0$
and there exists $t_1\geq 0$ such that
for all $t\geq t_1$ and each $y\in K$ such that $d_{\tilde g}(\varphi^t(x),y)<\varepsilon$,
we have $(d\varphi^t(x))^{-T}\xi\in \mathcal C^{(y)}_s(\varphi^t(x))$.
(Recall that $d_{\tilde g}(\varphi^t(x),K)\to 0$ as $t\to+\infty$ by Lemma~\ref{l:convergence}.)

By a straightforward adaptation of the proof
of~\cite[Proposition~6.2.12]{KaHa}, we see that $E^*_-(x)$ is a linear
subbundle of $T^*_{\Gamma_-}\mathcal M$ invariant under $\varphi^t$.
In fact, for each $t_j\to +\infty$ and $y_j\in K$ with $d_{\tilde g}(\varphi^{t_j}(x),y_j)<\varepsilon$,
we have
$$
E^*_-(x)=\lim_{j\to \infty} (d\varphi^{t_j}(x))^T\tau_{y_j\to \varphi^{t_j}(x)} E_s^*(y_j)
$$
where the limit is taken in the Grassmanian of $T_x^*\mathcal M$.
The fact that $E^*_-(x)=E_s^*(x)$ for $x\in K$ follows from here immediately, as
we can take $y_j:=\varphi^{t_j}(x)$. The bound~\eqref{e:lapi3} follows directly
from~\eqref{e:cones2}.

To show~\eqref{e:lapi4}, take $x\in\Gamma_-$ and $\xi\in T^*_x\mathcal M$
such that $p(x,\xi)=0$ and $\xi\notin E_-^*(x)$. By Lemma~\ref{l:convergence},
there exists $t_1\geq 0$ and $y_1\in K$ such that
$d_{\tilde g}(\varphi^{t_1}(x),y_1)<\varepsilon$ and
\begin{equation}
  \label{e:coid}
\begin{gathered}
d_{\tilde g}(\varphi^t(x),K)<\varepsilon\quad\text{for }t\geq t_1;\quad
(d\varphi^{t_1}(x))^{-T}\xi\notin \mathcal C_s^{(y_1)}(\varphi^{t_1}(x)).
\end{gathered}
\end{equation}
Iterating~\eqref{e:cones1}, we see
that
\begin{equation}
  \label{e:cones11}
(d\varphi^t(x))^{-T}\xi\in \mathcal C_u^{(y)}(\varphi^t(x)),\quad
t\geq t_0+t_1,
\end{equation}
for each $y\in K$ such that $d_{\tilde g}(\varphi^t(x),y)<\varepsilon$. Iterating~\eqref{e:cones2},
we get $|(d\varphi^t(x))^{-T}\xi|\to \infty$ as $t\to +\infty$.
To see the second part of~\eqref{e:lapi4}, it suffices to take
an arbitrary sequence $t_j\to +\infty$ such that
$$
\varphi^{t_j}(x)\to x_\infty\in K,\quad
{(d\varphi^{t_j}(x))^{-T}\xi\over |(d\varphi^{t_j}(x))^{-T}\xi|}\to \xi_\infty\in T_{x_\infty}^*\mathcal M
$$
and prove that $\xi_\infty\in E_u^*(x_\infty)$. Clearly $p(x_\infty,\xi_\infty)=0$. Next,
for each $t\geq 0$, we have
$$
(d\varphi^{-t}(x_\infty))^{-T}\xi_\infty=\lim_{j\to \infty}
{(d\varphi^{t_j-t}(x))^{-T}\xi\over |(d\varphi^{t_j}(x))^{-T}\xi|}\in \mathcal C_u^{(\varphi^{-t}(x_\infty))}(\varphi^{-t}(x_\infty)),
$$
which implies that $\xi_\infty\in E_u^*(x_\infty)$ as needed.

Finally, to show that $E_-^*(x)$ depends continuously on $x$, note that the condition~\eqref{e:coid}
is stable under perturbations of $x,\xi$ (recall that the convergence of Lemma~\ref{l:convergence} is uniform in $x$);
on the other hand, similarly to~\eqref{e:cones11} the condition~\eqref{e:coid}
implies that $\xi\notin E_-^*(x)$.
\end{proof}

\begin{figure}
\includegraphics{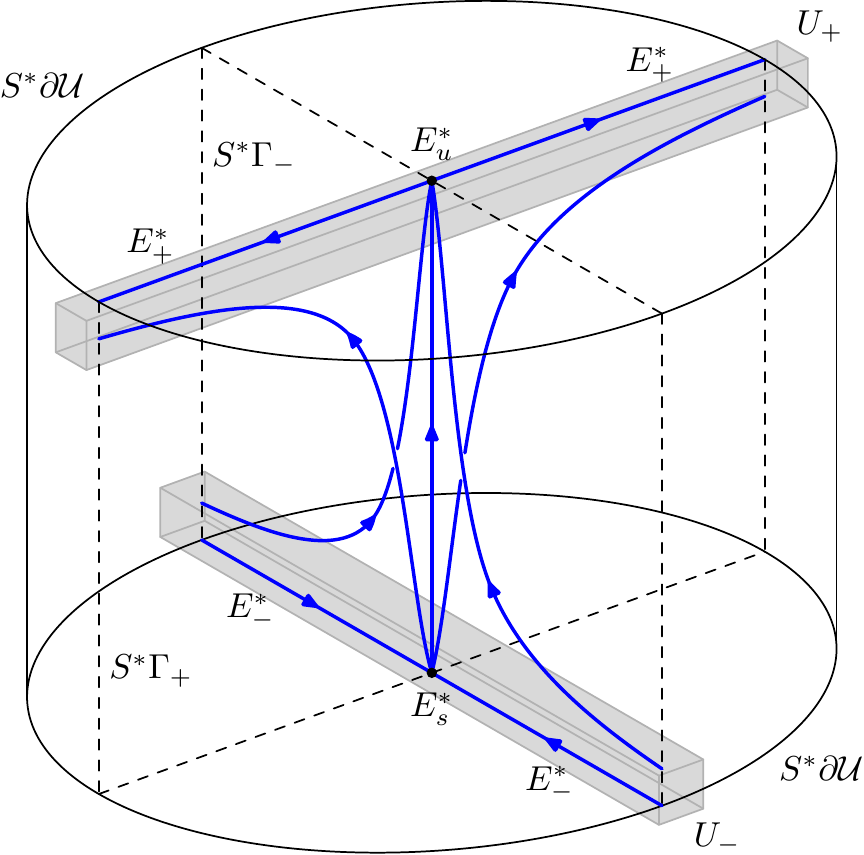}
\caption{A schematic representation of the flow lines $e^{tH_p}$ (thick blue lines)
on $S^*\mathcal U$, which is depicted
by the cylinder. The vertical direction in the picture
corresponds to the fibers of $S^*\mathcal U$.
The two vertical planes are $S^*\Gamma_\pm$, containing the subbundles $E_\pm^*$
(formally speaking, their images under $\kappa$);
the vertical line in the middle is $S^*K$, containing the subbundles $E_u^*,E_s^*$. The shaded
regions are the neighborhoods $U_\pm\supset E_\pm^*$.}
\label{f:funny3d}
\end{figure}

The subbundle $E_+^*$ is a \emph{generalized radial sink} and $E_-^*$ is a \emph{generalized radial source}
in the following sense (this definition is a modification of~\cite[(2.12)]{DyZw}).
\begin{lemm}
  \label{l:global-dynamics}
Let $\kappa:T^*\mathcal M\setminus 0\to S^*\mathcal M$ be the canonical projection, where
$S^*\mathcal M$ is the cosphere bundle over $\mathcal M$. Fix open neighborhoods
$U_\pm\subset S^*\mathcal M$ of $\kappa(E_\pm^*)$ such that 
$\overline{U_\pm}\cap \kappa(E_\mp^*)=\emptyset$ (see Figure~\ref{f:funny3d}). Then
for all $(x,\xi)\in T^*\mathcal M\setminus 0$ such that $p(x,\xi)=0$, $\kappa(x,\xi)\in U_\pm$,
and $x,\varphi^t(x)\in \overline{\mathcal U}$, we have
\begin{equation}
\begin{aligned}
d\big(\kappa(e^{tH_p}(x,\xi)),\kappa(E_\pm^*)\big)\to 0&\quad\text{as }t\to \pm\infty;\\
|(d\varphi^t(x))^{-T}\xi|\geq C^{-1}e^{\tilde\gamma|t|}|\xi|&\quad\text{for }\pm t\geq 0,
\end{aligned}
\end{equation}
uniformly in $(x,\xi)$. Here $d$ denotes any distance function on $S^*\mathcal M$
and $C,\tilde\gamma>0$ are constants independent of $x,\xi$.
\end{lemm}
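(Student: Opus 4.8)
The plan is to prove the ``$+$'' statement (that $E_+^*$ is a generalized radial sink); the ``$-$'' statement then follows by reversing the flow, that is, replacing $X$ by $-X$, which interchanges $(\Gamma_+,E_+^*)$ with $(\Gamma_-,E_-^*)$ and $e^{tH_p}$ with $e^{-tH_p}$. The hypothesis $\varphi^t(x)\in\overline{\mathcal U}$ for all $t\geq 0$ means exactly that $x\in\Gamma_-$, so $E_-^*(x)$ is defined by Lemma~\ref{l:extended}. The one conceptual step is to reduce to the hypotheses of part~4 of that lemma: since $\kappa(x,\xi)$ lies in the compact set $\overline{U_+}$ and $\overline{U_+}\cap\kappa(E_-^*)=\emptyset$, the direction $\kappa(x,\xi)$ is at a \emph{uniform} positive distance from the compact set $\kappa(E_-^*)$; in particular $\xi\notin E_-^*(x)$, while $p(x,\xi)=0$ by assumption. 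I also record, from Lemma~\ref{l:convergence}, that $\varphi^t(x)\to K$ as $t\to+\infty$ uniformly over $x\in\Gamma_-$.

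Granting $\xi\notin E_-^*(x)$, the qualitative part of the conclusion is immediate from part~4 of Lemma~\ref{l:extended} (used with the ``$-$'' sign, as $x\in\Gamma_-$): as $t\to+\infty$,
\[
|(d\varphi^t(x))^{-T}\xi|\to\infty,\qquad
\frac{(d\varphi^t(x))^{-T}\xi}{|(d\varphi^t(x))^{-T}\xi|}\to E_+^*|_K,
\]
and since $\kappa(E_+^*|_K)=\kappa(E_u^*)\subset\kappa(E_+^*)$ this gives $d\big(\kappa(e^{tH_p}(x,\xi)),\kappa(E_+^*)\big)\to 0$. What remains is to make the growth rate exponential and all bounds uniform in $(x,\xi)$.

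For the uniform statements I would re-run the cone argument underlying Lemma~\ref{l:extended}, tracking the constants. Fix a small neighborhood $W$ of $K$ over which the dual cones $\mathcal C^{(y)}_u,\mathcal C^{(y)}_s$ of~\eqref{e:dualcones} are defined and the estimates~\eqref{e:cones1}, \eqref{e:cones2} hold, along with the underlying hyperbolicity bounds near $K$. By the uniform convergence above there is a uniform time $T_W$ with $\varphi^t(x)\in W$ for all $t\geq T_W$ and all $x\in\Gamma_-$. On $[0,T_W]$ the maps $(d\varphi^t(x))^{-T}$ and their inverses are bounded uniformly over the compact manifold $\overline{\mathcal U}$, so the direction of $(d\varphi^{T_W}(x))^{-T}\xi$ is still a uniform distance from $E_-^*(\varphi^{T_W}(x))$, hence (as $E_-^*|_K=E_s^*$ and $E_-^*$ is continuous) a uniform distance from $E_s^*$ near $K$; therefore its $E_u^*$-component dominates a fixed fraction of its norm while its $E_s^*$-component decays geometrically along the orbit, so after a further uniform time $T_1\geq T_W$ the direction of $(d\varphi^{T_1}(x))^{-T}\xi$ enters the dual unstable cone over $\varphi^{T_1}(x)$. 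By~\eqref{e:cones1} it then stays in the unstable cone for all $t\geq T_1$ (the orbit remaining in $W$), and iterating~\eqref{e:cones2} yields $|(d\varphi^t(x))^{-T}\xi|\geq c\,e^{\tilde\gamma(t-T_1)}|(d\varphi^{T_1}(x))^{-T}\xi|$ for $t\geq T_1$; absorbing the bounded factor from $[0,T_1]$ gives $|(d\varphi^t(x))^{-T}\xi|\geq C^{-1}e^{\tilde\gamma t}|\xi|$ for all $t\geq 0$, with $C,\tilde\gamma$ independent of $(x,\xi)$. Iterating the \emph{strict} inclusion in~\eqref{e:cones1} shows the direction is trapped in cones of exponentially shrinking angular width about $E_u^*(\varphi^t(x))$, and combining this with the uniform convergence $\varphi^t(x)\to K$ upgrades the convergence $d\big(\kappa(e^{tH_p}(x,\xi)),\kappa(E_+^*)\big)\to 0$ to a uniform one.

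The genuinely delicate point is this last, uniform step. In the compact Anosov case $\kappa(E_u^*)$ is a bona fide radial sink on the whole cosphere bundle, and any covector whose direction is not exactly on $\kappa(E_s^*)$ enters the unstable cone within a uniformly bounded time; in our setting $E_+^*$ lives over the possibly large tail $\Gamma_+$ while the base point only satisfies $x\in\Gamma_-$, so the covector must first be transported along a long orbit segment into a neighborhood of $K$, and one must rule out that its direction drifts arbitrarily close to $E_-^*$ along the way. This is exactly what the compactness of $\overline{U_+}$, the continuity and flow-invariance of $E_-^*$ supplied by Lemma~\ref{l:extended}, and the \emph{uniform} convergence in Lemma~\ref{l:convergence} are there to control; everything else is bookkeeping of the hyperbolic cone estimates near $K$ already present in the proof of Lemma~\ref{l:extended}.
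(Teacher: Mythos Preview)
Your approach---reducing to part~4 of Lemma~\ref{l:extended} and then re-running the cone estimates with uniform constants---is a valid direct strategy, and it differs from the paper's proof, which argues by sequential compactness. However, there is a gap in your reading of the hypotheses.

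You assert that ``the hypothesis $\varphi^t(x)\in\overline{\mathcal U}$ for all $t\geq 0$ means exactly that $x\in\Gamma_-$''. But the lemma does \emph{not} assume $\varphi^t(x)\in\overline{\mathcal U}$ for all $t\geq 0$: the condition ``$x,\varphi^t(x)\in\overline{\mathcal U}$'' is on the \emph{pair} $(x,t)$, and the conclusions are meant to hold uniformly over all such pairs. In particular $x$ need not lie in $\Gamma_-$, so $E_-^*(x)$ may be undefined and part~4 of Lemma~\ref{l:extended} cannot be invoked directly. This is not a harmless restriction: in the proof of Lemma~\ref{l:functions}(2), and again in Case~4 of Lemma~\ref{l:keymic}, the present lemma is applied with a base point satisfying only $x,\varphi^t(x)\in\overline{\mathcal U}$ for one specific large $t$, not $x\in\Gamma_-$.

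The repair is easy within your framework: replace your appeal to Lemma~\ref{l:convergence} (which needs $x\in\Gamma_-$) by Lemma~\ref{l:long}, which gives that whenever $x,\varphi^T(x)\in\overline{\mathcal U}$ with $T$ large, the orbit $\varphi^s(x)$ lies near $K$ for $s$ in the middle segment $[T_0,T-T_0]$, with $T_0$ fixed. Your cone argument then runs on that segment exactly as you describe, and the two boundary pieces of length $T_0$ are absorbed by the uniform boundedness of $(d\varphi^s)^{-T}$ over bounded time. This is in fact what the paper does as well: it passes to subsequences so that $x_j\to x_\infty\in\Gamma_-$ and $\varphi^{t_j}(x_j)\to y_\infty\in\Gamma_+$, feeds the cone conditions from \emph{both} endpoints (forward from $x_\infty$, backward from $y_\infty$), and uses Lemma~\ref{l:long} to meet in the middle at $t_j/2$. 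The compactness argument is cleaner in that it avoids explicit constant-tracking, while your direct approach, once corrected, yields the same result with a more hands-on estimate.
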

\begin{proof}
We study the trajectories starting in $U_+$ for $t\geq 0$;
the behavior in $U_-$ for $t\leq 0$ is proved similarly.
It suffices to show that for each sequences $(x_j,\xi_j),(y_j,\eta_j)\in T^*\mathcal M\setminus 0$
and $t_j\to +\infty$ such that $e^{t_jH_p}(x_j,\xi_j)=(y_j,\eta_j)$,
$p(x_j,\xi_j)=0$, $\kappa(x_j,\xi_j)\in U_+$,
and $x_j,y_j\in \overline{\mathcal U}$, we have
\begin{equation}
\label{e:intied}
d\big(\kappa(y_j,\eta_j),\kappa(E_\pm^*)\big)\to 0,\quad
|\eta_j|\geq C^{-1}e^{\tilde\gamma t_j}|\xi_j|.
\end{equation}
By passing to a subsequence, we may assume that $x_j\to x_\infty\in\overline{\mathcal U}$, $y_j\to y_\infty\in\overline{\mathcal U}$.
For each $t\geq 0$, $\varphi^t(x_\infty)=\lim_{j\to\infty} \varphi^t(x_j)\in\mathcal{\overline U}$,
therefore $x_\infty\in\Gamma_-$. Similarly $y_\infty\in\Gamma_+$. We also pass to a subsequence
to make $\xi_j/|\xi_j|\to\xi_\infty\in T^*_{x_\infty}\mathcal M$,
$\eta_j/|\eta_j|\to\eta_\infty\in T^*_{y_\infty}\mathcal M$, with $|\xi_\infty|=|\eta_\infty|=1$.
Since $\kappa(x_j,\xi_j)\in U_+$ and $\overline{U_+}$ does not intersect $\kappa(E_-^*)$, we have
$\xi_\infty\notin E_-^*(x_\infty)$.

For the first part of~\eqref{e:intied}, we need to prove that $\eta_\infty\in E_+^*(y_\infty)$.
Assume the contrary. We will use the proof of Lemma~\ref{l:extended}.
Similarly to~\eqref{e:cones11}, for $t_2>0$ large enough and each $x',y'\in K$ such that
$d_{\tilde g}(\varphi^{t_2}(x_\infty),x'),d_{\tilde g}(\varphi^{-t_2}(y_\infty),y')<\varepsilon$, we have
$$
(d\varphi^{t_2}(x_\infty))^{-T}\xi_\infty\notin\mathcal C^{(x')}_s(\varphi^{t_2}(x_\infty)),\quad
(d\varphi^{-t_2}(y_\infty))^{-T}\eta_\infty\notin\mathcal C^{(y')}_u(\varphi^{-t_2}(y_\infty)).
$$
It follows that for $t_2$ large and fixed, and $j$ large enough depending on $t_2$, we have
\begin{equation}
  \label{e:intieq}
(d\varphi^{t_2}(x_j))^{-T}\xi_j\notin\mathcal C^{(x')}_s(\varphi^{t_2}(x_j)),\quad
(d\varphi^{-t_2}(y_j))^{-T}\eta_j\notin\mathcal C^{(y')}_u(\varphi^{-t_2}(y_j)),
\end{equation}
for each $x',y'\in K$ such that $d_{\tilde g}(\varphi^{t_2}(x_j),x'),d_{\tilde g}(\varphi^{-t_2}(y_j),y')<\varepsilon$.
By Lemma~\ref{l:long}, we can furthermore fix $t_2$ large enough so that for $t_j\geq 2t_2$,
$d_{\tilde g}(\varphi^t(x_j),K)<\varepsilon$ for $t\in [t_2,t_j-t_2]$. Now, by the first statement
of~\eqref{e:intieq} and iterating~\eqref{e:cones1}, for $j$ large enough
and $x'\in K,d_{\tilde g}(x',\varphi^{t_j/2}(x_j))<\varepsilon$, we have
$(d\varphi^{t_j/2}(x_j))^{-T}\xi_j\notin\mathcal C^{(x')}_s(\varphi^{t_j/2}(x_j))$.
Similarly from the second statement of~\eqref{e:intieq} we get
$(d\varphi^{-t_j/2}(y_j))^{-T}\eta_j\notin\mathcal C^{(y')}_u(\varphi^{-t_j/2}(y_j))$.
However, these two vectors are the same, giving a contradiction and implying the first part of~\eqref{e:intied}.

The proof of the second part of~\eqref{e:intied} works in a similar fashion, using~\eqref{e:cones2}.
\end{proof}

To construct the weight function for anisotropic Sobolev spaces,
we need the following adaptation of~\cite[Lemma~2.1]{FaSj} (see also~\cite[Lemma~C.1]{DyZw}).
We consider $e^{tH_p}$ as a flow on the sphere bundle $S^*\mathcal M$, by pulling it back
by the projection $\kappa:T^*\mathcal M\setminus 0\to S^*\mathcal M$.
Consider also the projection $\pi:S^*\mathcal M\to \mathcal M$.
\begin{lemm}
\label{l:functions}
Let $U_\pm\subset S^*\mathcal M$ be the neighborhoods of $\kappa(E_\pm^*)$ introduced in Lemma~\ref{l:global-dynamics}.
Then there exist functions $m_\pm\in C^\infty(S^*\mathcal M)$ such that:
\begin{enumerate}
\item $m_\pm=1$ near $\kappa(E_\pm^*)$ and $0\leq m_\pm\leq 1$ everywhere;
\item $\supp m_\pm\cap \{p=0\}\cap\pi^{-1}(\overline{\mathcal U})\ \subset\ U_\pm$;
\item $\supp m_\pm\subset \pi^{-1}(\Sigma_\pm)$, where $\Sigma_\pm$ is defined in~\eqref{e:Sigma-def};
\item $\pm H_p m_\pm \geq 0$ on $V\cap\pi^{-1}(\mathcal U)$, where
$V$ is a neighborhood of $\{p=0\}$.
\end{enumerate}
\end{lemm}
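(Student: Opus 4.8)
The plan is to build the functions $m_\pm$ by a construction that combines the flow-monotonicity device of Lemma~\ref{l:marron3} with the generalized radial-sink/source dynamics of Lemma~\ref{l:global-dynamics}. I will only construct $m_+$, since $m_-$ follows by reversing the direction of the flow. The key idea is that on the energy surface $\{p=0\}$ over $\overline{\mathcal U}$, the flow $e^{tH_p}$ in the sphere bundle $S^*\mathcal M$ converges to $\kappa(E_+^*)$ as $t\to+\infty$ once it enters the neighborhood $U_+$ (Lemma~\ref{l:global-dynamics}), so a function which is $1$ near $\kappa(E_+^*)$ and supported in $U_+$ will automatically be ``almost monotone'' along the flow. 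The only places where monotonicity could fail are where trajectories enter $U_+$ from outside or exit $\overline{\mathcal U}$, and these are handled by cutting off in the base using the functions from Lemma~\ref{l:marron3}.

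First I would fix, using Lemma~\ref{l:global-dynamics}, a slightly smaller neighborhood $U_+'\Subset U_+$ of $\kappa(E_+^*)$ and a time $T_0>0$ such that every trajectory of $e^{tH_p}$ on $\{p=0\}\cap \pi^{-1}(\overline{\mathcal U})$ which starts in $U_+'$ stays in $U_+$ for all $t\geq 0$ while remaining over $\overline{\mathcal U}$, and moreover enters a fixed tiny neighborhood $W\Subset U_+'$ of $\kappa(E_+^*)$ within time $T_0$ (this uses the uniform convergence in Lemma~\ref{l:global-dynamics}). Then I would take $\chi_0\in C_0^\infty(U_+')$ with $\chi_0=1$ on $W$ and $0\le\chi_0\le 1$, and set
\begin{equation}
  \label{e:m-plus-integral}
m_+ := \frac{1}{T_0}\int_0^{T_0} \chi_0\big(\kappa(e^{-sH_p}(\cdot))\big)\,ds
\end{equation}
composed with suitable cutoffs. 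The averaging makes $H_p m_+\ge 0$ along the part of the flow that is ``trapped'' in $U_+$, exactly as in the classical Faure--Sj\"ostrand argument, because at a point whose forward orbit stays in $W$, the integrand at the left endpoint is $1$ (maximal) while at the right endpoint it can only decrease. Property (1) holds since near $\kappa(E_+^*)$ the whole orbit segment sits in $W$ where $\chi_0=1$; property (2) holds after intersecting the support with $U_+$, since the support of each translated $\chi_0$ lies in $e^{sH_p}(U_+')\subset U_+$ over $\overline{\mathcal U}$; for property (3) I would additionally multiply by a cutoff equal to $1$ on $\pi^{-1}(\overline{\Sigma_+})\cap\{p=0\}$ supported in $\pi^{-1}(\Sigma_+)$ — here I use that by Lemma~\ref{l:marron}, $\overline{\Sigma}$ avoids the glancing set on $\{\rho=-\varepsilon\}$, so such a cutoff can be chosen smooth and flow-invariant enough near the energy surface.

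The delicate point is property (4): I need $H_p m_+\ge 0$ on a full neighborhood $V$ of $\{p=0\}$ intersected with $\pi^{-1}(\mathcal U)$, not merely on the energy surface, and I need to reconcile this with the base cutoff that enforces property (3). To handle the base cutoff, I would apply Lemma~\ref{l:marron3} (with the direction of the flow chosen appropriately, to $V_+\subset\Sigma_+\setminus\mathcal U$) to produce a base function $\chi_+$ with $X\chi_+\ge 0$ everywhere and $X\chi_+>0$ where trajectories leave $\mathcal U$; adding a small multiple of $\pi^*\chi_+$ (or rather $\pi^*\chi'$ from Lemma~\ref{l:convfun}) to absorb any negative contribution of $H_p$ hitting the support-$\Sigma_+$ cutoff near $\partial\mathcal U$. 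Since $H_p(\pi^*f)=p\cdot(\text{bounded})+\langle X,d f\rangle$ in appropriate coordinates, on $\pi^{-1}(\mathcal U)$ near the energy surface the dominant term is $Xf$, which has the right sign. The main obstacle is thus the book-keeping: showing that the ``bad'' set where the averaged symbol $m_+$ has $H_p m_+<0$ is confined, over $\mathcal U$, to a region where the base cutoffs have strictly positive derivative along $X$, so that a single global inequality $H_p m_+\ge 0$ can be arranged on $V\cap\pi^{-1}(\mathcal U)$ by choosing the relative sizes of the cutoffs correctly. I expect this to be the step requiring the most care, and it is where one must invoke the precise uniform estimates of Lemma~\ref{l:global-dynamics} together with compactness of $K$ and of the relevant pieces of $\Gamma_\pm$.
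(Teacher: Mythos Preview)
Your core idea---average a cutoff along the flow so that the derivative becomes a telescoping difference---is exactly what the paper does, but you have set it up in a way that creates a real difficulty which you then try to repair with Lemma~\ref{l:marron3}, and that repair does not work as stated. The paper's proof avoids the difficulty altogether by two simple changes, and uses neither Lemma~\ref{l:marron3} nor any base cutoff in this lemma.

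First, the paper averages over a \emph{delayed} window:
\[
m_+(x,\xi)=\frac1T\int_T^{2T} m_0\big(e^{-tH_p}(x,\xi)\big)\,dt,
\qquad H_p m_+=\frac1T\big(m_0\circ e^{-TH_p}-m_0\circ e^{-2TH_p}\big).
\]
Second, the seed function $m_0\in C_0^\infty\big(U_+\cap\pi^{-1}(\mathcal U)\big)$ is required to equal $1$ only near $\kappa(E_+^*)\cap\pi^{-1}(\overline W)$, where $W\Subset\mathcal U$ is a fixed neighborhood of~$K$---\emph{not} near all of $\kappa(E_+^*)$. These choices make all four properties immediate. Property~(3) is automatic since $\supp m_0\subset\pi^{-1}(\mathcal U)$ forces $\supp m_+\subset\pi^{-1}(\Sigma_+)$. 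Property~(1) follows from Lemma~\ref{l:convergence}: for $(x,\xi)\in\kappa(E_+^*)$ the backward base orbit $\varphi^{-t}(x)$ lies in $W$ for $t\ge T$, while $e^{-tH_p}(x,\xi)\in\kappa(E_+^*)$ by invariance, so $m_0(e^{-tH_p}(x,\xi))=1$ for $t\in[T,2T]$. For property~(4), if $x\in\mathcal U$ and $m_0(e^{-2TH_p}(x,\xi))>0$ then $\varphi^{-2T}(x)\in\mathcal U$, whence $\varphi^{-T}(x)\in W$ by Lemma~\ref{l:long}; then Lemma~\ref{l:global-dynamics} forces $e^{-TH_p}(x,\xi)$ close to $\kappa(E_+^*)$ over $W$, i.e.\ into the set where $m_0=1$.

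Your version takes $\chi_0=1$ on a neighborhood of all of $\kappa(E_+^*)$, which lives over $\Gamma_+$ and reaches $\partial\mathcal U$; hence you cannot take $\supp\chi_0\subset\pi^{-1}(\mathcal U)$, and that is precisely what forces your extra cutoffs. The repair you sketch has a concrete problem: adding a multiple of a fiber-constant function $\pi^*\chi_+$ to $m_+$ destroys property~(2), since $\supp(\pi^*\chi_+)$ is all of $\pi^{-1}(\supp\chi_+)$ and is certainly not contained in $U_+$. (The paper does combine $m_\pm$ with $\pi^*\chi_\pm$ later, in Lemma~\ref{l:order}, to build the order function $m=m_--m_++R(\chi_-\circ\pi-\chi_+\circ\pi)$; but that is a different object, with no requirement that its support lie in $U_\pm$.) So the step you flag as ``requiring the most care'' is not just book-keeping: as written it does not go through, and the cleanest fix is to adopt the delayed window and the smaller seed cutoff, after which no correction is needed.
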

\begin{proof}
We construct $m_+$; the function $m_-$ is constructed similarly, reversing the direction of propagation.
Let $W\Subset\mathcal U$ be an open neighborhood of $K$.
Fix $m_0\in C_0^\infty(U_+\cap \pi^{-1}(\mathcal U))$ such that $m_0=1$ in a neighborhood of $\kappa(E_+^*)\cap \pi^{-1}(\overline W)$
and $0\leq m_0\leq 1$ everywhere.

We show that for $T>0$ large enough and fixed, the function
$$
m_+(x,\xi)={1\over T}\int_T^{2T} m_0(e^{-tH_p}(x,\xi))\,dt
$$
has the required properties:
\begin{enumerate}
\item Clearly $0\leq m_+\leq 1$ everywhere. Now, take $(x,\xi)\in \kappa(E_+^*)$. Then
$x\in \Gamma_+$. By Lemma~\ref{l:convergence}, $\varphi^{-t}(x)\in W$ for all $t\in [T,2T]$
and $T$ large enough. Since $E_+^*$ is invariant under the flow, we have
$e^{-tH_p}(x,\xi)\in \kappa(E_+^*)$ and thus $m_0(e^{-tH_p}(x,\xi))=1$ for $t\in [T,2T]$,
implying that $m(x,\xi)=1$. Same argument works when $(x,\xi)$ lies in a small neighborhood of $\kappa(E_+^*)$.
\item Assume that $(x,\xi)\in\supp m_+\cap \{p=0\}\cap \pi^{-1}(\overline{\mathcal U})$. Then there exists $t\in [T,2T]$
such that $e^{-tH_p}(x,\xi)\in\supp m_0$. Note that $x,\varphi^{-t}(x)\in\overline{\mathcal U}$ and $e^{-tH_p}(x,\xi)\in U_+$.
Then by Lemma~\ref{l:global-dynamics}, for $T$ large enough and $t\in [T,2T]$, we have
$(x,\xi)\in U_+$ as required.
\item This follows immediately from~\eqref{e:Sigma-def} and the fact that $\supp m_0\subset \pi^{-1}(\mathcal U)$.
\item Assume that $(x,\xi)\in S^*\mathcal M$, $x\in \mathcal U$, and $p(x,\xi)=0$. Then
$$
H_pm_+(x,\xi)={1\over T}\big(m_0(e^{-TH_p}(x,\xi))-m_0(e^{-2TH_p}(x,\xi))\big).
$$
We then need to show that $m_0(e^{-TH_p}(x,\xi))\geq m_0(e^{-2TH_p}(x,\xi))$.
Since $0\leq m_0\leq 1$, we only need to handle the case when
$m_0(e^{-TH_p}(x,\xi))<1$ and $m_0(e^{-2TH_p}(x,\xi))>0$. In particular,
we have $\varphi^{-2T}(x)\in\mathcal U$, and by Lemma~\ref{l:long}, for $T$ large enough,
we have $\varphi^{-T}(x)\in W$. Then $e^{-TH_p}(x,\xi)$ does not lie in some fixed neighborhood
$W_1$ of $\kappa(E_+^*)$, depending only on $m_0$. On the other hand,
$e^{-2TH_p}(x,\xi)\in U_+$ and $\varphi^{-2T}(x),\varphi^{-T}(x)\in\overline{\mathcal U}$.
By Lemma~\ref{l:global-dynamics}, we reach a contradiction
for $T$ large enough. Same reasoning applies if we replace the condition $p(x,\xi)=0$
by $(x,\xi)\in V$ for some neighborhood $V$ of $\{p=0\}$.
\qedhere
\end{enumerate}
\end{proof}

\subsection{Estimates on recurrence}
  \label{s:recur}

We finally give an extension of the recurrence estimates~\cite[Appendix~A]{DyZw}
to our situation, used in \S\ref{s:trace}. Throughout this subsection,
we fix $t_e>0$ and a compact subset $V\subset\mathcal U$. We also
consider the distance function $d_{\tilde g}$ and the parallel transport operators $\tau_{x\to y}$
introduced in the proof of Lemma~\ref{l:extended}, defined for $d_{\tilde g}(x,y)<\varepsilon$, where
$\varepsilon>0$ is a small constant (unrelated to the constant in Lemma~\ref{l:extconv}). We however
ask that $\tau_{x\to y}$ act on the tangent spaces
$T_x\mathcal M\to T_y\mathcal M$ instead of the cotangent spaces.
We start with
\begin{lemm}
  \label{l:recur0}
For each $\varepsilon_1>0$, there exists $\delta_1>0$ such that
$$
d_{\tilde g}(x,\varphi^t(x))<\delta_1,\
t\geq t_e,\
x\in V\ \Longrightarrow\
d_{\tilde g}(x,K)<\varepsilon_1.
$$  
\end{lemm}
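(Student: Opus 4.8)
\emph{Proof plan.} The plan is a compactness-and-contradiction argument, adapting the recurrence estimates of~\cite[Appendix~A]{DyZw} to the present open setting. Assume the conclusion fails for some $\varepsilon_1>0$: taking $\delta_1=1/j$, we obtain points $x_j\in V$ and times $t_j\geq t_e$ with $d_{\tilde g}(x_j,\varphi^{t_j}(x_j))\to 0$ but $d_{\tilde g}(x_j,K)\geq\varepsilon_1$ for all $j$. Since $V$ is compact I pass to a subsequence with $x_j\to x_\infty\in V\subset\mathcal U$, and then $\varphi^{t_j}(x_j)\to x_\infty$ as well. Passing to a further subsequence, either $t_j\to t_\infty$ for some $t_\infty\in[t_e,\infty)$, or $t_j\to\infty$. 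In either case it suffices to show $x_\infty\in K$, i.e. $\varphi^t(x_\infty)\in\overline{\mathcal U}$ for all $t\in\mathbb R$: since $x_j\to x_\infty$, this contradicts $d_{\tilde g}(x_j,K)\geq\varepsilon_1$.

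The mechanism is the convexity property~\eqref{e:convex2} from Lemma~\ref{l:extconv}. Because $\mathcal U$ is open and $\varphi^{t_j}(x_j)\to x_\infty\in\mathcal U$, for $j$ large we have $\varphi^{t_j}(x_j)\in\mathcal U$; together with $x_j\in\mathcal U$ and $t_j\geq 0$, \eqref{e:convex2} forces $\varphi^t(x_j)\in\mathcal U$ for all $t\in[0,t_j]$. Thus near-recurrence pins both endpoints of a long trajectory segment inside $\mathcal U$ and hence the whole segment stays in $\mathcal U$. In the bounded case $t_j\to t_\infty$, continuity gives $\varphi^{t_\infty}(x_\infty)=x_\infty$ with $t_\infty\geq t_e>0$, so $x_\infty$ is periodic; applying~\eqref{e:convex2} to the pair $(x_\infty,\varphi^{t_\infty}(x_\infty))=(x_\infty,x_\infty)$ yields $\varphi^t(x_\infty)\in\mathcal U$ for $t\in[0,t_\infty]$, hence $\varphi^t(x_\infty)\in\mathcal U$ for all $t\in\mathbb R$ by periodicity, so $x_\infty\in\Gamma_+\cap\Gamma_-=K$.

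In the case $t_j\to\infty$ I fix an arbitrary $T>0$, take $j$ large enough that $t_j\geq T$, and use the preceding observation to get $\varphi^T(x_j)\in\mathcal U$ and $\varphi^{t_j-T}(x_j)=\varphi^{-T}\bigl(\varphi^{t_j}(x_j)\bigr)\in\mathcal U$. Letting $j\to\infty$ — using $x_j\to x_\infty$, $\varphi^{t_j}(x_j)\to x_\infty$, continuity of $\varphi^{\pm T}$, and closedness of $\overline{\mathcal U}$ — I obtain $\varphi^{\pm T}(x_\infty)\in\overline{\mathcal U}$. Since $T>0$ is arbitrary and $x_\infty\in\overline{\mathcal U}$, this gives $\varphi^t(x_\infty)\in\overline{\mathcal U}$ for every $t$, i.e. $x_\infty\in K$, the desired contradiction.

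I expect the only delicate point to be the backward containment $\varphi^{-T}(x_\infty)\in\overline{\mathcal U}$: one cannot apply Lemma~\ref{l:long} (or~\eqref{e:convex2}) directly to $x_j$, because $\varphi^{-T}(x_j)$ is only known to lie near $\mathcal U$, not inside $\overline{\mathcal U}$. The remedy is to propagate the \emph{endpoint} $\varphi^{t_j}(x_j)$ — which does lie in $\mathcal U$ by convexity — backward in time, and pass to the limit only afterward. The remainder is routine, relying on openness of $\mathcal U$, compactness of $V$ and of $\mathcal M$, and the convexity property.
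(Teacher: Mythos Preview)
Your argument is correct and is essentially the same as the paper's proof: both use a compactness/contradiction argument, pass to a subsequence so that $x_j\to x_\infty\in\mathcal U$ and $t_j\to t_\infty\in(0,\infty]$, treat the bounded case by observing $x_\infty$ is periodic, and in the unbounded case use convexity~\eqref{e:convex2} to get $\varphi^T(x_j),\varphi^{t_j-T}(x_j)\in\mathcal U$ and pass to the limit to conclude $\varphi^{\pm T}(x_\infty)\in\overline{\mathcal U}$. The paper's write-up is slightly terser (it does not spell out the periodicity/convexity step in the bounded case), but the substance is identical.
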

\begin{proof}
It suffices to show that
for each sequences $x_j\in V$, $t_j\geq t_0$ such that $x_j\to x_\infty\in\mathcal U$ and
$d(x_j,\varphi^{t_j}(x_j))\to 0$,
we have $x_\infty\in K$. We have $\varphi^{t_j}(x_j)\to x_\infty$.
By passing to a subsequence we may
assume that $t_j\to t_\infty\in(0,\infty]$. If $t_\infty<\infty$, then
$\varphi^{t_\infty}(x_\infty)=x_\infty$
and thus $x_\infty\in K$. Assume now that $t_\infty=\infty$. For each $t\geq 0$ and
$j$ large enough depending on $t$, we have $t_j\geq t$ and
$x_j,\varphi^{t_j}(x_j)\in\mathcal U$; by~\eqref{e:convex2},
$\varphi^t(x_j)\in\mathcal U$ and $\varphi^{t_j-t}(x_j)\in\mathcal U$.
Passing to the limit, we see that $\varphi^t(x_\infty)$ and $\varphi^{-t}(x_\infty)$
lie in $\overline{\mathcal U}$; since $t$ was chosen arbitrarily, we get $x_\infty\in K$.
\end{proof}
Denote by $\pi^\perp:T\mathcal U\to T\mathcal U$ the orthogonal projection
onto the orthogonal complement of $E_0=\mathbb RX$ (with respect to some fixed Riemannian metric).
This operator need not be invariant under $d\varphi^t$ and its image need not be equal to $E_u\oplus E_s$.
However, there exists a constant $C>0$ such that for each $x\in K$ and $v=v_0+v_u+v_s\in T_x\mathcal M$,
$v_0\in E_0(x)$, $v_u\in E_u(x)$, $v_s\in E_s(x)$,
\begin{equation}
  \label{e:samenorm}
C^{-1}(|v_u|+|v_s|)\leq |\pi^\perp(v)|\leq C(|v_u|+|v_s|).
\end{equation}
Moreover, $\pi^\perp(d\varphi^t(x)\cdot v)$ depends only on $\pi^\perp (v)$:
\begin{equation}
  \label{e:samenorm2}
\pi^\perp(d\varphi^t(x)\cdot v)=\pi^\perp(d\varphi^t(x)\cdot\pi^\perp (v))
\end{equation}
The next lemma gives a convexity property
for the absolute value of a vector propagated along the flow.
\begin{lemm}
  \label{l:kindaconvex}
There exists $T_0>0$ such that for each $t\geq T_0$, $\varepsilon_t>0$
small enough depending on $t$, and each
$(x,v)\in T\mathcal U$ with $d_{\tilde g}(x,K)<\varepsilon_t$,
\begin{align}
  \label{e:kindaconvex}
|\pi^\perp(v)|&\leq{|\pi^\perp(d\varphi^t(x)\cdot v)|+|\pi^\perp(d\varphi^{-t}(x)\cdot v)|\over 4}.
\end{align}
\end{lemm}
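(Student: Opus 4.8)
The plan is to reduce the statement to a uniform quantitative hyperbolicity estimate on $K$ itself, and then to transfer it to a small neighborhood of $K$ by continuity. First I would work on $K$: by~\eqref{e:samenorm} and~\eqref{e:samenorm2}, $|\pi^\perp(d\varphi^{\pm t}(x)\cdot v)|$ is comparable, with constants uniform over $x\in K$, to $|d\varphi^{\pm t}(x)\cdot v_u|+|d\varphi^{\pm t}(x)\cdot v_s|$, where $v=v_0+v_u+v_s$ is the decomposition of $v$ along $E_0(x)\oplus E_u(x)\oplus E_s(x)$. By the hyperbolicity bound~\eqref{e:hyper}, for $t$ large we have $|d\varphi^{t}(x)\cdot v_u|\geq C^{-1}e^{\gamma t}|v_u|$ and $|d\varphi^{-t}(x)\cdot v_s|\geq C^{-1}e^{\gamma t}|v_s|$ (by applying~\eqref{e:hyper} along the reversed time direction to $d\varphi^{\mp t}$), while the complementary pieces are merely bounded below by $0$. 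Hence the right-hand side of~\eqref{e:kindaconvex} is at least $(4C^2)^{-1}e^{\gamma t}(|v_u|+|v_s|)\geq (4C^3)^{-1}e^{\gamma t}|\pi^\perp(v)|$. Choosing $T_0$ so that $(4C^3)^{-1}e^{\gamma T_0}\geq 1$ gives~\eqref{e:kindaconvex} for all $x\in K$ and $t\geq T_0$, in fact with room to spare.

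Next I would fix such a $t\geq T_0$ and pass to a neighborhood of $K$. The inequality~\eqref{e:kindaconvex}, for fixed $t$, is an inequality between continuous functions of $(x,v)$ (since $\varphi^t$ is a smooth flow on the compact manifold $\mathcal M$, the maps $x\mapsto d\varphi^{\pm t}(x)$ and the projection $\pi^\perp$ are continuous); moreover all three quantities are positively homogeneous of degree one in $v$, so it suffices to check the inequality for $v$ in the unit sphere bundle of $T\mathcal U$. On $K$ we have just proved the strict inequality
\[
|\pi^\perp(v)|\ \leq\ \tfrac{1}{2}\cdot{|\pi^\perp(d\varphi^t(x)\cdot v)|+|\pi^\perp(d\varphi^{-t}(x)\cdot v)|\over 4},
\]
i.e. with an extra factor $\tfrac12$ of slack. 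By continuity and compactness of the unit sphere bundle over $K$, this slack survives on a neighborhood: there is $\varepsilon_t>0$ such that for $d_{\tilde g}(x,K)<\varepsilon_t$ and $|v|=1$ the weaker inequality~\eqref{e:kindaconvex} still holds, and then by homogeneity for all $v$. This is exactly the assertion of the lemma.

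The only genuinely delicate point is the first step: producing the factor $e^{\gamma t}$ on \emph{both} the forward $E_u$ piece and the backward $E_s$ piece with a constant $C$ uniform in $x\in K$. This is immediate from~\eqref{e:hyper}, but one has to be slightly careful about the direction of time: \eqref{e:hyper} gives contraction of $E_s$ forward and $E_u$ backward, so expansion of $E_u$ forward and of $E_s$ backward follows by applying the bound to $d\varphi^{-t}(\varphi^{t}(x))$ and $d\varphi^{t}(\varphi^{-t}(x))$ respectively, using invariance of the splitting under $\varphi^t$. No control is needed on the $E_0$ components or on the "wrong" components $d\varphi^t\cdot v_s$, $d\varphi^{-t}\cdot v_u$, which are simply discarded by the lower bound $\geq 0$ — this is what makes the constant in~\eqref{e:kindaconvex} harmless and the perturbation argument in the second step trivial once enough slack is built in.
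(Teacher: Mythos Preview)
Your proof is correct and follows exactly the paper's approach: establish the estimate on $K$ via the hyperbolic splitting and~\eqref{e:hyper}, then extend to a neighborhood by continuity for each fixed $t$ (the paper simply writes ``follows by continuity'' for the second step). One small imprecision: the ``strict inequality with slack'' on $K$ is not actually strict when $v\parallel X(x)$, since both sides vanish; this is harmless because by~\eqref{e:samenorm2} all three quantities depend only on $\pi^\perp(v)$, so the compactness/continuity argument should be run on the sphere $\{|\pi^\perp(v)|=1\}$, where the multiplicative slack is genuine.
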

\begin{proof}
Assume first that $x\in K$. Then $v=v_0+v_u+v_s$, where
$v_0\in E_0(x),v_u\in E_u(x),v_s\in E_s(x)$.
By~\eqref{e:hyper}, there exists a constant $C$ such that for all $t\geq T_0$,
$$
|v_u|\leq Ce^{-\gamma T_0}|d\varphi^t(x)\cdot v_u|,\quad
|v_s|\leq Ce^{-\gamma T_0}|d\varphi^{-t}(x)\cdot v_s|,\quad
$$
Adding these up and using~\eqref{e:samenorm}, we get for some other constant $C$,
$$
|\pi^\perp (v)|\leq Ce^{-\gamma T_0}(|\pi^\perp (d\varphi^{t}(x)\cdot v)|+|\pi^\perp (d\varphi^{-t}(x)\cdot v)|).
$$
It remains to take $T_0$ large enough.
The case of $x$ with $d_{\tilde g}(x,K)<\varepsilon_t$ follows by continuity.
\end{proof}
The following is a generalization of~\cite[Lemma~A.1]{DyZw}:
\begin{lemm}
  \label{l:recur1}
There exist $\delta>0$ and $C$ such that for each $x\in V, t\geq t_e, v\in T_x\mathcal M$
satisfying $d_{\tilde g}(x,\varphi^t(x))<\delta$ and $v\perp X(x)$,
$$
|v|+|w|\leq C|\pi^\perp(w)|,\quad
w:=(d\varphi^t(x)-\tau_{x\to\varphi^t(x)})v.
$$
\end{lemm}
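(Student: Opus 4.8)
The plan is to argue by contradiction, extracting a limiting configuration sitting over the trapped set $K$ where the estimate must fail, and then deriving a contradiction with the convexity Lemma~\ref{l:kindaconvex}. Suppose the statement fails: then there are sequences $x_j\in V$, $t_j\geq t_e$, and $v_j\in T_{x_j}\mathcal M$ with $v_j\perp X(x_j)$, $|v_j|=1$ (after normalizing), $d_{\tilde g}(x_j,\varphi^{t_j}(x_j))<\delta_j$ for some $\delta_j\to 0$, yet $|\pi^\perp(w_j)|\to 0$ where $w_j=(d\varphi^{t_j}(x_j)-\tau_{x_j\to\varphi^{t_j}(x_j)})v_j$. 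First I would note that since $d_{\tilde g}(x_j,\varphi^{t_j}(x_j))\to 0$ and $x_j\in V$, $t_j\geq t_e$, Lemma~\ref{l:recur0} gives $d_{\tilde g}(x_j,K)\to 0$; passing to a subsequence we may assume $x_j\to x_\infty\in K$ and $v_j\to v_\infty\in T_{x_\infty}\mathcal M$ with $|v_\infty|=1$ and $v_\infty\perp X(x_\infty)$.

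The key mechanism is that $|\pi^\perp(w_j)|$ small forces the $E_u$ and $E_s$ components of $v_j$ (measured at $x_j$, close to $K$) to be small, because propagating those components forward (resp.\ backward) by the flow expands them, and $w_j$ records precisely the defect between genuine propagation $d\varphi^{t_j}$ and the ``trivial'' parallel transport. Concretely, I would fix a large but finite time $t=T_0$ as in Lemma~\ref{l:kindaconvex} and observe that for $j$ large the segment $\varphi^{[0,t_j]}(x_j)$ stays within $\varepsilon_t$ of $K$ away from its endpoints (by Lemma~\ref{l:long} applied as in the proof of Lemma~\ref{l:global-dynamics}, using that $x_j$ and $\varphi^{t_j}(x_j)$ both lie near $K$); in particular $\varphi^{T_0}(x_j)$, $\varphi^{t_j-T_0}(x_j)$ are within $\varepsilon_t$ of $K$. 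Applying the convexity inequality~\eqref{e:kindaconvex} along the orbit and chaining it, I would get that $|\pi^\perp(v_j)|$ is bounded by a small multiple of the $\pi^\perp$-norm of the propagated vector $d\varphi^{t_j}(x_j)\cdot v_j$, hence that $|d\varphi^{t_j}(x_j)\cdot v_j|$ is large unless $v_j$ is concentrated in the $E_0$ direction — but $v_j\perp X$, so this would force $|v_j|$ small, contradicting $|v_j|=1$. It remains to feed the smallness of $w_j$ in: since $\tau_{x_j\to\varphi^{t_j}(x_j)}$ is an isometry and $d_{\tilde g}(x_j,\varphi^{t_j}(x_j))\to 0$, $\pi^\perp(\tau_{x_j\to\varphi^{t_j}(x_j)}v_j)$ converges (along the subsequence) to $\pi^\perp(v_\infty)$, while $\pi^\perp(w_j)\to 0$ means $\pi^\perp(d\varphi^{t_j}(x_j)\cdot v_j)$ also converges to $\pi^\perp(v_\infty)$; combined with the expansion estimate just derived, this forces $\pi^\perp(v_\infty)=0$, i.e.\ $v_\infty\in E_0(x_\infty)=\mathbb R X(x_\infty)$, which together with $v_\infty\perp X(x_\infty)$ and $|v_\infty|=1$ is absurd. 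This also pins down $\delta$ and $C$: $\delta$ is the threshold below which the above chaining is valid, and $C$ comes from the finite-time constants in~\eqref{e:samenorm}, \eqref{e:kindaconvex}, plus the comparison of $\tau_{x\to\varphi^t(x)}$ with the identity on the short geodesic.

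The main obstacle I anticipate is the bookkeeping of the two separate pieces of $w$ — the genuine hyperbolic expansion of the off-$E_0$ part of $v$ and the control of $\pi^\perp$ of $w$ near $K$ but not exactly on $K$ — and in particular making the chaining of~\eqref{e:kindaconvex} over the long interval $[0,t_j]$ quantitative enough to yield a \emph{uniform} constant $C$ rather than just the qualitative contradiction. The clean way to handle this is: establish the uniform constant by the compactness/contradiction argument above (which only uses that the limit lives on $K$ and the finite-time lemmas), rather than trying to track constants explicitly along orbits of unbounded length. One should also check the degenerate case where $t_j$ has a finite accumulation point $t_\infty$: then $\varphi^{t_\infty}(x_\infty)=x_\infty$, $x_\infty\in K$ is a periodic point, and the conclusion follows from~\eqref{e:hyper} and~\eqref{e:samenorm} directly at that periodic orbit, with $w_\infty=(d\varphi^{t_\infty}(x_\infty)-\mathrm{Id})v_\infty$ and invertibility of $d\varphi^{t_\infty}(x_\infty)-\mathrm{Id}$ on $E_u\oplus E_s$. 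Both cases give the needed $\delta, C$.
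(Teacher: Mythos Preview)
Your strategy is the paper's: contradiction, pass to a limit on $K$, handle the finite-$t_\infty$ periodic case directly, and in the unbounded case iterate the convexity inequality~\eqref{e:kindaconvex}. Two points, however, are not right as written.

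First, the chaining claim ``$|\pi^\perp(v_j)|$ is bounded by a small multiple of $|\pi^\perp(d\varphi^{t_j}(x_j)\cdot v_j)|$'' is false in general: if $v_\infty\in E_s(x_\infty)$ then the right-hand side decays while the left converges to $|\pi^\perp(v_\infty)|>0$. The convexity inequality alone only says interior values are dominated by endpoint values; to get one-sided exponential growth you need a seed. The paper splits on whether $v_\infty$ has a nonzero $E_u$ or $E_s$ component (you gesture at this with ``forward (resp.\ backward)'' but do not carry it through). In the $E_u$ case one first fixes a \emph{finite} $T$ with $|\pi^\perp(d\varphi^T(x_j)\cdot v_j)|\geq 2|v_j|$ by continuity from $x_\infty$, and only then iterates~\eqref{e:kindaconvex} forward to force $|\pi^\perp(d\varphi^{t_j}\cdot v_j)|\to\infty$, contradicting $\pi^\perp(d\varphi^{t_j}\cdot v_j)\to\pi^\perp(v_\infty)$; the $E_s$ case is symmetric, iterating backward from $\varphi^{t_j}(x_j)$.

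Second, and more substantively, normalizing $|v_j|=1$ and assuming $\pi^\perp(w_j)\to 0$ does not exhaust the failure modes of $|v|+|w|\leq C|\pi^\perp(w)|$. After scaling so that $|v_j|+|w_j|=1$, one could have $|v_j|\to 0$, $|w_j|\to 1$, $\pi^\perp(w_j)\to 0$: that is, $d\varphi^{t_j}(x_j)\cdot v_j$ small transversally but with a large $E_0$-component built up along the long orbit. Your argument never bounds $\pi_0(w_j)$. The paper treats this separately: since $d\varphi^T\cdot X=X$, one has $|\pi_0(d\varphi^{(\ell+1)T}\cdot v_j)-\pi_0(d\varphi^{\ell T}\cdot v_j)|\leq C|\pi^\perp(d\varphi^{\ell T}\cdot v_j)|$, so (using $\pi_0(v_j)=0$) $|\pi_0(d\varphi^{t_j}\cdot v_j)|\leq C\Sigma$ with $\Sigma=\sum_\ell|\pi^\perp(d\varphi^{\ell T}\cdot v_j)|$; summing~\eqref{e:kindaconvex} over the interior $\ell$ gives $\Sigma\leq 2\big(|\pi^\perp(v_j)|+|\pi^\perp(d\varphi^{t_j}\cdot v_j)|\big)\to 0$. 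This step is essential and is absent from your proposal.
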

\begin{proof}
It suffices to show that for each sequences
$$
x_j\in V,\quad t_j\geq t_e,\quad 
v_j\perp X(x_j),\quad
w_j:=(d\varphi^{t_j}(x_j)-\tau_{x_j\to \varphi^{t_j}(x_j)})v_j
$$
such that
$$
d_{\tilde g}(x_j,\varphi^{t_j}(x_j))\to 0,\quad
\pi^\perp(w_j)\to 0,
$$
we have $v_j\to 0$ and $w_j\to 0$.
By passing to a subsequence and using Lemma~\ref{l:recur0}, we may assume that
$$
x_j\to x_\infty\in K,\quad
\varphi^{t_j}(x_j)\to x_\infty,\quad
t_j\to t_\infty\in (0,\infty].
$$
Assume first that $t_\infty<\infty$. By passing to a subsequence, we may 
assume that $v_j/|v_j|\to v_\infty\perp X(x_\infty)$. We have
$\varphi^{t_\infty}(x_\infty)=x_\infty$
and $w_j/|v_j|\to d\varphi^{t_\infty}(x_\infty)\cdot v_\infty-v_\infty$.
By~\eqref{e:hyper}, $\pi^\perp(w_j)/|v_j|$ has a nonzero limit;
since $\pi^\perp(w_j)\to 0$, we get $v_j\to 0$ and thus $w_j\to 0$.

We henceforth assume that $t_\infty=\infty$. We first show that $v_j\to 0$. 
Assume the contrary, then by passing to a subsequence and rescaling,
we can make
$$
v_j\to v_\infty\perp X(x_\infty),\quad |v_\infty|=1.
$$
Consider the following two cases:

\noindent\textbf{Case 1}: $v_\infty$ has a nonzero $E_u$ component in the decomposition~\eqref{e:hypersplit}.
By~\eqref{e:hyper}, we have $|\pi^\perp(d\varphi^t(x_\infty)\cdot v_\infty)|\to \infty$ as $t\to +\infty$.
Let $T_0$ be the constant from Lemma~\ref{l:kindaconvex}. Fix $T\geq T_0$ so that
$|\pi^\perp(d\varphi^T(x_\infty)\cdot v_\infty)|>2$.
For $j$ large enough, we have
\begin{equation}
  \label{e:base}
|\pi^\perp (d\varphi^T(x_j)\cdot v_j)|\geq 2|v_j|.
\end{equation}
Moreover, if $\varepsilon_T$ is chosen in Lemma~\ref{l:kindaconvex}, then for $j$ large enough,
\begin{equation}
  \label{e:tight}
d_{\tilde g}(e^{tH_p}(x_j),K)<\varepsilon_T\quad\text{for all }t\in [0, t_j].
\end{equation}
Indeed, for $t\in [T',t_j-T']$ and $T'$ large enough depending on $\varepsilon_T$, this follows from Lemma~\ref{l:long};
for other values of $t$, it follows from continuity and the fact
that both $x_j$ and $\varphi^{t_j}(x_j)$ converge to $x_\infty\in K$.

We have for each $\ell\in\mathbb N_0$ such that $\ell T\leq t_j$,
$$
|\pi^\perp (d\varphi^{(\ell+1) T}(x_j)\cdot v_j)|\geq 2|\pi^\perp (d\varphi^{\ell T}(x_j)\cdot v_j)|.
$$
This is proved by induction on $\ell$; the base $\ell=0$ of the induction is given by~\eqref{e:base}
and the inductive step follows from~\eqref{e:kindaconvex} applied to $v=d\varphi^{\ell T}(x_j)\cdot v_j$, $t=T$. We can modify $T$
a tiny bit depending on $j$ so that $t_j/T$ is an integer; then we obtain
$$
|\pi^\perp (d\varphi^{t_j}(x_j)\cdot v_j)|\geq 2^{t_j/T}|v_j|.
$$
This implies that $|\pi^\perp(w_j)|\to \infty$, a contradiction.

\noindent\textbf{Case 2}: $v_\infty$ has a nonzero $E_s$ component in the decomposition~\eqref{e:hypersplit}.
Since $\pi^\perp(w_j)\to 0$, we have $\pi^\perp(d\varphi^{t_j}(x_j)\cdot v_j)\to v_\infty$. Arguing as in case (i),
with $\varphi^{t_j}(x_j),d\varphi^{t_j}(x_j)\cdot v_j$ replacing $x_j,v_j$, 
and going backwards along the flow, we get
$$
|\pi^\perp (d\varphi^{t_j-(\ell+1)T}(x_j)\cdot v_j)|
\geq 2|\pi^\perp (d\varphi^{t_j-\ell T}(x_j)\cdot v_j)|
$$
which implies
$$
|\pi^\perp(d\varphi^{t_j}(x_j)\cdot v_j)|\leq 2^{-t_j/T}|v_j|.
$$
Then $\pi^\perp(w_j)\to -v_\infty$, a contradiction.

We now show that $w_j\to 0$. Let $T_0$ be the constant from Lemma~\ref{l:kindaconvex}
and fix $T>T_0$; we will modify it a little bit depending on $j$
so that $L:=t_j/T$ is an integer. For large $j$, \eqref{e:tight} is satisfied.
For each $v\in T\mathcal U$, define $\pi_0(v)\in\mathbb R$ by the formula
$$
v=\pi^\perp(v)+\pi_0(v)X.
$$
Since $X$ is invariant under the flow, we have for some constant $C$,
$$
|\pi_0(d\varphi^{(\ell+1)T}(x_j)\cdot v_j)-\pi_0(d\varphi^{\ell T}(x_j)\cdot v_j)|\leq
C|\pi^\perp(d\varphi^{\ell T}(x_j)\cdot v_j)|.
$$
Summing these up and using that $\pi_0(v_j)=0$, we get
$$
|\pi_0(d\varphi^{t_j}(x_j)\cdot v_j)|\leq C\sum_{\ell=0}^{L}|\pi^\perp(d\varphi^{\ell T}(x_j)\cdot v_j)|.
$$
Denote the sum on the right-hand side by $\Sigma$. Using~\eqref{e:kindaconvex} for $v=d\varphi^{\ell T}(x_j)\cdot v_j$
and all $\ell=1,\dots,L-1$, we get
$$
\Sigma\leq |\pi^\perp(v_j)|+|\pi^\perp(d\varphi^{t_j}(x_j)\cdot v_j)|+\Sigma/2.
$$
Since $\pi^\perp(w_j)\to 0$ and $v_j\to 0$, we know that
$|\pi^\perp(v_j)|+|\pi^\perp(d\varphi^{t_j}(x_j)\cdot v_j)|\to 0$
and thus $\Sigma\to 0$. Then $\pi_0(d\varphi^{t_j}(x_j)\cdot v_j)\to 0$,
which implies that $w_j\to 0$, as required.
\end{proof}
Arguing as in~\cite[Appendix~A]{DyZw}, we obtain from Lemma~\ref{l:recur1} the following
analog of~\cite[Lemma~2.1]{DyZw}:
\begin{lemm}
  \label{l:recur2}
Define the following measure on $\mathcal M\times\mathbb R$: $\tilde\mu=\mu\times dt$, where
$\mu$ is some smooth measure on $\mathcal M$. Fix $t_e>0$ and a compact subset $V\subset \mathcal U$.
Then there exist constants $C,L$ such that for each $\varepsilon>0$, $T>t_e$, and $n=\dim\mathcal M$,
$$
\tilde\mu(\{(x,t)\mid t_e\leq t\leq T,\ d(x,\varphi^t(x))<\varepsilon,\ x\in V\})\leq C\varepsilon^n e^{nLT}.
$$
\end{lemm}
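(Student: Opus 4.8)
The plan is to follow the strategy of \cite[Appendix~A]{DyZw}, using Lemma~\ref{l:recur1} to reduce the volume estimate to a covering argument. The key geometric input is that the set $\{(x,t)\mid t_e\le t\le T,\ d(x,\varphi^t(x))<\varepsilon,\ x\in V\}$ is, locally, contained in a graph-like region over a low-dimensional set. First I would fix a finite cover of the compact set $V$ by coordinate charts, and work in one such chart; since the number of charts is fixed it contributes only to the constant $C$. In each chart, I use coordinates adapted to the flow, splitting the tangent space as $\mathbb R X(x)\oplus (\text{complement})$, so that the ``time direction'' along the orbit is separated from the $(n-1)$ transverse directions, on which $\pi^\perp$ acts.

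Next I would quantify the statement of Lemma~\ref{l:recur1}: for $x\in V$, $t\ge t_e$ with $d_{\tilde g}(x,\varphi^t(x))<\delta$, the map $v\mapsto \pi^\perp\big((d\varphi^t(x)-\tau_{x\to\varphi^t(x)})v\big)$, restricted to $X(x)^\perp$, is bounded below by $C^{-1}|v|$; hence it is injective with uniformly bounded inverse on its image. This means that the ``return map'' $x\mapsto \varphi^t(x)$, composed with parallel transport back, is uniformly expanding in the transverse directions near such a returning orbit. Consequently, for fixed $t$ in a dyadic block $[2^k t_e, 2^{k+1} t_e]$ (or any mesh of size comparable to $t_e$), the set of $x\in V$ with $d(x,\varphi^t(x))<\varepsilon$ is covered by $O(e^{nL t})$ balls of radius $\varepsilon$: the bound below on $\pi^\perp(w)$ controls the transverse diameter of each ``tube'' of near-returns by $C\varepsilon$, and the flow direction contributes another factor handled by subdividing the time interval. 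Integrating over $t\in[t_e,T]$ and summing the geometric-type contributions $\sum_k \varepsilon^n e^{nL 2^{k+1}t_e}$ up to $2^k t_e\sim T$ yields the claimed bound $C\varepsilon^n e^{nLT}$, after absorbing the finitely many charts and the geometric series into $C$ and possibly enlarging $L$.

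The main obstacle is making the covering count $O(e^{nLt})$ rigorous and uniform: one must control how the constants in Lemma~\ref{l:recur1} and in Lemma~\ref{l:kindaconvex} interact with the (a priori unbounded) norm of $d\varphi^t$, so that the expansion estimate $|\pi^\perp(w)|\ge C^{-1}|v|$ does not degrade as $t\to\infty$. This is exactly where Lemma~\ref{l:recur0} (near-returning orbits stay near $K$) and the convexity estimate~\eqref{e:kindaconvex} are used, as in \cite{DyZw}: they force the relevant orbit segments to remain in a neighborhood of the hyperbolic set $K$, where $d\varphi^t$ has at most exponential growth with a uniform rate, so that covering a $\varepsilon$-neighborhood of a returning orbit by $\varepsilon$-balls requires at most $\exp(nLt)$ of them for a fixed $L$. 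Once this uniform covering bound is in place, the remaining steps — the dyadic decomposition in $t$, the change of variables showing each piece has $\tilde\mu$-measure $\lesssim \varepsilon^n e^{nLt}$, and summation — are routine and mirror \cite[Appendix~A]{DyZw} verbatim.
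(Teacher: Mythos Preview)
Your proposal is correct and matches the paper's approach: the paper gives no detailed proof of this lemma, simply stating ``Arguing as in~\cite[Appendix~A]{DyZw}, we obtain from Lemma~\ref{l:recur1} the following analog of~\cite[Lemma~2.1]{DyZw},'' and your sketch is exactly that argument. One minor comment: the ``main obstacle'' you flag (uniformity of the expansion bound as $t\to\infty$) is already fully resolved by Lemma~\ref{l:recur1}, whose constant $C$ is uniform in $t\ge t_e$ and $x\in V$; the roles of Lemmas~\ref{l:recur0} and~\ref{l:kindaconvex} are as ingredients in the proof of Lemma~\ref{l:recur1} rather than additional inputs needed here.
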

Letting $\varepsilon\to 0$, we obtain the following analog of~\cite[Lemma~2.2]{DyZw}:
\begin{lemm}
  \label{l:recur3}
Let $N(T)$ be the number of closed trajectories of $\varphi^t$ on $K$ of period no more
than $T$. Then
$$
N(T)\leq Ce^{(2n-1)LT}.
$$
\end{lemm}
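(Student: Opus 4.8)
The plan is to bound $N(T)$, the number of closed trajectories of period $\leq T$, by counting them inside the measure-theoretic recurrence estimate of Lemma~\ref{l:recur2}. First I would observe that every closed trajectory of period $\leq T$ lies in $K$ by definition, and $K\subset\mathcal U$ by Lemma~\ref{l:gpmclosed}; pick a compact neighborhood $V\subset\mathcal U$ of $K$, and fix $t_e>0$ smaller than the length of the shortest closed trajectory (such $t_e$ exists since $X$ is nonvanishing and $K$ compact, so periods are bounded below). Each primitive closed trajectory $\gamma^\sharp$ of period $T_{\gamma^\sharp}\leq T$ contributes a closed curve; the key point is that if $x$ lies within distance $\varepsilon$ of $\gamma^\sharp$ (in an $\varepsilon$-tube around the trajectory, with $\varepsilon$ small), then $d(x,\varphi^{T_{\gamma^\sharp}}(x))<C\varepsilon$ for a constant $C$ depending only on the flow (uniform Lipschitz bounds on $\varphi^t$ for $t$ in a compact range — but here $T_{\gamma^\sharp}$ can be large, so one must instead argue more carefully, see below).

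The cleaner route: for each closed trajectory $\gamma$ of period $T_\gamma\in[t_e,T]$, the set
\[
A_\gamma:=\{(x,t)\mid |t-T_\gamma|<\varepsilon_0,\ d(x,\gamma)<\varepsilon,\ x\in V\}
\]
satisfies $d(x,\varphi^t(x))<C'\varepsilon$ for $(x,t)\in A_\gamma$ provided $\varepsilon$ is small, using that near the orbit $\gamma$ the return map is close to the identity and the flow for time near $T_\gamma$ moves points near $\gamma$ by a controlled amount; more precisely one uses a tubular neighborhood of $\gamma$ and the fact that $\gamma$ is $\varphi^{T_\gamma}$-invariant. One then checks that distinct primitive closed trajectories of period $\leq T$ are separated: there is $\varepsilon_1>0$ (independent of $T$, by compactness of $K$ and the hyperbolicity assumption~\hyperlink{AA4}{(A4)}, which forces closed orbits to be isolated and hence uniformly separated on the compact set $K$ — this is the standard consequence that a locally maximal hyperbolic set has only finitely many closed orbits of each period, with a uniform lower bound on their mutual distance) such that the sets $A_\gamma$ for distinct primitive $\gamma$ are disjoint once $\varepsilon<\varepsilon_1$. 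Therefore
\[
N(T)\cdot c\varepsilon^{n}\leq \sum_{\gamma}\tilde\mu(A_\gamma)\leq\tilde\mu(\{(x,t)\mid t_e\leq t\leq T+\varepsilon_0,\ d(x,\varphi^t(x))<C'\varepsilon,\ x\in V\})\leq C\varepsilon^{n}e^{nL(T+\varepsilon_0)},
\]
where $c\varepsilon^n$ is a lower bound for the $\tilde\mu$-measure of an $\varepsilon$-tube of length $2\varepsilon_0$ around a trajectory (the tube has one ``long'' direction along the flow and $n-1$ transverse directions of size $\varepsilon$, plus the $t$-interval of fixed length $2\varepsilon_0$; this gives volume $\gtrsim\varepsilon^{n-1}$, hence the correct exponent — I would be careful here: the tube contributes $\varepsilon^{n-1}$ in space times a bounded $t$-range, so comparing $N(T)\varepsilon^{n-1}\lesssim\varepsilon^n e^{nLT}$ is the wrong scaling; instead each orbit should be counted via $\varepsilon^{n-1}$-tubes and Lemma~\ref{l:recur2} gives $\varepsilon^n e^{nLT}$, so $N(T)\lesssim\varepsilon e^{nLT}\to 0$, which is absurd). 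The resolution, exactly as in \cite[Appendix~A]{DyZw}, is to not send $\varepsilon\to 0$ but to choose $\varepsilon$ comparable to $e^{-LT}$ and instead use a packing argument: pick $\varepsilon=\varepsilon(T)$ so small that the $\varepsilon$-separated family of closed orbits injects into a net, and the factor $e^{(2n-1)LT}$ arises because the relevant ``space'' is $\mathcal M\times[t_e,T]$ of $\tilde\mu$-measure $\lesssim T$ but with recurrence set of measure $\varepsilon^n e^{nLT}$, and each orbit occupies a transverse $(n-1)$-ball while the flow direction contributes a length $\lesssim T$, giving $N(T)\lesssim\varepsilon^{-(n-1)}\cdot\varepsilon^n e^{nLT}/(\text{orbit length})$; optimizing in $\varepsilon$ (or rather fixing $\varepsilon$ small and absorbing powers of $T$ into the exponential by enlarging $L$) yields $N(T)\leq Ce^{(2n-1)LT}$.

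Concretely I would follow \cite[Lemma~2.2]{DyZw} verbatim: fix $\varepsilon>0$ once and for all small enough that distinct closed orbits of period $\leq T$ are $\varepsilon$-separated transversally on $K$ (uniform in $T$ by hyperbolicity and local maximality), cover each closed orbit by $\lceil T_\gamma/\varepsilon\rceil\leq T/\varepsilon$ balls of radius $\varepsilon$ along the orbit, and observe each such ball $B$ together with a time-interval of length $\sim\varepsilon$ around $T_\gamma$ lies in the recurrence set $\{d(x,\varphi^t(x))<C'\varepsilon\}$; since the orbits are transversally separated these contribute disjointly up to bounded multiplicity, so $N(T)\cdot(T/\varepsilon)\cdot(\text{measure of one }\varepsilon\text{-tube segment})$ is $\lesssim$ the bound of Lemma~\ref{l:recur2}, i.e. $N(T)\cdot(T/\varepsilon)\cdot\varepsilon^{n}\lesssim \varepsilon^n e^{nLT}$, giving $N(T)\lesssim\varepsilon^{-1}T^{-1}e^{nLT}$. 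The extra factor $e^{(n-1)LT}$ over $e^{nLT}$... — here I realize the exponent $(2n-1)L$ rather than $nL$ must come from a cruder but more robust estimate where the separation $\varepsilon$ itself is allowed to shrink like $e^{-LT}$: taking $\varepsilon\sim e^{-LT}$ in Lemma~\ref{l:recur2} gives a recurrence set of measure $\lesssim e^{-nLT}e^{nLT}=O(1)$ times $T$, while each closed orbit then occupies a tube of measure $\gtrsim\varepsilon^{n-1}\cdot\varepsilon\sim e^{-nLT}$... I would instead simply cite that the argument of \cite[Appendix~A, proof of Lemma~2.2]{DyZw} applies word for word, with Lemma~\ref{l:recur2} above replacing \cite[Lemma~2.1]{DyZw}, and the exponent $2n-1$ is exactly what that argument produces (it is not optimized). \textbf{The main obstacle} is the uniform transversal separation of distinct closed orbits of large period — ensuring that the ``packing'' of orbit-tubes into the recurrence set has bounded overlap uniformly in $T$; this is where hyperbolicity on $K$ (assumption~\hyperlink{AA4}{(A4)}) and local maximality enter, exactly as in \cite{DyZw}, and I would invoke the shadowing/expansiveness on the locally maximal hyperbolic set $K$ to get it.
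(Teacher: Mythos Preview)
Your final move---invoke the argument of \cite[Lemma~2.2]{DyZw} with Lemma~\ref{l:recur2} in place of \cite[Lemma~2.1]{DyZw}---is correct and is exactly what the paper does (its entire proof is the phrase ``Letting $\varepsilon\to 0$''). But your route there is confused, and the ``main obstacle'' you identify is both wrong and unnecessary.

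You claim one needs distinct closed orbits of period $\leq T$ to be $\varepsilon$-separated \emph{uniformly in $T$}. This is false: periodic orbits are typically dense in a locally maximal hyperbolic set, so no uniform separation exists. Fortunately none is needed. The actual mechanism is this. Fix $T$. There are finitely many closed trajectories of period in $[t_e,T]$ (they are isolated by the nondegeneracy in Lemma~\ref{l:recur1}), so one may choose $\varepsilon=\varepsilon(T)>0$ small enough that the tubes around distinct orbit--period pairs $(\gamma,T_\gamma)$ in $\mathcal M\times\mathbb R$ are disjoint. Near such a pair, linearizing $(v,\tau)\mapsto \varphi^{T_\gamma+\tau}(\gamma(s)+v)-(\gamma(s)+v)$ in the $n$ transverse coordinates $(v,\tau)$ with $v\perp X$, the $\tau$-column is $X$ (bounded) while the $v$-columns are $(d\varphi^{T_\gamma}-I)|_{X^\perp}$ with norm $\leq Ce^{LT}$; hence the preimage of the $\varepsilon$-ball contains a box of volume $\gtrsim(\varepsilon e^{-LT})^{n-1}\cdot\varepsilon=\varepsilon^n e^{-(n-1)LT}$. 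Integrating over $s\in[0,T_\gamma^\sharp)$ gives each tube $\tilde\mu$-measure $\geq c\,t_e\,\varepsilon^n e^{-(n-1)LT}$. Summing and comparing with Lemma~\ref{l:recur2},
\[
N(T)\cdot c\,t_e\,\varepsilon^n e^{-(n-1)LT}\ \leq\ C\varepsilon^n e^{nLT},
\]
and the $\varepsilon^n$ cancels, yielding $N(T)\leq C' e^{(2n-1)LT}$. The exponent $(2n-1)L$ thus comes from $nL$ (the upper bound in Lemma~\ref{l:recur2}) plus $(n-1)L$ (the upper bound on the transverse derivative), not from any packing or separation argument; the role of $\varepsilon\to 0$ is only to guarantee disjointness for each fixed $T$, and shadowing/expansiveness play no part.
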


\section{Semiclassical preliminaries}
  \label{s:semi}

In this section, we discuss some general results from microlocal and semiclassical analysis, following
the notation of~\cite[Section~2.3 and Appendix~C]{DyZw}. While some of the facts mentioned
here (such as Lemma~\ref{l:propagation}) are standard, Lemma~\ref{l:ultimate} below
seems to be a new result.

\subsection{Review of semiclassical notation}
  \label{s:notation}

Recall that we are working
on a compact manifold $\mathcal M$ without boundary.
We use the class $\Psi^k(\mathcal M;\mathcal E)$ of pseudodifferential operators of order $k$ acting
on sections of $\mathcal E$. The corresponding symbol class is denoted by~$S^k(\mathcal M)$, see~\cite[(C.1)]{DyZw}.
The principal symbol
$$
\sigma(\mathbf A)\in S^k(\mathcal M;\End(\mathcal E))/S^{k-1}(\mathcal M;\End(\mathcal E))
$$
of $\mathbf A\in \Psi^k(\mathcal M;\mathcal E)$
is in general a section of the endomorphism bundle $\End(\mathcal E)$ pulled back to $T^*\mathcal M$, however
in this paper we mostly work with \emph{principally scalar} operators, whose principal symbols
are products of functions on $T^*\mathcal M$ and the identity homomorphism on $\mathcal E$.
The \emph{wavefront set} $\WF(A)$ is a closed conic subset of $T^*\mathcal M\setminus 0$ which measures the concentration of~$A$
in the phase space, and the \emph{elliptic set} $\Ell(A)$ is an open conic subset of $T^*\mathcal M\setminus 0$
which measures where the principal symbol of $A$ is invertible.

We also use the class of \emph{semiclassical} pseudodifferential operators
$\Psi^k_h(\mathcal M;\mathcal E)$, which depend on a positive parameter $h$ tending to zero.
Quantizing a symbol $a(x,\xi)$ in the $h$-sense is equivalent to quantizing the rescaled
symbol $a(x,h\xi)$ in the nonsemiclassical sense. We use the notion of the semiclassical principal symbol
$$
\sigma_h(A)\in S^k(\mathcal M)/hS^{k-1}(\mathcal M)
$$
of
a principally scalar $A\in\Psi^k_h(\mathcal M;\mathcal E)$.
We also use the \emph{fiber-radially compactified} cotangent bundle $\overline{T}^*\mathcal M$;
the interior of this bundle is diffeomorphic to $T^*\mathcal M$ and the boundary $\partial\overline{T}^*\mathcal M$,
called the \emph{fiber infinity}, is diffeomorphic to the cosphere bundle $S^*\mathcal M$.
The $h$-wavefront set $\WFh(A)$ and the $h$-elliptic set $\Ell_h(A)$ are now subsets of $\overline T^*\mathcal M$.
We use the symbol $\Psi^{\comp}_h$ to denote the class of operators in $\Psi^k_h$ whose wavefront sets
are compactly contained in $T^*\mathcal M$ (that is, do not intersect the fiber infinity).

We use the concept of the wavefront set $\WF(u)\subset T^*\mathcal M\setminus 0$ of any distribution $u\in\mathcal D'(\mathcal M)$.
We also consider wavefront sets $\WF'(B)\subset T^*(\mathcal M\times\mathcal M)\setminus 0$
of operators $B:C^\infty(\mathcal M)\to\mathcal D'(\mathcal M)$, defined as follows:
\begin{equation}
  \label{e:wfprime}
\WF'(B)=\{(x,\xi,y,-\eta)\mid (x,\xi,y,\eta)\in\WF(K_B)\}
\end{equation}
where the Schwartz kernel $K_B\in \mathcal D'(\mathcal M\times\mathcal M)$ is given by the formula
(where we use any smooth density $dy$ on $\mathcal M$)
\begin{equation}
  \label{e:schwartz}
Bf(x	)=\int_{\mathcal M} K_B(x,y) f(y)\,dy,\quad
f\in C^\infty(\mathcal M).
\end{equation}
For distributions $u=u(h)$ and
operators $B=B(h)$ which are $h$-tempered (in the sense
that $\|u(h)\|_{H^{-N}_h}=\mathcal O(h^{-N})$ for some $N$), we consider the semiclassical wavefront sets
$\WFh(u)\subset\overline T^*\mathcal M$, $\WFh'(B)\subset\overline T^*(\mathcal M\times\mathcal M)$.
By taking the union of the wavefront sets of all components, we can extend these notions
to distributions and operators valued in smooth vector bundles.

We will use the following multiplicative property of $h$-wavefront sets away from fiber infinity:
assume that $A(h),B(h): C^\infty(\mathcal M)\to\mathcal D'(\mathcal M)$ are $h$-tempered and
$Q\in\Psi^{\comp}_h(\mathcal M)$. Using~\cite[Lemma~2.3]{DyZw}, we obtain
\begin{equation}
  \label{e:wf-mul}
\begin{gathered}
(x,\xi,z,\zeta)\in \WFh(AQB)\cap T^*(\mathcal M\times\mathcal M)\\\Longrightarrow\ \exists (y,\eta)\in \WFh(Q):
(x,\xi,y,\eta)\in\WFh(A),\ (y,\eta,z,\zeta)\in\WFh(B).
\end{gathered}
\end{equation}

Finally, if $u\in \mathcal D'(\mathcal V)$, where $\mathcal V\subset\mathcal M$ is an open set,
then $\WF(u)\subset T^*\mathcal V\setminus 0$ is defined as the union of all $\WF(\chi u)$ for
$\chi\in C_0^\infty(\mathcal V)$; here $\chi u$ is naturally embedded into $\mathcal D'(\mathcal M)$.
Similarly one can define $\WF(B)\subset T^*(\mathcal U\times\mathcal U)\setminus 0$, where $B:C_0^\infty(\mathcal U)\to \mathcal D'(\mathcal U)$ and
$\mathcal U\subset\mathcal M$ is open, by using~\eqref{e:wfprime} and the previous definition
with $\mathcal V:=\mathcal U\times\mathcal U$.

\subsection{Semiclassical propagation estimates}

We start with several semiclassical estimates which form the basis of our proofs.
To simplify their statements, we say for $p\in C^\infty(T^*\mathcal M)$
that
$$
p\in\Hom^k(T^*\mathcal M;\mathbb R)
$$
if $p$ is real-valued and homogeneous of degree $k$ in $\xi$ for $|\xi|$ large enough.
If $p\in\Hom^1(T^*\mathcal M;\mathbb R)$, then the Hamiltonian field $H_p$ extends to a smooth vector field on $\overline T^*\mathcal M$
which is tangent to $\partial\overline T^*\mathcal M$. For later use in this section, we recall the notation
$$
\Re\mathbf A:={\mathbf A+\mathbf A^*\over 2},\quad
\Im\mathbf A:={\mathbf A-\mathbf A^*\over 2i},
$$
where $\mathbf A$ is an operator $C^\infty(\mathcal M;\mathcal E)\to\mathcal D'(\mathcal M;\mathcal E)$ and we fix
a volume form on $\mathcal M$ and an inner product on $\mathcal E$ to define the adjoint operator $\mathbf A^*$.

First of all, we review the classical Duistermaat--H\"ormander \emph{propagation of singularities},
formulated using the following
\begin{defi}
  \label{d:con}
Assume that $p\in\Hom^1(T^*\mathcal M;\mathbb R)$.
Let $V,W\subset\overline T^*\mathcal M$ be open sets. We say that a point $(x,\xi)\in\overline T^*M$
is \emph{controlled by $V$ inside of $W$}, if there exists $T\geq 0$ such that
$e^{-TH_p}(x,\xi)\in V$ and $e^{-tH_p}(x,\xi)\in W$ for $t\in [0,T]$. Denote by
\begin{equation}
  \label{e:con}
\Con_p(V;W)\subset\overline T^*\mathcal M
\end{equation}
the set of all such points. Note that $\Con_p(V;W)$ is an open subset of $\overline T^*\mathcal M$.
\end{defi}

Propagation of singularities (see for instance~\cite[Proposition~2.5]{DyZw})
is then formulated as follows:
\begin{lemm}
  \label{l:propagation}
Assume that $\mathbf P\in\Psi^1_h(\mathcal M;\mathcal E)$ is principally scalar
and $\sigma_h(\mathbf P)=p-iq$ where%
\footnote{Strictly speaking, this means that $\sigma_h(\mathbf P)=p-iq+\mathcal O(h)_{S^0}$.
In particular, the real part of $\sigma_h(\mathbf P)$ is independent of $h$.}
$p\in\Hom^1(T^*\mathcal M;\mathbb R)$
and $q$ is real-valued.
Let $A,B,B_1\in\Psi^0_h(\mathcal M)$
be such that
$$
q\geq 0\quad\text{near }\WF_h(B_1),\quad
\WFh(A)\subset\Con_p(\Ell_h(B);\Ell_h(B_1)).
$$
Then for each $s,N$ and $\mathbf u\in C^\infty(\mathcal M;\mathcal E)$,
we have
\begin{equation}
  \label{e:propagation}
\|A\mathbf u\|_{H^s_h}\leq C\|B\mathbf u\|_{H^s_h}+Ch^{-1}\|B_1\mathbf P \mathbf u\|_{H^s_h}+\mathcal O(h^\infty)\|\mathbf u\|_{H^{-N}_h}.
\end{equation}
\end{lemm}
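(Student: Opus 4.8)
\emph{Approach and reductions.} This is the classical Duistermaat--H\"ormander propagation of singularities in semiclassical form, with an absorbing term $-iq$ of favorable sign, and I would prove it by the positive commutator argument of~\cite[Proposition~2.5]{DyZw}; below are the three steps. First, conjugating $\mathbf P$ by an elliptic operator $\Lambda\in\Psi^s_h(\mathcal M;\mathcal E)$ (which changes neither $\sigma_h(\mathbf P)=p-iq$ nor the hypotheses) and replacing $\mathbf u$ by $\Lambda\mathbf u$ and $A,B,B_1$ by the corresponding conjugates, one reduces to $s=0$. Since $\mathbf P$ is principally scalar, the endomorphism part of its subprincipal symbol lies in $h\Psi^0_h$ and enters only the remainders below, so one argues in $L^2(\mathcal M;\mathcal E)$ as in the scalar case. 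Finally, $\WF_h(A)$ is a compact subset of the open set $\Con_p(\Ell_h(B);\Ell_h(B_1))$, so a microlocal partition of unity reduces~\eqref{e:propagation} to the case in which $\WF_h(A)$ lies in an arbitrarily small neighbourhood of a single $\rho\in\overline T^*\mathcal M$ that is controlled by $\Ell_h(B)$ inside $\Ell_h(B_1)$ along one flow segment $\gamma=\{e^{-tH_p}(\rho):0\le t\le T\}$, with $e^{-TH_p}(\rho)\in\Ell_h(B)$ and $\gamma\subset\Ell_h(B_1)$.

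\emph{The escape function (main step).} The heart of the proof is to construct a real symbol $a\in S^0(\mathcal M)$, homogeneous of degree $0$ in $\xi$ for $|\xi|$ large, microsupported in a neighbourhood $\mathcal W$ of $\gamma$ with $\overline{\mathcal W}\Subset\Ell_h(B_1)$, such that
\[
H_p(a^2)=g^2-b^2\qquad\text{on }\overline T^*\mathcal M,
\]
where $g,b\in S^0$ are real, $\supp g$ lies in $\Ell_h(B)$ (near the endpoint $e^{-TH_p}(\rho)$ of $\gamma$), and $b$ is elliptic at $\rho$ --- hence, after shrinking $\WF_h(A)$, elliptic on $\WF_h(A)$. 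Concretely: in flow-box coordinates along $\gamma$ in which $H_p=\partial_t$ and $\gamma=\{y=0\}$, I would set $a^2=\chi(y)^2F(t)$ with $\chi$ a transversal cutoff equal to $1$ near $\gamma$ and $F$ rising from $0$ to $1$ near the $\Ell_h(B)$ end of $\gamma$, staying equal to $1$ along the rest of $\gamma$, and then decreasing back to $0$ just past $\rho$; the part of $\gamma$ at fiber infinity is handled using homogeneity of $H_p$ on the cosphere bundle, and the finitely many charts are patched in the standard way (this is precisely the construction underlying~\cite[Proposition~2.5]{DyZw}). I expect this construction --- in particular the bookkeeping at fiber infinity and the patching of charts --- to be the only delicate point; everything else is routine symbol calculus.

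\emph{Positive commutator.} With $\mathbf A=\Op_h(a)$ and $B_0=\Op_h(b)$, using $\sigma_h(\mathbf P^*)=p+iq+\mathcal O(h)$ and the symbol calculus, one gets for $\mathbf u\in C^\infty(\mathcal M;\mathcal E)$ the identity
\[
2\Im\langle\mathbf P\mathbf u,\mathbf A^2\mathbf u\rangle=h\langle\Op_h(H_p(a^2))\mathbf u,\mathbf u\rangle-2\langle\Op_h(a^2q)\mathbf u,\mathbf u\rangle+\mathcal O(h)\|\mathbf u\|_{L^2}^2.
\]
Since $q\ge0$ on $\WF_h(B_1)\supset\supp a$, we have $a^2q\ge0$, so the sharp G\aa rding inequality gives $\langle\Op_h(a^2q)\mathbf u,\mathbf u\rangle\ge-\mathcal O(h)\|\mathbf u\|_{L^2}^2$: the complex absorption contributes with the favorable sign. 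Substituting $H_p(a^2)=g^2-b^2$, applying sharp G\aa rding to $b^2$ and $g^2$, writing $\mathbf A^2=B_1^*Z+\mathcal O(h^\infty)_{\Psi^{-\infty}}$ with $Z\in\Psi^0_h$ (legitimate since $\WF_h(\mathbf A)\subset\Ell_h(B_1)$), applying Cauchy--Schwarz to $|\langle B_1\mathbf P\mathbf u,Z\mathbf u\rangle|$, and dividing by $h$, one obtains
\[
\|B_0\mathbf u\|_{L^2}^2\le C\|B\mathbf u\|_{L^2}^2+Ch^{-2}\|B_1\mathbf P\mathbf u\|_{L^2}^2+C\|\mathbf u\|_{L^2}^2+\mathcal O(h^\infty)\|\mathbf u\|_{H^{-N}_h}^2,
\]
where $\supp g\subset\Ell_h(B)$ was used to pass to $\|B\mathbf u\|$. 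Since $b$ is elliptic on $\WF_h(A)$, one may replace $\|B_0\mathbf u\|$ by $\|A\mathbf u\|$; the spurious term $C\|\mathbf u\|_{L^2}$ is then removed by the standard iteration argument of~\cite{DyZw}. Taking square roots gives~\eqref{e:propagation} for $s=0$, and undoing the first reduction gives it for all $s$.
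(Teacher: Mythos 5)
The paper states this lemma without proof, citing~\cite[Proposition~2.5]{DyZw} as a standard reference, and your positive-commutator escape-function sketch is indeed the argument behind that reference; it is also structurally the same as what the paper later carries out (with nonempty $L$) in the proof of Lemma~\ref{l:ultimate}. Two imprecisions worth flagging: as written, your $F$ equals $1$ at $\rho$ and decreases only afterwards, so $b^2=-\chi^2\partial_t F$ vanishes at $\rho$ and $b$ is \emph{not} elliptic on $\WFh(A)$ --- the decrease must straddle $\rho$ so that $F'<0$ in a full neighbourhood of it; and the spurious term in the final inequality should come out as $Ch^{1/2}\|A'\mathbf u\|_{H^{-1/2}_h}$ for some $A'$ microsupported near $\gamma$, rather than $C\|\mathbf u\|_{L^2}$, since it is precisely the extra factor $h^{1/2}$ together with the lowered Sobolev index that allows the iteration to reduce the remainder to $\mathcal O(h^\infty)\|\mathbf u\|_{H^{-N}_h}$ (a genuine $C\|\mathbf u\|_{L^2}$ term would not disappear).
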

In this subsection, we give a more general propagation estimate (Lemma~\ref{l:ultimate})
under the weaker assumption that the trajectories of $e^{-tH_p}$ starting on $\WFh(A)$
either pass through $\Ell_h(B)$ or converge to some closed set $L$, while staying in $\Ell_h(B_1)$.
This follows a long tradition of study of operators with radial invariant sets,
see in particular Guillemin--Schaeffer~\cite{gui-sch}, Melrose~\cite{Me}, Herbst--Skibsted~\cite{he-sk},
and Hassell--Melrose--Vasy~\cite{HMV}.
For the estimate, we need
to additionally restrict the sign of the imaginary part of the subprincipal symbol of $\mathbf P$ on $L$, which
is achieved by the following
\begin{defi}
  \label{d:gtrsimh}
Let $\mathbf P\in\Psi^1_h(\mathcal M;\mathcal E)$ and $L\subset \overline T^*\mathcal M$ be a closed set.
Fix a volume form on $\mathcal M$ and an inner product on the fibers of $\mathcal E$;
this defines an inner product on $L^2(\mathcal M;\mathcal E)$.
Fix also $s\in\mathbb R$.
We say that
\begin{equation}
  \label{e:gtrsimh0}
\Im \mathbf P\lesssim -h\quad\text{on }H^s_h\quad\text{microlocally near }L
\end{equation}
if there exist operators
$$
\begin{gathered}
\mathbf Y_1\in\Psi^s_h(\mathcal M;\mathcal E),\quad \mathbf Y_2\in\Psi^{-s}_h(\mathcal M;\mathcal E),\quad \mathbf Z\in \Psi^0_h(\mathcal M;\mathcal E);\\
\mathbf Y_1\mathbf Y_2=1+\mathcal O(h^\infty)\quad\text{near }L,\quad
L\subset\Ell_h(\mathbf Z),
\end{gathered}
$$
such that for each $N$, $h$ small enough, and each $\mathbf u\in H^{1/2}_h(\mathcal M;\mathcal E)$,
\begin{equation}
  \label{e:gtrsimh}
\Im \langle \mathbf Y_1\mathbf P \mathbf Y_2\mathbf u,\mathbf u\rangle_{L^2}\leq -h\|\mathbf Z\mathbf u\|^2_{L^2}+\mathcal O(h^\infty)\|\mathbf u\|_{H^{-N}_h}^2.
\end{equation}
\end{defi}
\noindent\textbf{Remarks}. (i) The above definition does not actually depend on the choice of
the volume form on $M$ and the metric on the fibers of $\mathcal E$. Indeed, any other choice
yields the inner product $\langle \mathbf u,\mathbf v\rangle'=\langle \mathbf W\mathbf u,\mathbf W\mathbf v\rangle$
for some invertible $\mathbf W\in C^\infty(\mathcal M;\End(\mathcal E))$. Applying~\eqref{e:gtrsimh} for
the inner product $\langle\cdot,\cdot\rangle$ to $\mathbf W\mathbf u$,
we obtain~\eqref{e:gtrsimh} for $\langle\cdot,\cdot\rangle'$ with the operators
$\mathbf Y_1'=\mathbf W^{-1}\mathbf Y_1$,
$\mathbf Y_2'=\mathbf Y_2\mathbf W$,
and $\mathbf Z'=\mathbf W^{-1}\mathbf Z\mathbf W$.

(ii) If $L\cap \partial\overline T^*\mathcal M=\emptyset$, then Definition~\ref{d:gtrsimh} also
does not depend on the value of~$s$. Indeed, for each $B\in\Psi^{\comp}_h(\mathcal M)$ such that
$B=1+\mathcal O(h^\infty)$ microlocally near $L$, we can apply~\eqref{e:gtrsimh} to $B\mathbf u$ to get the same inequality
with the operators $\mathbf Y_1'=B^*\mathbf Y_1$, $\mathbf Y_2'=\mathbf Y_2 B$,
$\mathbf Z'=\mathbf Z B$ which lie in $\Psi^{\comp}_h$, and thus in $\Psi^s_h(\mathcal M)$ for all $s$.

(iii) The presence of the operators $\mathbf Y_1,\mathbf Y_2$ (which is inevitable in the case $s\neq 0$
as there is no canonical elliptic operator in $\Psi^s_h$, unlike the identity operator for $s=0$) makes the definition~\eqref{e:gtrsimh} subtle.
For instance, the sum of two operators satisfying~\eqref{e:gtrsimh0} does not necessarily satisfy the same
condition. Moreover, the real part $\Re \mathbf P$ enters the definition in a nontrivial way. In fact,
the statement~\eqref{e:gtrsimh0} does not change if $\mathbf P$ is replaced by
$$
\mathbf P':=Y\mathbf P Y^{-1}=\mathbf P+[Y,\mathbf P]Y^{-1},\quad
\sigma_h\big(h^{-1}(\mathbf P'-\mathbf P)\big)=i H_{\sigma_h(P)} f,
$$
for any $Y\in\Psi^0_h(\mathcal M)$ with $\sigma_h(Y)=e^f$.
In particular, one can add functions of the form $H_{\Re\sigma_h(P)} f$
to the imaginary part of the subprincipal symbol of $\mathbf P$, which means
that~\eqref{e:gtrsimh} is a really a statement about the ergodic averages of this symbol
along the flow $\exp(tH_{\Re\sigma_h(P)})$. These
subtleties do not play a role in our analysis because we will always enforce~\eqref{e:gtrsimh}
by either adding a large term or taking sufficiently large $|s|$~-- see the following two lemmas.
\smallskip

We will use the following formulation of the sharp G\r arding inequality:
\begin{lemm}
  \label{l:garding}
Assume that $\mathbf P\in\Psi^{2m+1}_h(\mathcal M;\mathcal E)$ is principally scalar,
$A\in\Psi^0_h(\mathcal M)$, and\/%
\footnote{Since $\sigma_h(\mathbf P)\in S^{2m+1}/hS^{2m}$, the following inequality needs to be satisfied
for some representative of this equivalence class.}
 $\Re\sigma_h(\mathbf P)\leq 0$
in a neighborhood $U\subset \overline T^*\mathcal M$ of $\WFh(A)$.
Then there exists a constant $C$ such that for each $N$ and
$\mathbf u\in H^{m+1/2}_h(\mathcal M;\mathcal E)$,
$$
\Re\langle \mathbf P A\mathbf u,A\mathbf u\rangle_{L^2}\leq
Ch\|A\mathbf u\|_{H^m_h}^2+\mathcal O(h^\infty)\|\mathbf u\|_{H^{-N}_h}^2.
$$
\end{lemm}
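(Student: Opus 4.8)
The plan is to reduce the statement to the standard sharp G\r arding inequality for semiclassical pseudodifferential operators acting on sections of a vector bundle (see for instance~\cite[Theorem~4.32 and \S E.1.8]{DyZw} or~\cite[Theorem~14.12]{Zw-book}). First I would localize: since $\WFh(A)\subset U$ and $\Re\sigma_h(\mathbf P)\le 0$ on $U$, I choose $B\in\Psi^0_h(\mathcal M)$ with $\WFh(B)\subset U$ and $B=1+\mathcal O(h^\infty)$ microlocally near $\WFh(A)$. Writing $\mathbf P A=B\mathbf P BA+(1-B)\mathbf P A + B\mathbf P(1-B)A$, the last two terms have Schwartz kernels that are $\mathcal O(h^\infty)$ (microlocally) when paired against $A\mathbf u$, because $\WFh(A)\cap\WFh(1-B)=\emptyset$; using the $h$-temperedness bound $\|A\mathbf u\|_{H^m_h}\le\mathcal O(h^{-N'})\|\mathbf u\|_{H^{-N}_h}$ and Cauchy--Schwarz, these contribute only to the $\mathcal O(h^\infty)\|\mathbf u\|^2_{H^{-N}_h}$ remainder. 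So it suffices to bound $\Re\langle B\mathbf P B\, A\mathbf u, A\mathbf u\rangle_{L^2}$.

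The main step is then to apply sharp G\r arding to the operator $\widetilde{\mathbf P}:=-\Re(B\mathbf P B)=-\tfrac12\big(B\mathbf P B+(B\mathbf P B)^*\big)\in\Psi^{2m+1}_h(\mathcal M;\mathcal E)$, which is formally self-adjoint and principally scalar with semiclassical principal symbol $\sigma_h(\widetilde{\mathbf P})=-|\sigma_h(B)|^2\,\Re\sigma_h(\mathbf P)+\mathcal O(h)\ge 0$ on all of $\overline T^*\mathcal M$ (the symbol vanishes outside $\WFh(B)\subset U$, and is $\ge 0$ on $U$ by hypothesis, up to the $\mathcal O(h)$ ambiguity which is harmless). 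To handle the order $2m+1$ rather than $0$, I conjugate by an elliptic operator: fix $\Lambda\in\Psi^{m}_h(\mathcal M)$ elliptic and self-adjoint (e.g.\ $\Lambda=(1+h^2\Delta_g)^{m/2}$ for a fixed metric), with approximate inverse $\Lambda^{-1}\in\Psi^{-m}_h$. Then $\langle\widetilde{\mathbf P}\,A\mathbf u,A\mathbf u\rangle_{L^2}=\langle \Lambda^{-1}\widetilde{\mathbf P}\Lambda^{-1}\cdot\Lambda A\mathbf u,\Lambda A\mathbf u\rangle_{L^2}+\mathcal O(h)\|A\mathbf u\|^2_{H^m_h}+\mathcal O(h^\infty)\|\mathbf u\|^2_{H^{-N}_h}$, where the error term comes from commuting $\Lambda$ and $\Lambda^{-1}$ past $\widetilde{\mathbf P}$ and $A$ and using that the commutators drop one power of $h$ (and one order, absorbed into the $H^m_h$ norm via ellipticity). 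The conjugated operator $\Lambda^{-1}\widetilde{\mathbf P}\Lambda^{-1}\in\Psi^1_h(\mathcal M;\mathcal E)$ is still self-adjoint with nonnegative principal symbol, so the bundle-valued sharp G\r arding inequality gives $\langle \Lambda^{-1}\widetilde{\mathbf P}\Lambda^{-1}\,w,w\rangle_{L^2}\ge -Ch\|w\|^2_{L^2}$ for $w=\Lambda A\mathbf u\in L^2$; taking $w=\Lambda A\mathbf u$ yields $\ge -Ch\|A\mathbf u\|^2_{H^m_h}$.

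Combining, $\Re\langle B\mathbf P B\,A\mathbf u,A\mathbf u\rangle_{L^2}=-\langle\widetilde{\mathbf P}A\mathbf u,A\mathbf u\rangle_{L^2}\le Ch\|A\mathbf u\|^2_{H^m_h}+\mathcal O(h^\infty)\|\mathbf u\|^2_{H^{-N}_h}$, and adding back the negligible off-diagonal terms gives the claimed estimate for $\Re\langle\mathbf P A\mathbf u,A\mathbf u\rangle_{L^2}$. The only point requiring care is the bookkeeping of orders when inserting $\Lambda^{\pm1}$: one must check that each commutator genuinely gains a factor $h$ and that the resulting lower-order operators, applied to $A\mathbf u$, are controlled either by $\|A\mathbf u\|_{H^m_h}$ (via ellipticity of $\Lambda$ and the microlocal support condition) or by the $h^\infty$ remainder. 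This is routine symbol calculus and $h$-temperedness; I would present it compactly. I expect the main obstacle to be purely notational—verifying that the G\r arding inequality quoted for scalar operators extends verbatim to principally scalar operators on $\mathcal E$ (which it does, by patching together local quantizations and using that the subprincipal corrections are lower order in $h$, exactly as in the cited references)—rather than any genuine analytic difficulty.
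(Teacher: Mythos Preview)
Your proposal is correct and follows essentially the same route as the paper: localize with a cutoff $B\in\Psi^0_h$ supported in $U$ and equal to $1$ near $\WFh(A)$ so that the modified operator has globally nonpositive real principal symbol, then invoke the standard sharp G\r arding inequality and absorb the off-diagonal pieces into the $\mathcal O(h^\infty)$ remainder. The paper streamlines two of your steps: instead of working with the bundle-valued operator throughout, it first writes the principally scalar $\mathbf P$ as a scalar operator plus an $h\Psi^{2m}_h$ remainder (the latter trivially bounded by $Ch\|A\mathbf u\|_{H^m_h}^2$), and it uses the one-sided product $\mathbf P B$ rather than $B\mathbf P B$, citing a version of G\r arding (\cite[Theorem~9.11]{e-z}) that already handles operators of order $2m+1$, so your explicit conjugation by $\Lambda$ is unnecessary.
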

\begin{proof}
Since $\mathbf P$ is principally scalar, we can write it as a sum of
a scalar operator in $\Psi^{2m+1}_h(\mathcal M)$ and an $h\Psi^{2m}_h(\mathcal M;\mathcal E)$
remainder. Therefore, we may assume that $\mathbf P$ is scalar, which reduces us to
the case when $\mathcal E$ is trivial.

Take $B\in\Psi^0_h(\mathcal M)$ such that $B=1+\mathcal O(h^\infty)$ near $\WFh(A)$,
$\sigma_h(B)\geq 0$ everywhere,
and $\WFh(B)\subset U$. Then $\sigma_h(\Re(\mathbf P B))\leq 0$ everywhere.
By the standard sharp G\r arding inequality~\cite[Theorem~9.11]{e-z},
there exists a constant $C$ such that for $\mathbf u\in H^{m+1/2}_h(\mathcal M)$
$$
\Re\langle \mathbf P B A\mathbf u,A\mathbf u\rangle_{L^2}\leq Ch\|A\mathbf u\|_{H^m_h}^2.
$$
Here we use a partition of unity and coordinate charts to reduce to the case
$\mathcal M=\mathbb R^n$. It remains to note that $A=BA+\mathcal O(h^\infty)$ and thus
$$
\Re\langle\mathbf PA\mathbf u,A\mathbf u\rangle_{L^2}
=\Re\langle\mathbf PBA\mathbf u,A\mathbf u\rangle_{L^2}+
\mathcal O(h^\infty)\|\mathbf u\|_{H^{-N}_h}^2
$$
for each $N$.
\end{proof}

We now provide several situations in which~\eqref{e:gtrsimh0} is satisfied:
\begin{lemm}
  \label{l:gs1}
Let $L\subset T^*\mathcal M$ be a closed subset, $\mathbf P\in\Psi^1_h(\mathcal M;\mathcal E)$
be principally scalar, $Q\in\Psi^0_h(\mathcal M)$, and
\begin{equation}
  \label{e:imim}
\Im\sigma_h(\mathbf P)\leq 0\quad\text{near }L,\quad
\Re\sigma_h(Q)>0\quad\text{on }L.
\end{equation}
Then $\Im(\mathbf P-iQ)\lesssim -h$ on $H^s_h$ near $L$, for all $s$.
\end{lemm}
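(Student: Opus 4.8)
**Proof plan for Lemma 3.14 (the statement that $\Im\sigma_h(\mathbf P)\le 0$ near $L$ and $\Re\sigma_h(Q)>0$ on $L$, with $L\Subset T^*\mathcal M$, imply $\Im(\mathbf P-iQ)\lesssim -h$ on $H^s_h$ near $L$).**

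The plan is to build the three operators $\mathbf Y_1,\mathbf Y_2,\mathbf Z$ required by Definition~\ref{d:gtrsimh} by hand and then verify~\eqref{e:gtrsimh} via the sharp G\r arding inequality. Since $L$ is compactly contained in $T^*\mathcal M$, remark (ii) following Definition~\ref{d:gtrsimh} lets us work entirely with compactly microlocalized operators, so we may take $\mathbf Y_1=\mathbf Y_2=\mathbf Z=B$ for a suitable $B\in\Psi^{\comp}_h(\mathcal M;\mathcal E)$ which is principally scalar, satisfies $0\le\sigma_h(B)$, equals $1+\mathcal O(h^\infty)$ microlocally near $L$, and has $\WFh(B)$ contained in the open neighborhood $U$ of $L$ on which $\Im\sigma_h(\mathbf P)\le 0$ and $\Re\sigma_h(Q)>0$ both hold (shrinking $U$ at the start if needed). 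The relations $\mathbf Y_1\mathbf Y_2=1+\mathcal O(h^\infty)$ near $L$ and $L\subset\Ell_h(\mathbf Z)$ are then immediate.

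The core computation is to bound $\Im\langle B(\mathbf P-iQ)B\mathbf u,\mathbf u\rangle_{L^2}$. Writing $\mathbf A:=B(\mathbf P-iQ)B\in\Psi^1_h(\mathcal M;\mathcal E)$, one has $\Im\langle\mathbf A\mathbf u,\mathbf u\rangle=\langle(\Im\mathbf A)\mathbf u,\mathbf u\rangle$ where $\Im\mathbf A=\frac{1}{2i}(\mathbf A-\mathbf A^*)\in\Psi^1_h$, but in fact its semiclassical principal symbol is $\sigma_h(\Im\mathbf A)=|\sigma_h(B)|^2\big(\Im\sigma_h(\mathbf P)-\Re\sigma_h(Q)\big)+\mathcal O(h)$ — the symbol of the commutator piece is $\mathcal O(h)$ and, crucially, of order $1$ in $\xi$, hence absorbed into the $\mathcal O(h)\Psi^1_h$ error; since $B$ is compactly microlocalized this error is in $h\Psi^{\comp}_h$. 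On $U$ we have $\Im\sigma_h(\mathbf P)-\Re\sigma_h(Q)\le -c<0$ on a (slightly shrunk) neighborhood of $L$; combined with $\sigma_h(B)=1+\mathcal O(h^\infty)$ near $L$ and $\sigma_h(B)$ supported in $U$, we conclude that $\Re\sigma_h(-\Im\mathbf A - c|B|^2)\ge 0$ wait — more carefully: $\sigma_h\big(-\Im\mathbf A\big)\ge c\,\sigma_h(B)^2$ microlocally near $L$ and $\ge 0$ elsewhere on $\WFh(B)$, up to $\mathcal O(h)$. Apply Lemma~\ref{l:garding} (with $2m+1=1$, i.e.\ $m=0$) to the operator $\mathbf P':=-\Im\mathbf A-\tfrac{c}{2}B^*B$, whose principal symbol is $\le 0$ on a neighborhood of $\WFh(B)\supset\WFh(B^*B)$: this gives $\Re\langle\mathbf P'\mathbf u,\mathbf u\rangle\le Ch\|\mathbf u\|_{L^2}^2+\mathcal O(h^\infty)\|\mathbf u\|_{H^{-N}_h}^2$ after inserting a cutoff, which rearranges to
$$
\Im\langle\mathbf A\mathbf u,\mathbf u\rangle\le -\tfrac{c}{2}\|B\mathbf u\|_{L^2}^2+Ch\|\mathbf u\|_{L^2}^2+\mathcal O(h^\infty)\|\mathbf u\|_{H^{-N}_h}^2.
$$
For $h$ small enough the $Ch\|\mathbf u\|^2$ term is dominated: again using that $B=1+\mathcal O(h^\infty)$ near $L$ one localizes $\|\mathbf u\|_{L^2}^2$ near $L$ by $\|B\mathbf u\|_{L^2}^2+\mathcal O(h^\infty)\|\mathbf u\|_{H^{-N}_h}^2$ away from $L$ — but this requires $\mathbf u$ to be microlocalized near $L$, which it is not in general. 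The clean fix: do not try to control all of $\|\mathbf u\|$; instead keep the estimate as $\Im\langle\mathbf A\mathbf u,\mathbf u\rangle\le -\tfrac{c}{2}\|B\mathbf u\|^2+\mathcal O(h^\infty)\|\mathbf u\|_{H^{-N}_h}^2$ by choosing a \emph{second} cutoff $B'$ with $B'=1$ near $\WFh(B)$, replacing the G\r arding application's error term, and noting $Ch\|B'\mathbf u\|^2$ can be absorbed into $\tfrac c2\|B\mathbf u\|^2$ only on $\WFh(B)\cap\{B=1\}$ — so in fact I will take $B$ to equal $1$ on a full neighborhood of $\WFh$ of the relevant operators, making $B'\mathbf u = B B'\mathbf u+\mathcal O(h^\infty)$ and closing the loop. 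Setting $\mathbf Z:=\sqrt{c/2}\,B$ then yields exactly~\eqref{e:gtrsimh}.

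The main obstacle is the bookkeeping around the cutoff $B$: one must choose $B$ so that simultaneously (a) $\WFh(B)\subset U$ where the sign conditions hold, (b) $B\equiv 1$ microlocally near $L$ as Definition~\ref{d:gtrsimh} demands for $\mathbf Y_1\mathbf Y_2$ and $\mathbf Z$, and (c) the $Ch$-error from G\r arding gets genuinely absorbed rather than merely moved around — which forces $B$ to equal $1$ on a neighborhood of $\WFh$ of $\mathbf P$-conjugated data, not just near $L$. Handling the vector-bundle valued, principally-scalar case adds the minor wrinkle that one first splits $\mathbf P$ and $Q$ into scalar parts plus $h\Psi^0_h(\mathcal M;\End\mathcal E)$ remainders exactly as in the proof of Lemma~\ref{l:garding}, reducing to the scalar setting. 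Everything else is routine symbol calculus.
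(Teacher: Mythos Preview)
Your overall setup is fine, and the reduction to compactly microlocalized $B$ via Remark~(ii) is legitimate for $L\Subset T^*\mathcal M$. The gap is in how you apply G\r arding. You pass from $\Im\langle\mathbf A\mathbf u,\mathbf u\rangle$ to $\langle(\Im\mathbf A)\mathbf u,\mathbf u\rangle$ and then invoke Lemma~\ref{l:garding} on the operator $\Im\mathbf A+\tfrac{c}{2}B^*B$ (your sign is off, but that is minor). The trouble is that Lemma~\ref{l:garding} produces an error $Ch\|A\mathbf u\|^2$ where $A$ is the localizer \emph{in the lemma}, not the cutoff $B$ already sitting inside $\mathbf A$. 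Whatever cutoff $B'$ you insert must satisfy $B'=1$ near $\WFh(B)$, so $\WFh(B)\subsetneq\WFh(B')$ and $\|B'\mathbf u\|$ is not controlled by $\|B\mathbf u\|$. Your proposed ``fix'' then asks for $B=1$ near $\WFh(B')$ as well, which together with $B'=1$ near $\WFh(B)$ is impossible for compactly microlocalized operators.

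The repair is one line and is exactly what the paper does. Choose $B$ self-adjoint (replace it by $\tfrac12(B+B^*)$). Then
\[
\Im\langle B(\mathbf P-iQ)B\mathbf u,\mathbf u\rangle_{L^2}
=\Im\langle(\mathbf P-iQ)\,B\mathbf u,\,B\mathbf u\rangle_{L^2}
=\Re\big\langle(\Im\mathbf P-\Re Q)\,B\mathbf u,\,B\mathbf u\big\rangle_{L^2},
\]
and since $\sigma_h(\Im\mathbf P-\Re Q)\le-\varepsilon$ on $U\supset\WFh(B)$, Lemma~\ref{l:garding} applied to $\Im\mathbf P-\Re Q+\varepsilon$ with the localizer $A=B$ gives directly
\[
\Im\langle B(\mathbf P-iQ)B\mathbf u,\mathbf u\rangle_{L^2}
\le(-\varepsilon+Ch)\|B\mathbf u\|_{L^2}^2+\mathcal O(h^\infty)\|\mathbf u\|_{H^{-N}_h}^2,
\]
which for $h$ small is $\le -h\|B\mathbf u\|_{L^2}^2+\mathcal O(h^\infty)\|\mathbf u\|_{H^{-N}_h}^2$. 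Thus $\mathbf Y_1=\mathbf Y_2=\mathbf Z=B$ works. The paper's proof is the same idea written for general $s$: it takes $\mathbf Y_1=Z^*Y_1$, $\mathbf Y_2=Y_2Z$ with $Y_1\in\Psi^s_h$, $Y_2\in\Psi^{-s}_h$, and applies G\r arding with $A=Z$; the point in both cases is that the localizer in the G\r arding step must be the \emph{outermost} factor so that the $Ch\|\cdot\|^2$ error lands on $\|Z\mathbf u\|^2$.
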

\begin{proof}
Take $Y_1\in\Psi^s_h(\mathcal M)$, $Y_2\in\Psi^{-s}_h(\mathcal M)$ such that
$Y_1Y_2=1+\mathcal O(h^\infty)$ near $L$. Take also $Z\in\Psi^0_h(\mathcal M)$
such that near $L$, $Z=1+\mathcal O(h^\infty)$ and near $\WFh(Z)$,
$$
Y_1Y_2=1+\mathcal O(h^\infty),\quad
\Im\sigma_h(\mathbf P)\leq 0,\quad
\Re\sigma_h(Q)\geq \varepsilon>0.
$$
Then $\Im\sigma_h(Y_1\mathbf PY_2)\leq 0$ and $\Re\sigma_h(Y_1 Q Y_2)\geq \varepsilon$ near $\WFh(Z)$; therefore,
by Lemma~\ref{l:garding}, there exists a constant $C$ such that
for each $N$ and each $\mathbf u\in H^{1/2}_h(\mathcal M;\mathcal E)$,
$$
\begin{aligned}
\Im\langle Y_1\mathbf P Y_2 Z\mathbf u,Z\mathbf u\rangle_{L^2}
&\leq Ch\|Z\mathbf u\|^2_{L^2}+
\mathcal O(h^\infty)\|\mathbf u\|_{H^{-N}_h}^2;\\
\Re\langle Y_1 QY_2 Z\mathbf u,Z\mathbf u\rangle_{L^2}
&\geq \varepsilon\|Z\mathbf u\|_{L^2}^2
-Ch\|Z\mathbf u\|_{H^{-1/2}_h}^2-\mathcal O(h^\infty)\|u\|_{H^{-N}_h}^2.
\end{aligned}
$$
This implies that for $h$ small enough,
$$
\Im\langle Y_1(\mathbf P-iQ)Y_2 Z\mathbf u,Z\mathbf u\rangle_{L^2}\leq (2Ch-\varepsilon)\|Z\mathbf u\|_{L^2}^2
+\mathcal O(h^\infty)\|u\|_{H^{-N}_h}.
$$
Therefore, \eqref{e:gtrsimh0} holds for small $h$
with $\mathbf Y_1:=Z^*Y_1$, $\mathbf Y_2:=Y_2 Z$, and $\mathbf Z:=Z$.
\end{proof}

\begin{lemm}
  \label{l:gs2}
Let $L,\mathbf P$ satisfy the assumptions of Lemma~\ref{l:gs1} and
additionally $p:=\Re\sigma_h(\mathbf P)\in\Hom^1(T^*\mathcal M;\mathbb R)$.
Assume next that $L\subset \partial\overline T^*\mathcal M$
and $L$ is invariant under~$e^{tH_p}$. Fix a metric $|\cdot|$ on the fibers of $T^*\mathcal M$. Then:

1. Assume that there exist $c,\gamma>0$ such that
\begin{equation}
  \label{e:rads1}
{|e^{tH_p}(x,\xi)|\over |\xi|}\geq ce^{\gamma|t|}\quad\text{for }(x,\xi)\in L,\
t\leq 0.
\end{equation}
(Note that the left-hand side of~\eqref{e:rads1}
extends to a smooth function on $\overline T^*\mathcal M$.)
Then there exists $s_0$ such that for all $s>s_0$,
$\Im\mathbf P\lesssim -h$ near $L$ on $H^s_h$.

2. Assume that there exist $c,\gamma>0$ such that
\begin{equation}
  \label{e:rads2}
{|e^{tH_p}(x,\xi)|\over |\xi|}\geq ce^{\gamma |t|}\quad\text{for }(x,\xi)\in L,\
t\geq 0.
\end{equation}
Then there exists $s_0$ such that for all $s<s_0$,
$\Im\mathbf P\lesssim -h$ near $L$ on $H^s_h$.
\end{lemm}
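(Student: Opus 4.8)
The plan is to construct the operators $\mathbf Y_1,\mathbf Y_2,\mathbf Z$ demanded by Definition~\ref{d:gtrsimh} by conjugating $\mathbf P$ with a weighted elliptic operator of order $s$ and then applying the sharp G\r arding inequality, exactly as in the proof of Lemma~\ref{l:gs1}; the one new ingredient is a dynamical averaging step that converts the infinite--time bounds \eqref{e:rads1}, \eqref{e:rads2} on the compact invariant set $L$ into a \emph{pointwise} sign near $L$. As in the proof of Lemma~\ref{l:garding}, since $\mathbf P$ is principally scalar we write $\mathbf P=\mathbf P_0+h\mathbf R$ with $\mathbf P_0\in\Psi^1_h(\mathcal M)$ scalar, $\sigma_h(\mathbf P_0)=p-iq$, $p\in\Hom^1(T^*\mathcal M;\mathbb R)$, $q$ real with $q\geq 0$ near $L$, and $\mathbf R\in\Psi^0_h(\mathcal M;\mathcal E)$; the term $h\mathbf R$ is absorbed at the end. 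Put $\ell:=H_p\log\langle\xi\rangle$, which is smooth and homogeneous of degree $0$ near fiber infinity (here we use $p\in\Hom^1$), and observe that for $(x,\xi)\in L$,
\[
\int_0^T \ell\bigl(e^{tH_p}(x,\xi)\bigr)\,dt=\log\biggl({|e^{TH_p}(x,\xi)|\over|\xi|}\biggr),\qquad T\in\mathbb R .
\]

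\emph{The averaging step.} I claim there are $\gamma_0>0$ and $g\in C^\infty(\overline{T}^*\mathcal M)$, homogeneous of degree $0$ near fiber infinity, such that $\ell+H_pg\leq -\gamma_0$ near $L$ in case~1, and $\ell+H_pg\geq \gamma_0$ near $L$ in case~2. This is the standard cohomological averaging: in case~1 set $g:=-\tfrac1T\int_{-T}^{0}(T+t)\,\ell\circ e^{tH_p}\,dt$ with $T$ large; differentiating along the flow gives $\ell+H_pg=\tfrac1T\int_{-T}^{0}\ell\circ e^{tH_p}\,dt$, which on $L$ equals $-\tfrac1T\log\bigl(|e^{-TH_p}(x,\xi)|/|\xi|\bigr)\leq-\gamma+\tfrac{|\log c|}{T}\leq-\tfrac\gamma2$ by \eqref{e:rads1}, uniformly over the compact set $L$; by continuity the inequality survives in a neighborhood of $L$, where we keep $g$ (multiplying it by a cutoff elsewhere). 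Case~2 is identical with $[-T,0]$ and \eqref{e:rads1} replaced by $[0,T]$ and \eqref{e:rads2}. The only things to check are the uniformity of \eqref{e:rads1}, \eqref{e:rads2} over $L$ and that $H_p$ preserves homogeneity of degree $0$.

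\emph{Conjugation and G\r arding.} Pick $\mathbf Z\in\Psi^0_h(\mathcal M)$ microlocally the identity near $L$, with $L\subset\Ell_h(\mathbf Z)$ and $\WFh(\mathbf Z)$ inside the neighborhood above, and $\mathbf G\in\Psi^s_h(\mathcal M)$ scalar and elliptic near $\WFh(\mathbf Z)$ with $\sigma_h(\mathbf G)=\langle\xi\rangle^s e^{sg}$ there; let $\mathbf G^{-1}\in\Psi^{-s}_h$ be a microlocal parametrix near $\WFh(\mathbf Z)$, and set $\mathbf Y_1:=\mathbf Z^*\mathbf G\in\Psi^s_h$, $\mathbf Y_2:=\mathbf G^{-1}\mathbf Z\in\Psi^{-s}_h$, so that $\mathbf Y_1\mathbf Y_2=1+\mathcal O(h^\infty)$ near $L$. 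The self--adjoint operator $\mathbf Q:=-\Im(\mathbf G\mathbf P_0\mathbf G^{-1})$ lies in $\Psi^0_h(\mathcal M)$ (the order--$1$ real part drops out of $\mathbf A-\mathbf A^*$), and an Egorov--type computation gives, near $\WFh(\mathbf Z)$,
\[
\sigma_h(\mathbf Q)=q-hs\,(\ell+H_pg)+\mathcal O(h^2)\geq \gamma_0 h|s|+\mathcal O(h^2)\geq\tfrac12\gamma_0 h|s|
\]
for $h$ small, in case~1 when $s>0$ and in case~2 when $s<0$ (using $q\geq 0$). Hence the symbol of $\mathbf Q-\tfrac12\gamma_0 h|s|$ is $\geq 0$ near $\WFh(\mathbf Z)$ and has seminorms bounded there independently of $s$ (for $h$ small, since all $s$--dependent contributions carry a compensating power of $h$). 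Applying the sharp G\r arding inequality (Lemma~\ref{l:garding}) to $\tfrac12\gamma_0 h|s|-\mathbf Q$ with $A=\mathbf Z$ produces a constant $C_0$ \emph{independent of $s$} with $\langle\mathbf Q\mathbf Z\mathbf u,\mathbf Z\mathbf u\rangle_{L^2}\geq\bigl(\tfrac12\gamma_0|s|-C_0\bigr)h\|\mathbf Z\mathbf u\|^2_{L^2}-\mathcal O(h^\infty)\|\mathbf u\|^2_{H^{-N}_h}$. Since $\langle\mathbf Y_1\mathbf P_0\mathbf Y_2\mathbf u,\mathbf u\rangle=\langle\mathbf G\mathbf P_0\mathbf G^{-1}\mathbf Z\mathbf u,\mathbf Z\mathbf u\rangle$ and $\langle\mathbf Y_1\mathbf R\mathbf Y_2\mathbf u,\mathbf u\rangle=\langle\mathbf G\mathbf R\mathbf G^{-1}\mathbf Z\mathbf u,\mathbf Z\mathbf u\rangle$ with $\mathbf G\mathbf R\mathbf G^{-1}\in\Psi^0_h$ of $L^2$--norm $\leq C_R$ (again independent of $s$ for $h$ small), we obtain for $|s|>s_0:=2(C_0+C_R+1)/\gamma_0$
\[
\Im\langle\mathbf Y_1\mathbf P\mathbf Y_2\mathbf u,\mathbf u\rangle_{L^2}=-\langle\mathbf Q\mathbf Z\mathbf u,\mathbf Z\mathbf u\rangle_{L^2}+h\,\Im\langle\mathbf Y_1\mathbf R\mathbf Y_2\mathbf u,\mathbf u\rangle_{L^2}\leq -h\|\mathbf Z\mathbf u\|^2_{L^2}+\mathcal O(h^\infty)\|\mathbf u\|^2_{H^{-N}_h},
\]
which is \eqref{e:gtrsimh}; this yields part~1 for $s>s_0$ and part~2 for $s<-s_0$, and one checks $\mathbf Y_1\in\Psi^s_h$, $\mathbf Y_2\in\Psi^{-s}_h$, $L\subset\Ell_h(\mathbf Z)$ as required.

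\emph{Main obstacle.} The crux is the averaging step: one must verify that the infinite--time bounds \eqref{e:rads1}, \eqref{e:rads2}, valid a priori only on (a neighborhood of) $L$, yield a genuinely smooth, degree--$0$ escape function $g$ whose cohomological derivative $\ell+H_pg$ has a pointwise sign in a \emph{full} neighborhood of $L$ with constants uniform over the compact set $L$. Everything after that is symbol calculus and careful bookkeeping to keep the G\r arding constants independent of $s$, so that the large--$|s|$ limit closes, mirroring the proofs of Lemmas~\ref{l:gs1} and~\ref{l:garding}.
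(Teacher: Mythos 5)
Your proof is correct and follows essentially the same route as the paper: construct an escape function by a finite-time dynamical average of $H_p\log\langle\xi\rangle$ over the compact invariant set $L$ (your $\langle\xi\rangle e^g$ is exactly the paper's $f$ with $\log f=\tfrac1T\int_0^T\log(f_0\circ e^{-tH_p})\,dt$, just re-expressed after an integration by parts), conjugate $\mathbf P$ by an elliptic operator of order $s$ with this weight, and close with sharp G\r arding, taking $|s|$ large. Two small points of care: $\mathbf Q=-\Im(\mathbf G\mathbf P_0\mathbf G^{-1})$ lies in $\Psi^1_h$ rather than $\Psi^0_h$ in general (the real part $p$ drops out of $\mathbf A-\mathbf A^*$ but $q$ may be order one), which only changes the $m$ used in Lemma~\ref{l:garding}; and to keep the G\r arding constant honestly $s$-uniform it is cleaner to split $\mathbf Q$ into a piece with symbol $q$ and a piece $hs\mathbf Q_2$ with $s$-independent symbol $-(\ell+H_pg)\geq\gamma_0$ and apply G\r arding separately — which is what the paper does by estimating $\langle\mathbf PZ\mathbf u,Z\mathbf u\rangle$ and $\langle[Y_1,\mathbf P]Y_2Z\mathbf u,Z\mathbf u\rangle$ in two steps — rather than appealing to uniform seminorm bounds for the combined symbol of $\tfrac12\gamma_0 h|s|-\mathbf Q$, whose $s$-dependent part is only small once $h\lesssim|s|^{-1}$.
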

\begin{proof}
1. We first find $f\in \Hom^1(T^*\mathcal M;\mathbb R)$ such that
$\langle\xi\rangle^{-1} f>0$ on $\overline T^*\mathcal M$ and 
$$
{H_p f\over f}<-{\gamma\over 2}<0\quad\text{on }L.
$$
For that, we fix $f_0\in\Hom^1(T^*\mathcal M;\mathbb R)$ such that $\langle\xi\rangle^{-1}f_0>0$
on $\overline T^*\mathcal M$. Then for $T$ large enough, \eqref{e:rads1} implies that
$$
{f_0\circ e^{-TH_p}\over f_0}>e^{\gamma T/2}\quad\text{on }L.
$$
Using that $\log(f_0\circ e^{-tH_p})-\log f_0\in\Hom^0(T^*\mathcal M;\mathbb R)$, 
we then define $f$ by
$$
\log f={1\over T}\int_0^T\log(f_0\circ e^{-tH_p})\,dt,\quad
{H_p f\over f}=-{1\over T}\log\Big({f_0\circ e^{-TH_p}\over f_0}\Big)<-{\gamma\over 2}\quad\text{on }L.
$$
Having constructed $f$, we take $Y_1\in\Psi^s_h(\mathcal M),Y_2\in\Psi^{-s}_h(\mathcal M),Z\in\Psi^0_h(\mathcal M)$ such that
$$
\begin{gathered}
Z=1+\mathcal O(h^\infty)\quad\text{near }L,\quad
Y_1Y_2=1+\mathcal O(h^\infty)\quad\text{near }\WFh(Z);\\
\Im\sigma_h(\mathbf P)\leq 0,\quad
\sigma_h(Y_1)=f^s,\quad\text{and}\quad
{H_p f\over f}< -{\gamma\over 2}
\quad\text{near }\WFh(Z).
\end{gathered}
$$
Then we have microlocally near $\WFh(Z)$,
$$
\begin{gathered}
Y_1\mathbf PY_2=\mathbf P+[Y_1,\mathbf P]Y_2,\quad
[Y_1,\mathbf P]Y_2\in h\Psi^0_h(\mathcal M;\mathcal E),\quad
\Im\sigma_h(h^{-1}[Y_1,\mathbf P]Y_2)=s\,{H_pf\over f}.
\end{gathered}
$$
Similarly to the proof of Lemma~\ref{l:gs1}, by applying the sharp G\r arding inequality twice we get
for some constant $C$ independent of $s>0$,
all $N$, and all $\mathbf u\in H^{1/2}_h(\mathcal M;\mathcal E)$
$$
\begin{gathered}
\Im\langle Y_1\mathbf P Y_2 Z\mathbf u,Z\mathbf u\rangle_{L^2}=\Im\langle \mathbf P Z\mathbf u,Z\mathbf
u\rangle_{L^2}+\Im\langle [Y_1,\mathbf P]Y_2 Z\mathbf u,Z\mathbf u\rangle_{L^2}
+\mathcal O(h^\infty)\|\mathbf u\|_{H^{-N}_h}^2\\
\leq \Big(C-{s\gamma\over 2}\Big)h\|Z\mathbf u\|_{L^2}^2+\mathcal O(h^\infty)\|u\|_{H^{-N}_h}^2.
\end{gathered}
$$
It remains to choose $s$ large enough so that ${s\gamma\over 2}-C\geq 1$; then~\eqref{e:gtrsimh0} holds
with $\mathbf Y_1:=Z^*Y_1$, $\mathbf Y_2:=Y_2 Z$, and $\mathbf Z:=Z$.

2. We argue similarly to part~1. First of all, we construct
$f\in \Hom^1(T^*\mathcal M;\mathbb R)$ such that
$\langle\xi\rangle^{-1} f>0$ on $\overline T^*\mathcal M$ and 
$$
{H_p f\over f}>{\gamma\over 2}>0\quad\text{on }L.
$$
This is done as in part~1, reversing the direction of the flow. We next argue as before, replacing
$C-{s\gamma\over 2}$ by $C+{s\gamma\over 2}$ and choosing $s<0$ large enough in absolute value.
\end{proof}
We now formulate the main propagation estimate; see Figure~\ref{f:ultimate}.
\begin{lemm}
  \label{l:ultimate}
Assume that $\mathbf P\in\Psi^1_h(\mathcal M;\mathcal E)$ is principally scalar with
$\sigma_h(\mathbf P)=p-iq$, where $p,q$ are real-valued and $p\in\Hom^1(T^*\mathcal M;\mathbb R)$.
Let $L\subset\overline T^*\mathcal M$ be compact and invariant under $e^{tH_p}$.
Assume that $A,B,B_1\in\Psi^0_h(\mathcal M)$ and $s\in\mathbb R$ are such that
$$
\begin{gathered}
\WFh(A)\subset\Ell_h(B_1),\quad
L\subset\Ell_h(A),\quad
L\cap\WFh(B)=\emptyset,\\
q\geq 0\quad\text{near }\WFh(B_1),\quad
\Im \mathbf P\lesssim -h\quad\text{on }H^s_h\quad\text{near }L.
\end{gathered}
$$
Consider the closed subset set of $\overline T^*\mathcal M$ (see~\eqref{e:con})
$$
\Omega:=\{\langle\xi\rangle^{-1}p=0\}\setminus\Con_p(\Ell_h(B);\Ell_h(B_1))
$$
and assume that uniformly in $(x,\xi)\in\Omega\cap \WFh(A)$,
\begin{equation}
  \label{e:ultcon}
e^{tH_p}(x,\xi)\to L\quad\text{as }t\to-\infty;\quad
e^{tH_p}(x,\xi)\in \Ell_h(B_1)\quad\text{for }t\leq 0.
\end{equation}
Then for each $N$, for $h$ small enough, and for each $\mathbf u\in C^\infty(\mathcal M;\mathcal E)$,
\begin{equation}
  \label{e:ultimate}
\|A\mathbf u\|_{H^s_h}\leq C\|B\mathbf u\|_{H^s_h}+Ch^{-1}\|B_1\mathbf P\mathbf u\|_{H^s_h}
+\mathcal O(h^\infty)\|\mathbf u\|_{H^{-N}_h}.
\end{equation}
\end{lemm}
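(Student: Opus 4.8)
The plan is to prove~\eqref{e:ultimate} by the positive commutator method, grafting the absorption near $L$ supplied by the hypothesis $\Im\mathbf P\lesssim -h$ on $H^s_h$ (Definition~\ref{d:gtrsimh}) onto the Duistermaat--H\"ormander propagation bound (Lemma~\ref{l:propagation}). First one disposes of the elliptic region: away from the characteristic set $\{\langle\xi\rangle^{-1}p=0\}$ the operator $\mathbf P$ is elliptic, so, since $\WFh(A)\subset\Ell_h(B_1)$, a microlocal elliptic estimate bounds the part of $\|A\mathbf u\|_{H^s_h}$ microlocalized there by $C\|B_1\mathbf P\mathbf u\|_{H^s_h}+\mathcal O(h^\infty)\|\mathbf u\|_{H^{-N}_h}$, which already appears on the right-hand side of~\eqref{e:ultimate}. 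Thus we may shrink $\WFh(A)$ to a small conic neighborhood of the compact set $\WFh(A)\cap\{\langle\xi\rangle^{-1}p=0\}$ and assume that $q\geq 0$ on a neighborhood of $L$ (which we include in $\Ell_h(B_1)$).

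Next I would split $\WFh(A)\cap\{\langle\xi\rangle^{-1}p=0\}$ by a microlocal partition of unity. On the open set $\Con_p(\Ell_h(B);\Ell_h(B_1))$, Lemma~\ref{l:propagation} (with the control operator $B$ chosen so that $\WFh(B)\subset\Ell_h(B_1)$, hence $q\geq0$ along the relevant trajectories) gives the bound by $\|B\mathbf u\|_{H^s_h}$. Every remaining point lies in $\Omega\cap\WFh(A)$, and by the uniform convergence in~\eqref{e:ultcon} it lies in $\Con_p(U_L;\Ell_h(B_1))$ for any prescribed neighborhood $U_L$ of $L$; a further application of Lemma~\ref{l:propagation} reduces the problem to the case in which $\WFh(A)$ is contained in an arbitrarily small neighborhood of $L$. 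In that localized situation the points of $\WFh(A)\cap\{\langle\xi\rangle^{-1}p=0\}$ either stay near $L$ and converge to it as $t\to-\infty$, or leave every neighborhood of $L$ backward in time and, being in $\Omega$, reach $\Ell_h(B)$ --- the latter once more handled by Lemma~\ref{l:propagation}. So it remains to estimate $\mathbf u$ microlocally on the backward-trapped part, i.e.\ near $L$.

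For this core estimate I would use a commutant $\mathbf G$ of order $2s$ supported in a small neighborhood of $L$, whose symbol equals a radial weight $\sim\langle\xi\rangle^{2s}$ near $L$ and decays to $0$ along the outgoing (saddle-unstable) directions of $L$, the cutoff being placed where $\mathbf P$ or $B$ is elliptic so the wrong-signed part of $H_p g$ is confined there. Since the principal commutator $\tfrac ih[\mathbf P,\mathbf G]$ vanishes on the invariant set $L$, the strictly negative contribution needed there is exactly what Definition~\ref{d:gtrsimh} delivers: with $\mathbf Y_1\in\Psi^s_h$, $\mathbf Y_2\in\Psi^{-s}_h$, $\mathbf Z\in\Psi^0_h$ satisfying $\mathbf Y_1\mathbf Y_2=1+\mathcal O(h^\infty)$ near $L$ and $L\subset\Ell_h(\mathbf Z)$, one conjugates the near-$L$ computation down to the $L^2$ level and gains the term $-h\|\mathbf Z\mathbf v\|_{L^2}^2$ from~\eqref{e:gtrsimh} (for $\mathbf v$ microlocalized near $L$), which substitutes for the absent principal commutator sign irrespective of the sign of $q$. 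Feeding this into the commutator identity, bounding the residual positive errors by $Ch\|\cdot\|^2$ via the sharp G\r arding inequality (Lemma~\ref{l:garding}), and using $L\subset\Ell_h(A)$, one obtains
$$\|A\mathbf u\|_{H^s_h}^2\leq C\|B\mathbf u\|_{H^s_h}^2+Ch^{-1}\|A\mathbf u\|_{H^s_h}\|B_1\mathbf P\mathbf u\|_{H^s_h}+\mathcal O(h^\infty)\|\mathbf u\|_{H^{-N}_h}^2,$$
after which Young's inequality absorbs $\tfrac12\|A\mathbf u\|_{H^s_h}^2$ into the left side; combined with the partition of unity above, this gives~\eqref{e:ultimate}.

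The hard part is the near-$L$ construction: fashioning the commutant so that its radial part around $L$ is compatible with the conjugation of Definition~\ref{d:gtrsimh} (which is engineered precisely so that, on $H^s_h$, the effective imaginary part is $\lesssim -h$ near $L$ even when $\Im\sigma_h(\mathbf P)$ is merely $\leq 0$), while its transition away from $L$ places the wrong-signed part of $H_p g$ into $\Ell_h(B)\cup\Ell_h(\mathbf P)$ and nowhere else. The genuinely delicate region is the annulus between the ``absorbing'' zone around $L$ and the ``propagating'' zone further out, where neither mechanism is by itself sufficient; there one must use that trajectories through $\Omega\cap\WFh(A)$ converge to $L$ only in backward time --- hence leave every neighborhood of $L$ in forward time --- so that the small but correctly signed principal commutator contribution of trajectories already departing $L$ closes the gap. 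A lesser nuisance is that $\mathbf Y_1,\mathbf Y_2,\mathbf Z$ exist only microlocally near $L$, so every manipulation must be localized and the errors carried along uniformly as $\mathcal O(h^\infty)\|\mathbf u\|_{H^{-N}_h}$.
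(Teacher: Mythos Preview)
Your overall architecture matches the paper's: elliptic estimate off the characteristic set, Duistermaat--H\"ormander propagation on $\Con_p(\Ell_h(B);\Ell_h(B_1))$, reduction by propagation to a small neighborhood of $L$, and a positive commutator estimate there using the absorption $\Im\mathbf P\lesssim -h$ of Definition~\ref{d:gtrsimh}. The paper does exactly this, conjugating by $\mathbf Y_1,\mathbf Y_2$ to reduce to $L^2$ and using a commutant $F^*F$ with $F\in\Psi^0_h$ and $\sigma_h(F)=\chi$ a cutoff near $L$; your order-$2s$ commutant $\mathbf G$ is an equivalent packaging.

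There is, however, a concrete imprecision in your near-$L$ step. You write that the transition of the commutant ``places the wrong-signed part of $H_pg$ into $\Ell_h(B)\cup\Ell_h(\mathbf P)$ and nowhere else,'' and correspondingly your core inequality has $\|B\mathbf u\|^2$ on the right. But the commutant must be supported in a small neighborhood $U$ of $L$ (so that the absorption from Definition~\ref{d:gtrsimh} applies), while $L\cap\WFh(B)=\emptyset$ forces $\Ell_h(B)$ to be \emph{away} from $L$. So the transition zone of $\chi$---where $H_p\chi$ can have the wrong sign---lives in $U$, not in $\Ell_h(B)$. What one can arrange is only that $H_p\chi\le 0$ \emph{near $\Omega$}; the wrong-signed region then sits in $U\setminus\Omega$, which is not $\Ell_h(B)$ but rather a region from which every characteristic point is controlled by $B$ \emph{via propagation}. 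The paper inserts an auxiliary operator $A_1$ with $\WFh(A_1)\subset U\setminus\Omega$, bounds the commutator error by $\|A_1\mathbf v\|$, and then controls $\|A_1\mathbf u\|_{H^s_h}$ by $\|B\mathbf u\|_{H^s_h}$ through the preliminary estimate~\eqref{e:ullie1}. Your displayed inequality collapses these two steps and, as written, does not follow.

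The second and more substantive point you leave open is the actual construction of such a $\chi$ with $H_p\chi\le 0$ near $\Omega$. This is the heart of the lemma, isolated as Lemma~\ref{l:ultimatesc} in the paper. It is not obvious: $L$ need be neither a source nor a sink, and the backward convergence in~\eqref{e:ultcon} by itself does not immediately produce a monotone cutoff. The paper's device is to take a crude cutoff $\chi_1$ equal to $1$ near $L$ and supported in a carefully nested neighborhood, then average it along the flow, $\widetilde\chi=T^{-1}\int_0^T\chi_1\circ e^{tH_p}\,dt$, so that $H_p\widetilde\chi=T^{-1}(\chi_1\circ e^{TH_p}-\chi_1)$; the backward convergence~\eqref{e:ultcon} and the invariance of $\Omega$ are used to show $\chi_1\ge\chi_1\circ e^{TH_p}$ near $\Omega$, giving $H_p\widetilde\chi\le 0$ there. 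Your last paragraph gestures at this (``trajectories \ldots leave every neighborhood of $L$ in forward time''), but the mechanism---averaging to turn a qualitative departure into a sign on $H_p\chi$---is the missing idea.
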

\begin{figure}
\includegraphics{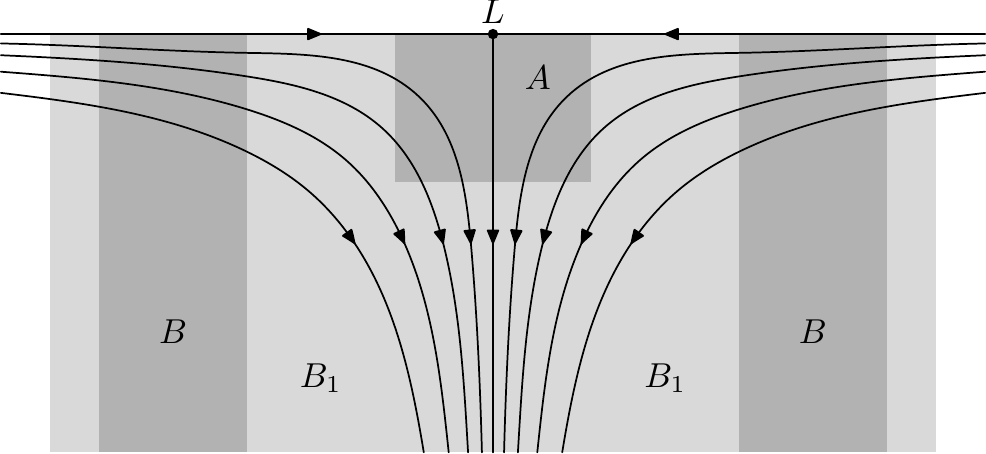}
\caption{An illustration of Lemma~\ref{l:ultimate} in the case
where $L$ lies inside the fiber infinity, the latter depicted by the horizontal line
at the top. The lighter shaded region
is the wavefront set of $B_1$, while the darker shaded regions are the wavefront
sets of $A$ and $B$. Several trajectories of the flow are displayed;
$\Omega$ is the vertical trajectory converging to $L$.}
\label{f:ultimate}
\end{figure}
\noindent\textbf{Remarks}.
(i) The condition $\mathbf u\in C^\infty(\mathcal M)$ can be relaxed as follows:
let $m<s$ and $\Im\mathbf P\lesssim -h$ on $H^{s'}_h$ near $L$ for all $s'\in [m,s]$,
and the symbol of the corresponding operators $\mathbf Z$ is invertible
on $L$ uniformly in $s'$.
Then the conditions $A\mathbf u\in H^m_h$, $B\mathbf u\in H^s_h$,
$B_1\mathbf Pu\in H^s_h$ imply
that $A\mathbf u\in H^s_h$, and~\eqref{e:ultimate} holds. The proof works by 
improving the Sobolev regularity of $\mathbf u$ in small steps $\delta>0$ (depending
on the operators in~\eqref{e:gtrsimh}) by an approximation argument similar
to the one in the proofs of~\cite[Propositions~2.3--2.4]{vasy}.
For our purposes, it suffices to show~\eqref{e:ultimate}
for $\mathbf u\in C^\infty$, so we avoid this approximation argument.

(ii) Lemma~\ref{l:ultimate} implies several other semiclassical estimates:
\begin{itemize}
\item propagation of singularities (Lemma~\ref{l:propagation}), by taking $L=\emptyset$;
\item radial points estimate (see~\cite[Proposition~9]{Me} and~\cite[Proposition~2.6]{DyZw}), by taking $L$ to be a radial source,
$B=0$, $\Omega=\{\langle\xi\rangle^{-1}p=0\}$, and using part~1 of Lemma~\ref{l:gs2};
\item dual radial points estimate (see~\cite[Proposition~10]{Me} and~\cite[Proposition~2.7]{DyZw}), by taking $L$ to be a radial sink,
$B$ microlocalized inside a punctured neighborhood of $L$, 
$\Omega\cap\WFh(A)=L\cap\{\langle\xi\rangle^{-1}p=0\}$, and using part~2 of Lemma~\ref{l:gs2}.
\end{itemize}
The first implication is circular, since the proof uses
propagation of singularities.

The proof of Lemma~\ref{l:ultimate} relies on the construction
of a special escape function:
\begin{lemm}
  \label{l:ultimatesc}
Under the assumptions of Lemma~\ref{l:ultimate}, let $U\subset\overline T^*\mathcal M$
be an open neighborhood of $L$. Then there exists a function $\chi\in C^\infty(\overline T^*\mathcal M;[0,1])$ such that:
\begin{enumerate}
\item $\supp\chi\subset U$;
\item $\chi=1$ near $L$;
\item $H_p\chi\leq 0$ in some neighborhood of $\Omega$.
\end{enumerate}
\end{lemm}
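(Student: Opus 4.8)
\emph{Plan.} I would look for $\chi$ of the form $\chi=\eta\circ f$, where $f\in C^\infty(\overline{T}^*\mathcal M;[0,1])$ is a flow--averaged function and $\eta\in C^\infty([0,\infty);[0,1])$ is a cutoff with $\eta=1$ near $0$, $\eta'\le 0$, and $\supp\eta\subset[0,c]$ for a small $c>0$. The three conclusions then become requirements on $f$: $f=0$ near $L$ (gives (2)); $f>c$ on $\overline{T}^*\mathcal M\setminus U$, so that $\supp\chi\subset\{f\le c\}\subset U$ (gives (1)); and $H_pf\ge 0$ on a neighbourhood of $\Omega$ large enough to contain $\supp\chi$ (which, since $\eta'\le 0$, gives (3)).

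\emph{Reduction.} It suffices to prove the statement with $U$ replaced by $U\cap\Ell_h(A)$, since shrinking $U$ only strengthens the conclusion; thus we may assume $U\subset\Ell_h(A)\subset\WF_h(A)$, so that $\Omega\cap U\subset\Omega\cap\WF_h(A)$. Since $\Omega$ is closed (the closed set $\{\langle\xi\rangle^{-1}p=0\}$ with the open set $\Con_p(\Ell_h(B);\Ell_h(B_1))$ removed) and $\WF_h(A)$ is closed, the set $\Omega\cap\WF_h(A)$ is a \emph{compact} subset of $\overline{T}^*\mathcal M$, and by \eqref{e:ultcon} the convergence $e^{tH_p}(x,\xi)\to L$ as $t\to-\infty$ holds uniformly on it. (Only this part of \eqref{e:ultcon} is used; the hypotheses on $q$ and $\Im\mathbf P$ are irrelevant to this lemma.)

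\emph{Building $f$.} Since $p\in\Hom^1(T^*\mathcal M;\mathbb R)$, $H_p$ extends to a smooth vector field on $\overline{T}^*\mathcal M$, so $e^{tH_p}$ is a smooth flow there. Fix, in this order, open neighbourhoods of $L$ with $\mathcal N^{(2)}\Subset\mathcal N\Subset\mathcal N'\Subset U$, choosing $\mathcal N'$ small enough that $e^{tH_p}(\mathcal N')\subset U$ for all $t\in[0,1]$ (possible by a tube--lemma argument, using $e^{tH_p}(L)=L$). Pick $g\in C^\infty(\overline{T}^*\mathcal M;[0,1])$ with $g=0$ on $\mathcal N$ and $g=1$ off $\mathcal N'$. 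Using the uniform convergence, choose $T_0\ge 1$ with $e^{-T_0H_p}(\Omega\cap\WF_h(A))\subset\mathcal N^{(2)}$, and set
\[
f(\rho):=\frac1{T_0}\int_0^{T_0} g\big(e^{-tH_p}(\rho)\big)\,dt\ \in\ C^\infty(\overline{T}^*\mathcal M;[0,1]),
\]
so that $H_pf=\tfrac1{T_0}\bigl(g-g\circ e^{-T_0H_p}\bigr)$. Then: (a) by the tube lemma there is a neighbourhood $\mathcal N_1$ of $L$ with $e^{-tH_p}(\mathcal N_1)\subset\mathcal N$ for $t\in[0,T_0]$, hence $f\equiv 0$ on $\mathcal N_1$; (b) on the open set $\mathcal O:=e^{T_0H_p}(\mathcal N^{(2)})\supset\Omega\cap\WF_h(A)$ we have $g\circ e^{-T_0H_p}=0$, hence $H_pf=g/T_0\ge 0$; (c) if $\rho\notin U$ then $e^{-tH_p}(\rho)\notin\mathcal N'$ for all $t\in[0,1]$ (else $\rho\in e^{tH_p}(\mathcal N')\subset U$), so $g(e^{-tH_p}(\rho))=1$ there and $f(\rho)\ge 1/T_0$.

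\emph{Conclusion and main obstacle.} With $c:=\tfrac1{2T_0}$, $\eta$ as above, and $\chi:=\eta\circ f$: (2) follows from (a); (c) gives $\{f\le c\}\subset U$, so $\supp\chi\subset U$ and (1) holds; and for (3), if $\rho\in\Omega\setminus U$ then $f>c$ near $\rho$ by (c) so $\chi\equiv 0$ near $\rho$, while if $\rho\in\Omega\cap U\subset\Omega\cap\WF_h(A)\subset\mathcal O$ then $H_p\chi=\eta'(f)\,H_pf\le 0$ near $\rho$ by (b). The step I expect to be the crux is reconciling (1) with (3): a naive flow--average of a cutoff supported near $L$ has support equal to its \emph{forward} flow--out, which leaves $U$. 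The resolution is to average a cutoff $g$ that vanishes only on a small neighbourhood of $L$ but equals $1$ on the bulk of $\overline{T}^*\mathcal M$; then $H_pf\ge 0$ is forced precisely on the compact set $\Omega\cap\WF_h(A)$ where it is needed, because time $-T_0$ pushes that set into $\{g=0\}$, while $f$ is automatically bounded below outside $U$ since a trajectory starting outside $U$ needs a positive time to reach the small set $\{g=0\}$.
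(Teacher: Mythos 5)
Your proof is correct, and it takes a genuinely different (and in my view cleaner) route than the paper's. The paper averages a \emph{bump} $\chi_1\in C_0^\infty(U_0;[0,1])$, supported near $L$, over the \emph{forward} flow: $\widetilde\chi=\frac1T\int_0^T\chi_1\circ e^{tH_p}\,dt$, so $H_p\widetilde\chi=(\chi_1\circ e^{TH_p}-\chi_1)/T$, and the sign near $\Omega$ hinges on showing $\chi_1\geq\chi_1\circ e^{TH_p}$ near $\Omega$. That, in turn, requires the non-obvious invariance-type fact~\eqref{e:libifa} ($e^{tH_p}(\Omega)\cap\supp\chi_1\subset\Omega$ for $t\in[0,T]$), whose proof genuinely uses that $\Omega=\{\langle\xi\rangle^{-1}p=0\}\setminus\Con_p(\Ell_h(B);\Ell_h(B_1))$ and the choice $U_0\cap\Ell_h(B)=\emptyset$. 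You instead average $g$ (vanishing near $L$, equal to $1$ away) over the \emph{backward} flow and compose with a decreasing cutoff; then $H_pf=(g-g\circ e^{-T_0H_p})/T_0$ is nonnegative on $\mathcal O=e^{T_0H_p}(\mathcal N^{(2)})$ for the trivial reason that $g\circ e^{-T_0H_p}=0$ there. Thus you never need any structural property of $\Omega$ beyond its being closed (hence $\Omega\cap\WFh(A)$ compact) and the uniform convergence in~\eqref{e:ultcon}. The step that makes this work is your WLOG reduction $U\subset\Ell_h(A)$, which forces $\Omega\cap U\subset\Omega\cap\WFh(A)\subset\mathcal O$; this is harmless because Lemma~\ref{l:ultimate} already assumes $L\subset\Ell_h(A)$, and shrinking $U$ only strengthens the conclusion. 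What you lose is nothing: $\supp\chi\subset\{f\leq c\}\subset U$ by the lower bound $f\geq 1/T_0$ off $U$, and $\chi\equiv1$ near $L$ since $f$ vanishes on $\mathcal N_1$. Your closing remark correctly identifies the tension (a naive forward flow-out of a small bump exits $U$), and the resolution via a function that vanishes only near $L$ is exactly what makes the sign argument automatic.
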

\begin{figure}
\includegraphics{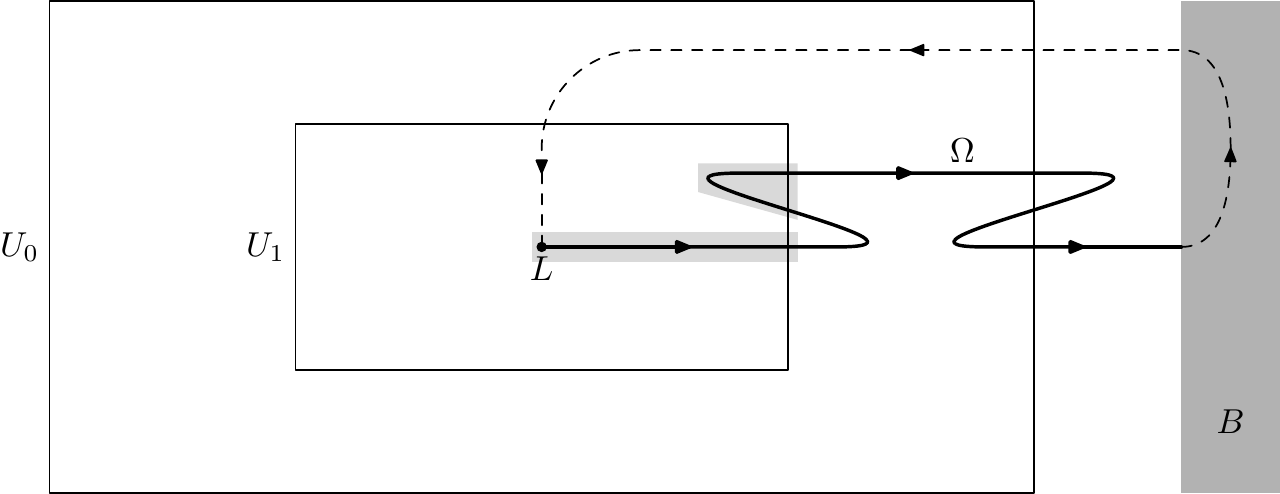}
\caption{An illustration of the proof of Lemma~\ref{l:ultimatesc}. The darker
shaded region is $\WFh(B)$ and the lighter shaded region is
$\supp\chi_1$. A possible trajectory of $H_p$ is shown; $\Omega$ is the undashed
part of this trajectory.}
\label{f:escape}
\end{figure}
\begin{proof}
We take open neighborhoods (see Figure~\ref{f:escape})
$$
U_1\ \subset\ U_0\ \subset\ \Ell_h(A)\setminus\WFh(B)
$$
of $L$ such that
\begin{equation}
  \label{e:nbhds}
e^{-tH_p}(\Omega\cap\overline{U_0})\ \subset\ U\quad\text{and}\quad
e^{-tH_p}(\Omega\cap\overline{U_1})\ \subset\ U_0\quad\text{for all }t\geq 0.
\end{equation}
The first equation in~\eqref{e:nbhds} follows from~\eqref{e:ultcon} for
$t$ large enough independently of $U_0$;
for bounded $t$, it suffices to use the fact that $L$ is invariant under the flow
and take $U_0$ small enough. The set $U_1$ is constructed in the same way.

By~\eqref{e:ultcon}, there exists $T>0$ such that
\begin{equation}
  \label{e:falibi0}
e^{-TH_p}(\Omega\cap\overline{U_0})\ \subset\ U_1.
\end{equation}
Take a function $\chi_1\in C_0^\infty(U_0;[0,1])$ such that $\chi_1=1$ near $(\Omega\cap\overline{U_1})\cup L$
and
\begin{equation}
  \label{e:falibi1}
e^{-tH_p}(\supp\chi_1)\ \subset\ U_0\quad\text{for all }t\in [0,T].
\end{equation}
The existence of such function follows from the second equation in~\eqref{e:nbhds}
and the invariance of $L$ under the flow.

We have
\begin{equation}
  \label{e:libifa}
e^{tH_p}(\Omega)\cap \supp\chi_1\ \subset\ \Omega\quad\text{for all }t\in [0,T].
\end{equation}
Indeed, assume that $(x,\xi)\in\supp\chi_1$
and $e^{-tH_p}(x,\xi)\in\Omega$, but $(x,\xi)\notin\Omega$. Since
the set $\{\langle\xi\rangle^{-1}p=0\}$
is invariant under the flow and contains $e^{-tH_p}(x,\xi)$, we have
$(x,\xi)\in\Con_p(\Ell_h(B);\Ell_h(B_1))$. By Definition~\ref{d:con},
there exists $T'\geq 0$ such that $e^{-T'H_p}(x,\xi)\in\Ell_h(B)$
and $e^{-t'H_p}(x,\xi)\in\Ell_h(B_1)$ for all $t'\in [0,T']$.
By~\eqref{e:falibi1}, we have
$e^{-sH_p}(x,\xi)\notin\Ell_h(B)$ for $s\in [0,T]$, which
implies that $T'>T\geq t$. Then $e^{-tH_p}(x,\xi)\in\Con_p(\Ell_h(B);\Ell_h(B_1))$,
which contradicts the fact that $e^{-tH_p}(x,\xi)\in\Omega$.

Combining~\eqref{e:falibi0} and~\eqref{e:libifa}, we get
\begin{equation}
  \label{e:libifb}
\Omega\cap e^{-TH_p}(\supp\chi_1)\ \subset\ \Omega\cap e^{-TH_p}(\Omega\cap\supp\chi_1)\ \subset\ \Omega\cap U_1.
\end{equation}
Combining~\eqref{e:libifa} with the first part of~\eqref{e:nbhds}, we get
\begin{equation}
  \label{e:libifc}
\Omega\cap e^{-tH_p}(\supp\chi_1)\ \subset\ e^{-tH_p}(\Omega\cap\supp\chi_1)\ \subset U\quad\text{for all }t\in [0,T].
\end{equation}
It follows from~\eqref{e:libifb} that for $(x,\xi)$ near $\Omega\cap\supp(\chi_1\circ e^{TH_p})$,
we have $\chi_1(x,\xi)=1$.
Since $0\leq \chi_1\leq 1$, we have for $(x,\xi)$ in some neighborhood of $\Omega$,
\begin{equation}
  \label{e:falibi}
\chi_1(x,\xi)\geq \chi_1(e^{TH_p}(x,\xi)).
\end{equation}
Put
$$
\widetilde\chi={1\over T}\int_0^T \chi_1\circ e^{tH_p}\,dt,\quad
H_p\widetilde\chi={\chi_1\circ e^{TH_p}-\chi_1\over T},
$$
then $H_p\widetilde\chi\leq 0$ in some neighborhood of $\Omega$. By~\eqref{e:libifc}, $\Omega\cap\supp\widetilde\chi\subset U$. Since
$\chi_1=1$ near~$L$, we also have $\widetilde\chi=1$ near $L$. It remains
to put $\chi:=\chi_2\widetilde\chi$, where $\chi_2\in C_0^\infty(U;[0,1])$
satisfies $\chi_2=1$ near $(\Omega\cap\supp\widetilde\chi)\cup L$.
\end{proof}
We now give
\begin{proof}[Proof of Lemma~\ref{l:ultimate}]
We start with the estimate
\begin{equation}
  \label{e:ullie1}
\|\mathbf A_1\mathbf u\|_{H^s_h}\leq C\|B\mathbf u\|_{H^s_h}
+Ch^{-1}\|B_1\mathbf P\mathbf u\|_{H^s_h}
+\mathcal O(h^\infty)\|\mathbf u\|_{H^{-N}_h}
\end{equation}
valid for all $\mathbf A_1\in\Psi^0_h(\mathcal M;\mathcal E)$ such that
$\WFh(\mathbf A_1)\subset\Ell_h(B_1)\setminus\Omega$.
Indeed, we have
$$
\WFh(\mathbf A_1)\ \subset\ (\Ell_h(B_1)\cap\Ell_h(\mathbf P))\cup \Con_p(\Ell_h(B);\Ell_h(B_1)),
$$
therefore by a partition of unity we may reduce to the situation when $\WFh(\mathbf A_1)$
is contained either inside $\Ell_h(B_1)\setminus\{\langle\xi\rangle^{-1}p=0\}$
or inside $\Con_p(\Ell_h(B);\Ell_h(B_1))$. The first case is handled by the
elliptic estimate~\cite[Proposition~2.4]{DyZw} and the second one,
by propagation of singularities (Lemma~\ref{l:propagation}).

Similarly we have the estimate
\begin{equation}
  \label{e:ullie2}
\|A\mathbf u\|_{H^s_h}\leq C\|B\mathbf u\|_{H^s_h}
+C\|\mathbf A_2\mathbf u\|_{H^s_h}+Ch^{-1}\|B_1\mathbf P\mathbf u\|_{H^s_h}
+\mathcal O(h^\infty)\|\mathbf u\|_{H^{-N}_h}
\end{equation}
valid for all $\mathbf A_2\in\Psi^0_h(\mathcal M;\mathcal E)$ such that
$L\subset\Ell_h(\mathbf A_2)$, where we use the following corollary of~\eqref{e:ultcon}:
$$
\WFh(A)\ \subset\ (\Ell_h(B_1)\cap\Ell_h(\mathbf P))\cup\Con_p(\Ell_h(B);\Ell_h(B_1))
\cup\Con_p(\Ell_h(\mathbf A_2);\Ell_h(B_1)).
$$
Next, using Definition~\ref{d:gtrsimh}, choose
$\mathbf Y_1\in\Psi^s_h(\mathcal M;\mathcal E)$,
$\mathbf Y_2\in\Psi^{-s}_h(\mathcal M;\mathcal E)$,
and $\mathbf Z\in\Psi^0_h(\mathcal M;\mathcal E)$ such that
$$
\mathbf Y_1\mathbf Y_2=1+\mathcal O(h^\infty)\quad\text{near }\overline U,\quad
\overline U\subset\Ell_h(\mathbf Z),
$$
for some neighborhood $U\subset\Ell_h(A)\cap\Ell_h(B_1)$ of $L$, and for each $\mathbf v\in C^\infty(\mathcal M;\mathcal E)$,
\begin{equation}
  \label{e:ultii}
\Im\langle \mathbf P'\mathbf v,\mathbf v\rangle_{L^2}\leq -h\|\mathbf Z\mathbf v\|_{L^2}^2+\mathcal O(h^\infty)\|\mathbf v\|^2_{H^{-N}_h},
\end{equation}
where $\mathbf P':=\mathbf Y_1\mathbf P\mathbf Y_2\in\Psi^1_h(\mathcal M;\mathcal E)$.
Note that $\sigma_h(\mathbf P')=\sigma_h(\mathbf P)$ on $U$.

We now claim that it suffices to show that there exist operators
$$
A_1,A_2,B_2\in\Psi^0_h(\mathcal M),\quad
\WFh(A_1)\subset U\setminus\Omega,\quad
L\subset \Ell_h(A_2),\quad
\WFh(B_2)\subset U,
$$
such that for each $\mathbf v\in C^\infty(\mathcal M;\mathcal E)$,
\begin{equation}
  \label{e:tiner1}
\|A_2 \mathbf v\|_{L^2}\leq C\|A_1\mathbf v\|_{L^2}+Ch^{-1}\|B_2\mathbf P'\mathbf v\|_{L^2}
+Ch^{1/2}\|B_2\mathbf v\|_{H^{-1/2}_h}+\mathcal O(h^\infty)\|\mathbf v\|_{H^{-N}_h}.
\end{equation}
Indeed, applying~\eqref{e:tiner1} to $\mathbf v:=\mathbf Y_1\mathbf u$ and assuming
that $\WFh(A_2)\subset U$, we get
\begin{equation}
  \label{e:tinerz}
\|\mathbf A_2\mathbf u\|_{H^s_h}
\leq C\|\mathbf A_1\mathbf u\|_{H^s_h}
+Ch^{-1}\|B_1\mathbf P\mathbf u\|_{H^s_h}
+Ch^{1/2}\|A \mathbf u\|_{H^{s-1/2}_h}
+\mathcal O(h^\infty)\|\mathbf u\|_{H^{-N}_h}.
\end{equation}
where $\mathbf A_j:=\mathbf Y_2 A_j\mathbf Y_1$ satisfy
$\WFh(\mathbf A_1)\subset U\setminus\Omega$, $L\subset\Ell_h(\mathbf A_2)$. Here we used the
fact that $\WFh(B_2)\subset\Ell_h(A)$ and the elliptic estimate
to bound $\|B_2\mathbf v\|_{H^{-1/2}_h}$ in terms of
$\|A\mathbf u\|_{H^{s-1/2}_h}$.
To obtain the required estimate~\eqref{e:ultimate}, it remains to combine this
with~\eqref{e:ullie1} and~\eqref{e:ullie2}, and take $h$ small enough to eliminate
the $Ch^{1/2}\|A\mathbf u\|_{H^{s-1/2}_h}$ remainder.

We now prove~\eqref{e:tiner1} using a positive commutator argument.
Let $\chi$ be the function constructed in Lemma~\ref{l:ultimatesc}.
Fix $F\in \Psi^0_h(\mathcal M)$ such that
$$
\sigma_h(F)=\chi,\quad
\WFh(F)\subset U.
$$
Then
$$
\sigma_h(ih^{-1}[\Re\mathbf P',F^*F])=2\chi H_p\chi\leq 0\quad\text{near }\Omega.
$$
Therefore
\begin{equation}
[\Re\mathbf P',F^*F]=-ih (\mathbf G_1+\mathbf G_2),
\end{equation}
where $\mathbf G_j\in \Psi^0_h(\mathcal M;\mathcal E)$ are self-adjoint and principally
scalar, and
$$
\WFh(\mathbf G_j)\subset U;\quad
\sigma_h(\mathbf G_1)\leq 0;\quad
\WFh(\mathbf G_2)\cap\Omega=\emptyset.
$$
For each $\mathbf v\in C^\infty(\mathcal M;\mathcal E)$, we have
\begin{equation}
  \label{e:pc-1}
\Im\langle \mathbf P'\mathbf v,F^*F\mathbf v\rangle_{L^2}
={i\over 2}\langle [\Re\mathbf P',F^*F]\mathbf v,\mathbf v\rangle_{L^2}
+\Re\langle F^*F(\Im\mathbf P')\mathbf v,\mathbf v\rangle_{L^2}.
\end{equation}
Take $A_1,B_2\in\Psi^0_h(\mathcal M)$ such that
$$
\begin{gathered}
\WFh(A_1)\ \subset\ U\setminus\Omega,\quad
A_1=1+\mathcal O(h^\infty)\quad\text{near }
\WFh(\mathbf G_2);\\
\WFh(B_2)\ \subset\ U,\quad
B_2=1+\mathcal O(h^\infty)\quad\text{near }
\WFh(F)\cup\WFh(\mathbf G_1)\cup\WFh(\mathbf G_2).
\end{gathered}
$$
By the sharp G\r arding inequality (Lemma~\ref{l:garding}) applied to $\mathbf G_1$
and the elliptic estimate applied to $\mathbf G_2$, the product
${i\over 2}\langle [\Re\mathbf P',F^*F]\mathbf v,\mathbf v\rangle_{L^2}$ is equal to
\begin{equation}
  \label{e:pc-2}
\begin{aligned}
&{h\over 2}\big(\langle \mathbf G_1B_2\mathbf v,B_2\mathbf v\rangle_{L^2}
+\langle \mathbf G_2A_1\mathbf v,A_1\mathbf v\rangle_{L^2}\big)+\mathcal O(h^\infty)\|\mathbf v\|_{H^{-N}_h}^2
\\&\leq
Ch^2\|B_2\mathbf v\|_{H^{-1/2}_h}^2+
Ch\|A_1\mathbf v\|_{L^2}^2
+\mathcal O(h^\infty)\|\mathbf v\|_{H^{-N}_h}^2.
\end{aligned}
\end{equation}
Next, $h^{-1}F^*[F,\Im\mathbf P']\in \Psi^0_h$
and its principal symbol is imaginary valued, therefore
$\Re(F^*[F,\Im\mathbf P'])\in h^2\Psi^{-1}_h$. It then follows from~\eqref{e:ultii} that 
\begin{equation}
  \label{e:pc-3}
\begin{gathered}
\Re\langle F^*F(\Im\mathbf P')\mathbf v,\mathbf v\rangle_{L^2}
=\Im\langle \mathbf P'F\mathbf v,F\mathbf v\rangle_{L^2}
+\langle\Re(F^*[F,\Im\mathbf P'])\mathbf v,\mathbf v\rangle_{L^2}\\
\leq -h\|\mathbf ZF\mathbf v\|_{L^2}^2
+Ch^2\|B_2\mathbf v\|_{H^{-1/2}_h}^2
+\mathcal O(h^\infty)\|\mathbf v\|_{H^{-N}_h}^2.
\end{gathered}
\end{equation}
Since $\mathbf Z$ is elliptic on $U\supset \WFh(F)$, we have
$$
\|F\mathbf v\|_{L^2}^2\leq C\|\mathbf ZF\mathbf v\|_{L^2}^2
+\mathcal O(h^\infty)\|\mathbf v\|_{H^{-N}_h}^2. 
$$
Combining this with~\eqref{e:pc-1}--\eqref{e:pc-3}, we get
$$
\begin{gathered}
C^{-1}h\|F\mathbf v\|_{L^2}^2-Ch\|A_1\mathbf v\|_{L^2}^2-Ch^2\|B_2\mathbf v\|_{H^{-1/2}_h}^2
\\\leq -\Im\langle \mathbf P'\mathbf v,F^*F\mathbf v\rangle+\mathcal O(h^\infty)\|\mathbf v\|_{H^{-N}_h}^2
\\\leq C\|B_2\mathbf P'\mathbf v\|_{L^2}\cdot \|F\mathbf v\|_{L^2}+\mathcal O(h^\infty)\|\mathbf v\|_{H^{-N}_h}^2,
\end{gathered}
$$
which implies~\eqref{e:tiner1} with $A_2:=F$, finishing the proof.
\end{proof}

\section{Properties of the resolvent}
  \label{s:res}

In this section, we prove Theorems~\ref{t:mer} and~\ref{t:res-states}, and show microlocalization
statements for the resolvent that form the basis of the proof of Theorem~\ref{t:zeta} in the next section.
We follow in part the argument of~\cite{DyZw}, based on the strategy
of~\cite{FaSj}.

\subsection{Auxiliary resolvent}
\label{s:absorbed}

In this section, we introduce an auxiliary resolvent depending on the semiclassical parameter $h>0$.
Recall the function $\rho$, the constant $\varepsilon$, and the vector field $X_1$ used in Lemma~\ref{l:extconv},
and let $p$ be defined in~\eqref{e:symbol}.

\smallsection{Anisotropic spaces}
We first construct the \emph{anisotropic Sobolev spaces}
on which the auxiliary resolvent will be defined.
The order function of these spaces
is given by the following (see Figure~\ref{f:absorbers}(a))
\begin{lemm}
  \label{l:order}
There exists $m\in C^\infty(S^*\mathcal M;\mathbb R)$ such that, with
$H_p$ pulled back to $S^*\mathcal M$ by the projection
$\kappa:T^*\mathcal M\setminus 0\to S^*\mathcal M$,
and $E_\pm^*$ defined in Lemma~\ref{l:extended},
\begin{enumerate}
\item $m=1$ in a neighborhood of $\kappa(E_-^*)\supset\kappa(E_s^*)$;
\item $m=-1$ in a neighborhood of $\kappa(E_+^*)\supset\kappa(E_u^*)$;
\item $H_pm\leq 0$ in a neighborhood of $\{p=0\}$;
\item $\supp m\subset \{\rho>-2\varepsilon\}$ and
$\supp m\cap \{\rho=-\varepsilon\}\cap \{X_1\rho=0\}=\emptyset$.
\end{enumerate}  
\end{lemm}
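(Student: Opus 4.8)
The plan is to start from the functions $m_\pm$ of Lemma~\ref{l:functions} and set $m_1:=m_--m_+$, then correct it near the part of $\Sigma_\pm$ lying outside $\overline{\mathcal U}$. Properties (1) and (2) already hold for $m_1$: $m_-\equiv 1$ near $\kappa(E_-^*)$, and $\kappa(E_-^*)\subset\{p=0\}\cap\pi^{-1}(\overline{\mathcal U})$ since $\langle X,\eta\rangle=0$ on $E_-^*$ (Lemma~\ref{l:extended}) and $\Gamma_-\subset\overline{\mathcal U}$; but $\supp m_+\cap\{p=0\}\cap\pi^{-1}(\overline{\mathcal U})\subset U_+$ with $\overline{U_+}\cap\kappa(E_-^*)=\emptyset$, so $m_+\equiv 0$ near $\kappa(E_-^*)$, and symmetrically at $\kappa(E_+^*)$. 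Property (4) holds in the strong form that $\supp m_1$ is bounded away from $\{\rho=-\varepsilon\}$: from the integral representation in the proof of Lemma~\ref{l:functions}, $\supp m_\pm$ is a compact subset of $\pi^{-1}(\Sigma_\pm)$, and no trajectory issued from $\mathcal U$ reaches the invariant hypersurface $\{\rho=-\varepsilon\}$ on which $X$ vanishes. Finally Lemma~\ref{l:functions}(4) gives $H_pm_1\le 0$ on a neighborhood $V$ of $\{p=0\}$ intersected with $\pi^{-1}(\mathcal U)$; the remaining task is to repair $H_pm_1$ over $\pi^{-1}(\{\rho\le 0\})$, near $\{p=0\}$.

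For this I would analyze the ``bad set'' $B_+:=\{(x,\xi)\in V\cap\supp m_+\mid\rho(x)\le 0,\ H_pm_+(x,\xi)<0\}$, using $H_pm_+(x,\xi)=\tfrac1T\bigl(m_0(e^{-TH_p}(x,\xi))-m_0(e^{-2TH_p}(x,\xi))\bigr)$. At a bad point, $m_0(e^{-2TH_p}(x,\xi))>0$ forces $\varphi^{-2T}(x)\in\mathcal U$, so the trajectory through $x$ leaves $\overline{\mathcal U}$ at some time $b\in(-2T,0)$; if the exit is recent, $b\ge-(T-T_e)$ with $T_e$ the Lemma~\ref{l:long} constant for the neighborhood $W$ of $K$ used to build $m_0$, then $\varphi^{-T}(x)\in W$ by Lemma~\ref{l:long} and Lemma~\ref{l:global-dynamics} (applied to $e^{-2TH_p}(x,\xi)\in U_+\cap\{p=0\}$ over a trajectory stretch in $\overline{\mathcal U}$) forces $m_0(e^{-TH_p}(x,\xi))=1$, hence $H_pm_+(x,\xi)\ge 0$ — a contradiction. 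So bad points have $b<-(T-T_e)$. Strict convexity of $\partial\mathcal U$ at grazing exits together with the estimates behind Lemma~\ref{l:marron0} shows that, after a uniformly bounded transient, $\rho$ is strictly decreasing along such a trajectory, and by Lemma~\ref{l:marron} it can approach $\{\rho=-\varepsilon\}$ only through points with $X_1\rho\ne 0$; hence, once $T$ is fixed large, $B_+\subset\pi^{-1}(\{\rho\le-\tfrac\varepsilon2\})$ and $\overline{B_+}$ is a compact subset of $\pi^{-1}(\Sigma_+)$ disjoint from $\{X_1\rho=0\}$. The same holds for the bad set $B_-$ of $m_-$, and $(\Sigma_+\setminus\mathcal U)\cap(\Sigma_-\setminus\mathcal U)=\emptyset$ by convexity.

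I would then apply Lemma~\ref{l:marron3} with $V_\pm:=\pi(\overline{B_\pm})\subset\Sigma_\pm\setminus\mathcal U$ to obtain $\chi_\pm\in C_0^\infty(\{-2\varepsilon<\rho<\varepsilon\}\cap\{\pm X_1\rho<0\};[0,1])$ with $\pm X\chi_\pm\ge 0$ everywhere and $\pm X\chi_\pm>0$ on $V_\pm$; since $V_\pm\subset\{\rho\le-\tfrac\varepsilon2\}$, inspection of that construction shows $\supp\chi_\pm$ can be kept inside $\{\rho<0\}=\mathcal M\setminus\overline{\mathcal U}$. With $\widetilde\chi_\pm:=\chi_\pm\circ\pi$ one has $H_p\widetilde\chi_\pm=(X\chi_\pm)\circ\pi$, so $H_p(\widetilde\chi_--\widetilde\chi_+)\le 0$ everywhere and is bounded above by a negative constant on $B_+\cup B_-$. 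The order function is then
\[
m:=m_--m_++C_1(\widetilde\chi_--\widetilde\chi_+),
\]
with $C_1$ large enough that $C_1H_p(\widetilde\chi_--\widetilde\chi_+)$ dominates $|H_pm_1|$ on the compact set $\overline{B_+}\cup\overline{B_-}$. Then (1)--(2) survive because $\supp\widetilde\chi_\pm\subset\pi^{-1}(\mathcal M\setminus\overline{\mathcal U})$ is disjoint from $\kappa(E_\pm^*)$ (which sit over $\Gamma_\pm\subset\overline{\mathcal U}$); (4) survives because $\supp\chi_\pm\subset\{-2\varepsilon<\rho<\varepsilon\}\cap\{X_1\rho\ne 0\}$ while $\supp m_1$ is bounded away from $\{\rho=-\varepsilon\}$; and (3) holds near $\{p=0\}$ since $H_pm_1\le 0$ off $B_+\cup B_-$ (over $\mathcal U$ by Lemma~\ref{l:functions}(4), over $\{\rho\le 0\}$ by the bad-set analysis), while on $B_\pm$ the large negative correction dominates.

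I expect the main obstacle to be exactly this escape-region bookkeeping: showing that the set where $H_pm_1$ has the wrong sign is compactly contained in $\mathcal M\setminus\overline{\mathcal U}$, is clear of the glancing set $\{\rho=-\varepsilon\}\cap\{X_1\rho=0\}$, and is separated from the radial sets $\Gamma_\pm$, so that the Lemma~\ref{l:marron3} correction can be switched on there without perturbing the prescribed values of $m$ near $\kappa(E_\pm^*)$. This is precisely why property (4) is phrased with the glancing set excised rather than as the cleaner statement $\supp m\subset\{\rho>-\varepsilon\}$.
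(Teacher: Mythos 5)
Your construction is the paper's construction: you set $m = m_- - m_+ + R(\chi_-\circ\pi - \chi_+\circ\pi)$ with $\chi_\pm$ from Lemma~\ref{l:marron3}, which is exactly the paper's $m:=m_--m_++R(\chi_-\circ\pi-\chi_+\circ\pi)$. The real difference is the set $V_\pm$ to which Lemma~\ref{l:marron3} is applied. The paper takes the simple choice $V_\pm := \pi(\supp m_\pm)\setminus\mathcal U$, which is automatically compact and contained in $\Sigma_\pm\setminus\mathcal U$, and then asserts (1)--(3) hold for the corrected $m$. You instead introduce a finer ``bad set'' $B_\pm$ where $\pm H_p m_\pm < 0$ and run a transient-time argument to show $\overline{B_\pm}\subset\pi^{-1}(\{\rho\le -\varepsilon/2\})$, then apply Lemma~\ref{l:marron3} to $V_\pm := \pi(\overline{B_\pm})$.

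Your extra analysis is not needed for the positivity part of the argument: the inclusion ``bad set $\subset\pi^{-1}(V_\pm)$'' already follows from Lemma~\ref{l:functions}(4) together with $H_p m_\pm=0$ off $\supp m_\pm$, without pinning the bad set to $\{\rho\le-\varepsilon/2\}$. On the other hand, your motivation for the refinement~--- that the corrections $\chi_\pm$ must vanish near $\Gamma_\pm$ so that adding $R(\chi_-\circ\pi-\chi_+\circ\pi)$ does not disturb $m=\pm 1$ near $\kappa(E_\mp^*)$~--- is exactly the delicate point, and the paper disposes of it with only the remark ``$\chi_\pm=0$ near $K$,'' which does not obviously cover $\Gamma_\pm\cap\partial\mathcal U$. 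Your route of first showing $\overline{B_\pm}\subset\{\rho\le-\varepsilon/2\}$ and then keeping $\supp\chi_\pm\subset\{\rho<0\}$ is one legitimate way to make this explicit. However, two of your steps are themselves under-justified: (i) the claim that the time for a trajectory to traverse $\{-\varepsilon/2\le\rho\le 0\}$ after a (possibly grazing) exit is \emph{uniformly} bounded needs the quantitative estimate $g''>\delta_1$ from Lemma~\ref{l:marron0} together with a lower bound on $\psi(\rho)$ on $\{\rho\ge-\varepsilon/2\}$, which you invoke only by allusion; (ii) the claim ``inspection of the construction shows $\supp\chi_\pm$ can be kept inside $\{\rho<0\}$'' is only true because your $V_\pm$ is bounded away from $\partial\mathcal U$~--- for the paper's $V_\pm$, which meets $\{\rho=0\}$, the $\delta$-extension in the proof of Lemma~\ref{l:marron3} does spill into $\{\rho>0\}$, and $\chi_1'(-T)>0$ forces $\chi_1(-T)>0$, so $\chi_+$ is nonzero at $V_+$ itself. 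So your added bookkeeping does carry real content, but the way you present it leaves its two key quantitative inputs as assertions rather than proofs.
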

\begin{proof}
Let $m_\pm$ be the functions constructed in Lemma~\ref{l:functions}, then
\begin{enumerate}
\item $m_-=1$ and $m_+=0$ in a neighborhood of $\kappa(E_-^*)$;
\item $m_+=1$ and $m_-=0$ in a neighborhood of $\kappa(E_+^*)$;
\item $\pm H_p m_\pm\geq 0$ on $V\setminus \pi^{-1}(V_\pm)$, where $V$ is a neighborhood of $\{p=0\}$ and
$V_\pm:=\pi(\supp m_\pm)\setminus\mathcal U\subset \Sigma_\pm\setminus\mathcal U$ are compact. Here $\pi:S^*\mathcal M\to\mathcal M$
is the projection map and $\Sigma_\pm$ are defined in~\eqref{e:Sigma-def}.
\end{enumerate}
Next, take the functions $\chi_\pm$ constructed in Lemma~\ref{l:marron3}
(with the sets $V_\pm$ defined in (3) above).
We have $\pm H_p(\chi_\pm\circ\pi)\geq 0$ everywhere, $\chi_\pm=0$ near $K$, and
$\pm H_p(\chi_\pm\circ\pi)>\delta>0$ on $\pi^{-1}(V_\pm)$. Then for a large enough constant
$R>0$, the function
$$
m:=m_--m_++R(\chi_-\circ\pi-\chi_+\circ\pi)
$$
satisfies conditions~(1)--(3). Condition~(4) follows immediately
from the fact that $\supp m_\pm\subset\pi^{-1}(\Sigma_\pm)\subset \{\rho>-\varepsilon\}$
and $\supp\chi_\pm\subset\{\rho>-2\varepsilon\}\cap \{\pm X_1\rho<0\}$.
\end{proof}
We now consider $m$ as a homogeneous function of degree 0 on $T^*\mathcal M$ and
define the weight $\widetilde m\in C^\infty(T^*\mathcal M)$ by
\begin{equation}
  \label{e:tilde-m}
\widetilde m(x,\xi)=(1-\chi_m(x,\xi))m(x,\xi)\log|\xi|,
\end{equation}
where $\chi_m\in C_0^\infty(T^*\mathcal M;[0,1])$ is equal to 1 near the zero section
and supported in $\{|\xi|<1\}$.
\begin{figure}
\includegraphics{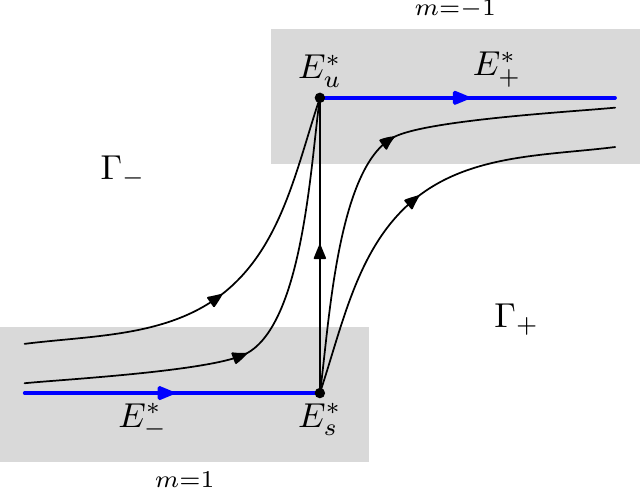}
\qquad
\includegraphics{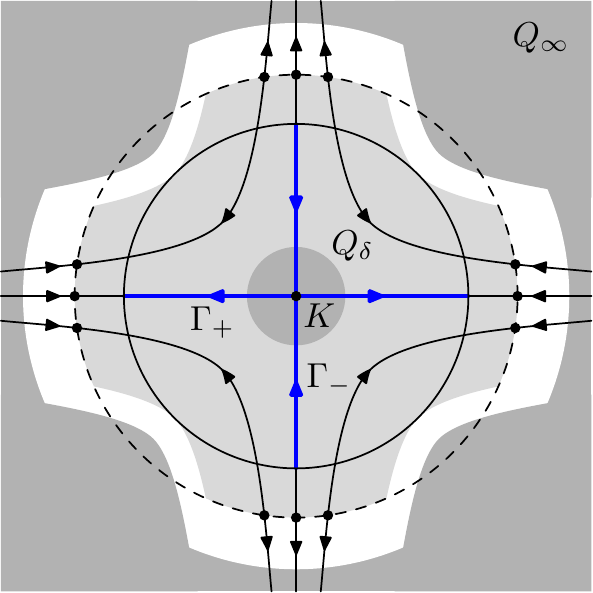}
\hbox to\hsize{\hss (a)\hss\hss (b)\hss}
\caption{(a) The flow $e^{tH_p}$ on $S^*\mathcal M\cap \{p=0\}$. The left half of the figure represents the sphere bundle over
$\Gamma_-$, the right half represents $\Gamma_+$, and the vertical midline represents $K$.
(To obtain this from Figure~\ref{f:funny3d}, glue the rear halves of the two vertical planes.)
The thick blue lines
are $E_\pm^*$ and the shaded boxes represent the regions where $m=\pm 1$.
(b) The flow $\varphi^t$ near $\mathcal U$ (pictured by the solid circle). The dashed circle
is $\{\rho=-\varepsilon\}$, consisting of fixed points; $q_1$ is supported near this circle. The lighter shaded region denotes the set $\Sigma$
from~\eqref{e:Sigma-def}, while
the darker shaded regions denote the supports of $Q_\infty$ and $Q_\delta$ (the latter microlocalized
near zero frequency).
}
\label{f:absorbers}
\end{figure}
Take an operator
\begin{equation}
  \label{e:Goat}
G=G(h)\in\bigcap_{\delta>0}\Psi^\delta_h(\mathcal M),\quad
\sigma_h(G)=\widetilde m,\quad
\WFh(G)\cap \{\xi=0\}=\emptyset.
\end{equation}
We moreover require that
\begin{equation}
  \label{e:Chevre}
\WFh(G)\ \subset\ \{\rho>-2\varepsilon\},\quad
\WFh(G)\cap\{\rho=-\varepsilon\}\cap \{X_1\rho=0\}=\emptyset.
\end{equation}
For each $r\in\mathbb R$, we define the \emph{anisotropic Sobolev space} $\mathcal H^r_h$ as follows:
\begin{equation}
  \label{e:anisop}
\mathcal H^r_h:=\exp(-rG(h))(L^2(\mathcal M;\mathcal E)),\quad
\|\mathbf u\|_{\mathcal H^r_h}:=\|\exp(rG(h)) \mathbf u\|_{L^2(\mathcal M;\mathcal E)}.
\end{equation}
As explained for instance in~\cite[Sections~3.1 and~3.3]{DyZw}, we have
\begin{equation}
  \label{e:anisotc}
H^{C_m|r|}_h(\mathcal M;\mathcal E)\subset\mathcal H^r_h\subset H^{-C_m|r|}_h(\mathcal M;\mathcal E)
\end{equation}
where $H^r_h$ stands for the standard semiclassical Sobolev space~\cite[Section~14.2.4]{e-z}
and $C_m=\sup_{S^*\mathcal M}|m|$; we can take $C_m=1$ for distributions supported inside $\mathcal U$.
Moreover, the norms of $\mathcal H^r_h$ for different $h$ are all equivalent with constants depending on $h$.

Since $m=1$ near $\kappa(E_-^*)$, we have $\widetilde m=\log|\xi|$ near $\kappa(E_-^*)\subset\partial\overline T^*\mathcal M$, where
\begin{equation}
  \label{e:kappa}
\kappa:T^*\mathcal M\setminus 0\ \to\ S^*\mathcal M=\partial\overline T^*\mathcal M
\end{equation}
is the projection map. It follows from~\cite[Theorems~8.6 and~8.10]{e-z} that
$\mathcal H^r_h$ is microlocally equivalent to the standard semiclassical Sobolev space
$H^r_h$ near $\kappa(E_-^*)$ in the following sense:
for each $A\in\Psi^0_h(\mathcal M)$ such that $\WFh(A)$ is contained in a small neighborhood
of $\kappa(E_-^*)$ and all $\mathbf u\in C^\infty(\mathcal M;\mathcal E)$, we have
\begin{equation}
  \label{e:aniseq-1}
\|A\mathbf u\|_{\mathcal H^r_h}\leq C\|\mathbf u\|_{H^r_h},\quad
\|A\mathbf u\|_{H^r_h}\leq C\|\mathbf u\|_{\mathcal H^r_h}.
\end{equation}
Since $m=-1$ near $\kappa(E_+^*)$, we similarly have for each
$A\in\Psi^0_h(\mathcal M)$ with $\WFh(A)$ contained in a small neighborhood
of $\kappa(E_+^*)$,
\begin{equation}
  \label{e:aniseq-2}
\|A\mathbf u\|_{\mathcal H^r_h}\leq C\|\mathbf u\|_{H^{-r}_h},\quad
\|A\mathbf u\|_{H^{-r}_h}\leq C\|\mathbf u\|_{\mathcal H^r_h}.
\end{equation}

\smallsection{Complex absorbing operators}
Take small $\delta>0$
and choose
\begin{equation}
  \label{e:absorbers-def}
Q_\infty\in\Psi^1_h(\mathcal M),\quad
Q_\delta\in \Psi^{\comp}_h(\mathcal M),\quad
q_1\in C^\infty(\mathcal M)
\end{equation}
such that
$\sigma_h(Q_\infty),\sigma_h(Q_\delta),q_1\geq 0$ everywhere and
\begin{enumerate}
\item $\indic_{\Sigma'}Q_\infty=Q_\infty\indic_{\Sigma'}=0$, where
$\Sigma'$ is a neighborhood of $\overline{\Sigma}$ and $\Sigma$ is defined
in~\eqref{e:Sigma-def};
\item $\{\rho\leq -2\varepsilon\}\cup\big(\{\rho=-\varepsilon\}\cap\{X_1\rho=0\}\big)\ \subset\ \Ell_h(Q_\infty)$;
\item $Q_\delta=\chi_\delta Q_\delta\chi_\delta$ for some $\chi_\delta\in C^\infty(\mathcal M)$ supported in a $\delta$-neighborhood of $K$,
$\WFh(Q_\delta)\ \subset\ \{|\xi|<\delta\}$, and $\{x\in K,\ \xi=0\}\ \subset\ \Ell_h(Q_\delta)$;
\item $\supp q_1\cap \overline{\mathcal U}=\emptyset$ and $q_1>0$ on $\{\rho=-\varepsilon\}$;
\item $\WFh(G)\cap (\WFh(Q_\delta)\cup\WFh(Q_\infty))=\emptyset$.
\end{enumerate}
The existence of $Q_\infty$ is guaranteed by Lemma~\ref{l:marron} and the fact that $\Sigma\subset \{\rho>-\varepsilon\}$. Condition~(5) can be satisfied by~\eqref{e:Goat} and~\eqref{e:Chevre}.
See Figure~\ref{f:absorbers}(b).

The use of the absorbing operator $Q_\delta$ goes back to~\cite{FaSj};
we will follow closely the later argument of~\cite{DyZw}. By contrast,
the operator $Q_\infty$ is something specific to open systems; such complex absorbing operators
have been previously used in scattering theory, see for instance~\cite{stefanov,NoZw,vasy}.
The absorbing potential $q_1$ guarantees invertibility
on $\{\rho=-\varepsilon\}$; making $Q_\infty$ elliptic there would destroy
the propagation of support property, meaning that Lemma~\ref{l:samething} below
would no longer be true.

\smallsection{Existence of the auxiliary resolvent}
Introduce the modified operator
\begin{equation}
  \label{e:P-delta}
\mathbf P_\delta=\mathbf P_\delta(h)={h\over i}\mathbf X-i(Q_\infty+q_1+Q_\delta)\in\Psi^1_h(\mathcal M;\mathcal E)
\end{equation}
which acts $\mathcal D^r_h\to\mathcal H^r_h$, where
\begin{equation}
  \label{e:anisopd}
\mathcal D^r_h:=\{\mathbf u\in \mathcal H^r_h\mid \mathbf P_\delta\mathbf u\in\mathcal H^r_h\},\quad
\|\mathbf u\|_{\mathcal D^r_h}:=\|\mathbf u\|_{\mathcal H^r_h}+\|\mathbf P_\delta \mathbf u\|_{\mathcal H^r_h}.
\end{equation}
Note that, with $p$ defined in~\eqref{e:symbol},
$$
\sigma_h(\mathbf P_\delta)=p-i(\sigma_h(Q_\infty)+\sigma_h(Q_\delta)+q_1).
$$
The main result of this subsection is the following
\begin{lemm}
  \label{l:keymic}
Take $C_1,C_2>0$. Then there exists $r_0=r_0(C_1)\geq 0$ such that
for all $r\geq r_0$ and $0<h<h_0(C_1,C_2,r)$,
the inverse
\begin{equation}
  \label{e:keymic}
\mathbf R_\delta(z)=(\mathbf P_\delta-z)^{-1}:\mathcal H^r_h\to\mathcal D^r_h,\quad
z\in [-C_2h,C_2h]+i[-C_1h,1]
\end{equation}
exists and satisfies the bound
\begin{equation}
  \label{e:keymicb}
\|\mathbf R_\delta(z)\|_{\mathcal H^r_h\to\mathcal H^r_h}\leq Ch^{-1}.
\end{equation}
Furthermore, the $h$-wavefront set of $\mathbf R_\delta(z)$ satisfies
\begin{equation}
  \label{e:WF-absorbed}
\WFh'(\mathbf R_\delta(z))\cap T^*(\mathcal U\times \mathcal U)\ \subset\ \Delta(T^*\mathcal U)\cup\Upsilon_+,
\end{equation}
where $\Delta(T^*\mathcal U)$ is the diagonal of $T^*\mathcal U$ and $\Upsilon_+$ is the positive flow-out
of $e^{tH_p}$ on $\{p=0\}$ inside $\pi^{-1}(\mathcal U)$ (here $\pi:\overline T^*\mathcal M\to \mathcal M$
is the projection map)
$$
\Upsilon_+=\{(e^{tH_p}(y,\eta),y,\eta)\mid t\geq 0,\ p(y,\eta)=0,\ y\in\mathcal U,\ \varphi^t(y)\in\mathcal U\}.
$$
\end{lemm}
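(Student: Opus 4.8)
\emph{Strategy.} The plan is to adapt the scheme of~\cite{FaSj} and~\cite[\S3]{DyZw}: realize $\mathbf R_\delta(z)$ as the inverse of a semiclassically Fredholm operator by proving an a priori estimate together with its analogue for the adjoint, and then read off~\eqref{e:WF-absorbed} from the propagation estimates used along the way. Throughout, $z$ ranges over the box in~\eqref{e:keymic}; since $\Re z=\mathcal O(h)$ it is an $\mathcal O(h)$ perturbation of $p$, and since $\Im z\geq -C_1h$ it contributes at worst $+C_1h$ to the imaginary part of the symbol — both will be absorbed into the $h^{-1}$ loss once $r\geq r_0(C_1)$.

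\emph{Reduction to $L^2$.} First I would conjugate by the weight: set $\mathbf P_{\delta,r}:=e^{rG}\mathbf P_\delta e^{-rG}\in\Psi^1_h(\mathcal M;\mathcal E)$, so that inverting $\mathbf P_\delta-z$ on $\mathcal H^r_h$ amounts to inverting $\mathbf P_{\delta,r}-z$ on $L^2(\mathcal M;\mathcal E)$. As in~\cite[\S3.3]{DyZw}, $\mathbf P_{\delta,r}$ is principally scalar with $\Re\sigma_h(\mathbf P_{\delta,r})=p$, and its subprincipal symbol acquires, compared with $\mathbf P_\delta$, an imaginary contribution $\asymp r\,H_p m$; by Lemma~\ref{l:order}(3) this is $\leq 0$ near $\{p=0\}$, and near $\kappa(E_-^*)$ (resp.\ near $\kappa(E_+^*)$, going backward) the expansion bound of Lemma~\ref{l:global-dynamics} makes it of size $\gtrsim r$ with the favorable sign. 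Equivalently, since $\mathcal H^r_h$ coincides microlocally with $H^{r}_h$ near $\kappa(E_-^*)$ and with $H^{-r}_h$ near $\kappa(E_+^*)$ by~\eqref{e:aniseq-1}--\eqref{e:aniseq-2}, Lemma~\ref{l:gs2}(1) (resp.\ (2)) gives $\Im(\mathbf P_\delta-z)\lesssim-h$ near $\kappa(E_\mp^*)$ on the relevant space for all $r\geq r_0(C_1)$.

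\emph{A priori estimate, invertibility, and wavefront bound.} I would then establish $\|\mathbf u\|_{\mathcal H^r_h}\leq Ch^{-1}\|(\mathbf P_\delta-z)\mathbf u\|_{\mathcal H^r_h}+\mathcal O(h^\infty)\|\mathbf u\|_{H^{-N}_h}$ for $\mathbf u\in C^\infty(\mathcal M;\mathcal E)$, uniformly in $z$, by a microlocal partition of unity on $\overline T^*\mathcal M$: on the complement of $\{\langle\xi\rangle^{-1}p=0\}$, together with $\Ell_h(Q_\infty)$, $\Ell_h(q_1)$, the set $\{\rho\leq-2\varepsilon\}\cup(\{\rho=-\varepsilon\}\cap\{X_1\rho=0\})$ and the part of $\{x\in K,\ \xi=0\}$ where $Q_\delta$ is elliptic, one uses the elliptic estimate; near $\kappa(E_-^*)$ one uses a radial-source estimate (Lemma~\ref{l:ultimate} with $B=0$ and part~1 of Lemma~\ref{l:gs2}); and along the remaining part of the characteristic set over $\mathcal U$, which feeds forward into the generalized radial sink $\kappa(E_+^*)$, one uses Lemma~\ref{l:ultimate} with $L=\kappa(E_+^*)$ and part~2 of Lemma~\ref{l:gs2}, the controlling operator $B$ being microlocalized near $\kappa(E_-^*)$ and on the elliptic sets of the absorbers. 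Verifying hypothesis~\eqref{e:ultcon} uses Lemmas~\ref{l:convergence}, \ref{l:long}, \ref{l:global-dynamics} and the trichotomy that a forward characteristic trajectory issued from $\pi^{-1}(\mathcal U)$ either stays over $\mathcal U$ and converges to $\kappa(E_+^*)$, or leaves $\overline{\mathcal U}$ and enters $\Ell_h(Q_\infty)$ (by Lemma~\ref{l:marron} and the properties of the absorbers), or crosses $\{\rho=-\varepsilon\}$ where $q_1>0$. Since $\|\mathbf u\|_{H^{-N}_h}\leq\|\mathbf u\|_{L^2}\lesssim\|\mathbf u\|_{\mathcal H^r_h}$, the remainder is absorbed for $h<h_0(C_1,C_2,r)$, which gives~\eqref{e:keymicb} and injectivity of $\mathbf P_\delta-z$ on $\mathcal D^r_h$; running the same argument for $(\mathbf P_\delta-z)^*$ — which satisfies the same hypotheses with the flow reversed, so that $\kappa(E_+^*)$ and $\kappa(E_-^*)$ swap roles and one works on the dual space $\mathcal H^{-r}_h$ — yields the adjoint estimate, and together they show $\mathbf P_\delta-z:\mathcal D^r_h\to\mathcal H^r_h$ is invertible. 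For~\eqref{e:WF-absorbed}, given $\mathbf f$ with $\WFh(\mathbf f)\subset T^*\mathcal U$ small, the section $\mathbf v:=\mathbf R_\delta(z)\mathbf f$ solves $(\mathbf P_\delta-z)\mathbf v=\mathbf f$, hence away from $\WFh(\mathbf f)$ it solves the homogeneous equation microlocally; therefore $\WFh(\mathbf v)$ lies in $\{p=0\}$ and is invariant under $e^{tH_p}$, the sign of the absorbing and weight terms forces propagation only in the forward direction, and the source estimate forces $\mathbf v$ to be microlocally trivial near $\kappa(E_-^*)$, so $\WFh(\mathbf v)\cap\pi^{-1}(\mathcal U)$ lies in the forward flow-out of $\WFh(\mathbf f)\cap\{p=0\}$ inside $\pi^{-1}(\mathcal U)$; translating to Schwartz kernels via~\eqref{e:wfprime} gives~\eqref{e:WF-absorbed}.

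\emph{Main difficulty.} The crux is the treatment of the sets $\kappa(E_\pm^*)$. Unlike the Anosov case, $E_\pm^*$ sit over the noncompact tails $\Gamma_\pm$ and are only saddles — not sources or sinks — for the flow $e^{tH_p}$ on $\{p=0\}$, so the classical radial-point estimates do not apply directly; one must instead verify carefully that the hypotheses of the general propagation Lemma~\ref{l:ultimate} are met, i.e.\ that over $\mathcal U$ every nonescaping characteristic trajectory is captured by the sink $\kappa(E_+^*)$ while all others reach $\Ell_h(Q_\infty)$ or the damping region of $q_1$ — and this must be arranged without making $Q_\infty$ elliptic on $\{\rho=-\varepsilon\}$, which would destroy the forward support and wavefront propagation underlying~\eqref{e:WF-absorbed}.
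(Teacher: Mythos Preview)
Your overall strategy---conjugate to $L^2$, combine elliptic estimates with Lemma~\ref{l:ultimate} at the radial saddles $\kappa(E_{s}^*),\kappa(E_{u}^*)$ using Lemma~\ref{l:gs2}, then run the adjoint argument---is exactly the paper's approach, and you have correctly identified the main difficulty. Two points deserve correction, however.

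First, and most importantly, your trichotomy for forward characteristic trajectories is wrong at the boundary. Trajectories that leave $\overline{\mathcal U}$ do \emph{not} ``cross $\{\rho=-\varepsilon\}$'': since $X=\psi(\rho)X_1$ vanishes there, they only converge to a point of $\{\rho=-\varepsilon\}$ as $t\to+\infty$ without ever reaching it. Ordinary propagation of singularities (Lemma~\ref{l:propagation}) therefore cannot transport control from the damping region $\{q_1>0\}$ to such trajectories. The paper deals with this by introducing the compact sets $L_\pm\subset\{\rho=-\varepsilon\}\cap\{\pm X_1\rho<0\}$ (see~\eqref{e:LPM}) and applying Lemma~\ref{l:ultimate} with $L=\pi^{-1}(L_\pm)$, using Lemma~\ref{l:gs1} (with the potential $q_1$) to verify $\Im(\mathbf P_{\delta,r}-z)\lesssim-h$ there; this is the content of Cases~2 and~9, with Case~8 handling the intermediate annulus. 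Without this, the a~priori estimate does not close over $\mathcal M\setminus\overline{\mathcal U}$.

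Second, the source estimate near $\kappa(E_-^*)$ cannot be taken with $B=0$. Backward trajectories starting near $\kappa(E_s^*)$ but over $\overline{\mathcal U}\setminus\Gamma_+$ leave $\overline{\mathcal U}$ rather than converge to $\kappa(E_s^*)$; the paper (Case~4, via~\eqref{e:containment}) chooses $B$ microlocalized near $\kappa(E_-^*)\cap\{\rho=\varepsilon\}$, where $B\mathbf u$ is controlled by the earlier Case~3 (propagation from outside $\mathcal U$). Relatedly, the paper applies Lemma~\ref{l:ultimate} with $L=\kappa(E_s^*)$ and $L=\kappa(E_u^*)$ (over $K$, hence compact and genuinely flow-invariant), and then propagates to the rest of $\kappa(E_\pm^*)$ by ordinary propagation (Cases~5 and~7); taking $L=\kappa(E_\pm^*)$ directly would require checking flow-invariance over $\Gamma_\pm$, which fails in one time direction. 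The wavefront bound~\eqref{e:WF-absorbed} is then read off by tracking the $B_1$ operators appearing in Cases~1, 3, and~5 (those in Cases~2 and~4 being microlocalized away from $T^*\mathcal U$), which is sharper than the general argument you sketch.
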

\noindent\textbf{Remarks}. (i)
The proof of Lemma~\ref{l:keymic} can be summarized as follows:
\begin{itemize}
\item the anisotropic spaces
give invertibility at the projections of the sets $E_u^*,E_s^*$ to fiber infinity
$\partial\overline T^*\mathcal M$;
\item together, $q_1$ and $Q_\infty$ give invertibility on $\{\rho\leq -\varepsilon\}$;
\item the operator
$Q_\delta$ gives invertibility on the set $\{(x,0)\mid x\in K\}$; and
\item invertibility elsewhere is obtained by propagation of singularities.
\end{itemize}

(ii) One can specify the value of $r_0$ more precisely. Indeed, the condition
$r\geq r_0$ is only needed to ensure that~\eqref{e:sub1}, \eqref{e:sub2} hold.
Examining the proof of Lemma~\ref{l:gs2}, we see immediately that
we can take for some large fixed constant $\widetilde C>0$,
\begin{equation}
  \label{e:improv-0}
r_0=\widetilde C(1+C_1).
\end{equation}
Moreover, if $h^{-1}\Im z$ is large enough and positive, then we can take $r_0=0$.

(iii)
If additionally $\mathbf X^*=-\mathbf X$ near $K$ with respect to some
smooth measure on $\mathcal U$ and some inner product on the fibers of $\mathcal E$
(e.g. when $\mathcal E=\mathbb R$, $\mathbf X=X$,
and $X$ admits a smooth invariant measure), then we can take for some $\widetilde C>0$,
any $r_0$ with
\begin{equation}
  \label{e:improv-1}
r_0>\widetilde C C_1.
\end{equation}
Furthermore, replacing $\gamma/2$ in the proof of Lemma~\ref{l:gs2}
by a constant arbitrarily close to $\gamma$, we can put $\widetilde C:=\gamma^{-1}$, where $\gamma$ is the minimal expansion rate
appearing in~\eqref{e:hyper}.
\begin{proof}
We use the strategy of the proof of~\cite[Proposition~3.4]{DyZw}.
 One could similarly adapt the construction of~\cite[Section~3]{FaSj},
however the method of~\cite{DyZw} is more convenient for the wavefront set statements, needed for Theorem~\ref{t:zeta}.

To reduce $\mathcal H^r_h$ estimates to $L^2$ estimates, we use the conjugated operator
(see~\cite[Section~3.3]{DyZw} for details)
\begin{equation}
  \label{e:p-conj}
\mathbf P_{\delta,r}=e^{rG}\mathbf P_\delta e^{-rG}=\mathbf P_\delta+r[G,\mathbf P_\delta]+\mathcal O(h^2)_{\Psi^{-1+}_h}.
\end{equation}

Since $\WFh(G)\cap (\WFh(Q_\delta)\cup\WFh(Q_\infty))=\emptyset$,
$\sigma_h(Q_\infty),\sigma_h(Q_\delta),q_1$ are real-valued, and $q_1$ is a pseudodifferential
operator of order 0, we get
\begin{equation}
  \label{e:p-conj-1}
\Re\sigma_h(\mathbf P_{\delta,r})=p+\mathcal O(h)_{S^0}.
\end{equation}
Since $H_pm\leq 0$ near $\{p=0\}$, it follows from~\eqref{e:tilde-m} that
$H_p\widetilde m\leq 0$ modulo $S^0$ near $\{\langle\xi\rangle^{-1}p=0\}$. Together with the fact
that $\sigma_h(Q_\infty),\sigma_h(Q_\delta),q_1\geq 0$, this implies
\begin{equation}
  \label{e:p-conj-2}
\Im\sigma_h(\mathbf P_{\delta,r})\leq 0\quad\text{near }\{\langle\xi\rangle^{-1}p=0\}.
\end{equation}
Now, by~\eqref{e:hyperdual}, $L=\kappa(E_s^*)$ satisfies~\eqref{e:rads1}, where
$\kappa$ is defined in~\eqref{e:kappa}.
By part~1 of Lemma~\ref{l:gs2}, there exists $r_0:=r_0(C_1)$ such that
\begin{equation}
  \label{e:sub1}
\Im(\mathbf P_\delta-z)\lesssim -h\quad\text{near }\kappa(E_s^*)\quad\text{on }H^r_h,\quad
r\geq r_0,\quad
\Im z\geq -C_1h;
\end{equation}
here we use Definition~\ref{d:gtrsimh}. Similarly, $L=\kappa(E_u^*)$ satisfies~\eqref{e:rads2}.
By part~2 of Lemma~\ref{l:gs2},
\begin{equation}
  \label{e:sub2}
\Im(\mathbf P_\delta-z)\lesssim -h\quad\text{near }\kappa(E_u^*)\quad\text{on }H^{-r}_h,\quad
r\geq r_0,\quad
\Im z\geq -C_1h.
\end{equation}
Since~\eqref{e:p-conj-2} is true when $q_1$ is removed from $\mathbf P_\delta$,
and $\Re\sigma_h(e^{rG}q_1e^{-rG})=q_1(x)>0$ on $\{\rho=-\varepsilon\}\subset\{\langle\xi\rangle^{-1}p=0\}\subset\overline T^*\mathcal M$, we have
by Lemma~\ref{l:gs1},
\begin{equation}
  \label{e:subq}
\Im(\mathbf P_{\delta,r}-z)\lesssim -h\quad\text{near }\{\rho=-\varepsilon\}\quad\text{on }L^2,\quad
\Im z\geq -C_1h.
\end{equation}
Finally, since $\{\rho=-\varepsilon\}\cap \{X_1\rho=0\}\subset \Ell_h(Q_\infty)$, there exist compact sets
(see Figure~\ref{f:LPM})
\begin{equation}
  \label{e:LPM}
L_\pm\ \subset\ \{\rho=-\varepsilon\}\cap \{\pm X_1\rho<0\}\ \subset\ \mathcal M
\end{equation}
such that on $\overline T^*\mathcal M$, and with $L_\pm^\circ$ denoting the interior of $L_\pm$
inside $\{\rho=-\varepsilon\}$,
\begin{equation}
  \label{e:LPM2}
\{\rho=-\varepsilon\}\cap \{\pm X_1\rho\leq 0\}\ \subset\ \pi^{-1}(L_\pm^\circ)\cup\Ell_h(Q_\infty).
\end{equation}
\begin{figure}
\includegraphics{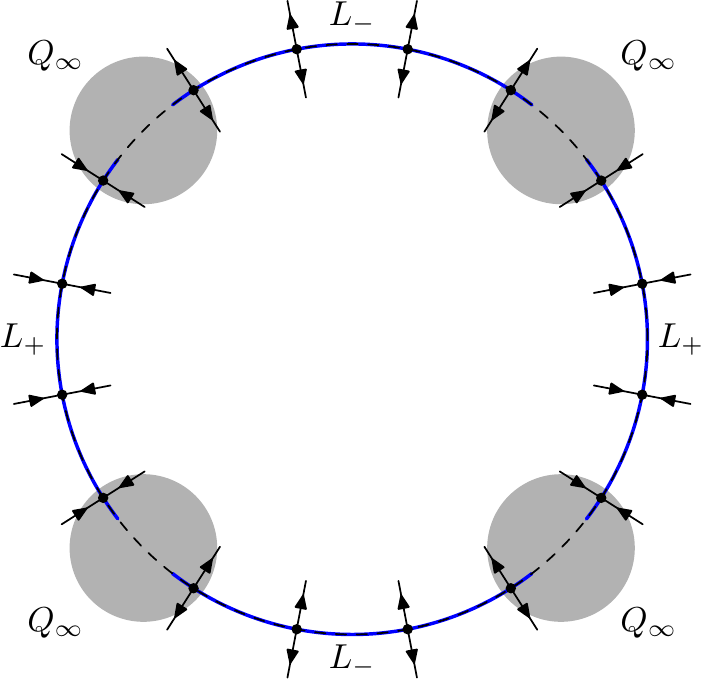}
\caption{The sets $L_\pm$ (thick blue lines) and the elliptic set of $Q_\infty$ (shaded)
near $\{\rho=-\varepsilon\}$ (dashed circle). The arrows depict the flow $\varphi^t$.}
\label{f:LPM}
\end{figure}
The proof of the lemma is based on the following bound similar to~\cite[(3.10)]{DyZw}:
\begin{equation}
  \label{e:direct-bound}
\|\mathbf u\|_{\mathcal H^r_h}\leq Ch^{-1}\|\mathbf f\|_{\mathcal H^r_h},\quad
\mathbf u\in \mathcal D^r_h,\quad
\mathbf f=(\mathbf P_\delta-z)\mathbf u.
\end{equation}
By~\cite[Lemma~A.1]{FaSj} applied to $e^{rG}\mathbf u$ and the operator $\mathbf P_{\delta,r}\in \Psi^1_h$, for
each fixed $h$ and each $\mathbf u\in\mathcal D^r_h$ there exists a sequence
$\mathbf u_j\in C^\infty(\mathcal M;\mathcal E)$ such that
$\mathbf u_j\to\mathbf u$ in $\mathcal H^r_h$ and
$(\mathbf P_\delta-z)\mathbf u_j\to\mathbf f$ in $\mathcal H^r_h$.
Therefore, it suffices to prove~\eqref{e:direct-bound} for the case
$\mathbf u\in C^\infty(\mathcal M;\mathcal E)$.

We now use semiclassical estimates to obtain bounds on $A\mathbf u$, where
$A\in\Psi^0_h(\mathcal M)$ falls into one of the following cases.
We will typically arrive to a propagation estimate of the form
\begin{equation}
  \label{e:case-0}
\|A\mathbf u\|_{\mathcal H^r_h}\leq C\|B \mathbf u\|_{\mathcal H^r_h}
+Ch^{-1}\|B_1\mathbf f\|_{\mathcal H^r_h}+\mathcal O(h^\infty)\|\mathbf u\|_{\mathcal H^r_h},
\end{equation}
for some choice of operators $B,B_1\in\Psi^0_h(\mathcal M)$. The term $B\mathbf u$
will be controlled by previously considered cases and we keep track of the wavefront set of $B_1$
to show~\eqref{e:WF-absorbed}.

\noindent\textbf{Case 1}: $\WFh(A)\cap\{\langle\xi\rangle^{-1}p=0\}\ \subset\ \Ell_h(Q_\infty)\cup\Ell_h(Q_\delta)$. Then
$\mathbf P_{\delta}-z$, and thus $\mathbf P_{\delta,r}-z$, is elliptic on $\WFh(A)$.
Similarly to~\cite[Proposition~3.4, Case~1]{DyZw}, we find for some $B_1\in\Psi^0_h(\mathcal M)$
microlocalized in a small neighborhood of $\WFh(A)$,
\begin{equation}
  \label{e:case-1}
\|A\mathbf u\|_{\mathcal H^r_h}\leq C\|B_1 \mathbf f\|_{\mathcal H^r_h}+\mathcal O(h^\infty)\|\mathbf u\|_{\mathcal H^r_h}.
\end{equation}
Note that the $\mathcal H^r_h$ bound on the operator $\mathbf P_\delta-z$
is equivalent to the $L^2$ bound on the operator $\mathbf P_{\delta,r}-z$;
we will use this fact in the next cases.

\noindent\textbf{Case 2}: $\WFh(A)$ is contained in a small neighborhood of $\pi^{-1}(L_-)\subset \overline T^*\mathcal M$,
where $L_-$ is defined in~\eqref{e:LPM} and $\pi:\overline T^*\mathcal M\to\mathcal M$ is the projection map;
moreover, $\pi^{-1}(L_-)\subset\Ell_h(A)$.

For each $(x,\xi)\in\WFh(A)$, $\varphi^t(x)$ uniformly converges to
$\{\rho=-\varepsilon\}\cap \{X_1\rho>0\}$ as $t\to -\infty$. Here we used that
$L_-\subset\{X_1\rho>0\}$,
$\varphi^t(x)=e^{tX}$, $X=\psi(\rho)X_1$, and $\sgn\psi(\rho)=\sgn(\rho+\varepsilon)$
(see Lemma~\ref{l:extconv} and Figures~\ref{f:LPM} and~\ref{f:case2}).

Take $B\in\Psi^0_h$ such that
$$
\{\rho=-\varepsilon\}\cap \{X_1\rho\geq 0\}\setminus \pi^{-1}(L_-^\circ)\ \subset\ \Ell_h(B),\quad
\WFh(B)\subset \Ell_h(Q_\infty).
$$
We apply Lemma~\ref{l:ultimate}, with $\mathbf P=\mathbf P_{\delta,r}-z$,
$L=\pi^{-1}(L_-)$, $s=0$, and $B_1$ elliptic in a sufficiently large neighborhood
of $L$ depending on $\WFh(A)$. All assumptions
of this lemma are satisfied, except for the condition $L\cap\WFh(B)=\emptyset$.
Indeed, $L$ is invariant under $e^{TH_p}$
since $L_-$ consists of fixed points of $X$ and thus is invariant under $\varphi_t$.
The condition~\eqref{e:subq} implies that $\Im\mathbf P\lesssim -h$ on $L^2$
near $L$. Moreover, for each $(x,\xi)\in\Omega\cap\WFh(A)$, the point
$\lim_{t\to -\infty}\varphi^t(x)$
lies in $L_-^\circ$.

Finally, the condition $L\cap\WFh(B)=\emptyset$ can be waived as it is only used in Lemma~\ref{l:ultimatesc}
and we can instead construct
the required function $\chi$ directly. In fact, 
using the coordinates~\eqref{e:funkco}, we see that
there exists
$\chi=\chi(x)\in C^\infty(\mathcal M;[0,1])$ supported in an arbitrarily small neighborhood of $L_-$
such that $\chi=1$ near $L_-$ and $H_p\chi\leq 0$ everywhere.

Now, the estimate~\eqref{e:ultimate} gives~\eqref{e:case-0}
for some $B_1\in\Psi^0_h(\mathcal M)$ microlocalized in a small neighborhood of $\pi^{-1}(L_-)$.
The term $B\mathbf u$ is controlled by Case~1.

\begin{figure}
\includegraphics{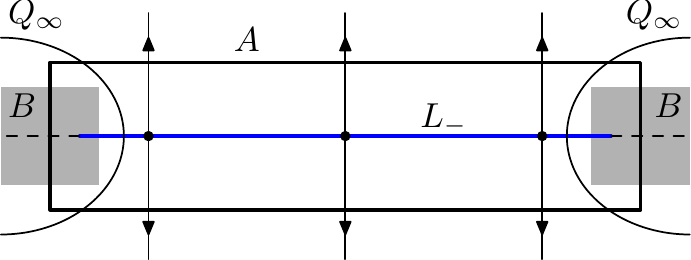}
\qquad
\includegraphics{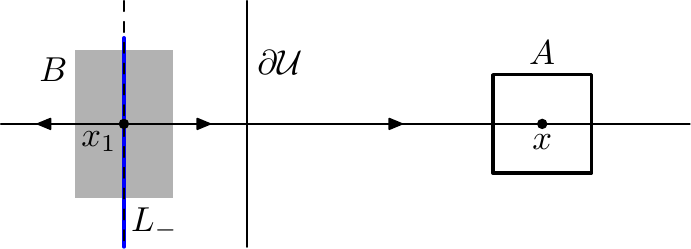}
\hbox to\hsize{\hss Case 2\hss\quad\hss Case 3\hss}
\caption{An illustration of Cases 2 and 3, with the flow lines
of $\varphi^t$ drawn. The solid blue lines
are $L_-$, the dashed lines containing them are $\{\rho=-\varepsilon\}$,
and the semicircles denote $\Ell_h(Q_\infty)$.
In this and the following figures,
the elliptic sets of $B$ are shaded and $\WFh(A)$ are pictured by the rectangles.
}
\label{f:case2}
\end{figure}

\noindent\textbf{Case 3}: $\WFh(A)$ is contained in a small neighborhood
of some $(x,\xi)\in \{\langle\xi\rangle^{-1}p=0\}$, where
$x\in\overline{\mathcal U}\setminus\Gamma_+$. By~\eqref{e:gpm}, there exists
$T>0$ such that $\varphi^{-T}(x)\notin\overline{\mathcal U}$.
Similarly to the proof of Lemma~\ref{l:marron3}, we use
part~2 of Lemma~\ref{l:marron0} to see that
$X_1\rho(\varphi^{-T}(x))>0$. We apply part~2 of Lemma~\ref{l:marron0}
(with $[\alpha,\beta]=[-\varepsilon,0]$)
again to see that there exists $T'>0$ such that
$x_1:=e^{-T'X_1}(\varphi^{-T}(x))\in \{\rho=-\varepsilon\}\cap\{X_1\rho>0\}$
and $e^{-tX_1}(\varphi^{-T}(x))\in \{-\varepsilon<\rho<0\}$ for all $t\in [0,T']$.
Since $X=\psi(\rho)X_1$, it follows that (see Figure~\ref{f:case2})
$$
\varphi^{-t}(x)\ \to\ x_1\in \{\rho=-\varepsilon\}\cap \{X_1\rho>0\}\quad\text{as }t\to +\infty.
$$
By~\eqref{e:LPM2}, there exists $B\in\Psi^0_h$ such that $\pi^{-1}(x_1)\subset\Ell_h(B)$
and $B\mathbf u$ is controlled either by Case~1 (if $\pi^{-1}(x_1)\subset\Ell_h(Q_\infty)$)
or by Case~2 (if $x_1\in L_-$). By propagation of singularities (Lemma~\ref{l:propagation})
applied to $\mathbf P_{\delta,r}$, the estimate~\eqref{e:case-0} holds for some
$B_1\in\Psi^0_h$ microlocalized in a small neighborhood of
$\{e^{-tH_p}(x,\xi)\mid t\geq 0\}$.

\noindent\textbf{Case 4}: $\WFh(A)$ is contained in a small neighborhood $U$ of $\kappa(E_s^*)$, where
$\kappa$ is defined in~\eqref{e:kappa}; moreover, $\kappa(E_s^*)\subset\Ell_h(A)$.
Take $B,B_1\in\Psi^0_h$ such that for some arbitrarily small fixed open sets $V\supset\kappa(E_-^*)\cap\{\rho=\varepsilon\}$ and $W\supset\kappa(E_-^*)$
$$
\begin{aligned}
\kappa(E_-^*)\cap \{\rho=\varepsilon\}\ \subset\ \Ell_h(B),&\quad
\WFh(B)\ \subset\ V;\\
\kappa(E_-^*)\ \subset\ \Ell_h(B_1),&\quad
\WFh(B_1)\ \subset\ W;
\end{aligned}
$$
see Figure~\ref{f:case4}. We also assume that~\eqref{e:aniseq-1} holds for the operators $A,B,B_1$.

We claim that for some choice of $U$ depending on $B,B_1$,
\begin{equation}
  \label{e:containment}
U\cap \{\langle\xi\rangle^{-1}p=0\}\setminus \pi^{-1}(\Gamma_+)\ \subset\ \Con_p(\Ell_h(B);\Ell_h(B_1)),
\end{equation}
see Definition~\ref{d:con} for the notation on the right-hand side. To see~\eqref{e:containment},
we first note that by Lemma~\ref{l:global-dynamics}, there exists $T'\geq 0$
(depending on $B,B_1$, but not on $U$, as long as $U$ lies inside a fixed small neighborhood of $\kappa(E_s^*)$)
such that
for each $T\geq T'$ and each
\begin{equation}
  \label{e:lalacond}
(x,\xi)\in U\cap \{\langle\xi\rangle^{-1}p=0\},\quad\varphi^{-T}(x)\in\{\rho=\varepsilon\},
\end{equation}
we have
\begin{equation}
  \label{e:lalaconc}
e^{-TH_p}(x,\xi)\in \Ell_h(B);\quad
e^{-tH_p}(x,\xi)\in \Ell_h(B_1)\quad\text{for all }t\in [T', T].
\end{equation}
Since $\kappa(E_s^*)$ is invariant under the flow and lies inside $\Ell_h(B_1)\setminus \{\rho=\varepsilon\}$,
we can make sure that~\eqref{e:lalacond} never holds for $T\in [0,T')$ and
\eqref{e:lalaconc} holds for all $t\in [0,T]$, as long as
$U$ is chosen small enough depending on $B,B_1,T'$.
Now, for each $(x,\xi)\in U\cap\{\langle\xi\rangle^{-1}p=0\}\setminus\pi^{-1}(\Gamma_+)$, there
exists $T\geq 0$ such that~\eqref{e:lalacond} holds. Then
\eqref{e:lalaconc} implies that $(x,\xi)\in\Con_p(\Ell_h(B);\Ell_h(B_1))$,
which proves~\eqref{e:containment}.

\begin{figure}
\includegraphics{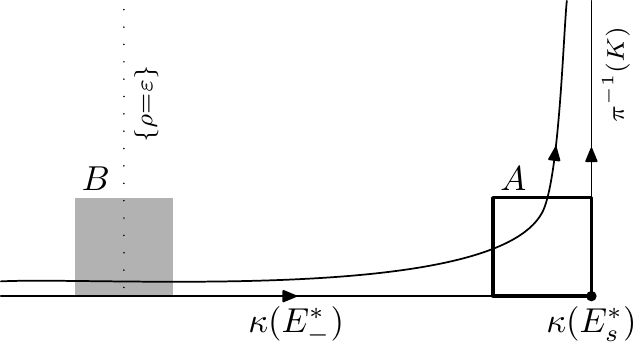}
\qquad
\includegraphics{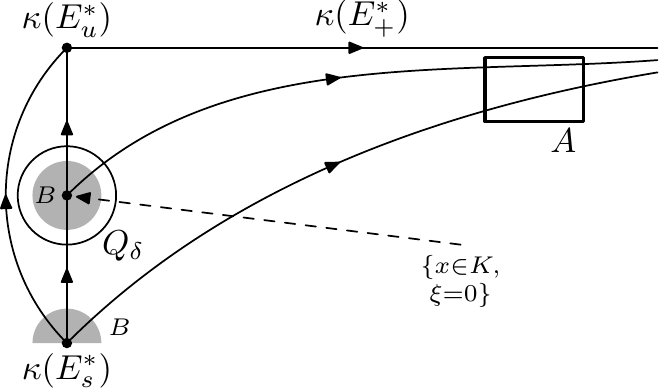}
\hbox to\hsize{\hss Case 4\hss\quad\hss Case 5\hss}
\caption{An illustration of Cases 4 and 5, with the flow lines
of $e^{tH_p}$ drawn on $\overline T^*_{\Gamma_-}\mathcal M$ (Case~4)
and $\overline T^*_{\Gamma_+}\mathcal M$ (Case~5).
The right (Case~4) and left (Case~5) side of the pictures is $\pi^{-1}(K)$.}
\label{f:case4}
\end{figure}

We now apply Lemma~\ref{l:ultimate}, with $\mathbf P=\mathbf P_\delta-z$, $L=\kappa(E_s^*)$, $s=r$.
To verify that $\Im \mathbf P\lesssim -h$ near $L$ on $H^r_h$, we use~\eqref{e:sub1}.
By~\eqref{e:containment}, we have $\Omega\cap \WFh(A)\subset \pi^{-1}(\Gamma_+)$. By Lemmas~\ref{l:convergence}
and~\ref{l:global-dynamics},
we have $e^{-tH_p}(x,\xi)\to L$ as $t\to +\infty$ uniformly in $(x,\xi)\in\Omega\cap\WFh(A)$.
Finally, by~\eqref{e:aniseq-1}, the space $H^r_h$ can be replaced by $\mathcal H^r_h$ in the estimate.

We see that~\eqref{e:ultimate} gives the estimate~\eqref{e:case-0}.
By Lemma~\ref{l:gpmclosed},
$K\cap \{\rho=\varepsilon\}=\emptyset$ for $\varepsilon$ small enough; therefore
 we can choose $V$ so that $\pi(\WFh(B))\subset\mathcal U\setminus\Gamma_+$.
Then the term $B\mathbf u$ is controlled by Case~3.

\noindent\textbf{Case 5}: $\WFh(A)$ is contained in a small neighborhood
of some $(x,\xi)\in \{\langle\xi\rangle^{-1}p=0\}$, where
$x\in \Gamma_+$ and $(x,\xi)\notin\kappa(E_+^*)$. If $\xi\notin \overline{E_+^*(x)}$, then
by part~4 of Lemma~\ref{l:extended},
we have $e^{-tH_p}(x,\xi)\to \kappa(E_s^*)$ as $t\to +\infty$.
Otherwise $\xi\in E_+^*(x)$ does not lie on the fiber infinity; by part~3
of Lemma~\ref{l:extended}, we have $e^{-tH_p}(x,\xi)\to \{x\in K,\ \xi=0\}$
as $t\to +\infty$.

Similarly to Case~3, we use propagation of singularities
to obtain the estimate~\eqref{e:case-0}, where $B_1$ is microlocalized in a small neighborhood
of $\{e^{-tH_p}(x,\xi)\mid t\geq 0\}$ and
$\WFh(B)$ lies either in a small neighborhood of $\kappa(E_s^*)$ or
in a small neighborhood of $\{x\in K,\ \xi=0\}$. In the first case,
$B\mathbf u$ is controlled by Case~4; in the second case,
$\WFh(B)\subset\Ell_h(Q_\delta)$ and $B\mathbf u$ is controlled by Case~1.
See Figure~\ref{f:case4}.

\noindent\textbf{Case 6}: $\WFh(A)$ is contained in a small neighborhood $U$ of
$\kappa(E_u^*)$; moreover, $\kappa(E_u^*)\subset\Ell_h(A)$.
Take $B\in \Psi^0_h$ such that (see Figure~\ref{f:case6})
$$
\kappa(E_s^*)\cup \{x\in K,\ \xi=0\}\cup(\{\rho=\varepsilon\}\cap\pi^{-1}(\Gamma_-))\ \subset\ \Ell_h(B)
$$
and $\WFh(B)$ lies in a small neighborhood of the above set.
Let $B_1\in\Psi^0_h$ satisfy $\{\langle\xi\rangle^{-1}p=0\}\subset\Ell_h(B_1)$
and~\eqref{e:p-conj-2} hold near $\WFh(B_1)$. We claim that for $U$ small enough,
\begin{equation}
  \label{e:containment2}
U\cap \{\langle\xi\rangle^{-1}p=0\}\setminus\kappa(E_+^*)\ \subset\ \Con_p(\Ell_h(B);\Ell_h(B_1)).
\end{equation}
To show~\eqref{e:containment2}, take $(x,\xi)\in U\cap \{\langle\xi\rangle^{-1}p=0\}\setminus\kappa(E_+^*)$.
If $x\in\Gamma_+$, then by the analysis of Case~5, $(x,\xi)\in\Con_p(\Ell_h(B);\Ell_h(B_1))$.
If $x\notin\Gamma_+$, then there exists $T>0$ such that
$\varphi^{-T}(x)\in \{\rho=\varepsilon\}$; we claim that $\varphi^{-T}(x)\in \Ell_h(B)$.
Indeed, otherwise $\varphi^{-T}(x)$ does not lie in some fixed closed subset of $\overline{\mathcal U}$
which does not intersect $\Gamma_-$, which implies that $\varphi^{t-T}(x)\notin\pi(U)$
for $\pi(U)$ a small enough neighborhood of $K$ and all $t\geq 0$; putting $t:=T$, we get a contradiction.

We now apply Lemma~\ref{l:ultimate}
with $\mathbf P=\mathbf P_\delta-z$, $L=\kappa(E_u^*)$, $s=-r$.
To see that $\Im\mathbf P\lesssim -h$ near $L$ on $H^{-r}_h$, we use~\eqref{e:sub2}.
By~\eqref{e:containment2}, $\Omega\cap \WFh(A)\subset \kappa(E_+^*)$.
Then by Lemma~\ref{l:convergence}
and the invariance of $E_+^*$ under the flow, $e^{-tH_p}(x,\xi)\to\kappa(E_+^*)\cap \pi^{-1}(K)=
\kappa(E_u^*)$ as $t\to +\infty$ uniformly in $(x,\xi)\in\Omega\cap\WFh(A)$.

By~\eqref{e:ultimate}, we obtain~\eqref{e:case-0}.
The space $H^{-r}_h$ can be replaced in~\eqref{e:ultimate} by $\mathcal H^r_h$;
indeed, \eqref{e:tinerz} still holds by~\eqref{e:aniseq-2}
and~\eqref{e:ullie1}, \eqref{e:ullie2} follow by propagation
of singularities for the conjugated operator $\mathbf P_{\delta,r}-z$.
The term $B\mathbf u$ is controlled by Cases~1, 3, and~4,
corresponding to the parts of $B$ lying near
$\{x\in K,\ \xi=0\}$, $\{\rho=\varepsilon\}\cap \pi^{-1}(\Gamma_-)$,
and $\kappa(E_s^*)$ respectively.

\begin{figure}
\includegraphics{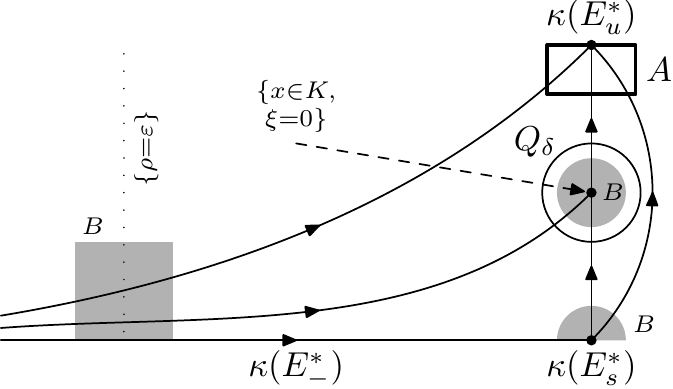}
\qquad
\includegraphics{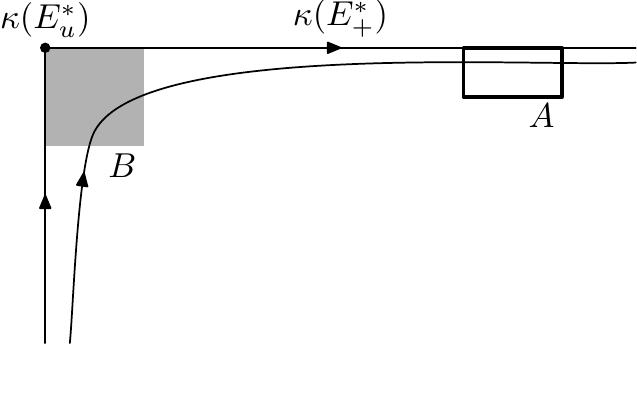}
\hbox to\hsize{\hss Case 6\hss\quad\hss Case 7\hss}
\caption{An illustration of Cases 6 and 7, with the flow lines
of $e^{tH_p}$ drawn on $\overline T^*_{\Gamma_-}\mathcal M$ (Case~6)
and $\overline T^*_{\Gamma_+}\mathcal M$ (Case~7).}
\label{f:case6}
\end{figure}

\noindent\textbf{Case 7}: $\WFh(A)$ is contained in a small neighborhood
of some $(x,\xi)\in \kappa(E_+^*)$. Take $B\in\Psi^0_h$ which is microlocalized
in a small neighborhood of $\kappa(E_u^*)$ and $\kappa(E_u^*)\subset\Ell_h(B)$.
Let $B_1$ be as in Case~6.
By Lemma~\ref{l:convergence} and the invariance of $E_+^*$, we see that
$\WFh(A)\subset\Con_p(\Ell_h(B);\Ell_h(B_1))$. Similarly to Case~3,
propagation of singularities gives~\eqref{e:case-0}. The term $B\mathbf u$
is controlled by Case~6. See Figure~\ref{f:case6}.

\noindent\textbf{Case 8}: $\WFh(A)$ is contained in a small neighborhood
of some $(x,\xi)\in \{\langle\xi\rangle^{-1}p=0\}$
and $(x,\xi)\notin\Ell_h(Q_\infty)\cup\pi^{-1}(\overline{\mathcal U}\cup L_-\cup L_+)$.
Here $L_\pm$ are defined in~\eqref{e:LPM}.
Then
$x\in \{-2\varepsilon<\rho<0\}\setminus\{\rho=-\varepsilon\}$. By part~1 of Lemma~\ref{l:marron0}
(with $[\alpha,\beta]=[-2\varepsilon,\varepsilon]$ or $[\alpha,\beta]=[-\varepsilon,0]$), 
and since $X=\psi(\rho)X_1$, $\sgn\psi(\rho)=\sgn(\rho+\varepsilon)$, we see that
one of the following holds (see Figure~\ref{f:case9})
\begin{enumerate}
\item there exists $T\geq 0$ such that $x_1:=\varphi^{-T}(x)\in \partial\overline{\mathcal U}$, or
\item there exists $T\geq 0$ such that $x_1:=\varphi^{-T}(x)\in \{\rho=-2\varepsilon\}$, or
\item there exists $x_1\in \{\rho=-\varepsilon\}\cap \{X_1\rho\geq 0\}$ such that
$\varphi^{-t}(x)\to x_1$ as $t\to +\infty$.
\end{enumerate}
Take $B\in\Psi^0_h$ such that $\pi^{-1}(x_1)\in\Ell_h(B)$, but
$\pi(\WFh(B))$ lies in a small neighborhood of $x_1$. Let $B_1$ be as in Case~6.
Similarly to Case~3, by propagation of singularities we get~\eqref{e:case-0}. The term $B\mathbf u$
can be estimated in each of the situations above as follows:
\begin{enumerate}
\item by Cases~1, 3, 5, and~7;
\item by Case~1, since $\pi^{-1}(x_1)\subset\Ell_h(Q_\infty)$;
\item by Case~2 if $x_1\in L_-$, and by Case~1 otherwise (as then $\pi^{-1}(x_1)\subset\Ell_h(Q_\infty)$).
\end{enumerate}

\noindent\textbf{Case 9}: $\WFh(A)$ is contained in a small neighborhood
of $\pi^{-1}(L_+)$ and
$\pi^{-1}(L_+)\subset\Ell_h(A)$, where $L_+$ is defined in~\eqref{e:LPM}.
We in particular require that
$$
\WFh(A)\subset \textstyle\{-{3\over 2}\varepsilon<\rho<-{1\over 2}\varepsilon\}\cap \{X_1\rho<0\}.
$$
Take $B\in\Psi^0_h$ such that (see Figure~\ref{f:case9})
$$
  \label{e:zzBreq}
\begin{gathered}
\WFh(B)\ \subset\ \big(\{-2\varepsilon<\rho<0\}\cap\{\rho\neq-\varepsilon\}\big)
\cup\Ell_h(Q_\infty),\\
\pi^{-1}\big(\{\rho=-\varepsilon\}\cap\{X_1\rho\leq 0\}\setminus L_+^\circ\big)\ \subset\ \Ell_h(B),\\
\textstyle\pi^{-1}\big(\{\rho=-{3\over 2}\varepsilon\}\cup \{\rho=-{1\over 2}\varepsilon\}\big)\ \subset\ \Ell_h(B).
\end{gathered}
$$
We apply Lemma~\ref{l:ultimate}, with $\mathbf P=\mathbf P_{\delta,r}-z$,
$L=\pi^{-1}(L_+)$, $s=0$, and $B_1$ chosen as in Case~6.
To verify that $\Im\mathbf P\lesssim -h$ near $L$ on $L^2$, we use~\eqref{e:subq}.
The condition $L\cap\WFh(B)=\emptyset$ does not hold, but similarly to Case~2 it can be waived
by taking a function $\chi\in C^\infty(\mathcal M;[0,1])$ which is supported in a small enough neighborhood
of $L_+$, but $\chi=1$ near $L_+$. As follows from the next paragraph, this function
satisfies the conclusions of Lemma~\ref{l:ultimatesc}, in fact $H_p\chi=0$ near $\Omega$.

To finish verifying the assumptions of Lemma~\ref{l:ultimate}, note that $\WFh(A)\cap\Omega\subset\pi^{-1}(L_+^\circ)$.
Indeed, let $(x,\xi)\in\WFh(A)$. If $x\notin\{\rho=-\varepsilon\}$, then by part~1 of Lemma~\ref{l:marron0}
(with $[\alpha,\beta]=[-{3\over 2}\varepsilon,-\varepsilon]$ or $[\alpha,\beta]=[-\varepsilon,-{1\over 2}\varepsilon]$),
either $\varphi^{-T}(x)\in \{\rho=-{3\over 2}\varepsilon\}\cup\{\rho=-{1\over 2}\varepsilon\}$
for some $T\geq 0$, or $\varphi^{-t}(x)\to x_1\in \{\rho=-\varepsilon\}$ as $t\to +\infty$.
The latter option is impossible if $\pi(\WFh(A))$ is sufficiently close to $L_+\subset\{X_1\rho<0\}$,
and the former option gives $(x,\xi)\notin\Omega$. If $x\in \{\rho=-\varepsilon\}$, then
we also have $x\in \{X_1\rho<0\}$. Therefore, either $x\in L_+^\circ$ or $(x,\xi)\in\Ell_h(B)$; in the latter
case, $(x,\xi)\notin\Omega$.

Now, the estimate~\eqref{e:ultimate} gives~\eqref{e:case-0}. The term $B\mathbf u$ can be estimated
by Cases~1 and~8.

\begin{figure}
\includegraphics{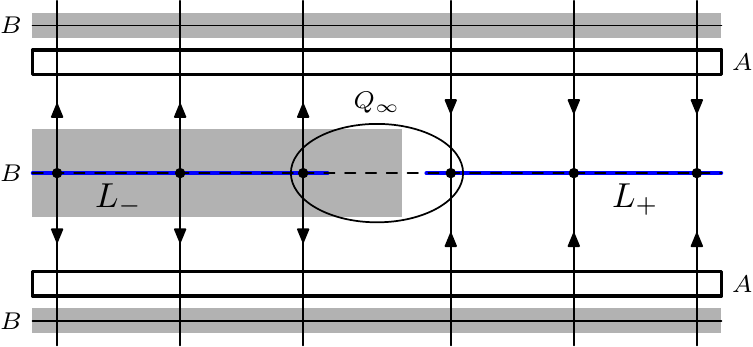}
\quad
\includegraphics{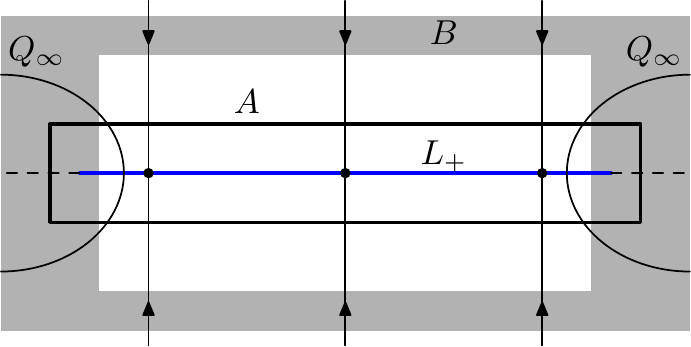}
\hbox to\hsize{\hss Case 8\hss\qquad\hss Case 9\hss}
\caption{An illustration of Cases~8 and~9, with the flow lines
of $\varphi^t$ drawn. The solid blue lines
are $L_\pm$ and the dashed lines containing them are $\{\rho=-\varepsilon\}$;
the solid lines on the top and bottom of Case~8 are $\partial\mathcal U$
and $\{\rho=-2\varepsilon\}$.
The (semi)circles denote $\Ell_h(Q_\infty)$.}
  \label{f:case9}
\end{figure}

\smallskip

Combining the above cases and using a pseudodifferential partition
of unity, we get the estimate~\eqref{e:direct-bound}. More precisely,
if $A\in\Psi^0_h$ and $\WFh(A)$ lies in a small neighborhood of $\pi^{-1}(\overline{\mathcal U})$,
then $A\mathbf u$ is estimated by a combination of Cases~1, 3, 5, and~7.
If $\pi^{-1}(\WFh(A))\cap\overline{\mathcal U}=\emptyset$, then $A\mathbf u$ is estimated by
a combination of Cases~1, 2, 8, and~9.

Reversing the direction of propagation
(replacing $X_1$ by $-X_1$, $\mathbf X$ by $-\mathbf X$, $m$ by $-m$, and switching $E_s^*$ with $E_u^*$, $E_+^*$ with $E_-^*$, and
$L_+$ with $L_-$), we repeat the above reasoning to get the adjoint estimate
similar to~\cite[(3.17)]{DyZw}
\begin{equation}
  \label{e:dual-bound}
\|\mathbf u\|_{\mathcal H^{-r}_h}\leq Ch^{-1}\|\mathbf f\|_{\mathcal H^{-r}_h},\quad
\mathbf u\in \mathcal H^{-r}_h,\quad
\mathbf f:=(\mathbf P_\delta^*-\bar z)\mathbf u\in\mathcal H^{-r}_h.
\end{equation}
Note that $\mathcal H^{-r}_h$ is dual to $\mathcal H^r_h$ with respect to the $L^2$ pairing.
The functional analytic argument
given at the end of the proof of~\cite[Proposition~3.4]{DyZw} shows that together, \eqref{e:direct-bound}
and~\eqref{e:dual-bound} imply invertibility
of $\mathbf P_\delta-z:\mathcal D^r_h\to\mathcal H^r_h$ and the bound~\eqref{e:keymicb}.

It remains to verify the wavefront set condition~\eqref{e:WF-absorbed}. By~\cite[Lemma~2.3]{DyZw},
it suffices to show that for each $(x,\xi),(y,\eta)\in T^*\mathcal U$ such that
$(y,\eta)\neq (x,\xi)$ and either $p(x,\xi)\neq 0$ or
$(x,\xi)\neq e^{tH_p}(y,\eta)$ for all $t\geq 0$, there exist $A\in\Psi^{\comp}_h(\mathcal M)$
and $B_2\in\Psi^0_h(\mathcal M)$ such that
$$
(x,\xi)\in\Ell_h(A),\ (y,\eta)\notin\WFh(B_2),
$$
and for each $\mathbf u\in\mathcal D^r_h$ with $\mathbf f:=(\mathbf P_\delta-z)\mathbf u$,
\begin{equation}
  \label{e:wf-bound}
\|A\mathbf u\|_{\mathcal H^r_h}\leq Ch^{-1}\|B_2\mathbf f\|_{\mathcal H^r_h}
+\mathcal O(h^\infty)\|\mathbf u\|_{\mathcal H^r_h}.
\end{equation}
As remarked
after~\eqref{e:direct-bound}, an approximation argument reduces us to the case
$\mathbf u\in C^\infty$. Then~\eqref{e:wf-bound} follows by a combination of Cases~1, 3, and~5.
Here we use that the operator $B_1$ from Case~2 is microlocalized in a small neighborhood
of $\pi^{-1}(L_-)\subset\pi^{-1}(\mathcal M\setminus\mathcal U)$ and
the same operator from Case~4 is microlocalized in a small neighborhood of $\kappa(E_-^*)\subset\partial\overline T^*\mathcal M$;
thus their wavefront sets do not contain $(y,\eta)$.
\end{proof}

\subsection{Proofs of Theorems~\ref{t:mer} and~\ref{t:res-states}}
  \label{s:proofs}

In this section, we show the meromorphic continuation of the resolvent $\mathbf R(\lambda)$
defined in~\eqref{e:res}. We start with the following corollary of Lemma~\ref{l:keymic}:
\begin{lemm}
  \label{l:meromorphic}
Let $Q_\infty\in\Psi^1_h(\mathcal M),Q_\delta\in \Psi^{\comp}_h(\mathcal M),q_1\in C^\infty(\mathcal M)$ be
introduced in~\eqref{e:absorbers-def},
and $\mathcal H^r_h,\mathcal D^r_h$ be given by~\eqref{e:anisop}, \eqref{e:anisopd}.
Fix $C_1,C_2>0$ and $r>r_0=r_0(C_1)$. Define
\begin{equation}
  \label{e:p-0}
\mathbf P_0=\mathbf P_0(h):={h\over i}\mathbf X-i(Q_\infty+q_1):\mathcal D^r_h\to\mathcal H^r_h.
\end{equation}
Then for $0<h<h_0(C_1,C_2,r)$,

1. $\mathbf P_0-z:\mathcal D^r_h\to\mathcal H^r_h$ is a Fredholm operator of index zero
for $z\in [-C_2h,C_2h]+i[-C_1h,1]$.

2. The inverse
\begin{equation}
  \label{e:r-0}
\mathbf R_0(z):=(\mathbf P_0-z)^{-1}:\mathcal H^r_h\to\mathcal D^r_h,\quad
z\in [-C_2h,C_2h]+i[-C_1h,1]
\end{equation}
is a meromorphic family of operators with poles of finite rank.
\end{lemm}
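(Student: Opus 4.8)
The plan is to deduce Lemma~\ref{l:meromorphic} from Lemma~\ref{l:keymic} by viewing $\mathbf P_0-z$ as a compact perturbation of the invertible operator $\mathbf P_\delta-z$, and then invoking analytic Fredholm theory; this follows the strategy of~\cite{DyZw}. From~\eqref{e:P-delta} and~\eqref{e:p-0} we have $\mathbf P_0-z=(\mathbf P_\delta-z)+iQ_\delta$. Since $Q_\delta\in\Psi^{\comp}_h(\mathcal M)$ has wavefront set compactly contained in $T^*\mathcal M$, it is smoothing in the semiclassical sense, hence compact on $L^2(\mathcal M;\mathcal E)$ (the manifold $\mathcal M$ being compact); and because $\WFh(G)\cap\WFh(Q_\delta)=\emptyset$ by condition~(5) in~\eqref{e:absorbers-def}, conjugation by $e^{rG}$ changes $Q_\delta$ only by an $\mathcal O(h^\infty)$ smoothing remainder, so $Q_\delta$ is also compact on $\mathcal H^r_h$ and, composed with the bounded inclusion $\mathcal D^r_h\hookrightarrow\mathcal H^r_h$, compact as a map $\mathcal D^r_h\to\mathcal H^r_h$. (Note also that $\mathbf P_0$ and $\mathbf P_\delta$ define the same domain $\mathcal D^r_h$, as $Q_\delta$ is bounded on $\mathcal H^r_h$.) I would then observe that $\mathbf P_\delta-z$, being invertible by Lemma~\ref{l:keymic}, is Fredholm of index $0$, and that a compact perturbation preserves this; hence $\mathbf P_0-z:\mathcal D^r_h\to\mathcal H^r_h$ is Fredholm of index $0$ for every $z\in[-C_2h,C_2h]+i[-C_1h,1]$, which is part~1.

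For part~2, the map $z\mapsto\mathbf P_0-z$ is a holomorphic (indeed affine) family of Fredholm operators of index $0$ between the fixed Banach spaces $\mathcal D^r_h$ and $\mathcal H^r_h$, defined on the connected strip $[-C_2h,C_2h]+i[-C_1h,1]$. By the analytic Fredholm theorem (see e.g.~\cite{DyZw} and references therein), once $\mathbf P_0-z$ is invertible at a single point $z_0$ of the strip, $\mathbf R_0(z)=(\mathbf P_0-z)^{-1}$ continues meromorphically to the whole strip with poles of finite rank, which is precisely the assertion of part~2. Equivalently, one may factor $\mathbf P_0-z=(\mathbf P_\delta-z)(\mathrm{Id}+i\mathbf R_\delta(z)Q_\delta)$, where $z\mapsto i\mathbf R_\delta(z)Q_\delta$ is a holomorphic family of compact operators, apply the usual analytic Fredholm theorem to $\mathrm{Id}+i\mathbf R_\delta(z)Q_\delta$, and write $\mathbf R_0(z)=(\mathrm{Id}+i\mathbf R_\delta(z)Q_\delta)^{-1}\mathbf R_\delta(z)$.

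The step that requires a little work is producing one point $z_0$ at which $\mathbf P_0-z_0$ is invertible. I would take $z_0$ with $\Re z_0=0$ and $\Im z_0=1$, so that $h^{-1}\Im z_0=h^{-1}$ is large for $h$ small. Since $\mathbf P_0-z_0$ is already Fredholm of index $0$, it suffices to establish the a priori bound $\|\mathbf u\|_{\mathcal H^r_h}\le Ch^{-1}\|(\mathbf P_0-z_0)\mathbf u\|_{\mathcal H^r_h}$ together with its analogue for $\mathbf P_0^*-\bar z_0$. These follow by repeating the proof of Lemma~\ref{l:keymic} with $\mathbf P_\delta$ replaced by $\mathbf P_0$: the estimates of Cases~2--9 (and the dual estimate) do not involve $Q_\delta$ and carry over unchanged, while the only role of $Q_\delta$ in Case~1 was to obtain invertibility microlocally near $\{x\in K,\ \xi=0\}$, where one now has $\Im\sigma_h(\mathbf P_{0,r}-z_0)=-\Im z_0+\mathcal O(h)<0$ for $h$ small, so the elliptic estimate applies directly (cf.\ Remark~(ii) after Lemma~\ref{l:keymic}).

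I expect the main obstacle to be purely bookkeeping: carefully transporting the compactness of $Q_\delta$ and the holomorphy/boundedness of $\mathbf R_\delta(z)$ from $L^2$ to the anisotropic spaces $\mathcal H^r_h$, $\mathcal D^r_h$ (which for fixed $h$ are Banach-isomorphic to $L^2$ via $e^{rG}$, though not uniformly in $h$), and verifying that the single invertible point $z_0$ can be obtained without the auxiliary absorber $Q_\delta$ — both of which rest on the wavefront-set disjointness arranged in~\eqref{e:absorbers-def} and the sign conditions~\eqref{e:sub1}--\eqref{e:subq} established in the proof of Lemma~\ref{l:keymic}.
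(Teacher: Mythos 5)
Your argument is correct and follows essentially the same route as the paper: view $\mathbf P_0-z=(\mathbf P_\delta-z)+iQ_\delta$ as a compact perturbation of the invertible operator from Lemma~\ref{l:keymic}, conclude Fredholm of index zero, apply analytic Fredholm theory, and produce an invertible point at $z_0=i$ by exploiting that $\Im z_0=1\gg h$. The one place you diverge from the paper is in establishing the a priori bound at $z_0=i$: you propose to replay the full case-by-case analysis of Lemma~\ref{l:keymic} (Cases~2--9 unchanged, Case~1 salvaged because $\Im\sigma_h(\mathbf P_{0,r}-i)\approx -1<0$ near $\{x\in K,\xi=0\}$). The paper instead collapses this into a single application of Lemma~\ref{l:ultimate} with $L=\{\langle\xi\rangle^{-1}p=0\}$, $B=0$, $s=0$, together with Lemma~\ref{l:gs1} with $Q:=1$: since $\Im z_0=1$, the subprincipal imaginary part is uniformly negative on the \emph{entire} characteristic set, so one can take the invariant set $L$ to be all of $\{\langle\xi\rangle^{-1}p=0\}$ (in which case $\Omega=L$, the convergence condition~\eqref{e:ultcon} is trivial, and there is no need for a control region $B$); one then finishes with a single elliptic estimate off the characteristic set. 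Your version certainly works but is more labor than needed. A further small simplification available to you: since part~1 already gives Fredholm of index zero, the forward bound $\|\mathbf u\|_{\mathcal H^r_h}\leq Ch^{-1}\|(\mathbf P_0-i)\mathbf u\|_{\mathcal H^r_h}$ alone (i.e.\ injectivity) implies invertibility; the dual estimate for $\mathbf P_0^*-\bar z_0$ that you mention is redundant here.
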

\begin{proof}
1. Take $z\in [-C_2h,C_2h]+i[-C_1h,1]$. By Lemma~\ref{l:keymic}, $\mathbf P_\delta-z:\mathcal D^r_h\to\mathcal H^r_h$
is invertible, where $\mathbf P_\delta$ is defined in~\eqref{e:P-delta}. We write
$$
\mathbf P_\delta-z=\mathbf P_0-z-iQ_\delta.
$$
Now, $Q_\delta$ is compactly microlocalized (that is, $\WFh(Q_\delta)\Subset T^*\mathcal M$)
so it is smoothing; that is, $Q_\delta$ is bounded
$H^{-N}(\mathcal M)\to H^{N}(\mathcal M)$ for all $N$. By Rellich's Theorem (using the fact that $\mathcal M$ is compact
and $e^{\pm rG}$ are pseudodifferential operators), we see that
$Q_\delta$ is a compact operator $\mathcal H^r_h\to\mathcal H^r_h$
and thus $\mathcal D^r_h\to\mathcal H^r_h$. It follows
that $\mathbf P_0-z:\mathcal D^r_h\to\mathcal H^r_h$ is a Fredholm operator of index zero.

2. The meromorphy of $\mathbf R_0(z)$ follows by analytic Fredholm theory~\cite[Proposition~D.4]{e-z},
as long as $\mathbf P_0-z$ is known to be invertible for at least one value of $z$.
We take $z=i$; it suffices to prove the estimate
\begin{equation}
  \label{e:upperhlf}
\|\mathbf u\|_{\mathcal H^r_h}\leq Ch^{-1}\|\mathbf f\|_{\mathcal H^r_h},\quad
\mathbf u\in\mathcal D^r_h,\quad
\mathbf f=(\mathbf P_0-i)\mathbf u.
\end{equation}
Similarly to~\eqref{e:p-conj}, let $\mathbf P_{0,r}:=e^{rG}\mathbf P_0e^{-rG}\in\Psi^1_h(\mathcal M;\mathcal E)$.
Note that $\Im\sigma_h(\mathbf P_{0,r})\leq 0$ near $\{\langle\xi\rangle^{-1}p=0\}$ 
and $\Re\sigma_h(\mathbf P_{0,r})=p$ similarly to~\eqref{e:p-conj-1},
\eqref{e:p-conj-2}. By~\eqref{e:anisop} and the approximation argument following~\eqref{e:direct-bound}, we reduce
\eqref{e:upperhlf} to
\begin{equation}
  \label{e:upperhlf2}
\|\mathbf v\|_{L^2}\leq Ch^{-1}\|\mathbf g\|_{L^2},\quad
\mathbf v\in C^\infty(\mathcal M;\mathcal E),\quad
\mathbf g=(\mathbf P_{0,r}-i)\mathbf v.
\end{equation}
We now apply Lemma~\ref{l:ultimate}, with $\mathbf P=\mathbf P_{0,r}-i$, $L=\{\langle\xi\rangle^{-1}p=0\}$, $s=0$, and $B=0$.
Note that $\Im\mathbf P\lesssim -h$ on $L^2$ near $L$ by Lemma~\ref{l:gs1}, with $Q:=1$.
By~\eqref{e:ultimate}, we get for some $A,B_1\in\Psi^0$ such that $\{\langle\xi\rangle^{-1}p=0\}\subset\Ell_h(A)$
and $B_1$ is microlocalized in a neighborhood of $\{\langle\xi\rangle^{-1}p=0\}$,
$$
\|A\mathbf v\|_{L^2}\leq Ch^{-1}\|B_1\mathbf g\|_{L^2}+\mathcal O(h^\infty)\|\mathbf v\|_{L^2}.
$$
Combining this with the elliptic estimate~\eqref{e:case-1} valid for $\WFh(A)\cap \{\langle\xi\rangle^{-1}p=0\}=\emptyset$,
we get~\eqref{e:upperhlf2}.
\end{proof}
The operator $\mathbf R_0(z)$ depends on the choice of $Q_\infty,q_1$
(and thus on $h$).
It is independent of the choice of $r$, but proving this would require a separate argument.
However, the restriction of this operator to $\mathcal U$ is independent of $Q_\infty,q_1,r$.
This is a byproduct of the following
\begin{lemm}
  \label{l:samething}
In the notation of Lemma~\ref{l:meromorphic},
let $\lambda\in [-C_1,h^{-1}]+i[-C_2,C_2]$ and put $z:=ih\lambda$. Assume also
that $\Re\lambda>C_0$, where $C_0$ is defined in~\eqref{e:C0-def}. Then
\begin{equation}
  \label{e:samething}
\mathbf R(\lambda)\mathbf f=-ih\mathbf R_0(z)\mathbf f|_{\mathcal U}\quad\text{for all}\quad\mathbf f\in C_0^\infty(\mathcal U;\mathcal E).
\end{equation}
\end{lemm}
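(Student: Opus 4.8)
\emph{Strategy.} The plan is to rearrange $\mathbf P_0-z$ algebraically, represent $\mathbf R_0(z)\mathbf f$ by a forward time integral of the transfer operator via a Duhamel identity, show that the complex absorbing terms contribute nothing inside $\mathcal U$, and finally pass from $\Re\lambda\gg1$ to $\Re\lambda>C_0$ by analytic continuation.

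\emph{Step 1: algebra and supports.} Since $z=ih\lambda$ and $h/i=-ih$, we have $\mathbf P_0-z=-ih\bigl(\mathbf X+\lambda+\tfrac1h(Q_\infty+q_1)\bigr)$, so that $\mathbf g:=-ih\mathbf R_0(z)\mathbf f\in\mathcal D^r_h$ satisfies
\[
(\mathbf X+\lambda)\mathbf g=\mathbf f-\mathbf e,\qquad \mathbf e:=\tfrac1h(Q_\infty+q_1)\mathbf g,
\]
and \eqref{e:samething} is precisely the assertion $\mathbf g|_{\mathcal U}=\mathbf R(\lambda)\mathbf f$. I would record two facts. First, an outgoing/support-propagation property of the Fredholm resolvent: $\supp\mathbf g$ is contained in the closure of the forward flow-out of $\supp\mathbf f$, hence in $\overline{\Sigma_+}$ with $\Sigma_+$ as in \eqref{e:Sigma-def}; this follows from the same propagation-of-support analysis underlying the wavefront bound \eqref{e:WF-absorbed}. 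Second, conditions (1) and (4) on $Q_\infty,q_1$ in \eqref{e:absorbers-def} give $\supp(Q_\infty\mathbf g)\subset\mathcal M\setminus\Sigma'$ (with $\Sigma'$ a neighborhood of $\overline\Sigma$) and $\supp(q_1\mathbf g)\subset\supp q_1\subset\mathcal M\setminus\overline{\mathcal U}$; combined with $\supp\mathbf g\subset\overline{\Sigma_+}$ and the elementary convexity fact $\Sigma_-\cap\overline{\Sigma_+}\subset\overline{\mathcal U}$ (a consequence of \eqref{e:convex2}), this yields $\supp\mathbf e\cap\Sigma_-=\emptyset$, i.e. $\varphi^s(\supp\mathbf e)\cap\mathcal U=\emptyset$ for all $s\geq0$.

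\emph{Step 2: Duhamel and uniqueness.} Pick $N$ with $\mathcal H^r_h\subset H^{-N}_h(\mathcal M;\mathcal E)$ (see \eqref{e:anisotc}), so $\mathbf g,\mathbf e\in H^{-N}_h$, and let $C_N\geq C_0$ be such that $\|e^{-t\mathbf X}\|_{H^{-N}\to H^{-N}}\leq Ce^{C_Nt}$ for $t\geq0$. For $\Re\lambda>C_N$ the $H^{-N}$-valued function $t\mapsto e^{-t(\mathbf X+\lambda)}\mathbf g$ is $C^1$ with $\tfrac{d}{dt}e^{-t(\mathbf X+\lambda)}\mathbf g=-e^{-t(\mathbf X+\lambda)}(\mathbf f-\mathbf e)$, and integrating from $0$ to $t$ and letting $t\to\infty$ (the boundary term tends to $0$ in $H^{-N}_h$) gives, in $H^{-N}_h$,
\[
\mathbf g=\int_0^\infty e^{-s(\mathbf X+\lambda)}(\mathbf f-\mathbf e)\,ds=(\mathbf X+\lambda)^{-1}\mathbf f-(\mathbf X+\lambda)^{-1}\mathbf e .
\]
By \eqref{e:res} and $\supp\mathbf f\subset\mathcal U$, the first term restricts on $\mathcal U$ to $\mathbf R(\lambda)\mathbf f$; by \eqref{e:X-support}, $(\mathbf X+\lambda)^{-1}\mathbf e=\int_0^\infty e^{-s(\mathbf X+\lambda)}\mathbf e\,ds$ is supported in $\bigcup_{s\geq0}\varphi^s(\supp\mathbf e)$, which is disjoint from $\mathcal U$ by Step~1. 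Hence $\mathbf g|_{\mathcal U}=\mathbf R(\lambda)\mathbf f$ whenever $\lambda$ lies in the box $[-C_1,h^{-1}]+i[-C_2,C_2]$ and $\Re\lambda>C_N$.

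\emph{Step 3: continuation.} The map $\lambda\mapsto\mathbf R(\lambda)\mathbf f$ is holomorphic on $\{\Re\lambda>C_0\}$ with values in $\mathcal D'(\mathcal U;\mathcal E)$ (differentiate \eqref{e:res-upper} under the integral), while $\lambda\mapsto-ih\mathbf R_0(ih\lambda)\mathbf f|_{\mathcal U}$ is meromorphic on the box by Lemma~\ref{l:meromorphic}. On the connected open set $\mathcal O:=\{\Re\lambda>C_0\}\cap([-C_1,h^{-1}]+i[-C_2,C_2])^\circ$, which is nonempty once $h<h_0$ makes $C_0<h^{-1}$, the two coincide on the nonempty open subset $\{\Re\lambda>C_N\}\cap\mathcal O$; since the first is holomorphic on all of $\mathcal O$, the meromorphic second one can have no poles in $\mathcal O$ and the two agree throughout $\mathcal O$, hence throughout the region in the statement by continuity. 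This proves \eqref{e:samething}.

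\emph{Main obstacle.} The Duhamel computation and the analytic continuation are routine; the delicate points are (a) the support-propagation property $\supp\mathbf g\subset\overline{\Sigma_+}$, needed so that the $q_1$-part of $\mathbf e$ --- which is \emph{not} supported away from $\overline\Sigma$ --- still has forward flow-out missing $\mathcal U$, and (b) the verification of the convexity lemma $\Sigma_-\cap\overline{\Sigma_+}\subset\overline{\mathcal U}$.
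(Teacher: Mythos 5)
Your proposal reverses the direction of the paper's argument: it starts from $\mathbf g:=-ih\mathbf R_0(z)\mathbf f$ and tries to deduce its support, then recovers the forward-integral representation by Duhamel. This reversal creates a genuine gap at the step you yourself flag as delicate: the inclusion $\supp\mathbf g\subset\overline{\Sigma_+}$ is asserted but not proved, and it does not follow from the wavefront bound~\eqref{e:WF-absorbed}. That bound is established for $\mathbf R_\delta(z)$ rather than $\mathbf R_0(z)$; it constrains $\WFh'$ only inside $T^*(\mathcal U\times\mathcal U)$ and hence says nothing about $\mathbf R_0(z)\mathbf f$ away from $\mathcal U$; and in any case wavefront information controls singular support, not support. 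More to the point, $\mathbf P_0$ is genuinely nonlocal because $Q_\infty\in\Psi^1_h(\mathcal M)$ is pseudodifferential, so there is no propagation-of-support theory available for it; the only local support property in the paper is~\eqref{e:X-support} for the transfer semigroup of a \emph{differential} operator. Without the support claim, the Duhamel step cannot conclude that $\int_0^\infty e^{-s(\mathbf X+\lambda)}\mathbf e\,ds$ vanishes on $\mathcal U$, and the proof does not close.

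The paper avoids this circularity by arguing forward rather than backward: it defines
\begin{equation*}
\mathbf v:=\int_0^\infty e^{-t(\mathbf X+h^{-1}q_1+\lambda)}\mathbf f\,dt,
\end{equation*}
using the transfer semigroup of $\mathbf X+h^{-1}q_1$. Since $q_1$ is a multiplication operator, this operator still satisfies the Leibniz rule~\eqref{e:X-bundles} and its semigroup retains the support property~\eqref{e:X-support}, so $\supp\mathbf v\subset\overline{\Sigma_+}$ is immediate from $\supp\mathbf f\subset\mathcal U$. Then $Q_\infty\mathbf v=0$ by condition~(1) on $Q_\infty$, one checks $(\mathbf P_0-z)\mathbf v=-ih\mathbf f$ with $\mathbf v\in\mathcal D^r_h$, and uniqueness of the Fredholm inverse gives $\mathbf v=-ih\mathbf R_0(z)\mathbf f$; convexity of $\mathcal U$ and~\eqref{e:X-support} finally give $\mathbf v|_{\mathcal U}=\mathbf R(\lambda)\mathbf f$. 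Bundling $q_1$ into the flow, as the paper does, also makes your auxiliary convexity lemma $\Sigma_-\cap\overline{\Sigma_+}\subset\overline{\mathcal U}$ unnecessary, since the support of $\mathbf v$ is contained in $\overline{\Sigma_+}$ directly. If you want to salvage your strategy, the cleanest way to establish $\supp\mathbf g\subset\overline{\Sigma_+}$ is precisely to construct $\mathbf v$ and invoke uniqueness of $(\mathbf P_0-z)^{-1}$ --- at which point you have reproduced the paper's proof.
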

\begin{proof}
By analyticity and since $h$ can be chosen arbitrarily small, it suffices to prove~\eqref{e:samething} in the case
$\Re\lambda>C_3$, where $C_3>C_0$ is a large enough constant depending on $r$, but not on $h$.
As discussed after~\eqref{e:anisop}, the anisotropic Sobolev space $\mathcal H^r_h$
contains the standard Sobolev space $H^N_h(\mathcal M;\mathcal E)$, for $N$ large enough
depending on $r$.

We consider an extension of $\mathbf X$ to $\mathcal M$ such that~\eqref{e:X-bundles} holds on $\mathcal M$.
Note that~\eqref{e:X-bundles} holds also for the operator
$\mathbf X+h^{-1}q_1$, since $q_1\in C^\infty(\mathcal M)$ is a multiplication operator. We claim that for some $C_3$
depending on $N$, but not on $h$,
\begin{equation}
  \label{e:mozilla}
\|e^{-t(\mathbf X+h^{-1}q_1)}\mathbf f\|_{H^N_h}\leq C(h)e^{C_3 t},\quad
t\geq 0.
\end{equation}
This follows by writing the transfer operator $e^{-t(\mathbf X+h^{-1}q_1)}$ in the form similar to~\eqref{e:Xpot}
using a local trivialization of the bundle $\mathcal E$, with $V$ now a matrix.
Here we use the fact that each derivative of $\varphi^{-t}$ is bounded exponentially in $t$.
The term $h^{-1}q_1$ does not change the value of $C_3$, as $q_1\geq 0$ everywhere and $t\geq 0$,
see~\eqref{e:Xpot}.

Now, for $\Re\lambda>C_3$ and $z:=ih\lambda$ not a pole of $\mathbf R_0$, consider the function
$$
\mathbf v:=\int_0^\infty e^{-t(\mathbf X+h^{-1}q_1+\lambda)}\mathbf f\,dt\ \in\ H^N_h(\mathcal M;\mathcal E)\ \subset\ \mathcal H^r_h.
$$
Since $\supp\mathbf f\subset\mathcal U$, $\supp q_1\cap \overline{\mathcal U}=\emptyset$,
and $\mathcal U$ is convex, it follows that (see~\eqref{e:Xpot})
\begin{equation}
  \label{e:same1}
\mathbf R(\lambda)\mathbf f=\mathbf v\quad\text{on }\mathcal U.
\end{equation}
We also have $\supp \mathbf v\subset\overline{\Sigma}$, where $\Sigma$ is defined in~\eqref{e:Sigma-def}.
(In fact, \eqref{e:X-support} implies that
$\supp\mathbf v\subset\overline{\Sigma_+}$.) Therefore, $Q_\infty\mathbf v=0$.
It follows that
$$
(\mathbf P_0-z)\mathbf v=-ih\mathbf f,\quad
\mathbf v\in \mathcal D^r_h.
$$
Since $\mathbf P_0-z$ is invertible $\mathcal D^r_h\to\mathcal H^r_h$, we have
\begin{equation}
  \label{e:same2}
-ih\mathbf R_0(z)\mathbf f=\mathbf v.
\end{equation}
Combining~\eqref{e:same1} and~\eqref{e:same2}, we get~\eqref{e:samething}.
\end{proof}

\begin{proof}[Proof of Theorem~\ref{t:mer}]
By Lemmas~\ref{l:meromorphic} and~\ref{l:samething},
the operator $-ih\indic_{\mathcal U}\mathbf R_0(ih\lambda)\indic_{\mathcal U}$ gives the meromorphic
continuation of $\mathbf R(\lambda)$ in the region
$[-C_1,h^{-1}]+i[-C_2,C_2]$ for $h$ small enough. Since $C_1,C_2$ can be chosen arbitrarily
and $h$ can be arbitrarily small, we obtain the continuation to the entire complex plane.
\end{proof}
Note that for all $\lambda\in\mathbb C$,
\begin{equation}
  \label{e:quation}
(\mathbf X+\lambda)\mathbf R(\lambda)=\mathbf R(\lambda)(\mathbf X+\lambda)=1\ :\ C_0^\infty(\mathcal U)\to \mathcal D'(\mathcal U).
\end{equation}
Indeed, by analytic continuation it suffices to consider the case $\Re\lambda>C_0$; in this case,
\eqref{e:quation} follows from~\eqref{e:res-upper}.

The following microlocalization statement is used in the proofs of Theorems~\ref{t:res-states} and~\ref{t:zeta}.
See~\eqref{e:wfprime} and~\eqref{e:schwartz} for the notation used below.
\begin{lemm}
  \label{l:microl}
Let $\lambda_0\in\mathbb C$. Then the expansion~\eqref{e:xpansion} holds
for $\mathbf R_H(\lambda):C_0^\infty(\mathcal U;\mathcal E)\to\mathcal D'(\mathcal U;\mathcal E)$ holomorphic
near $\lambda_0$ and a finite rank operator
$\Pi=\Pi_{\lambda_0}:C_0^\infty(\mathcal U;\mathcal E)\to \mathcal D'(\mathcal U;\mathcal E)$.
Moreover, $\supp K_\Pi\subset \Gamma_+\times\Gamma_-$ and
\begin{equation}
  \label{e:wavefront}
\WF'(\mathbf R_H(\lambda))\ \subset\ \Delta(T^*\mathcal U)\cup\Upsilon_+\cup(E_+^*\times E_-^*),\quad
\WF'(\Pi)\ \subset\ E_+^*\times E_-^*,
\end{equation}
where $\Delta(T^*\mathcal U)$ is the diagonal, $E_\pm^*\subset T^*\overline{\mathcal U}$ are defined in Lemma~\ref{l:extended},
and
$$
\Upsilon_+=\{(e^{tH_p}(y,\eta),y,\eta)\mid t\geq 0,\ p(y,\eta)=0,\ y\in\mathcal U,\ \varphi^t(y)\in\mathcal U\}.
$$
\end{lemm}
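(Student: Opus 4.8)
The plan is to transfer the statement of Lemma~\ref{l:keymic} from the compact model operator $\mathbf P_\delta$ on $\mathcal M$ to the restricted resolvent $\mathbf R(\lambda)$ on $\mathcal U$, using the identity $\mathbf R(\lambda)=-ih\,\indic_{\mathcal U}\mathbf R_0(ih\lambda)\indic_{\mathcal U}$ from Lemma~\ref{l:samething} together with the relation $\mathbf P_\delta-z=\mathbf P_0-z-iQ_\delta$. First I would fix $\lambda_0$, choose $C_1,C_2$ with $\lambda_0\in[-C_1,\infty)+i[-C_2,C_2]$ (shrinking attention to a disk about $\lambda_0$), pick $r>r_0(C_1)$, and take $h$ small; then by Lemma~\ref{l:meromorphic} the family $\mathbf R_0(z)$, $z=ih\lambda$, is meromorphic near $z_0=ih\lambda_0$ with poles of finite rank, so near $\lambda_0$ it has a Laurent expansion $\mathbf R_0(ih\lambda)=\mathbf R_0^H(\lambda)+\sum_j A_j/(\lambda-\lambda_0)^j$ with $A_j$ finite rank and $\mathbf R_0^H$ holomorphic. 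Multiplying by $-ih\indic_{\mathcal U}(\cdot)\indic_{\mathcal U}$ gives the expansion~\eqref{e:xpansion} with $\mathbf R_H(\lambda)=-ih\,\indic_{\mathcal U}\mathbf R_0^H(\lambda)\indic_{\mathcal U}$ and $\Pi_{\lambda_0},(\mathbf X+\lambda_0)^{j-1}\Pi_{\lambda_0}$ appropriate finite-rank combinations of the $\indic_{\mathcal U}A_j\indic_{\mathcal U}$; the precise matching of coefficients is the routine Laurent-expansion bookkeeping used in~\cite{DyZw} and I would just cite that. The finite-rank claim and the fact that $\Pi$ maps into $\mathcal D'(\mathcal U;\mathcal E)$ are immediate.

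The wavefront content is the heart of the lemma. For $\WF'(\mathbf R_H(\lambda))$, the idea is that on the resolvent set near $\lambda_0$, $\mathbf R_H(\lambda)=-ih\,\indic_{\mathcal U}\mathbf R_0(ih\lambda)\indic_{\mathcal U}$, so $\WF'(\mathbf R_H(\lambda))\subset \WF'(\mathbf R_0(ih\lambda))\cap T^*(\mathcal U\times\mathcal U)$, and since $\mathbf P_\delta-z$ and $\mathbf P_0-z$ differ by the compactly microlocalized smoothing operator $Q_\delta$, the resolvent identity $\mathbf R_\delta=\mathbf R_0-i\mathbf R_\delta Q_\delta\mathbf R_0$ (or its symmetric form) plus the wavefront calculus~\cite[Theorem~8.2.13]{ho1} shows $\WF'(\mathbf R_0)$ and $\WF'(\mathbf R_\delta)$ agree away from the zero section and the compact region where $Q_\delta$ lives; more carefully, $\mathbf R_0=\mathbf R_\delta+i\mathbf R_0 Q_\delta\mathbf R_\delta$, and the extra term contributes a wavefront set in $(\WF'(\mathbf R_0)\circ\WF'(Q_\delta))\times(\WF'(Q_\delta)\circ\WF'(\mathbf R_\delta))$ which, since $\WF_h(Q_\delta)\subset\{|\xi|<\delta\}$ near the zero section over $K$ and these points are limits of the flow-out, is already contained in the claimed set. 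So $\WF'(\mathbf R_0(z))\cap T^*(\mathcal U\times\mathcal U)\subset\Delta(T^*\mathcal U)\cup\Upsilon_+$ by~\eqref{e:WF-absorbed}; but I also need the $E_+^*\times E_-^*$ component, which enters because the holomorphic part near a pole $\lambda_0$ includes the regularized resolvent whose kernel picks up contributions from trajectories that asymptote to the radial sets — concretely, $\Upsilon_+$ is not closed, and its closure in $\overline{T}^*(\mathcal U\times\mathcal U)$ adds exactly $E_+^*\times E_-^*$ (by part~4 of Lemma~\ref{l:extended}, forward trajectories from $\{p=0\}$ converge to $E_+^*$ in the source slot and, dually, the second factor lands in $E_-^*$). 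I would make this precise by noting $\WF'$ of a distribution is closed, so $\WF'(\mathbf R_H(\lambda))\subset\overline{\Delta\cup\Upsilon_+}\subset\Delta\cup\Upsilon_+\cup(E_+^*\times E_-^*)$.

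For $\Pi=\Pi_{\lambda_0}$ I would extract it as a contour integral $\Pi=-\frac{1}{2\pi i}\oint_{\lambda_0}\mathbf R(\lambda)\,d\lambda$ over a small loop, so $\WF'(\Pi)\subset\WF'(\mathbf R_H)\cup\bigcup_\lambda\WF'(\mathbf R(\lambda))\subset\Delta\cup\Upsilon_+\cup(E_+^*\times E_-^*)$; then the support statement $\supp K_\Pi\subset\Gamma_+\times\Gamma_-$ plus the sign of the flow rules out the diagonal and $\Upsilon_+$ parts. Indeed, applying Lemma~\ref{l:outgoing2}: if $\psi_1\in C_0^\infty(\mathcal U)$ with $\supp\psi_1\cap\Gamma_-=\emptyset$ then $\mathbf R(\lambda)\psi_1$ is holomorphic at $\lambda_0$, so $\Pi\psi_1=0$, forcing the kernel to be supported in the second variable on $\Gamma_-$; symmetrically (applying Lemma~\ref{l:outgoing2} on the left, using $\supp\psi_2\cap\Gamma_+=\emptyset\Rightarrow\psi_2\mathbf R(\lambda)$ holomorphic) the first variable lies in $\Gamma_+$. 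Since $\Gamma_+\cap\Gamma_-=K$ is compact in $\mathcal U$ and $\Upsilon_+$ requires a forward trajectory segment inside $\mathcal U$ with both endpoints in $\mathcal U$ but its source covectors cannot lie over $\Gamma_-$ unless they lie over $K$ — and over $K$ with $p=0$ the only flow-invariant covector directions compatible with staying in both tails are in $E_u^*\subset E_+^*$ (source slot) and $E_s^*\subset E_-^*$ (target slot) — the diagonal and the genuinely-moving part of $\Upsilon_+$ are excluded, leaving $\WF'(\Pi)\subset E_+^*\times E_-^*$. I expect the main obstacle to be rigorously controlling the passage to the closure of $\Upsilon_+$ and isolating the $E_+^*\times E_-^*$ stratum: one must argue that near fiber infinity over $\Gamma_\pm$ the only accumulation of $\WF'(\mathbf R_0)$ is along $E_\pm^*$, which requires invoking the radial-point structure of Lemma~\ref{l:global-dynamics} (the generalized source/sink behaviour) exactly as it is used in Case~4 and Case~6 of the proof of Lemma~\ref{l:keymic}, together with the fact that the anisotropic space $\mathcal H^r_h$ is microlocally $H^{\mp r}_h$ near $\kappa(E_\pm^*)$, so that the resolvent kernel's singularities there are precisely captured by the weight.
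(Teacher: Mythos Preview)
Your outline for the Laurent expansion and the support statement $\supp K_\Pi\subset\Gamma_+\times\Gamma_-$ via Lemma~\ref{l:outgoing2} is fine and matches the paper. The wavefront-set argument, however, has a genuine gap.

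The identity you write, $\mathbf R_0=\mathbf R_\delta+i\mathbf R_0 Q_\delta\mathbf R_\delta$, still has $\mathbf R_0$ on the right-hand side, so using it to bound $\WF'(\mathbf R_0)$ is circular: you cannot conclude $\WF'(\mathbf R_0)\cap T^*(\mathcal U\times\mathcal U)\subset\Delta\cup\Upsilon_+$ from~\eqref{e:WF-absorbed}, because~\eqref{e:WF-absorbed} is a statement about $\mathbf R_\delta$, not $\mathbf R_0$. Your proposed fix --- that the extra term is ``already contained in the claimed set'' because $\WFh(Q_\delta)$ sits near the zero section --- presupposes exactly the wavefront bound on $\mathbf R_0$ you are trying to prove. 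Relatedly, your mechanism for producing $E_+^*\times E_-^*$ as the closure of $\Upsilon_+$ does not do the job: the closure of $\Upsilon_+$ inside $T^*(\mathcal U\times\mathcal U)\setminus 0$ is not $E_+^*\times E_-^*$ (arbitrary pairs $(x,\xi)\in E_+^*$, $(y,\eta)\in E_-^*$ need not be limits of a single flow segment), and in any case you have not shown the unclosed inclusion to begin with.

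The paper resolves both issues with a single device: iterate the resolvent identity one step further to obtain
\[
\mathbf R_0(z)=\mathbf R_\delta(z)-i\mathbf R_\delta(z)Q_\delta\mathbf R_\delta(z)-\mathbf R_\delta(z)Q_\delta\mathbf R_0(z)Q_\delta\mathbf R_\delta(z),
\]
so that after restricting to $\mathcal U$ the unknown factor becomes $\mathbf R(\lambda)$ itself, which is \emph{$h$-independent}. The first two terms are holomorphic in $\lambda$ (since $\mathbf R_\delta$ is globally invertible) and have $\WFh'$ in $\Delta\cup\Upsilon_+$ by~\eqref{e:WF-absorbed} and~\eqref{e:wf-mul}; in particular they contribute nothing to $\Pi$. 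The third term has $\WFh'$ contained in $\Theta_\delta^+\times\Theta_\delta^-$, where $\Theta_\delta^\pm=\bigcup_{\pm t\geq 0}e^{tH_p}(\WFh(Q_\delta))\cap T^*\mathcal U$. Since $\mathbf R_H(\lambda)$ and $\Pi$ do not depend on $\delta$, one may \emph{intersect over all $\delta>0$}, and the paper then shows $\bigcap_\delta\Theta_\delta^\pm\subset E_\pm^*$ using part~4 of Lemma~\ref{l:extended}. This simultaneously gives the $E_+^*\times E_-^*$ component of $\WF'(\mathbf R_H)$ and the full bound $\WF'(\Pi)\subset E_+^*\times E_-^*$, without any closure heuristics or elimination-by-support argument (your elimination argument is also incomplete: points of $\Delta$ and $\Upsilon_+$ over $K\times K$ are compatible with the support constraint, and nothing forces the fiber variables into $E_u^*,E_s^*$).
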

\begin{proof}
We argue similarly to the proof of~\cite[Proposition~3.3]{DyZw}. By
Theorem~\ref{t:mer},
\begin{equation}
  \label{e:merex-0}
\mathbf R(\lambda)=\mathbf R_H(\lambda)+\sum_{j=1}^{J(\lambda_0)} {\mathbf A_j\over (\lambda-\lambda_0)^j}
\end{equation}
where $\mathbf R_H(\lambda):C_0^\infty(\mathcal U;\mathcal E)\to \mathcal D'(\mathcal U;\mathcal E)$ is holomorphic near $\lambda_0$
and $\mathbf A_j:C_0^\infty(\mathcal U;\mathcal E)\to\mathcal D'(\mathcal U;\mathcal E)$ are finite rank operators.
Plugging this expansion into~\eqref{e:quation}, we get
\begin{equation}
  \label{e:merex-1}
\mathbf A_{j+1}=-(\mathbf X+\lambda_0)\mathbf A_j,\ 1\leq j<J(\lambda_0);\quad
(\mathbf X+\lambda_0)\mathbf A_{J(\lambda_0)}=0.
\end{equation}
The expansion~\eqref{e:xpansion} follows from here by putting $\Pi:=\mathbf A_1$.

If $\psi_1,\psi_2\in C_0^\infty(\mathcal U)$ satisfy $\supp\psi_1\cap \Gamma_-=\supp\psi_2\cap\Gamma_+=\emptyset$,
then Lemma~\ref{l:outgoing2} shows that $\mathbf R(\lambda)\psi_1,\psi_2\mathbf R(\lambda)$ are holomorphic
for all $\lambda\in\mathbb C$. Therefore, $\Pi\psi_1=\psi_2\Pi=0$; this implies that
$\supp K_\Pi\subset \Gamma_+\times\Gamma_-$.

We finally prove~\eqref{e:wavefront}. We start by writing the following identity
relating the auxiliary resolvents defined by~\eqref{e:keymic} and~\eqref{e:r-0}
(we put $z:=ih\lambda$):
$$
\mathbf R_0(z)=\mathbf R_\delta(z)-i\mathbf R_\delta(z)Q_\delta\mathbf R_\delta(z)
-\mathbf R_\delta(z)Q_\delta \mathbf R_0(z)Q_\delta\mathbf R_\delta(z).
$$
Since $Q_\delta$ is supported inside $\mathcal U$, by~\eqref{e:samething} this gives
\begin{equation}
  \label{e:identi2}
\mathbf R(\lambda)=-ih\indic_{\mathcal U}\big(\mathbf R_\delta(z)-i\mathbf R_\delta(z)Q_\delta \mathbf R_\delta(z)\big)\indic_{\mathcal U}
-\indic_{\mathcal U}\mathbf R_\delta(z)Q_\delta\mathbf R(\lambda)Q_\delta\mathbf R_\delta(z)\indic_{\mathcal U}.
\end{equation}
We analyse each of the terms on the right-hand side separately. By~\eqref{e:WF-absorbed}, we have
\begin{equation}
  \label{e:kicka}
\WFh'(\mathbf R_\delta(z))\cap T^*(\mathcal U\times\mathcal U)\ \subset\ \Delta(T^*\mathcal U)\cup\Upsilon_+.
\end{equation}
By~\eqref{e:wf-mul}, and since $\WFh(Q_\delta)\subset T^*\mathcal U$, we get
$$
\WFh'(\mathbf R_\delta(z)Q_\delta\mathbf R_\delta(z))\cap T^*(\mathcal U\times\mathcal U)\ \subset\
\Delta(T^*\mathcal U)\cup\Upsilon_+. 
$$
To handle the third term in~\eqref{e:identi2}, note that for each family of operators
$\mathbf T(\lambda):C_0^\infty(\mathcal U;\mathcal E)\to \mathcal D'(\mathcal U;\mathcal E)$
which is holomorphic in $\lambda$ and independent of $h$, we have by~\eqref{e:wf-mul}
$$
\begin{gathered}
\WFh'(\mathbf R_\delta(z)Q_\delta \mathbf T(\lambda) Q_\delta\mathbf R_\delta(z))\cap T^*(\mathcal U\times\mathcal U)\ \subset\
\Theta_\delta^+\times\Theta_\delta^-,\\
\Theta_\delta^\pm=T^*\mathcal U\cap \bigcup_{\pm t\geq 0} e^{tH_p}(\WFh(Q_\delta)).
\end{gathered}
$$
Plugging the expansion~\eqref{e:xpansion} into the third term in~\eqref{e:identi2} and using
that the terms in this expansion are $h$-independent and $\mathbf R(\lambda)$
does not depend on $\delta$, we get
$$
\begin{aligned}
\WFh'(\mathbf R_H(\lambda))\cap T^*(\mathcal U\times\mathcal U)\ &\subset\ \Delta(T^*\mathcal U)\cup\Upsilon_+\cup\Big(\bigcap_{\delta}\Theta_\delta^+\times\bigcap_\delta\Theta_\delta^-\Big),\\
\WFh'(\Pi)\cap T^*(\mathcal U\times\mathcal U)\ &\subset\ \bigcap_{\delta}\Theta_\delta^+\times\bigcap_\delta\Theta_\delta^-.
\end{aligned}
$$
Since $\mathbf R_H(\lambda)$ is independent of $h$,
by~\cite[(2.6)]{DyZw} we have
$$
\WF'(\mathbf R_H(\lambda))=\WFh'(\mathbf R_H(\lambda))\cap (T^*(\mathcal U\times\mathcal U)\setminus 0),
$$
and same is true for $\Pi$. To show~\eqref{e:wavefront}, it remains to prove that
$$
\bigcap_\delta \Theta_\delta^\pm\subset E_\pm^*.
$$
Take $(y,\eta)\in\bigcap_\delta\Theta_\delta^\pm$. By taking a sequence of $\delta$ converging to 0,
we see that there exists a sequence $t_j$ such that
$e^{\mp t_jH_p}(y,\eta)\to \{x\in K,\ \xi=0\}$.
If $y\notin\Gamma_\pm$, then the trajectory $\{\varphi^{\mp t}(y)\mid t\geq 0\}$ never passes through
some neighborhood of $K$; therefore, we have $y\in\Gamma_\pm$. Since $p$ is preserved along the trajectories
of $e^{tH_p}$, we have $p(y,\eta)=0$. Finally, if
$\eta\notin E_\pm^*(y)$, then by part~4 of Lemma~\ref{l:extended}
the trajectory $\{e^{\mp tH_p}(y,\eta)\mid t\geq 0\}$ never passes through some neighborhood
of the zero section and we have a contradiction. It follows that $(y,\eta)\in E_\pm^*$ as required.
\end{proof}
For the proof of Theorem~\ref{t:res-states}, we also need
\begin{lemm}
  \label{l:returns}
Assume that $\mathbf u\in \mathcal D'(\mathcal U;\mathcal E)$ satisfies
\begin{equation}
  \label{e:wfcond}
\supp\mathbf u\ \subset\ \Gamma_+,\quad
\WF(\mathbf u)\ \subset\ E_+^*.
\end{equation}
For some $\lambda\in\mathbb C$, put~$\mathbf f:=(\mathbf X+\lambda)\mathbf u$.
Take $\chi,\chi'\in C_0^\infty(\mathcal U)$ satisfying~\eqref{e:convfun} and $\chi=1$ near $K$. Then:

1. For $r>0$ large enough, $\chi'\mathbf u$ and $\chi'\mathbf f$ lie in the space $\mathcal H^r_h$ from~\eqref{e:anisop}.
By Lemmas~\ref{l:meromorphic} and~\ref{l:samething}, this makes it possible to define
$\mathbf R(\lambda)\chi' \mathbf f\in \mathcal D'(\mathcal U;\mathcal E)$.

2. We have $\chi\mathbf u=\chi \mathbf R(\lambda)\chi'\mathbf f$.
\end{lemm}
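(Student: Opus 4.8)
The plan is to exploit the fact that $\mathbf R(\lambda)$, away from its poles, inverts $\mathbf X+\lambda$ on $\mathcal U$ (see~\eqref{e:quation}), and to arrange matters so that the distribution $\chi\mathbf u$ can be recovered by integrating $e^{-t(\mathbf X+\lambda)}$ forward in time, exactly as the resolvent does. Part~1 is the routine bookkeeping step: since $\supp\mathbf u\subset\Gamma_+$, $\WF(\mathbf u)\subset E_+^*$, and the anisotropic order function $m$ equals $-1$ near $\kappa(E_+^*)$ (Lemma~\ref{l:order}), any cutoff $\chi'\mathbf u$ with $\chi'\in C_0^\infty(\mathcal U)$ lies in $\mathcal H^r_h$ for $r$ large enough; the same applies to $\chi'\mathbf f=(X+\lambda)\chi'\mathbf u+[\text{lower order}]$, using that $E_+^*$ is conic. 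Once $\chi'\mathbf f\in\mathcal H^r_h$, Lemmas~\ref{l:meromorphic} and~\ref{l:samething} give meaning to $\mathbf R(\lambda)\chi'\mathbf f=-ih\,\mathbf R_0(ih\lambda)\chi'\mathbf f|_{\mathcal U}$, and I would remark that this is independent of the auxiliary choices, again by Lemma~\ref{l:samething}.

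For part~2, I would first reduce to $\Re\lambda$ large by analytic continuation: both $\chi\mathbf u$ and $\chi\mathbf R(\lambda)\chi'\mathbf f$ are holomorphic in $\lambda$ away from the (discrete) poles of $\mathbf R(\lambda)$ — indeed $\chi\mathbf u$ is literally $\lambda$-independent, and the identity $(\mathbf X+\lambda)(\chi\mathbf u)=\chi\mathbf f+[\mathbf X,\chi]\mathbf u$ together with $\chi'=1$ on $\supp\chi$ shows that the $\lambda$-dependence of $\mathbf R(\lambda)\chi'\mathbf f$ is controlled — so it suffices to verify the identity for $\Re\lambda>C_0$. In that regime use the explicit formula~\eqref{e:res-upper}–\eqref{e:Xpot}: $\mathbf R(\lambda)\chi'\mathbf f=\indic_{\mathcal U}\int_0^\infty e^{-t(\mathbf X+\lambda)}\chi'\mathbf f\,dt$. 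Now apply the resolvent to $\chi'\mathbf f=\chi'(\mathbf X+\lambda)\mathbf u$; the key computation is to integrate by parts in $t$, writing $e^{-t(\mathbf X+\lambda)}\chi'(\mathbf X+\lambda)\mathbf u$ and using the support-propagation property~\eqref{e:X-support} of the transfer operator. The convexity cutoff condition~\eqref{e:convfun}, namely that the flow cannot leave $\supp\chi'$ and return to $\supp\chi$, is precisely what guarantees that $e^{-t(\mathbf X+\lambda)}\chi'\mathbf f$ agrees on $\supp\chi$ with $e^{-t(\mathbf X+\lambda)}\mathbf f$, which in turn equals $-\partial_t\big(e^{-t(\mathbf X+\lambda)}\mathbf u\big)$ near $\supp\chi$. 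Integrating from $0$ to $\infty$ then telescopes: the boundary term at $t=0$ gives $\mathbf u$ restricted to $\supp\chi$, and the boundary term at $t=\infty$ vanishes because $\varphi^{-t}(x)$ eventually leaves $\overline{\mathcal U}\supset\Gamma_+\supset\supp\mathbf u$ for $x\in\supp\chi$ — more precisely by convexity and Lemma~\ref{l:convfun}, $e^{-t(\mathbf X+\lambda)}\chi'\mathbf f$ vanishes on $\supp\chi$ for $t$ large. Multiplying through by $\chi$ yields $\chi\mathbf R(\lambda)\chi'\mathbf f=\chi\mathbf u$.

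The main obstacle is the justification of the integration-by-parts and the telescoping argument at the level of distributions with the given wavefront set, rather than for smooth $\mathbf u$. I would handle this by choosing $r$ large enough (part~1) that everything takes place in $\mathcal H^r_h$, where $e^{-t\mathbf X}$ and $\mathbf R_0$ act continuously, and then either approximate $\mathbf u$ by smooth functions adapted to $\mathcal H^r_h$ (as in the approximation argument invoked after~\eqref{e:direct-bound}, using \cite[Lemma~A.1]{FaSj}) or work directly with the operator identities, noting that all the cutoffs $\chi,\chi'$ and the support conditions make each term a well-defined element of $\mathcal H^r_h$ depending continuously on $\mathbf u$. The support condition $\supp\mathbf u\subset\Gamma_+$ and the geometry encoded in~\eqref{e:convfun} are doing the real work; once those are in place, the computation is the same telescoping identity that underlies the Laplace-transform formula for the resolvent.
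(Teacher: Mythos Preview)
Your Part~1 matches the paper's argument and is fine. The difficulty is in Part~2.

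Your key justification for the $t\to\infty$ boundary term vanishing is that ``$\varphi^{-t}(x)$ eventually leaves $\overline{\mathcal U}\supset\Gamma_+\supset\supp\mathbf u$ for $x\in\supp\chi$''. This is false: $\chi=1$ near $K$, and for $x\in K$ one has $\varphi^{-t}(x)\in K\subset\overline{\mathcal U}$ for all $t$. Likewise the fallback claim that $e^{-t(\mathbf X+\lambda)}\chi'\mathbf f$ vanishes on $\supp\chi$ for $t$ large cannot hold in general, since $K\subset\supp\chi'\cap\Gamma_+$ is invariant under $\varphi^t$. So the telescoping integral does not terminate for a support reason; it only converges because of exponential decay of $e^{-t(\mathbf X+\lambda)}$, and for that you need $\chi'\mathbf u$ in a space on which this decay holds. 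Since $\chi'\mathbf u$ is only in $H^{-r}$, not $L^2$, you would need $\Re\lambda$ larger than the growth rate of $e^{-t\mathbf X}$ on $H^{-r}$ (which depends on $r$) rather than just $C_0$. This is fixable, but the argument as written does not do it.

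The paper sidesteps the whole issue. Since $\chi'\mathbf u\in\mathcal H^r_h$ and $\mathbf X\chi'\mathbf u\in\mathcal H^r_h$, one has $\chi'\mathbf u\in\mathcal D^r_h$, and Lemmas~\ref{l:meromorphic} and~\ref{l:samething} give directly $\chi'\mathbf u=\mathbf R(\lambda)(\mathbf X+\lambda)\chi'\mathbf u$ for any $\lambda$ not a pole, with no analytic continuation and no integral formula. Multiplying by $\chi$ and expanding $(\mathbf X+\lambda)\chi'\mathbf u=\chi'\mathbf f+(X\chi')\mathbf u$ reduces the problem to showing $\chi\mathbf R(\lambda)(X\chi')\mathbf u=0$. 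The paper then observes (using Lemma~\ref{l:convergence} and the convexity condition~\eqref{e:convfun}) that $\supp((X\chi')\mathbf u)\subset\Gamma_+\cap\supp(X\chi')$ does not meet $\Gamma_-$, so by Lemma~\ref{l:outgoing2} the operator $\mathbf R(\lambda)$ is given by the finite-time integral~\eqref{e:holodeck} there, and the forward flow of that support never reaches $\supp\chi$. This last step is where~\eqref{e:convfun} is actually used, and it is close to your informal claim about agreement on $\supp\chi$, but applied to the commutator term $(X\chi')\mathbf u$ rather than to the full integrand.
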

\begin{proof}
1. Take $r>0$ large enough so that $\chi'\mathbf u\in H^{-r}_h(\mathcal M;\mathcal E)$.
By Lemma~\ref{l:order}, the order function $m$ is equal to $-1$ near $E_+^*\supset\WF(\chi'\mathbf u)$.
Let $G=G(h)$ be the operator defined in~\eqref{e:Goat}. Then $e^{rG(h)}$ is a nonsemiclassical
pseudodifferential operator of order $-r$ microlocally near $\WF(\chi'\mathbf u)$. It follows
that $e^{rG(h)}\chi'\mathbf u\in L^2$ and thus $\chi'\mathbf u\in \mathcal H^r_h$.
Since $\mathbf f$ satisfies~\eqref{e:wfcond} as well, we similarly have
$\chi'\mathbf f\in \mathcal H^r_h$.

2. Since $\chi'\mathbf u\in \mathcal H^r_h$ and $\mathbf X\chi'\mathbf u\in \mathcal H^r_h$
for $r$ large enough, we have by Lemmas~\ref{l:meromorphic} and~\ref{l:samething},
$$
\chi\mathbf u=\chi\chi'\mathbf u=\chi\mathbf R(\lambda)(\mathbf X+\lambda)\chi'\mathbf u.
$$
Now, by~\eqref{e:X-bundles}
$$
(\mathbf X+\lambda)\chi'\mathbf u=(X\chi')\mathbf u+\chi'\mathbf f.
$$
Take $x\in \Gamma_+\cap \supp(X\chi')$. By Lemma~\ref{l:convergence}, there exists
$t'>0$ such that $\varphi^{-t'}(x)\in \supp\chi$. Then by~\eqref{e:convfun},
$\varphi^t(x)\notin\supp\chi$ for all $t\geq 0$. In particular,
$x\notin\Gamma_-$. Since $\supp\mathbf u\subset\Gamma_+$,
there exists $\psi_1\in C_0^\infty(\mathcal M)$ such that
$(X\chi')\mathbf u=\psi_1(X\chi')\mathbf u$ and $\supp\psi_1\cap\Gamma_-=\emptyset$.
Then $\mathbf R(\lambda)\psi_1$ is given by~\eqref{e:holodeck}. It follows from~\eqref{e:X-support} that
$$
\chi\mathbf R(\lambda)(X\chi')\mathbf u=0
$$
and thus $\chi\mathbf u=\chi\mathbf R(\lambda)\chi'\mathbf f$ as needed.
\end{proof}

\begin{proof}[Proof of Theorem~\ref{t:res-states}]
The expansion~\eqref{e:xpansion} and the properties~\eqref{e:piprop1} have already been
established in Lemma~\ref{l:microl}. Therefore, it remains to prove~\eqref{e:piprop2}.
The property~$\mathbf X\Pi=\Pi\mathbf X$ follows from~\eqref{e:xpansion} and~\eqref{e:quation}.
By~\eqref{e:piprop1} and~\eqref{e:merex-1}, we know that
$$
\Ran\Pi\ \subset\ \Res^{(J(\lambda_0))}_{\mathbf X}(\lambda_0),
$$
therefore it remains to prove that for each $N$,
\begin{equation}
  \label{e:repeats}
\mathbf u\in \Res^{(N)}_{\mathbf X}(\lambda_0)\ \Longrightarrow\
\mathbf u=\Pi\mathbf u.
\end{equation}
Take $\chi\in C_0^\infty(\mathcal U)$ such that $\chi=1$ near $K$ and let $\chi'\in C_0^\infty(\mathcal U)$
be constructed in Lemma~\ref{l:convfun}. We claim that for each $j=0,\dots,N$,
\begin{equation}
  \label{e:repeats1}
\chi\mathbf R(\lambda)\chi'(\mathbf X+\lambda_0)^j\mathbf u=\sum_{k=j}^{N-1}{(-1)^{k-j}\chi (\mathbf X+\lambda_0)^k\mathbf u\over (\lambda-\lambda_0)^{k-j+1}}.
\end{equation}
We argue by induction on $j=N,\dots, 0$. For $j=N$, we have $(\mathbf X+\lambda_0)^j\mathbf u=0$ and~\eqref{e:repeats1} is trivial.
Now, assume that~\eqref{e:repeats1} is true for $j+1$. Using the identity
$$
(\mathbf X+\lambda_0)^j \mathbf u={(\mathbf X+\lambda)(\mathbf X+\lambda_0)^j\mathbf u-(\mathbf X+\lambda_0)^{j+1}\mathbf u\over \lambda-\lambda_0}
$$
and Lemma~\ref{l:returns} for the first term on the right-hand side, we obtain~\eqref{e:repeats1} for $j$, finishing
its proof.

Now, take $j=0$ in~\eqref{e:repeats1} and use~\eqref{e:xpansion}. Equating the terms next
to $(\lambda-\lambda_0)^{-1}$, we obtain $\chi\mathbf u=\chi\Pi\chi'\mathbf u$.
Moreover, $\Pi\chi'\mathbf u=\Pi\mathbf u$ since $\supp K_\Pi\subset\Gamma_+\times\Gamma_-$,
$\supp\mathbf u\subset\Gamma_+$, and $\chi'=1$ near $K$.
Since $\chi$ could be chosen arbitrarily, this gives~\eqref{e:repeats}.
\end{proof}

\section{Dynamical traces and zeta functions}
  \label{s:trace-zeta}

In this section we prove Theorem~\ref{t:zeta}. More generally, we prove
in Theorem~\ref{t:trace} below that the dynamical trace $F_{\mathbf X}(\lambda)$
associated to $\mathbf X$ is equal to the flat trace of a certain operator
featuring the resolvent $\mathbf R(\lambda)$; this flat
trace gives the meromorphic extension of $F_{\mathbf X}(\lambda)$. The key
ingredient of the proof is the wavefront set condition~\eqref{e:wavefront}
on the meromorphic extension of the resolvent. We follow the strategy of~\cite{DyZw}
and refer the reader to that paper for the parts of the proof that remain unchanged
in our more general case.

\subsection{Meromorphic extension of traces}  
  \label{s:trace}
  
We first show how to express Pollicott--Ruelle resonances of $\mathbf X$
as the poles of a certain trace expression featuring closed geodesics.
To write down this expression, we need to introduce some notation.
Define the vector bundle $\mathcal E_0$ over $\overline{\mathcal U}$ by
\begin{equation}
  \label{e:E-0}
\mathcal E_0(x)=\{\eta\in T^*_x\mathcal M\mid \langle X(x),\eta\rangle=0\},\quad
x\in\overline{\mathcal U}.
\end{equation}
Assume that $x,\varphi^t(x)\in \mathcal U$ for some $t$. Define the \emph{linearized Poincar\'e map}
$$
\mathcal P_{x,t}:\mathcal E_0(x)\to\mathcal E_0(\varphi^t(x)),\quad
\mathcal P_{x,t}=(d\varphi^t(x))^{-T}|_{\mathcal E_0(x)}.
$$
Here $(d\varphi^t(x))^{-T}$ is the inverse transpose of $d\varphi^t(x)$
as in~\eqref{e:hammertime}.
Next, the parallel transport
$$
\alpha_{x,t}:\mathcal E(x)\to \mathcal E(\varphi^t(x))
$$
is defined as follows: for each $\mathbf u\in C^\infty(\mathcal M;\mathcal E)$,
we put $\alpha_{x,t}(\mathbf u(x))=e^{-t\mathbf X}\mathbf u(\varphi^t(x))$.
This definition only depends on the value of $\mathbf u$ at $x$;
indeed, \eqref{e:X-support} shows that
if $\mathbf u(x)=0$, then $e^{-t\mathbf X}\mathbf u(\varphi^t(x))=0$ as well
(by writing $\mathbf u$ as a sum of expressions of the form $f\mathbf v$,
where $f\in C^\infty(\mathcal M)$ vanish at $x$). 

Now, assume that $\gamma(t)=\varphi^t(x_0)$ is a closed trajectory, that is
$\gamma(T)=\gamma(0)$ for some $T>0$. (We call $T$ the period of $\gamma$, and regard the same $\gamma$
with two different values of $T$ as two different closed trajectories. The minimal
positive $T^\sharp$ such that $\gamma(T^\sharp)=\gamma(0)$ is called the \emph{primitive period}.) Assume also that $x_0\in \mathcal U$; this implies
immediately that $\gamma$ lies inside $K$.
The operators $\alpha_{\gamma(t),T}:\mathcal E(\gamma(t))\to \mathcal E(\gamma(t))$,
as well as $\mathcal P_{\gamma(t),T}:\mathcal E_0(\gamma(t))\to\mathcal E_0(\gamma(t))$,
are conjugate to each other for different $t$, therefore the trace
and the determinant
\begin{equation}
  \label{e:fancy}
\tr\alpha_{\gamma}:=\tr\alpha_{\gamma(t),T},\quad
\det(I-\mathcal P_{\gamma}):=\det(I-\mathcal P_{\gamma(t),T})
\end{equation}
do not depend on $t$. Note that by~\eqref{e:hyperdual},
\begin{equation}
  \label{e:nondeg}
\det(I-\mathcal P_{\gamma})\neq 0.
\end{equation}

The main result of this subsection, and the key ingredient for showing meromorphic continuation of dynamical zeta
functions, is
\begin{theo}
  \label{t:trace}
Define for $\Re\lambda\gg 1$,
\begin{equation}
  \label{e:tracesum}
F_{\mathbf X}(\lambda):=\sum_{\gamma} {e^{-\lambda T_\gamma}\,T_\gamma^\sharp\,\tr\alpha_{\gamma} \over |\det(I-\mathcal P_{\gamma})|}
\end{equation}
where the sum is over all closed trajectories $\gamma$ inside $K$, $T_\gamma>0$
is the period of $\gamma$, and $T_\gamma^\sharp$ is the primitive period.
Then $F(\lambda)$ extends meromorphically to $\lambda\in\mathbb C$.
The poles of $F(\lambda)$ are the Pollicott--Ruelle resonances of $\mathbf X$ and
the residue at a pole $\lambda_0$ is equal to the
rank of $\Pi_{\lambda_0}$ (see Theorem~\ref{t:res-states}).
\end{theo}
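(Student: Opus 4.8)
The plan is to identify $F_{\mathbf X}(\lambda)$, for $\Re\lambda\gg 1$, with the \emph{flat trace} of a microlocal truncation of the resolvent $\mathbf R(\lambda)$, and then to use the wavefront bounds of Lemma~\ref{l:microl} to continue this flat trace meromorphically and read off its principal parts. Recall that the flat trace $\tr^\flat B=\int_{\mathcal U}K_B(x,x)$ is defined for any $B:C_0^\infty(\mathcal U;\mathcal E)\to\mathcal D'(\mathcal U;\mathcal E)$ with $\WF'(B)\cap\Delta(T^*\mathcal U)=\emptyset$, is continuous in the natural $\WF'$-topology on such operators, restricts to the ordinary trace on finite rank operators, and obeys the Atiyah--Bott--Guillemin trace formula. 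First I would fix $\chi_1\in C_0^\infty(\mathcal U)$ with $\chi_1=1$ near $K$, together with $t_0>0$ smaller than the period of every closed trajectory of $\varphi^t$ in $K$ (such $t_0$ exists since $X$ is nonvanishing on the compact set $K$, so in flow-box coordinates no trajectory can return in short time). Choosing $\chi_0\in C^\infty(\mathbb R;[0,1])$ with $\chi_0=0$ near $0$ and $\chi_0=1$ on $[t_0,\infty)$, I set, for $\Re\lambda\gg1$,
$$
\mathbf Q(\lambda):=\int_0^\infty\chi_0(t)\,\chi_1 e^{-t(\mathbf X+\lambda)}\chi_1\,dt=\chi_1\mathbf R(\lambda)\chi_1-\int_0^\infty(1-\chi_0(t))\,\chi_1 e^{-t(\mathbf X+\lambda)}\chi_1\,dt,
$$
using~\eqref{e:res-upper} and~\eqref{e:res}. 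The last integral is entire in $\lambda$ (a compactly supported time-integral of a strongly continuous operator family), while $\chi_1\mathbf R(\lambda)\chi_1$ continues meromorphically by Theorem~\ref{t:mer}; hence $\mathbf Q(\lambda)$ continues meromorphically to $\mathbb C$, with poles among the Pollicott--Ruelle resonances of $\mathbf X$.

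Next I would establish the trace formula. Viewing the Schwartz kernel of $\chi_1 e^{-t\mathbf X}\chi_1$ as a distribution on $\mathbb R_t\times\mathcal M\times\mathcal M$, its wavefront set lies in the conormal bundle of the graph $\{(t,x,\varphi^{-t}(x))\}$; a point of this conormal lying over $\{x=y\}$ with vanishing $t$-covector forces the spatial covector into $\mathcal E_0$ (defined in~\eqref{e:E-0}) and to be fixed by the linearized Poincar\'e map, which is impossible by the nondegeneracy~\eqref{e:nondeg}. Consequently $t\mapsto\tr^\flat(\chi_1 e^{-t\mathbf X}\chi_1)$ is a well-defined distribution on $\{t>0\}$, of finite order by the exponential bound on the number of closed orbits (Lemma~\ref{l:recur3}), and the Atiyah--Bott--Guillemin formula gives, on $\{t>0\}$,
$$
\tr^\flat(\chi_1 e^{-t\mathbf X}\chi_1)=\sum_{\gamma}\frac{T_\gamma^\sharp\,\tr\alpha_{\gamma}}{|\det(I-\mathcal P_{\gamma})|}\,\delta(t-T_\gamma),
$$
the sum over closed trajectories $\gamma$ in $K$. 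Pairing with $\chi_0(t)e^{-\lambda t}$ and using that every $T_\gamma\ge t_0$, so that $\chi_0\equiv1$ on the support of the right-hand side, I would obtain, for $\Re\lambda\gg1$,
$$
\tr^\flat\mathbf Q(\lambda)=\int_0^\infty\chi_0(t)e^{-\lambda t}\,\tr^\flat(\chi_1 e^{-t\mathbf X}\chi_1)\,dt=F_{\mathbf X}(\lambda).
$$

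Then I would show that $\tr^\flat\mathbf Q(\lambda)$ is defined for every $\lambda\in\mathbb C$ and is meromorphic. By Lemma~\ref{l:microl}, $\WF'(\mathbf R_H(\lambda))\subset\Delta(T^*\mathcal U)\cup\Upsilon_+\cup(E_+^*\times E_-^*)$ and $\WF'(\Pi_{\lambda_0})\subset E_+^*\times E_-^*$, so the same holds after conjugating by $\chi_1$. The short-time subtraction in the definition of $\mathbf Q(\lambda)$ cancels precisely the diagonal contribution $\Delta(T^*\mathcal U)$ (which is carried by the $t\to0^+$ behaviour of $e^{-t\mathbf X}$) together with the piece of $\Upsilon_+$ over times $0\le t<t_0$; what survives is contained in $(\Upsilon_+\cap\{t\ge t_0\})\cup(E_+^*\times E_-^*)$, a fixed closed conic set. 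This set is disjoint from $\Delta(T^*\mathcal U)$: a point of $\Upsilon_+$ with $t\ge t_0$ on the diagonal would produce a closed orbit with a covector in $\mathcal E_0$ fixed by $\mathcal P_\gamma$, excluded by~\eqref{e:nondeg}, while $E_+^*\times E_-^*$ misses the diagonal because $E_+^*\cap E_-^*=\{0\}$ over $K$ (Lemma~\ref{l:extended}). Hence $\tr^\flat\mathbf Q(\lambda)$ is well-defined, and since $\mathbf Q(\lambda)$ is meromorphic with values in operators whose $\WF'$ stays in this fixed set disjoint from the diagonal, $\lambda\mapsto\tr^\flat\mathbf Q(\lambda)$ is meromorphic, with poles among the resonances. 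By analytic continuation from $\Re\lambda\gg1$, $F_{\mathbf X}(\lambda)=\tr^\flat\mathbf Q(\lambda)$ for all $\lambda$; in particular $F_{\mathbf X}$ extends meromorphically. I expect this step, and specifically the bookkeeping that the conormal-to-diagonal singularity of $\mathbf R(\lambda)$ is carried entirely by the removed short-time part so that the residual wavefront set genuinely avoids the diagonal (using~\eqref{e:nondeg} and $E_+^*\cap E_-^*=\{0\}$), together with the continuity of $\tr^\flat$ on wavefront-controlled families, to be the main technical point.

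Finally I would compute residues. Inserting the expansion~\eqref{e:xpansion} into $\mathbf Q(\lambda)$, the principal part of $\mathbf Q(\lambda)$ at a resonance $\lambda_0$ equals $\sum_{j=1}^{J(\lambda_0)}(-1)^{j-1}(\lambda-\lambda_0)^{-j}\,\chi_1(\mathbf X+\lambda_0)^{j-1}\Pi_{\lambda_0}\chi_1$ (the short-time subtraction being holomorphic), and the remainder is a holomorphic operator family with $\WF'$ in the fixed good set above, hence has holomorphic flat trace. Since $\supp K_{\Pi_{\lambda_0}}\subset\Gamma_+\times\Gamma_-$ by~\eqref{e:piprop1}, the kernel of $(\mathbf X+\lambda_0)^{j-1}\Pi_{\lambda_0}$ restricts to the diagonal only over $\Gamma_+\cap\Gamma_-=K$, where $\chi_1=1$, so $\tr^\flat\big(\chi_1(\mathbf X+\lambda_0)^{j-1}\Pi_{\lambda_0}\chi_1\big)=\tr^\flat\big((\mathbf X+\lambda_0)^{j-1}\Pi_{\lambda_0}\big)$; this is the flat trace of a finite rank operator with $\WF'\subset E_+^*\times E_-^*$ disjoint from the diagonal, hence equals the ordinary trace $\tr\big((\mathbf X+\lambda_0)^{j-1}\Pi_{\lambda_0}\big)$. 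On $\Ran\Pi_{\lambda_0}=\Res_{\mathbf X}^{(J(\lambda_0))}(\lambda_0)$ the operator $\mathbf X+\lambda_0$ is nilpotent by~\eqref{e:piprop2} and the definition~\eqref{e:res-spaces}, so this trace vanishes for $j\ge2$ and equals $\rank\Pi_{\lambda_0}$ for $j=1$. Thus $\tr^\flat\mathbf Q(\lambda)$ has a simple pole at $\lambda_0$ with residue $\rank\Pi_{\lambda_0}$; and since $\rank\Pi_{\lambda_0}\ge1$ at every resonance (as $\Res_{\mathbf X}^{(1)}(\lambda_0)\ne0$ there, by the remark following Theorem~\ref{t:res-states}), the poles of $F_{\mathbf X}$ are exactly the Pollicott--Ruelle resonances of $\mathbf X$, with the asserted residues.
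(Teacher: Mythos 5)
Your plan is structurally the same as the paper's: write $F_{\mathbf X}(\lambda)$ as a flat trace of a truncated resolvent (via the Guillemin trace formula), use the wavefront bound from Lemma~\ref{l:microl} to continue the flat trace meromorphically, and extract the residues from the Laurent expansion \eqref{e:xpansion} by a nilpotency argument. The one substantive place you diverge is in \emph{how} you remove the short-time part of the flow. The paper takes
$$
\tr^\flat\big(\chi\, e^{-t_0(\mathbf X+\lambda)}\,\mathbf R(\lambda)\,\chi\big),
$$
i.e.\ it \emph{composes} with the fixed-time propagator $e^{-t_0\mathbf X}$, an operator whose wavefront relation is the graph of $e^{t_0H_p}$. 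Then multiplicativity of wavefront sets (H\"ormander 8.2.14) applied to the three components $\Delta\cup\Upsilon_+\cup(E_+^*\times E_-^*)$ of $\WF'(\mathbf R(\lambda))$ \emph{automatically} shifts the diagonal $\Delta(T^*\mathcal U)$ to $\{(e^{t_0H_p}(y,\eta),y,\eta)\}$, which is disjoint from the diagonal for $0<t_0<\min_\gamma T_\gamma$; there is no cancellation to justify.

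You instead smoothly cut off in time and write $\mathbf Q(\lambda)=\chi_1\mathbf R(\lambda)\chi_1 - \mathbf S(\lambda)$ with $\mathbf S(\lambda)$ entire. This produces the correct flat trace for $\Re\lambda\gg1$, but the crucial claim that $\WF'(\mathbf Q(\lambda))$ avoids the diagonal for the \emph{meromorphic continuation} is not a consequence of the wavefront set calculus: one only has $\WF'(A-B)\subset\WF'(A)\cup\WF'(B)$, so $\Delta(T^*\mathcal U)$ does not ``cancel'' merely by appearing in both terms. You flag this (``the main technical point''), but an actual argument is still needed — e.g.\ a microlocal parametrix construction showing $\chi_1\mathbf R(\lambda)\chi_1$ and $\mathbf S(\lambda)$ agree modulo smoothing near the diagonal for all $\lambda$, or a direct semiclassical analysis of $\mathbf Q(\lambda)$ in the spirit of Lemma~\ref{l:keymic}. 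The paper's composition trick bypasses this entirely, which is precisely why it is formulated with $e^{-t_0\mathbf X}\mathbf R(\lambda)$ rather than with a subtraction. If you keep the smooth-cutoff version, supply that cancellation argument; otherwise, switching to the composition $\chi e^{-t_0(\mathbf X+\lambda)}\mathbf R(\lambda)\chi$ removes the gap with no loss. The residue computation and the identification of poles with resonances are otherwise correct and match the paper's reasoning.
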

\noindent\textbf{Remark}.
The sum~\eqref{e:tracesum} converges for large $\Re\lambda$, since
$|\det(I-\mathcal P_\gamma)|$ is bounded away from zero,
$\alpha_\gamma$ grows at most exponentially in $T_\gamma$,
and the number of closed trajectories grows at most exponentially by Lemma~\ref{l:recur3}.
\begin{proof}
We use the concept of the flat trace of an operator
$\mathbf A:C^\infty(\mathcal M;\mathcal U)\to \mathcal D'(\mathcal M;\mathcal U)$
satisfying the condition
\begin{equation}
  \label{e:trcond}
\WF'(\mathbf A)\cap \Delta(T^*\mathcal M\setminus 0)=\emptyset,\quad
\Delta(T^*\mathcal M\setminus 0)=\{(x,\xi,x,\xi)\mid (x,\xi)\in T^*\mathcal M\setminus 0\}.
\end{equation}
The flat trace is defined as the integral of the restriction
of the Schwartz kernel $K_{\mathbf A}\in \mathcal D'(\mathcal M\times\mathcal M;\End(\mathcal E))$ to the diagonal:
$$
\tr^\flat \mathbf A:=\int_{\mathcal M} \tr_{\End(\mathcal E)} K_{\mathbf A}(x,x)\,dx
$$
and $\tr_{\End(\mathcal E)}K_{\mathbf A}(x,x)$ is a well-defined distribution on $\mathcal M$
due to~\eqref{e:trcond}~-- see~\cite[\S2.4]{DyZw}.

The starting point of the proof is the Atiyah--Bott--Guillemin trace formula~\cite{guillemin}
\begin{equation}
  \label{e:guillemin}
\tr^\flat \int_0^\infty \varphi(t) \chi e^{-t\mathbf X}\chi \,dt=\sum_{\gamma}{\varphi(T_\gamma)T_\gamma^\sharp\,\tr\alpha_{\gamma}
\over |\det(I-\mathcal P_{\gamma})|},\quad
\varphi\in C_0^\infty(0,\infty),
\end{equation}
where $\chi\in C_0^\infty(\mathcal U)$ is any function such that $\chi=1$ near $K$.
Note that the Schwartz kernel
$\chi(x)K_{e^{-t\mathbf X}}(x,y)\chi(y)$ is a smooth function times the delta
function of the submanifold $\{y=\varphi^{-t}(x)\}\subset \mathbb R_t\times\mathcal M_x\times\mathcal M_y$, so the wavefront set
of this kernel is contained in the conormal bundle to this surface~\cite[Example~8.2.5]{ho1}
$$
\{y=\varphi^{-t}(x),\ \xi=-(d\varphi^{-t}(x))^T\cdot\eta,\ \tau=\langle X(y),\eta\rangle,\ \eta\neq 0,\ x,y\in\supp\chi\}.
$$
Here $\tau$ is the momentum dual to $t$.
If $\mathbf A_\varphi$ is the integral on the left-hand side of~\eqref{e:guillemin},
then its Schwartz kernel is the pushforward of $\varphi(t)\chi(x)K_{e^{-t\mathbf X}}(x,y)\chi(y)$
under the map $(t,x,y)\mapsto (x,y)$, therefore~\cite[Example~8.2.5 and Theorem~8.2.13]{ho1}
$$
\WF'(\mathbf A_\varphi)\subset\{x=\varphi^t(y),\ \xi=\mathcal P_{y,t}\cdot\eta,\ \eta\in\mathcal E_0(y),\
t\in\supp\varphi,\
x,y\in\supp\chi\}.
$$
By~\eqref{e:nondeg}, $\mathbf A_\varphi$ satisfies~\eqref{e:trcond} and thus the left-hand side
of~\eqref{e:guillemin} is well-defined. See~\cite[Appendix~B]{DyZw} for a detailed
proof of~\eqref{e:guillemin}, which generalizes directly to our situation.
Note that the Poincar\'e map defined in~\cite[(B.1)]{DyZw} is the transpose of the one
used in this paper, which does not change the determinant~\eqref{e:nondeg}.

As in~\cite[\S4]{DyZw}, using~\eqref{e:guillemin}, Lemma~\ref{l:recur2}, and the fact that
the right-hand side is well-defined by the wavefront set condition (see below), we get for some $C_1>0$,
\begin{equation}
  \label{e:same-trace}
F_{\mathbf X}(\lambda)=\tr^\flat\big(\chi e^{-t_0(\mathbf X+\lambda)}\mathbf R(\lambda)\chi \big),\quad
\Re\lambda>C_1
\end{equation}
where $t_0>0$ is small enough so that $t_0 <T_\gamma$ for all $\gamma$.
We also make $t_0$ small enough so that $\varphi^{-t_0}(\supp\chi)\subset\mathcal U$;
then $\chi e^{-t_0(\mathbf X+\lambda)}\mathbf R(\lambda)\chi$ is a well-defined compactly supported
operator on $\mathcal U$.

Note that by~\eqref{e:X-support} and by~\cite[Example~8.2.5]{ho1},
the wavefront set of the operator $e^{-t_0\mathbf X}$ is contained in the graph
of $e^{t_0H_p}$. Then by~\eqref{e:wavefront} and multiplicativity of wavefront sets~\cite[Theorem~8.2.14]{ho1},
we have for each $\lambda\in\mathbb C$ which is not a resonance,
$$
\begin{gathered}
\WF'(\chi e^{-t_0(\mathbf X+\lambda)}\mathbf R(\lambda)\chi)\ \subset\
\{(e^{t_0H_p}(y,\eta),y,\eta)\mid (y,\eta)\in T^*\mathcal U\setminus 0\}\ \cup\ (E_+^*\times E_-^*)\\
\cup\ \{(e^{tH_p}(y,\eta),y,\eta)\mid (y,\eta)\in T^*\mathcal U\setminus 0,\ \eta\in\mathcal E_0(y),\ t\geq t_0\},
\end{gathered}
$$
and a similar statement is true for the regular and the singular parts of this operator
when $\lambda$ is a resonance~-- see Lemma~\ref{l:microl}.
It follows from~\eqref{e:hammertime} and~\eqref{e:nondeg} that the operator $\chi e^{-t_0(\mathbf X+\lambda)}\mathbf R(\lambda)\chi$
satisfies~\eqref{e:trcond}; therefore, the right-hand side of~\eqref{e:same-trace}
is defined as a meromorphic function of $\lambda\in\mathbb C$, and its poles are the resonances of $\mathbf X$.

It remains to show that for each resonance $\lambda_0$, the meromorphic continuation
of $F_{\mathbf X}(\lambda)$ has a simple pole at $\lambda_0$ with residue equal to the rank of $\Pi_{\lambda_0}$.
By~\eqref{e:same-trace} and recalling the expansion~\eqref{e:xpansion}, it suffices to show that
$$
\tr^\flat\sum_{j=1}^{J(\lambda_0)}(-1)^{j-1}{\chi e^{-t_0(\mathbf X+\lambda)}(\mathbf X+\lambda_0)^{j-1}\Pi_{\lambda_0}\chi\over (\lambda-\lambda_0)^j}
={\rank\Pi_{\lambda_0}\over \lambda-\lambda_0}+\Hol(\lambda)
$$
where $\Hol(\lambda)$ stands for a function which is holomorphic near $\lambda_0$.
Expanding $e^{-t_0(\mathbf X+\lambda)}$ at $\lambda=\lambda_0$, we see that it
is enough to prove that
\begin{equation}
  \label{e:nilpotent-traces}
\begin{aligned}
\tr^\flat (\chi e^{-t_0(\mathbf X+\lambda_0)}\Pi_{\lambda_0}\chi)&=\rank\Pi_{\lambda_0};\\
\tr^\flat (\chi e^{-t_0(\mathbf X+\lambda_0)}(\mathbf X+\lambda_0)^j\Pi_{\lambda_0}\chi)&=0,\quad
j\geq 1.
\end{aligned}
\end{equation}
By~\eqref{e:piprop1},
each operator on the left-hand side can be written as a finite sum $\sum_\ell \mathbf u_\ell\otimes \mathbf v_\ell$,
where $\otimes$ denotes the Hilbert tensor product and
$$
\begin{aligned}
\mathbf u_\ell\in\mathcal D'(\mathcal U;\mathcal E),&\quad
\supp\mathbf u_\ell\subset \Gamma_+,\quad
\WF(\mathbf u_\ell)\subset E_+^*;\\
\mathbf v_\ell\in\mathcal D'(\mathcal U;\mathcal E^*\otimes |\Omega|^1),&\quad
\supp \mathbf v_\ell\subset\Gamma_-,\quad
\WF(\mathbf v_\ell)\subset E_-^*.
\end{aligned}
$$
Since $\Gamma_+\cap\Gamma_-=K$ is compactly contained in $\mathcal U$ and
$E_+^*\cap E_-^*\cap (T^*\mathcal U\setminus 0)=\emptyset$, by~\cite[Theorem~8.2.13]{ho1}
we can define the inner product $\langle \mathbf u_\ell,\mathbf v_{\ell'}\rangle$ for each $\ell,\ell'$.
This implies that the operators of the form $\sum_\ell \mathbf u_\ell\otimes \mathbf v_\ell$ can be multiplied
(and form an algebra), they satisfy~\eqref{e:trcond},
and their flat traces are given by $\sum_\ell\langle \mathbf u_\ell,\mathbf v_\ell\rangle$.

Recall the spaces $\Res^{(k)}:=\Res^{(k)}_{\mathbf X}(\lambda_0)$ defined in~\eqref{e:res-spaces}.
By Theorem~\ref{t:res-states}, and since $\chi=1$ near $K$, the operators
\begin{equation}
  \label{e:nilpotent-operators}
\chi(e^{-t_0(\mathbf X+\lambda_0)}\Pi_{\lambda_0}-\Pi_{\lambda_0})\chi,\quad
\chi e^{-t_0(\mathbf X+\lambda_0)}(\mathbf X+\lambda_0)^j\Pi_{\lambda_0}\chi,\quad
j\geq 1
\end{equation}
are nilpotent; more precisely, they map for $1<k\leq J(\lambda_0)$,
\begin{equation}
  \label{e:nilpotency}
C_0^\infty(\mathcal U)\to\chi\Res^{(J(\lambda_0))},\quad
\chi\Res^{(k)}\to \chi\Res^{(k-1)},\quad
\chi\Res^{(1)}\to 0.
\end{equation}
(The propagation operator $e^{-t_0(\mathbf X+\lambda_0)}$ does not cause any trouble
since $\Gamma_+\cap \varphi^{-t_0}(\mathcal U)\subset\mathcal U$ by~\eqref{e:convex2}
and thus each element of $e^{-t_0(\mathbf X+\lambda)_0}\Res^{(k)}$ can be restricted to $\mathcal U$
to yield another element of $\Res^{(k)}$.) It follows from~\eqref{e:nilpotency} that the operators in~\eqref{e:nilpotent-operators}
have zero flat trace.
Since $\Pi_{\lambda_0}^2=\Pi_{\lambda_0}$ and $\chi=1$ near $K$, we have
$\tr^\flat (\chi\Pi_{\lambda_0}\chi)=\rank \Pi_{\lambda_0}$; \eqref{e:nilpotent-traces} follows.
\end{proof}

\subsection{Meromorphic extension of zeta functions}
  \label{s:zeta}
In this section, we prove Theorem~\ref{t:zeta}. Using the Taylor series
of $\log(1-x)$, we get for $\Re\lambda\gg 1$,
$$
\log\zeta_V(\lambda)=-\sum_{\gamma^\sharp}\sum_{k=1}^\infty {\exp(-kT_{\gamma^\sharp}(\lambda+V_{\gamma^\sharp}))\over k}
=-\sum_\gamma{T_\gamma^\sharp \exp(-T_\gamma(\lambda+V_\gamma))\over T_{\gamma}}
$$
where the last sum is over all closed trajectories $\gamma$ of $\varphi^t$, with periods $T_\gamma>0$
and primitive periods $T_\gamma^\sharp$,
and $V_\gamma$ is defined in~\eqref{e:V-gamma}. It follows that for $\Re\lambda\gg 1$,
$$
{\zeta_V'(\lambda)\over\zeta_V(\lambda)}
=\sum_\gamma T_\gamma^\sharp e^{-\lambda T_\gamma}e^{-T_\gamma V_\gamma}.
$$
To reduce the right-hand side to an expression that can be handled by Theorem~\ref{t:trace},
we make the assumption that, for the Poincar\'e determinants defined in~\eqref{e:fancy},
\begin{equation}
  \label{e:orientability}
|\det(I-\mathcal P_\gamma)|=(-1)^\beta\det(I-\mathcal P_\gamma)\quad\text{with $\beta$ independent of $\gamma$}.
\end{equation}
This condition holds when $E_s$ is orientable, with $\beta=\dim E_s$, see~\cite[\S2.2]{DyZw}.
See~\cite[Appendix~B]{glp} for methods which can be used to eliminate the orientability assumption.

Similarly to~\eqref{e:fancy}, for each $\ell=0,\dots,n-1$, with $n=\dim\mathcal U$, the trace
$$
\tr\wedge^\ell \mathcal P_\gamma
=\tr\wedge^\ell \mathcal P_{\gamma(t),T_\gamma},\quad
\wedge^\ell \mathcal P_{\gamma(t),T_\gamma}:
\wedge^\ell \mathcal E_0(\gamma(t))\to\wedge^\ell\mathcal E_0(\gamma(t))
$$
does not depend on $t$; here $\wedge^\ell$ denotes $\ell$th antisymmetric power.
Using the identity $\det(I-\mathcal P_\gamma)=\sum_{\ell=0}^{n-1}(-1)^\ell \tr\wedge^\ell \mathcal P_\gamma$,
we get for $\Re\lambda\gg 1$,
$$
{\zeta_V'(\lambda)\over\zeta_V(\lambda)}=\sum_{\ell=0}^{n-1}(-1)^{\ell+\beta}F_\ell(\lambda),\quad
F_\ell(\lambda)=
\sum_\gamma {T_\gamma^\sharp e^{-\lambda T_\gamma}e^{-T_\gamma V_\gamma}
\tr\wedge^\ell \mathcal P_\gamma\over |\det(I-\mathcal P_\gamma)|}.
$$
To show that $\zeta_V(\lambda)$ continues meromorphically to $\lambda\in\mathbb C$,
it is enough to show that for each $\ell$, the function $F_\ell(\lambda)$
continues meromorphically to $\lambda\in\mathbb C$ with simple poles and integer residues. This follows from Theorem~\ref{t:trace},
applied to the operator
$$
\mathbf X_\ell:=\mathcal L_X+V:C^\infty(\mathcal U;\wedge^\ell \mathcal E_0)\to C^\infty(\mathcal U;\wedge^\ell\mathcal E_0),
$$
where $\mathcal E_0$ is defined in~\eqref{e:E-0}, $\wedge^\ell\mathcal E_0$ is embedded into the bundle
$\Omega^{\ell}$ of differential $\ell$-forms on $\mathcal U$ as the kernel of the interior product operator
$\iota_X$, and $\mathcal L_X$ is the Lie derivative along $X$ on $\Omega^\ell$, restricted to $\wedge^\ell \mathcal E_0$.

\section{Examples}
  \label{s:examples}

\subsection{A basic example}
  \label{s:examples-basic}

We start with the following basic example:
$$
\mathcal U=\{x_1^2+x_2^2<1\}\times\mathbb S^1_{x_3}\subset\mathbb R^2\times\mathbb S^1,\quad
\mathcal E=\mathbb C,\quad
X=\mathbf X=x_1\partial_{x_1}-x_2\partial_{x_2}+\partial_{x_3}.
$$
It is straightforward to verify that assumptions~\hyperlink{AA1}{\rm(A1)}--\hyperlink{AA5}{\rm(A5)}
from the introduction are satisfied, with
$$
\begin{gathered}
\varphi^t(x_1,x_2,x_3)=(e^t x_1,e^{-t}x_2,x_3)\quad\text{if }e^{2t}x_1^2+e^{-2t}x_2^2\leq 1;\\
\Gamma_+=[-1,1]_{x_1}\times\{0\}_{x_2}\times \mathbb S^1_{x_3},\
\Gamma_-=\{0\}_{x_1}\times [-1,1]_{x_2}\times \mathbb S^1_{x_3},\
K=\{(0,0)\}_{x_1}\times\mathbb S^1_{x_3};\\
E_u(0,0,x_3)=\mathbb R\partial_{x_1},\quad
E_s(0,0,x_3)=\mathbb R\partial_{x_2},
\end{gathered}
$$
and the extended dual stable/unstable bundles from Lemma~\ref{l:extended} given by
$$
E_+^*(x_1, 0, x_3) = \mathbb R dx_2,\quad
E_-^*(0, x_2, x_3) = \mathbb R dx_1.
$$
Then $u\in \mathcal D'(\mathcal U)$ satisfies
$\supp u\subset\Gamma_+$ and $\WF(u)\subset E_+^*$ if and only if
it has the form
$$
u=\sum_{j=0}^N u_{\ell}(x_1,x_3)\partial_{x_2}^\ell \delta(x_2),\quad
u_\ell\in C^\infty((-1,1)_{x_1}\times\mathbb S^1_{x_3}),
$$
here the fact that $u_\ell$ are smooth follows from the wavefront set condition.
A direct calculation shows that the space of resonant states~$\Res^{(1)}_{X}(\lambda)$
defined in~\eqref{e:res-spaces} is nontrivial if and only if
$$
\lambda=\lambda_{\ell,k}=-1-\ell+ik,\quad
\ell\in\mathbb N_0,\quad
k\in\mathbb Z
$$
and the spaces $\Res^{(j)}_{X}(\lambda_{\ell,k})$
are the same for all $j\geq 1$ and spanned by
$$
x_1^m \partial_{x_2}^{\ell-m} \delta(x_2) e^{-ikx_3},\quad
m=0,\dots,\ell.
$$
By Theorem~\ref{t:res-states}, the resonances of $X$ are exactly
$\lambda_{\ell,k}$, with $\rank \Pi_{\lambda_{\ell,k}}=\ell+1$.
Another way to see the same fact is to apply Theorem~\ref{t:trace}
from~\S\ref{s:trace}, with
$$
F_X(\lambda)=\pi\sum_{m\in\mathbb N}{e^{-2\pi m\lambda}\over \cosh(2\pi m)-1};
$$
we use that $F_X(\lambda)$ is holomorphic in $\{\Re\lambda>-1\}$ and satisfies the functional equation
$$
F_X(\lambda+1)+F_X(\lambda-1)-2F_X(\lambda)=2\pi\sum_{m\in\mathbb N}e^{-2\pi m\lambda}
={2\pi\over e^{2\pi\lambda}-1}.
$$
We remark that the assumptions from the introduction are also satisfied
for the vector fields $\pm (x_1\partial_{x_1}+x_2\partial_{x_2})+\partial_{x_3}$;
we leave the details to the reader.

A more general family of examples is given by suspensions of Axiom A maps
(such as Anosov maps or Smale horseshoes). For suspensions of Anosov maps
Pollicott--Ruelle resonances of the flow are determined from
the resonances of the map, see~\cite[Appendix~B]{long}.

\subsection{Riemannian manifolds with boundary}
  \label{s:boundary}

Consider a smooth $m$-dimensional compact Riemannian manifold $(M,g)$ with strictly convex boundary; that is,
the second fundamental form at $\pl M$ with respect to the inward pointing normal
is positive definite. Let $\bbar{\mc{U}}=SM$ be its unit tangent bundle, and consider the vector field $X$ generating the geodesic flow on $SM$.
One can equivalently consider the geodesic flow on the unit cotangent bundle $S^*M$, which is naturally a contact flow.

The vector field $X$ satisfies assumptions~\hyperlink{AA1}{\rm(A1)}--\hyperlink{AA3}{\rm(A3)} in the introduction.
To see~\eqref{e:convex}, choose a coordinate system $x$ on $M$ such that
$M$ locally has the form $\{x_1\geq 0\}$. Let $x(t)\in M$ be a geodesic (on an extension
of $M$ past the boundary) such that $\dot x_1(t)=0$.
By the geodesic equation
$$
\ddot x_1(t)+\sum_{i,j}\Gamma^1_{ij}(x(t))\dot x_i(t)\dot x_j(t)=0
$$
and since the matrix $\Gamma^1_{ij}(x(0))_{i,j=2}^m$ is positive definite by the strict convexity of the boundary,
we get $\ddot x_1(t)<0$ as required.

We assume that the flow $\varphi^t=e^{tX}$ is hyperbolic on the trapped set $K\subset \mathcal U$
in the sense of assumption~\hyperlink{AA4}{\rm(A4)} in the introduction. This is in particular true
if $g$ has negative sectional curvature in a neighborhood of $K$, see for instance~\cite[\S3.9 and Theorem~3.2.17]{kl}.

We now discuss an application of the results of this paper to boundary problems for the geodesic flow.
For each $(x,v)\in\mc{U}$, define 
\[\ell_\pm(x,v)=\pm \sup \{t>0\mid \varphi^{\pm t}(x,v)\in \mc{U}\} \in [-\infty,\infty]\]
as the time of escape to $\pl M$ in forward ($+$) and backward ($-$) time. Note that
$$
\Gamma_\pm\cap\mc{U}=\{(x,v)\mid \ell_\mp(x,v)=\mp \infty\},\quad
K=\{(x,v)\mid \ell_+(x,v)=+\infty,\ \ell_-(x,v)=-\infty\}.
$$ 
Define the incoming ($-$), outgoing ($+$), and tangent ($0$) boundary by
$$
\pl_\pm SM:=\{(x,v)\in \pl SM\mid \mp \cjg d\rho,v\cjd>0\},\quad
\pl_0SM:=\{(x,v)\in \pl SM\mid \cjg d\rho,v\cjd=0\},
$$
where $\rho$ is a defining function of the boundary in $M$.
The Liouville measure $d\mu$ on $SM$ is invariant by the flow and it is straightforward to check that the boundary value problem
\begin{equation}
  \label{BVP}
(-X\pm\lambda)u_\pm=f\in L^2_{\comp}(\mathcal U),\quad u_\pm=0\quad\text{near }\pl_\pm SM,\quad
u_\pm\in L^2(SM),
\end{equation}
is uniquely solvable for $\Re\lambda>0$, and the solution is given by 
\begin{equation}
  \label{ump}
u_\pm(x,v)=\int_0^{\ell_\pm(x,v)}e^{-\la |t|}f(\varphi^t(x,v))dt.
\end{equation}
This defines a bounded map $R_\pm(\la): L^2(\mathcal U)\to L^2(\mathcal U)$ by putting $R_\pm(\la)f:=u_\pm$.
\begin{prop}\label{prop:bvp}
Assume that $(M,g)$ is a compact Riemannian manifold with strictly convex boundary
and hyperbolic trapped set, and $\varphi^t:\mathcal U\to\mathcal U$,
$\overline{\mathcal U}=SM$, be the geodesic flow.
Let $E_\mp^*\subset T_{\Gamma_\mp}^*\overline{\mc{U}}$ be defined by Lemma~\ref{l:extended}. 
Then:

1. The operators $R_\pm(\la)$ have meromorphic continuation to 
$\la\in\cc$ as operators $R_\pm(\la): C_0^\infty(\mc{U})\to \mc{D}'(\mc{U})$, with poles of finite rank.

2. Assume that $\lambda\in\mathbb C$ is not a pole of $R_\pm$. Then for each $f\in C_0^\infty(\mathcal U)$,
$u_\pm=R_\pm(\lambda)f$ is the unique solution in $\mathcal D'(\mathcal U)$ to the problem
\begin{equation}
  \label{e:prob}
(-X\pm\lambda)u_\pm=f ,\quad u_\pm=0 \text{ near } \pl_\pm SM\cup \pl_0SM, \quad \WF(u_\pm)\subset E_\mp^*.
\end{equation}
Moreover, $R_\pm(\lambda)$ acts $H^s_0(\mathcal U)\to H^{-s}(\mathcal U)$ for all
$s>\gamma^{-1}\max(0,-\Re\lambda)$, where $\gamma>0$ is the constant in~\eqref{e:hyper}.
Finally, there exist conic neighborhoods $U_\pm$ of $E_\pm^*$ such that for
each compactly supported $A_\pm\in\Psi^0(\mathcal U)$ with $\WF'(A_\pm)\subset U_\pm$,
the operators $A_\pm R_\pm(\lambda)$ act $H^s_0(\mathcal U)\to H^s(\mathcal U)$.

3. Assume that $\lambda\in\mathbb C$ is a pole of $R_\pm$. Then there exists a nonzero solution
$u_\pm\in \mathcal D'(\mathcal U)$ to the problem~\eqref{e:prob} with $f=0$; in fact, $\supp u_\pm\subset \Gamma_\mp$.
\end{prop}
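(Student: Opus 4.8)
The plan is to identify $R_\pm(\lambda)$, up to an overall sign, with the restricted resolvent $\mathbf R(\lambda)$ of Theorem~\ref{t:mer} for the scalar operator $\mathbf X=\mp X$ (trivial bundle $\mathcal E=\mathbb C$, potential $V=0$), and then to read off the three parts from Theorems~\ref{t:mer} and~\ref{t:res-states} together with Lemmas~\ref{l:keymic}, \ref{l:microl}, and~\ref{l:outgoing2}. For $\Re\lambda>0$ one compares~\eqref{ump} with~\eqref{e:res-upper}--\eqref{e:res}: since $e^{-t\mathbf X}f=f\circ\varphi^{\pm t}$ and $\mathcal U$ is convex in the sense of~\eqref{e:convex2} (a consequence of strict convexity of $\partial M$, checked above), the $t$-integral in~\eqref{e:res-upper} restricted to $\mathcal U$ truncates at the escape time $\ell_\pm$, so that $R_\pm(\lambda)=\pm\,\indic_{\mathcal U}(\mp X+\lambda)^{-1}\indic_{\mathcal U}$ on $C_0^\infty(\mathcal U)$. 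Under $X\mapsto\mp X$ the flow direction is reversed, so the incoming/outgoing tails and the extended dual (un)stable bundles interchange; in particular $\Gamma_\mp$ and $E_\mp^*$ appearing in the statement are precisely the sets $\Gamma_+$ and $E_+^*$ attached to $\mathbf X=\mp X$. Assumptions~(A1)--(A5) hold, and Part~1 then follows at once from Theorem~\ref{t:mer}.

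I would treat Part~3 next. If $\lambda$ is a pole of $R_\pm$, then by Theorem~\ref{t:res-states} the space $\Res^{(1)}_{\mp X}(\lambda)$ is nontrivial; choose $0\neq u_\pm$ in it. By definition $\supp u_\pm\subset\Gamma_\mp$, $\WF(u_\pm)\subset E_\mp^*$, and $(\mp X+\lambda)u_\pm=0$, i.e.\ $(-X\pm\lambda)u_\pm=0$. The boundary condition in~\eqref{e:prob} is then automatic: by strict convexity of $\partial M$ every point of $\partial_\pm SM\cup\partial_0SM$ has its trajectory leaving $\overline{\mathcal U}$ immediately in the relevant time direction, so some neighbourhood of $\partial_\pm SM\cup\partial_0SM$ in $\overline{\mathcal U}$ misses $\Gamma_\mp$, whence $u_\pm=0$ there. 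This produces the desired nonzero solution of~\eqref{e:prob} with $f=0$.

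For Part~2 I would argue in three steps. For \emph{regularity}: by Lemmas~\ref{l:meromorphic} and~\ref{l:samething}, $R_\pm(\lambda)=\mp ih\,\indic_{\mathcal U}\mathbf R_0(ih\lambda)\indic_{\mathcal U}$, and $\mathbf R_0$ maps $\mathcal H^r_h\to\mathcal H^r_h$; since the order function may be taken with $C_m=1$ on distributions supported in $\mathcal U$, the inclusions~\eqref{e:anisotc} give $R_\pm(\lambda):H^s_0(\mathcal U)\to H^{-s}(\mathcal U)$ as soon as $r=s>r_0$. Since $X$ preserves the Liouville measure we have $\mathbf X^*=-\mathbf X$ near $K$, so Remark~(iii) after Lemma~\ref{l:keymic} permits $r_0$ to be any number exceeding $\gamma^{-1}C_1$, and one may take $C_1=\max(0,-\Re\lambda)$, giving the stated range $s>\gamma^{-1}\max(0,-\Re\lambda)$. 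Near $E_\pm^*$ the weight for $\mathbf X=\mp X$ equals $+1$, so by~\eqref{e:aniseq-1} the space $\mathcal H^r_h$ is microlocally $H^r_h$ there; taking $U_\pm$ inside that region yields $A_\pm R_\pm(\lambda):H^s_0(\mathcal U)\to H^s(\mathcal U)$ whenever $\WF'(A_\pm)\subset U_\pm$. For \emph{existence}: $u_\pm=R_\pm(\lambda)f$ solves the transport equation by~\eqref{e:quation}; the bound $\WF(u_\pm)\subset E_\mp^*$ for smooth $f$ comes from Lemma~\ref{l:microl}, since in $\WF'(\mathbf R_H(\lambda))\subset\Delta(T^*\mathcal U)\cup\Upsilon_+\cup(E_+^*\times E_-^*)$ the only part contributing to the wavefront set of the image of a smooth function is the portion of $E_+^*\times E_-^*$ lying over the zero section in the second factor; and the boundary condition holds for $\Re\lambda>0$ from~\eqref{ump} (trajectories issuing from near $\partial_\pm SM\cup\partial_0SM$ never meet the compact set $\supp f\subset\mathcal U$), and persists for all non-pole $\lambda$ because, by Lemma~\ref{l:outgoing2}, $\psi R_\pm(\lambda)$ is entire for $\psi\in C_0^\infty(\mathcal U)$ supported near $\partial_\pm SM\cup\partial_0SM$ (hence off $\Gamma_\mp$) and vanishes identically on $\Re\lambda>0$. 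For \emph{uniqueness}: if $u\in\mathcal D'(\mathcal U)$ solves~\eqref{e:prob} with $f=0$, then the vanishing of $u$ near $\partial_\pm SM\cup\partial_0SM$ combined with the first-order equation $(-X\pm\lambda)u=0$ forces $\supp u\subset\Gamma_\mp$ --- for $x\notin\Gamma_\mp$ the trajectory through $x$ reaches $\partial\mathcal U$ in finite time in the appropriate direction, necessarily at a point of $\partial_\pm SM\cup\partial_0SM$ by strict convexity, where $u=0$, and propagating the transport equation back to $x$ (the relevant trajectory segment stays in $\mathcal U$ by strict convexity) gives $u=0$ near $x$. Hence $u\in\Res^{(1)}_{\mp X}(\lambda)$, which is trivial since $\lambda$ is not a pole, so $u=0$ and $u_\pm=R_\pm(\lambda)f$ is the unique solution.

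The step I expect to be most delicate is the uniqueness argument, namely making rigorous the claim that vanishing near $\partial_\pm SM\cup\partial_0SM$ and the transport equation force support in $\Gamma_\mp$: this needs a careful description of the exit behaviour of trajectories near the glancing set $\partial_0SM$ (where strict convexity of $\partial M$ is essential) and of the propagation of vanishing of a distributional solution of $(\mp X+\lambda)u=0$ up to and along the boundary. Pinning down the sharp Sobolev exponent similarly depends on the self-adjointness refinement recorded in Remark~(iii) after Lemma~\ref{l:keymic}. The remaining pieces are routine translations of results already established in the paper.
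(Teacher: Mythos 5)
Your proposal follows the paper's own argument essentially step for step: identify $R_\pm(\lambda)$ up to sign with the restricted resolvent $\mathbf R(\lambda)$ of $\mathbf X=\mp X$, deduce Part~1 from Theorem~\ref{t:mer}, read off Part~3 and the uniqueness in Part~2 from the characterization of resonant states in Theorem~\ref{t:res-states}, obtain the wavefront bound from Lemma~\ref{l:microl}, and extract the Sobolev mapping properties from Lemmas~\ref{l:meromorphic}--\ref{l:samething} together with~\eqref{e:anisotc}, \eqref{e:aniseq-1} and Remark~(iii) after Lemma~\ref{l:keymic} using invariance of the Liouville measure. The only differences are that you spell out a few points the paper leaves implicit (the support argument near $\partial_\pm SM\cup\partial_0 SM$ via strict convexity and Lemma~\ref{l:outgoing2}, and the role of the zero section in $E_+^*\times E_-^*$ in the wavefront estimate), which is consistent with and not in conflict with the paper's proof.
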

\begin{proof}
We establish the properties of $R_-(\lambda)$; the properties of $R_+(\lambda)$ are obtained by flipping the sign of $X$.

1. Put $\mathcal E:=\mathbb C$, $\mathbf X:=X$ in assumption~\hyperlink{AA5}{\rm(A5)} in the introduction. Comparing~\eqref{ump}
with~\eqref{e:res-upper}, we see that $R_-(\lambda)f=-\mathbf R(\lambda)f$ for all $f\in C_0^\infty(\mathcal U)$
and $\Re\lambda>0$. It remains to apply Theorem~\ref{t:mer}.

2. The fact that $R_-(\lambda)f$ is a solution to~\eqref{e:prob} follows by analytic continuation
from~\eqref{BVP}; to see that $\WF(R_-(\lambda)f)\subset E_+^*$, we use~\eqref{e:wavefront}.
To see uniqueness, assume that $u_-\in\mathcal D'(\mathcal U)$ solves~\eqref{e:prob} with $f=0$.
Using the equation $(X+\lambda)u_-=0$ and the fact that $u_-$ vanishes near
$\pl_-SM\cup\pl_0 SM$, we see that $\supp u_-\subset\Gamma_+$. Then $u_-=0$ by Theorem~\ref{t:res-states}.

To see that $R_-(\lambda):H^s_0(\mathcal M)\to H^{-s}(\mathcal M)$ is bounded,
we use Lemma~\ref{l:meromorphic} and Lemma~\ref{l:samething},
together with the properties of anisotropic spaces $\mathcal H^r_h$
given in~\eqref{e:anisotc}, and the discussion on the admissible values of $r$
in Remark~(iii) following Lemma~\ref{l:keymic}.
In the latter step we use the fact that the Liouville measure is invariant under the flow.
By~\eqref{e:aniseq-1}, this also implies that $A_- R_-(\lambda)$ acts $H^s_0(\mathcal U)\to H^s(\mathcal U)$.

3. This is a restatement of the characterization of resonant states in Theorem~\ref{t:res-states}.
\end{proof}

\subsection{Complete Riemannian manifolds}
\label{s:examples-geodesic}

Another example which fits to our setting, which reduces to
the one discussed in~\S\ref{s:boundary},
is the case of a complete Riemannian manifold $(M,g)$ satisfying:
\begin{enumerate}
\item there exists a function $F\in C^\infty(M;\mathbb R)$
such that for each $a\geq 0$, $M_a:=\{F\leq a\}$ is a compact domain whose boundary
$\{F=a\}$
is smooth and strictly convex with respect to $g$;
\item the trapped set $K$ of the geodesic flow $\varphi^t:SM\to SM$ is hyperbolic in the sense
of assumption~\hyperlink{AA4}{\rm(A4)} in the introduction.
\end{enumerate}

A particular case of such manifold is given by  negatively curved complete Riemannian manifolds $(M,g)$ which admit a compact region $M_0$ with strictly convex smooth boundary $\pl M_0$ such that, if $\nu$ is the unit normal exterior pointing vector field to $\pl M_0$ and $\pi: SM\to M$ the projection on the base, 
the map
\[ \psi: [0,\infty)\x \pl M_0 \to M \setminus M_0^\circ , \quad \psi(t,x)=\pi(\varphi^t(x,\nu(x)))\]
is a smooth diffeomorphism. In the coordinates $(t,x)$ defined by $\psi$, the metric has the form
$dt^2+g_1(t,x,dx)$, therefore the function $t$ is the geodesic distance to $\pl M_0$.
Thus lifting everything to the universal cover and applying Theorem~4.1 (see in particular Remark~4.3 there;
we use that $M$ is negatively curved) 
in~\cite{BiO'N}, we have that $F:=t$ produces a strictly convex foliation, verifying assumption~(1).
Assumption~(2) follows
from~\cite[\S3.9 and Theorem~3.2.17]{kl}.
An asymptotically hyperbolic manifold in the sense of Mazzeo--Melrose~\cite{MaMe}
with negative curvature satisfies these properties, and thus in particular any
convex co-compact hyperbolic manifold (with constant negative curvature) does too. 

We define the incoming/outgoing tails $\Gamma_\pm\subset SM$ on the entire $M$ by
$$
(x,v)\notin\Gamma_\pm\ \Longrightarrow\ \varphi^t(x,v)\to \infty\quad\text{as }t\to \mp\infty.
$$
Note that $K=\Gamma_+\cap \Gamma_-$ is contained in $\{F\leq 0\}$. By Lemma~\ref{l:extended},
we define the vector bundles $E_\pm^*$ over $\Gamma_\pm$. For $f\in C_0^\infty(SM)$, define
$$
R(\lambda)f=\int_0^\infty e^{-\lambda t}(f\circ \varphi^{-t})\,dt,\quad
\Re\lambda>0.
$$
\begin{prop}\label{casecomplete}
Under assumptions~(1) and~(2) above, the operator
$R(\lambda)$ admits a meromorphic extension to $\lambda\in\mathbb C$ as an operator
\[
R(\la): C_0^\infty(SM)\to \mc{D}'(SM)
\]
with poles of finite multiplicity.
Moreover, $\la\in\cc$ is a pole of $R(\la)$ (that is, a Pollicott--Ruelle resonance)
if and only if there exists a non-zero $u\in \mc{D}'(SM)$ satisfying 
\begin{equation}\label{conditionres} 
(X+\la)u=0, \quad \supp u\subset \Gamma_+, \quad \WF(u)\subset E^*_+.
\end{equation}
\end{prop}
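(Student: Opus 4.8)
The plan is to reduce Proposition~\ref{casecomplete} to the boundary case already treated in \S\ref{s:boundary} and Proposition~\ref{prop:bvp}, by exhibiting a compact domain whose unit tangent bundle plays the role of $\overline{\mathcal U}$. First I would fix $a>0$ large enough that $K\subset\{F<a\}$ (possible since $K\subset\{F\leq 0\}$ is compact and invariant), and set $\overline{\mathcal U}:=S M_a$, where $M_a=\{F\leq a\}$. By assumption~(1) this is a compact manifold with boundary $\partial\overline{\mathcal U}=\{F=a\}$-fibered piece of $SM$, and the strict convexity of $\{F=a\}$ with respect to $g$ gives exactly the condition \eqref{e:convex} for the geodesic vector field $X$, by the same geodesic-equation computation as in \S\ref{s:boundary} (using $\rho=a-F$ as a boundary defining function). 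Assumption~(2) gives hyperbolicity of the trapped set on $K$, which is \hyperlink{AA4}{(A4)}; and we take $\mathcal E=\mathbb C$, $\mathbf X=X$ for \hyperlink{AA5}{(A5)}. Thus \hyperlink{AA1}{(A1)}--\hyperlink{AA5}{(A5)} all hold for this $\overline{\mathcal U}$.

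Next I would check that the operator $R(\lambda)$ defined on $SM$ restricts to the operator $\mathbf R(\lambda)$ of \eqref{e:res} up to sign, exactly as in the proof of Proposition~\ref{prop:bvp}(1). Indeed, for $f\in C_0^\infty(SM)$ supported in $\{F<a\}$, and for $\Re\lambda>0$, the integral $\int_0^\infty e^{-\lambda t}(f\circ\varphi^{-t})\,dt$ is supported in $\overline{\Sigma_+}$ and agrees on $\mathcal U$ with $\mathbf R(\lambda)f$ by \eqref{e:res-upper}, \eqref{e:X-support}, and convexity \eqref{e:convex2}: no geodesic leaving $M_a$ returns to it, by strict convexity, so values of the transfer operator outside $M_a$ play no role. (For general $f\in C_0^\infty(SM)$ one first propagates $f$ into $\{F<a\}$: there is $T>0$ with $\varphi^{-T}(\supp f)\subset SM_{a'}$ for some $a'<a$ whenever $\supp f$ avoids $\Gamma_-$ on the nose, and in any case one writes $R(\lambda)f$ as a finite-time integral plus $e^{-\lambda T}R(\lambda)(f\circ\varphi^{-T})$ with the second term having support controlled.) Then Theorem~\ref{t:mer} gives meromorphic continuation of $\mathbf R(\lambda):C_0^\infty(\mathcal U;\mathcal E)\to\mathcal D'(\mathcal U;\mathcal E)$ with finite rank poles, and since $SM\setminus\overline{\mathcal U}=\{F>a\}$ is foliated by escaping trajectories, a distribution on $\mathcal U$ extends canonically by zero-flowout to an element of $\mathcal D'(SM)$; this yields the meromorphic continuation of $R(\lambda)$ on all of $SM$.

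For the characterization of poles as Pollicott--Ruelle resonances, I would invoke Theorem~\ref{t:res-states}: $\lambda_0$ is a resonance of $\mathbf X$ iff $\Res^{(1)}_{\mathbf X}(\lambda_0)\neq 0$, i.e.\ iff there is a nonzero $u\in\mathcal D'(\mathcal U)$ with $\supp u\subset\Gamma_+$, $\WF(u)\subset E_+^*$, and $(X+\lambda_0)u=0$. Such a $u$, being supported in $\Gamma_+\subset\{F\leq 0\}\Subset M_a$, extends by zero to an element of $\mathcal D'(SM)$ solving \eqref{conditionres} globally; conversely any global solution of \eqref{conditionres} restricts to $\mathcal U$ as a generalized resonant state. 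One should note here that the bundles $E_\pm^*$ over $\Gamma_\pm\subset SM$ do not depend on the choice of $a$ (they are intrinsic to the dynamics near $K$, by Lemma~\ref{l:extended}), so the statement is independent of the auxiliary truncation.

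The main obstacle I anticipate is the bookkeeping in the second paragraph: verifying carefully that $R(\lambda)$ on $SM$ and $\mathbf R(\lambda)$ on $\mathcal U$ genuinely agree (as meromorphic families, not just for $\Re\lambda>0$), and that the extension-by-zero-flowout is well-defined and compatible with the wavefront set bounds \eqref{e:wavefront}. This requires knowing that singularities of $R(\lambda)f$ cannot escape to spatial infinity, which is where strict convexity of all the level sets $\{F=a'\}$ (assumption~(1) for \emph{every} $a\geq 0$, not just one) is used: it guarantees that once a geodesic exits $M_a$ it never returns, so the forward flowout of $\supp f$ and $\WF(f)$ remains controlled and the zero-extension introduces no new wavefront set away from $E_+^*$. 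Everything else is a direct citation of Theorems~\ref{t:mer} and~\ref{t:res-states}.
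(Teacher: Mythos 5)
Your first paragraph is exactly what the paper does: set up a compact truncation $SM_a$, verify (A1)--(A5) for it using strict convexity of $\{F=a\}$, and reduce to the machinery of \S\ref{s:boundary}. The characterization of poles via $\Res^{(1)}_{\mathbf X}(\lambda_0)$ in your third paragraph is also correct in spirit. But the second paragraph --- the step from a meromorphic family $\mathbf R(\lambda):C_0^\infty(\mathcal U)\to\mathcal D'(\mathcal U)$ on a single truncation to a meromorphic family $R(\lambda):C_0^\infty(SM)\to\mathcal D'(SM)$ on the noncompact manifold --- is where the proposal has a genuine gap, and it is not the ``bookkeeping'' you dismiss it as.

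Two concrete problems. First, ``a distribution on $\mathcal U$ extends canonically by zero-flowout to an element of $\mathcal D'(SM)$'' is not a well-defined operation: a generic element of $\mathcal D'(\mathcal U)$ has no canonical extension across $\partial\mathcal U$, and ``zero-flowout'' is not extension by zero (the resolvent applied to $f$ is supported in the full forward flowout of $\supp f$, which escapes to infinity, so it certainly does not vanish outside $\mathcal U$). Second, with $a$ fixed once and for all, $f\in C_0^\infty(SM)$ need not be supported in $SM_a$; your parenthetical cure --- pushing $f$ forward under $\varphi^{-T}$ --- moves its support \emph{outward} (since $\supp(f\circ\varphi^{-T})=\varphi^T(\supp f)$), not into $\{F<a\}$, so that step as written does not close.

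The paper's proof avoids both issues by not fixing a single $a$. It applies Proposition~\ref{prop:bvp} to \emph{every} $M_a$, obtaining a family $R_a(\lambda)=\indic_{SM_a}R(\lambda)\indic_{SM_a}$ that all continue meromorphically, with $\indic_{SM_a}R_b(\lambda)\indic_{SM_a}=R_a(\lambda)$ for $a\le b$ by analytic continuation from $\Re\lambda>0$. The only substantive point is then that the pole at a given $\lambda_0$ has the \emph{same multiplicity} for all $a$, which the paper reduces (via Theorem~\ref{t:res-states}) to showing that the restriction map
\[
\indic_{SM_a}:\{u\in\mathcal D'(SM_b)\mid (X+\lambda)^ju=0,\ \supp u\subset\Gamma_+,\ \WF(u)\subset E_+^*\}\to\{\text{same over }SM_a\}
\]
is a linear isomorphism for every $j\geq 1$; this uses that $\varphi^{-t}(SM_b\cap\Gamma_+)\subset SM_a$ for $t$ large (Lemma~\ref{l:convergence}), so a generalized resonant state on $SM_a$ can be propagated out to one on $SM_b$ and vice versa. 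Your one-line appeal to ``$E_\pm^*$ are intrinsic to the dynamics near $K$'' gestures at this but does not give the isomorphism of the spaces in question, and in particular does not recover the $j>1$ case needed for the multiplicity claim. If you want to repair your argument, the cleanest fix is to adopt the paper's filtration $(M_a)_{a\geq 0}$ and prove the restriction isomorphism rather than attempt a single-truncation extension.
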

\begin{proof}
Applying Proposition~\ref{prop:bvp} to the manifolds $(M_a,g|_{M_a})$,
we continue meromorphically $R_a(\lambda)=\indic_{SM_a}R(\lambda)\indic_{SM_a}$ for all $a\geq 0$.
By analytic continuation, we have $\indic_{SM_a}R_b(\lambda)\indic_{SM_a}=R_a(\lambda)$ for $0\leq a\leq b$.
To show that the family of operators $R_a(\lambda)$ can be pieced together to
an operator $R(\lambda):C_0^\infty(SM)\to\mathcal D'(SM)$, it suffices to
show that each $\lambda$ has the same multiplicity (i.e., the rank of the operator
$\Pi_\lambda$ from Theorem~\ref{t:res-states}) as a resonance
of $R_a(\lambda)$ and $R_b(\lambda)$, for $0\leq a\leq b$.
By Theorem~\ref{t:res-states}, it is then enough to show that for each $j\geq 1$, the restriction operator
\begin{equation}
  \label{e:resti}
\begin{aligned}
\indic_{SM_a}:&\ \{u\in\mathcal D'(SM_b)\mid (X+\lambda)^ju=0,\ \supp u\subset\Gamma_+,\ \WF(u)\subset E_+^*\}
\\\to &\ \{u\in\mathcal D'(SM_a)\mid (X+\lambda)^ju=0,\ \supp u\subset\Gamma_+,\ \WF(u)\subset E_+^*\}
\end{aligned}
\end{equation}
is a linear isomorphism. (The case $j=1$ also gives the characterization~\eqref{conditionres}.)
The fact that~\eqref{e:resti} is an isomorphism follows by using the equation $(X+\lambda)^ju=0$
together with the fact that $\varphi^{-t}(SM_b\cap \Gamma_+)\subset SM_a$ for $t$ large enough;
the latter is a corollary of Lemma~\ref{l:convergence}.
\end{proof}

\smallsection{Acknowledgements}
We would like to thank Fr\'ed\'eric Faure for many discussions of resonances for
open systems (including the construction of~\cite{FaTs3}) and Maciej Zworski
for advice and support throughout this project. We are also grateful to Andr\'as Vasy
for a discussion of semiclassical propagation estimates and to Mark Pollicott and Fr\'ed\'eric Naud
for an overview of the history of the subject. Finally, we would like to thank
an anonymous referee for many suggestions for improving the manuscript.
This work was completed during
the time SD served as a Clay Fellow. CG was partially supported
by grants ANR-13-BS01-0007-01 and ANR-13-JS01-0006.


\def\arXiv#1{\href{http://arxiv.org/abs/#1}{arXiv:#1}}


\begin{thebibliography}{0}

\bibitem[AFW]{afw} Jean-Fran\c cois Arnoldi, Fr\'ed\'eric Faure, and Tobias Weich,
	\emph{Asymptotic spectral gap and Weyl law for Ruelle resonances of open partially expanding maps,\/}
	Erg. Theory Dyn. Syst., published online; \arXiv{1302.3087}.

\bibitem[BaTs]{bats} Viviane Baladi and Masato Tsujii,
	\emph{Anisotropic H\"older and Sobolev spaces for hyperbolic diffeomorphisms,\/}
	Ann. Inst. Fourier \textbf{57}(2007), 127--154.

\bibitem[BiO'N]{BiO'N} Richard Bishop and Barrett O'Neill,
	\emph{Manifolds of negative curvature,\/}
	Trans. Amer. Math. Soc. \textbf{145}(1969), 1--49.

\bibitem[BKL]{b-k-l} Michael Blank, Gerhard Keller, and Carlangelo Liverani,
	\emph{Ruelle--Perron--Frobenius spectrum for Anosov maps,\/}
	Nonlinearity \textbf{15}(2002), 1905--1973.

\bibitem[BoMa]{bowen-marcus} Rufus Bowen and Brian Marcus,
	\emph{Unique ergodicity for horocycle foliations,\/}
	Israel J. Math. \textbf{26}(1977), 43--67.

\bibitem[BoRu]{BoRu} Rufus Bowen and David Ruelle,
	\emph{The ergodic theory of Axiom A flows,\/}
	Invent. Math. \textbf{29}(1975), 181--202.
	
\bibitem[BuLi]{but-liv} Oliver Butterley and Carlangelo Liverani,
	\emph{Smooth Anosov flows: correlation spectra and stability,\/}
	J. Mod. Dyn. \textbf{1}(2007), 301--322.

\bibitem[Ch]{chernov} Nikolai Chernov,
	\emph{Markov approximations and decay of correlations for Anosov flows,\/}
	Ann. of Math. (2) \textbf{147}(1998), 269--324.
	
\bibitem[CoEa]{conley-easton} Charles Conley and Robert Easton,
	\emph{Isolated invariant sets and isolating blocks,\/}
	Trans. Amer. Math. Soc. \textbf{158}(1971), 35--61.

\bibitem[DDZ]{DDZ} Kiril Datchev, Semyon Dyatlov, and Maciej Zworski,
	\emph{Sharp polynomial bounds on the number of Pollicott--Ruelle resonances,\/}
	Erg. Theory Dyn. Syst. \textbf{34}(2014), 1168--1183.

\bibitem[Do]{dolgopyat} Dmitry Dolgopyat,
	\emph{On decay of correlations in Anosov flows,\/}
	Ann. of Math. (2) \textbf{147}(1998), 357--390.

\bibitem[DFG]{DFG} Semyon Dyatlov, Fr\'ed\'eric Faure, and Colin Guillarmou,
	\emph{Power spectrum of the geodesic flow on hyperbolic manifolds,\/}
	Analysis\&PDE \textbf{8}(2015), 923--1000.
	
\bibitem[DyZw13]{DyZw} Semyon Dyatlov and Maciej Zworski,
  \emph{Dynamical zeta functions for Anosov flows via microlocal analysis,\/}
  to appear in Annales de l'ENS, \arXiv{1306.4203}.
  
\bibitem[DyZw14]{DyZw2} Semyon Dyatlov and Maciej Zworski,
  \emph{Stochastic stability of Pollicott--Ruelle resonances,\/}
  Nonlinearity \textbf{28}(2015), 3511--3534.
  
\bibitem[Fa]{faure} Fr\'ed\'eric Faure,
	\emph{Prequantum chaos: resonances of the prequantum cat map,\/}
	J. Mod. Dyn. \textbf{1}(2007), 255--285.

\bibitem[FRS]{fa-ro-sj} Fr\'ed\'eric Faure, Nicolas Roy, and Johannes Sj\"ostrand,
	\emph{A semiclassical approach for Anosov diffeomorphisms and Ruelle resonances,\/}
	Open Math. J. \textbf{1}(2008), 35--81.

\bibitem[FaSj]{FaSj} Fr\'ed\'eric Faure and Johannes Sj\"ostrand,
	\emph{Upper bound on the density of Ruelle resonances for Anosov flows,\/}
	Comm. Math. Phys. \textbf{308}(2011), 325--364.
	
\bibitem[FaTs12]{FaTs1} Fr\'ed\'eric Faure and Masato Tsujii,
	\emph{Prequantum transfer operator for symplectic Anosov diffeomorphism,\/}
	preprint, \arXiv{1206.0282}.
	
\bibitem[FaTs13a]{FaTs2} Fr\'ed\'eric Faure and Masato Tsujii,
	\emph{Band structure of the Ruelle spectrum of contact Anosov flows,\/}
	Comptes Rendus~-- Math\'ematique \textbf{351}(2013), 385--391.
	
\bibitem[FaTs13b]{FaTs3} Fr\'ed\'eric Faure and Masato Tsujii,
	\emph{The semiclassical zeta function for geodesic flows on negatively curved manifolds,\/}
	preprint, \arXiv{1311.4932}.
	
\bibitem[Fr]{fried} David Fried,
	\emph{Meromorphic zeta functions for analytic flows,\/}
	Comm. Math. Phys. \textbf{174}(1995), 161--190.

\bibitem[GLP]{glp} Paolo Giulietti, Carlangelo Liverani, and Mark Pollicott,
	\emph{Anosov flows and dynamical zeta functions,\/}
 	Ann. of Math. (2) \textbf{178}(2013), 687--773.

\bibitem[G]{G} Colin Guillarmou,
    \emph{Lens rigidity for manifolds with hyperbolic trapped set,\/}
    preprint, \arXiv{1412.1760}.
    
\bibitem[Gu]{guillemin} Victor Guillemin,
	\emph{Lectures on spectral theory of elliptic operators,\/}
	Duke Math. J. \textbf{44}(1977), 485--517.

\bibitem[GuSc]{gui-sch} Victor Guillemin and David Schaeffer,
	\emph{On a certain class of Fuchsian partial differential equations,\/}
	Duke Math. J. \textbf{44}(1977), 157--199.	
	
\bibitem[GoLi]{go-liv} S\'ebastien Gou\"ezel and Carlangelo Liverani,
	\emph{Banach spaces adapted to Anosov systems,\/}
	Erg. Theory Dyn. Syst. \textbf{26}(2006), 189--217.

\bibitem[HMV]{HMV} Andrew Hassell, Richard Melrose, and Andr\'as Vasy,
	\emph{Spectral and scattering theory for symbolic potentials of order zero,\/}
	Adv. Math. \textbf{181}(2004), 1--87.

\bibitem[HeSk]{he-sk} Ira Herbst and Erik Skibsted,
	\emph{Absence of quantum states corresponding to unstable classical channels,\/}
	Ann. Henri Poincar\'e \textbf{9}(2008), 509--552.

\bibitem[H\"oI]{ho1} Lars H\"ormander,
	\emph{The Analysis of Linear Partial Differential Operators I. Distribution Theory and Fourier Analysis,\/}
	Springer, 1990.
	
\bibitem[JiZw]{long} Long Jin and Maciej Zworski,
	\emph{A local trace formula for Anosov flows,\/}
	with an appendix by Fr\'ed\'eric Naud,
	preprint, \arXiv{1411.6177}.

\bibitem[KaHa]{KaHa} Anatole Katok and Boris Hasselblatt,
	\emph{Introduction to the modern theory of dynamical systems,\/}
	Cambridge Univ. Press, 1997.

\bibitem[Ki]{kitaev} Alexey Kitaev,
	\emph{Fredholm determinants for hyperbolic diffeomorphisms of finite smoothness,\/}
	Nonlinearity \textbf{12}(1999), 141--179.

\bibitem[Kl]{kl} Wilhelm Klingenberg,
	\emph{Riemannian geometry,\/}
	Walter de Gruyter, 1995.

\bibitem[Li04]{liverani} Carlangelo Liverani,
	\emph{On contact Anosov flows,\/}
	Ann. of Math. (2) \textbf{159}(2004), 1275--1312.
	
\bibitem[Li05]{liverani2} Carlangelo Liverani,
	\emph{Fredholm determinants, Anosov maps and Ruelle resonances,\/}
	Discrete Contin. Dyn. Syst. \textbf{13}(2005), 1203--1215.

\bibitem[Ma]{margulis} Grigory Margulis,
	\emph{Certain applications of ergodic theory to the investigation of manifolds of negative curvature,\/}
	Funct. Anal. Appl. \textbf{3}(1969), 335--336.

\bibitem[MaMe]{MaMe} Rafe Mazzeo and Richard Melrose,
	\emph{Meromorphic extension of the resolvent on complete spaces with asymptotically constant negative curvature,\/}
	J. Funct. Anal. \textbf{75}(1987), 260--310.
	
\bibitem[Me]{Me} Richard Melrose,
	\emph{Spectral and scattering theory for the Laplacian on asymptotically Euclidean spaces,\/}
	in \emph{Spectral and scattering theory,\/} (M. Ikawa, ed.), Marcel Dekker, 1994.

\bibitem[Mo]{moore} Calvin Moore,
	\emph{Exponential decay of correlation coefficients for geodesic flows,\/}
	in Group representations, ergodic theory, operator algebras, and mathematical physics (Berkeley, 1984), 163--181,
	MSRI Publ. \textbf{6}, Springer, 1987. 

\bibitem[Na]{naud} Fr\'ed\'eric Naud,
	\emph{Expanding maps on Cantor sets and analytic continuation of zeta functions,\/}
	Ann. ENS \textbf{38}(2005), 116--153.
	
\bibitem[NoZw09]{NoZw} St\'ephane Nonnenmacher and Maciej Zworski,
	\emph{Quantum decay rates in chaotic scattering,\/}
	Acta Math. \textbf{203}(2009), 149--233. 
	
\bibitem[NoZw13]{NoZw2} St\'ephane Nonnenmacher and Maciej Zworski,
	\emph{Decay of correlations in normally hyperbolic trapping,\/}
	Invent. Math. \textbf{200}(2015), 345--438.

\bibitem[PaPo83]{parry-pollicott} William Parry and Mark Pollicott,
	\emph{An analogue of the prime number theorem for closed orbits of Axiom A flows,\/}
	Ann. of Math. (2) \textbf{118}(1983), 573--591.

\bibitem[PaPo90]{papo} William Parry and Mark Pollicott,
	\emph{Zeta functions and the periodic orbit structure of hyperbolic dynamics,\/}
	Asterisque \textbf{187--188}(1990).

\bibitem[PeSt]{petkov-stoyanov} Vesselin Petkov and Luchezar Stoyanov,
	\emph{Spectral estimates for Ruelle transfer operators with two parameters and applications,\/}
	preprint, \arXiv{1409.0721}.

\bibitem[Po85]{Po} Mark Pollicott,
	\emph{On the rate of mixing of Axiom A flows,\/}
	Invent. Math. \textbf{81}(1985), 413--426.

\bibitem[Po86]{Po0} Mark Pollicott,
	\emph{Meromorphic extensions of generalized zeta functions,\/}
	Invent. Math. \textbf{85}(1986), 147--164.
	
\bibitem[Ra]{ratner} Marina Ratner,
	\emph{The rate of mixing for geodesic and horocycle flows,\/}
	Erg. Theory Dyn. Syst. \textbf{7}(1987), 267--288.
	
\bibitem[Ro]{robinson} Clark Robinson,
	\emph{Structural stability on manifolds with boundary,\/}
	J. Diff. Eq. \textbf{37}(1980), 1--11.

\bibitem[Ru76]{Ru0} David Ruelle,
	\emph{Zeta functions for expanding maps and Anosov flows,\/}
	Invent. Math. \textbf{34}(1976), 231--242.
	
\bibitem[Ru86]{Ru} David Ruelle,
	\emph{Resonances of chaotic dynamical systems,\/}
	Phys. Rev. Lett. \textbf{56}(1986), 405--407.

\bibitem[Ru87]{Ru1} David Ruelle,
	\emph{Resonances for Axiom A flows,\/}
	J. Diff. Geom. \textbf{25}(1987), 99--116.	

\bibitem[Ru92]{rugh} Hans Rugh,
	\emph{The correlation spectrum for hyperbolic analytic maps,\/}
	Nonlinearity \textbf{5}(1992), 1237--1263.

\bibitem[Ru96]{rugh2} Hans Rugh,
	\emph{Generalized Fredholm determinants and Selberg zeta functions for Axiom A dynamical systems,\/}
	Erg. Theory Dyn. Syst. \textbf{16}(1996), 805--819.

\bibitem[Sm]{Sm} Steven Smale,
	\emph{Differentiable dynamical systems,\/}
	Bull. Amer. Math. Soc. \textbf{73}(1967), 747--817.

\bibitem[St]{stefanov} Plamen Stefanov,
    \emph{Approximating resonances with the complex absorbing potential method,\/}
    Comm. PDE \textbf{30}(2005), 1843--1862.
	
\bibitem[St11]{stoyanov2} Luchezar Stoyanov,
	\emph{Spectra of Ruelle transfer operators for Axiom A flows,\/}
	Nonlinearity \textbf{24}(2011), 1089--1120.

\bibitem[St13a]{stoyanov3} Luchezar Stoyanov,
	\emph{Pinching conditions, linearization and regularity of Axiom A flows,\/}
	Discrete Cont. Dyn. Syst. \textbf{33}(2013), 391--412.

\bibitem[St13b]{stoyanov} Luchezar Stoyanov,
	\emph{Ruelle operators and decay of correlations for contact Anosov flows,\/}
	C. R. Acad. Sci. Paris, Ser. I \textbf{351}(2013), 669--672.	

\bibitem[Ts]{tsujii} Masato Tsujii,
	\emph{Contact Anosov flows and the Fourier--Bros--Iagolnitzer transform,\/}
	Erg. Theory Dyn. Syst. \textbf{32}(2012), 2083--2118.

\bibitem[Va]{vasy} Andr\'as Vasy,
	\emph{Microlocal analysis of asymptotically hyperbolic and Kerr--de Sitter spaces,\/}
	with an appendix by Semyon Dyatlov,
	Invent. Math. \textbf{194}(2013), 381--513.

\bibitem[Zw]{e-z} Maciej Zworski,
	\emph{Semiclassical analysis,\/}
	Graduate Studies in Mathematics \textbf{138}, AMS, 2012.

\end{thebibliography}
\end{document}